\documentclass[11pt]{article}
\usepackage{fullpage}
\usepackage[psamsfonts]{eucal}
\usepackage{amsmath}
\usepackage{amsfonts}
\usepackage{amssymb}
\usepackage{amstext}
\usepackage{amscd}
\usepackage{amsthm}
\usepackage{makeidx}
\usepackage{graphicx}
\usepackage[colorlinks=true,linkcolor=blue]{hyperref}
\usepackage{supertabular}
\usepackage{enumerate}
\usepackage{titling}
\usepackage{wasysym}
\usepackage{comment}
\usepackage{scalerel}
\usepackage{cleveref}
\usepackage{listings}
\usepackage{relsize}
\usepackage{microtype}
\usepackage{abbreviations}
\usepackage{xcolor}
\renewcommand{\aa}{\mathfrak{a}}
\newcommand{\uu}{\mathfrak{u}}

\newcommand{\WW}{\mathcal{W}}
\newcommand{\EE}{\mathcal{E}}
\newcommand{\VV}{\mathcal{V}}
\newcommand{\YY}{\mathcal{Y}}
\newcommand{\GG}{\mathcal{G}}
\newcommand{\XX}{\mathcal{X}}
\newcommand{\UU}{\mathcal{U}}

\newcommand{\ww}{\mathbf{w}}

\newcommand{\vv}{\mathbf{v}}
\newcommand{\vone}{\mathbf{1}}

\newcommand{\cpois}{c_{\scaleto{\ref{lem:sharp_poisson_tail}}{3pt}}}
\newcommand{\cweib}{c_{\scaleto{\ref{lem:weibullbound}}{3pt}}}
\newcommand{\omegam}{\Omega_{\scaleto{\ref{dfn:omegadef}}{3pt}}}

\begin{document}
\title{The Spectral Edge of Constant Degree Erd\H os-R\'enyi Graphs}
\author{Ella Hiesmayr\thanks{\texttt{ella.hiesmayr@berkeley.edu}. University of California, Berkeley. Supported by the Citadel Securities Berkeley Statistics PhD Fellowship.} \and Theo McKenzie\thanks{\texttt{theom@stanford.edu}. Stanford University. Supported by NSF GRFP Grant DGE-1752814 and NSF Grant DMS-2212881. }}
\date{}
\maketitle

\begin{abstract} We show that for an Erd\H os-R\'enyi graph on $N$ vertices with expected degree $d$ satisfying $\log^{-1/9}N\leq d\leq \log^{1/40}N$, the largest eigenvalues can be precisely determined by small neighborhoods around vertices of close to maximal degree. Moreover, under the added condition that $d\geq\log^{-1/15}N$, the corresponding eigenvectors are localized, in that the mass of the eigenvector decays exponentially away from the high degree vertex. This dependence on local neighborhoods implies that the edge eigenvalues converge to a Poisson point process.
These theorems extend a result of Alt, Ducatez, and Knowles, who showed the same behavior for $d$ satisfying $(\log\log N)^4\ll d\leq (1-o_{N}(1))\frac{1}{\log 4-1}\log N$. 

To achieve high accuracy in the constant degree regime, instead of attempting to guess an approximate eigenvector of a local neighborhood, we analyze the true eigenvector of a local neighborhood, and show it must be localized and depend on local geometry. 
 
     \end{abstract}
\section{Introduction}\label{sec:intro}

Finding the spectrum and eigenvectors of the adjacency matrix of graphs is an ubiquitous problem in combinatorics and spectral theory, with important applications to computer science and mathematical physics, see \cite{chung1997spectral,kottos1997quantum, alon1998spectral} for general overviews on their applicability. More specifically, researchers have studied the ``typical'' behavior of eigenvalues and eigenvectors through random models. The most well-studied of these is the Erd\H os-R\'enyi model $\GG(N,\frac dN)$, where each of the possible $\binom N2$ edges is included independently with probability $\frac dN$. Therefore the expected degree of a vertex is $\frac{N-1}{N}d\approx d$. See the monograph of Guionnet for an overview of known results and the state of the field for this model \cite{guionnet2021bernoulli}.

In this paper, we focus on the specific behavior of the eigenvalues near the edge of the spectrum. This spectral edge has received much attention, as it has its own specific applications. For example, the edge governs the mixing rate of Markov chains, and graph partitioning (as shown in \cite{hoory2006expander}), and also achieves localization in the Anderson model (\cite{anderson1958absence}, further discussed later).\footnote{Unless otherwise specified, theorems stated in the introduction are of phenomena that occur with high probability.}  The behavior of the extreme eigenvalues and eigenvectors in the Erd\H os-R\'enyi model is known to go through various phase transitions. When $d\gg N^{1/3}$ these edge eigenvalues have Tracy-Widom fluctuations,\footnote{Here and throughout, when writing $x\ll y$, we mean that $x=o(y)$, $x\lesssim y$ means $x=O(y)$, and $x\asymp y$ means $x=\Theta(y)$.} similar to the fluctuations of the eigenvalues of a GOE matrix \cite{erdHos2012spectral,erdHos2013spectral,lee2018local}. When $N^\epsilon \leq d\ll N^{1/3}$, for some fixed $\epsilon>0$, the top eigenvalues lose GOE behavior and edge eigenvalues become Gaussian distributed \cite{huang2020transition,he2021fluctuations}.

For sparser Erd\H os-R\'enyi graphs, Krivelevich and Sudakov showed using a graph decomposition that eigenvalues are governed by the statistics surrounding the highest degree vertices \cite{krivelevich2003largest}. Therefore, if $\mu_k$ is the expected number of vertices of degree $k$, we define
\begin{equation}\label{eq:udef}
\uu:=\arg\min_{k\in \Z} \left \{\max \left \{ \mu_k,\mu_k^{-1} \right \} \right \}.
\end{equation}
 $\uu$ is roughly the largest degree we expect to occur in the graph. 
Krivelevich and Sudakov showed the largest eigenvalue of an Erd\H os-R\'enyi graph is $(1+o_N(1))\max\{d,\sqrt{\uu}\}$. The balls and bins paradigm implies that for $\log^{-1}N\ll d\ll\log N$, $\uu=\Theta( \frac{\log N }{\log \log N})$. Therefore, this shows that there is a phase transition in the largest eigenvalue at $d=\sqrt{\frac{\log N}{\log\log N}}$.

In fact, we begin to see the local affect of high degree vertices at $d\asymp \log N$. This is well known to be the threshold for connectivity (as was shown in the original work of Erd\H os and R\'enyi \cite{erdHos1960evolution}), but is also the threshold for large fluctuations in the degree sequence, as opposed to greater concentration seen for larger $d$. Specifically, Benaych-Georges, Bordenave, and Knowles showed that when $d\gg \log N$, edge eigenvalues converge to the edge of the support of the asymptotic eigenvalue distribution, but when $d\ll \log N$, roughly, edge eigenvalues are ``governed'' by the largest degree vertices of the adjacency matrix \cite{benaych2019largest,benaych2020spectral}, with the specific threshold later given by Alt, Ducatez, and Knowles \cite{alt2021extremal}.

Alt, Ducatez, and Knowles further studied this problem, and managed to obtain impressively detailed results. Through the works of \cite{alt2021delocalization,alt2022completely,alt2023poisson, alt2023localized}, the authors show a transition between the occurrence of delocalized eigenvectors in the bulk of the spectrum, and localized eigenvectors near the edge for $d\lesssim\log N$ (the specific bounds on $d$ vary paper to paper, but all results are for this sparse regime). 

We focus specifically on \cite{alt2023poisson}. In this result, Alt, Ducatez, and Knowles show that the largest eigenvalues of the graph are determined by two combinatorial statistics around the high degree vertices. As was shown previously through \cite{krivelevich2003largest} and \cite{benaych2019largest}, the primary term is the degree of the high degree vertex, which we denote here by $\alpha_x$. To gain the necessary levels of accuracy, they also track the secondary term $\beta_x$, which is the number of vertices of distance exactly 2 from a high degree vertex $x$ in the graph.\footnote{We have translated their parameters into the unnormalized versions we use in our proof.} Note that this notation slightly differs from the one in \cite{alt2023poisson}, where $\alpha_x$ and $\beta_x$ are normalized by $d$. Reinterpreting their result, they show the following. 

\begin{thm}[\cite{alt2023poisson}]
    For $\zeta> 4$ and sufficiently small constant $\xi>0$, assume that $(\log\log N)^\zeta\leq d\leq (\frac{1}{\log 4-1}-(\log N)^{-\xi})\log N$. For $K:=d^{1/2-2/\zeta-16\xi}$ there are some $\delta, \epsilon>0$ such that the first $K$ eigenvalues are of the form 
    \begin{equation}\label{eq:adk}
   \frac{\alpha_x}{\sqrt{\alpha_x-\frac{\beta_x}{2\alpha_x}(\frac{\alpha_x}{d}+\frac{\beta_x}{\alpha_x d})+\frac{\beta_x}{2\alpha_x}\sqrt{(\frac{\alpha_x}{d}+\frac{\beta_x}{d\alpha_x})^2-4\frac{\alpha_x}{d}}}}+O(d^{-\epsilon})
    \end{equation}
    for $K$ vertices of degree at least $\uu-\frac{d^{-\delta}}{\log \uu}$.
\end{thm}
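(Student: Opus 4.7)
The plan is first to isolate the vertices responsible for the top eigenvalues, then to analyze each one's local neighborhood as a small matrix problem, and finally to patch the local and global pictures together using resolvent bounds. Set $\eta := d^{-\delta}/\log \uu$ and let $\VV_\eta$ collect the vertices $x$ with degree $\alpha_x \geq \uu - \eta$. A Poisson approximation for the degree sequence shows that with high probability $|\VV_\eta| \geq K$ and that vertices in $\VV_\eta$ are pairwise at graph distance at least $\omega(1)$, so the balls $B_r(x)$ for an $r$ of order $\log \log N$ are vertex-disjoint and tree-like.

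For each $x \in \VV_\eta$, the local piece of the graph to analyze is modeled on $B_2(x)$: a root with $\alpha_x$ children and $\beta_x$ grandchildren in total. Each grandchild has further neighbors in the ``bath'' (the rest of the graph), which to leading order behaves like an infinite $(d-1)$-regular tree. A Schur complement on the resolvent integrates out the bath, producing an effective self-energy $\sigma(\lambda) = (d-1)/\lambda + O(1/\lambda^3)$ at each grandchild. Writing $Av = \lambda v$ on the reduced three-level system and eliminating the level-one and level-two amplitudes collapses the problem to a scalar equation whose leading solution is the larger root of a quadratic in $\lambda^2$; taking the square root and reorganizing in terms of $\alpha_x/d$ and $\beta_x/(\alpha_x d)$ yields exactly the closed form \eqref{eq:adk}.

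To convert each local eigenvalue $\widetilde\lambda_x$ into a true eigenvalue of $A$, extend the local eigenvector by zero outside $B_r(x)$ and observe that $(A - \widetilde\lambda_x)\widetilde v$ is supported only on the boundary of $B_r(x)$, where the eigenvector has decayed exponentially in $r$. Geometric decay is inherited from the fact that the bath operator has spectral radius at most $2\sqrt{d-1} + o(1)$, while $\widetilde\lambda_x \approx \sqrt{\alpha_x} \gg 2\sqrt{d}$ in this regime. The min-max principle then places a true eigenvalue within $O(d^{-\epsilon})$ of each $\widetilde\lambda_x$. Conversely, bounding the norm of $A$ restricted off $\bigcup_{x \in \VV_\eta} B_r(x)$ via a pruning argument (in the spirit of Krivelevich--Sudakov) forbids extraneous eigenvalues above the local contributions.

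The main obstacle will be controlling the accuracy of the self-energy expansion. Pushing the local analysis down to error $O(d^{-\epsilon})$ requires the Green's function of the actual random bath to concentrate around that of the ideal regular tree, which in turn demands that the $r$-neighborhood of a typical high-degree vertex is tree-like beyond level 2 and that its degree statistics concentrate adequately. This combinatorial-probabilistic estimate, together with the analytic perturbation of the implicit equation for $\sigma(\lambda)$, is the technical heart of the proof; the remaining counting and bijection issues then follow from anti-concentration of \eqref{eq:adk} under integer fluctuations of $\alpha_x$ and $\beta_x$.
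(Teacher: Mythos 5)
This paper does not actually prove the quoted theorem; it is Theorem 1 of \cite{alt2023poisson}, restated here as background, and the present paper's own machinery (using the \emph{true} top eigenvector of $B_r(x)$ rather than a guessed one, as developed in Sections~\ref{sec:fine}--\ref{sec:finaldecomp}) is deliberately different so as to reach $d = O(1)$. With that caveat, here is how your blueprint compares to the approach that is actually used in \cite{alt2023poisson}, as described in the introduction and mirrored in Section~\ref{sec:rough}.

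Your overall architecture is right: identify the small set of near-maximal-degree vertices, derive a local eigenvalue formula for each, extend the local eigenvector by zero and pay a boundary price, and use a Krivelevich--Sudakov pruning to control the bulk. The derivation of \eqref{eq:adk} as the larger root of a quadratic in $\lambda^2$, obtained by integrating out the bath at the grandchildren via the tree Green's function, is also morally the ADK computation (and can be checked to reproduce $\lambda^2 = \alpha_x + \beta_x/\alpha_x + \cdots$ at leading order). What differs is the packaging: \cite{alt2023poisson} works with an explicit deterministic test vector supported on $B_2(x)$ of the pruned graph, of exactly the form \eqref{eq:rough_eigenvector}, rather than with a Schur complement on the resolvent. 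The two are two ways of writing the same computation, so this is a presentational, not a mathematical, divergence.

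There are two places where your sketch is too optimistic, and both are precisely the reasons the ADK theorem needs $d \gg 1$. First, the bath is not a $(d-1)$-regular tree: the local weak limit of $\GG(N,d/N)$ is a Poisson$(d)$ Galton--Watson tree, so the self-energy at a grandchild is a random quantity whose fluctuations around $d/\lambda$ contribute at the same order as the correction you retained. You flag ``Green's function concentration'' as the main obstacle, which is correct, but it is worth making explicit that the relevant variance is of order $d$ and must be beaten down by the averaging over $\alpha_x$ children --- this is exactly why the error $O(d^{-\epsilon})$ can be achieved only for $d$ growing. Second, the claim that ``the bath operator has spectral radius at most $2\sqrt{d-1}+o(1)$'' is false for the actual random graph: there are many other vertices of degree close to $\uu$, so the full bath has eigenvalues up to roughly $\sqrt{\uu/2}$. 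Even after the star-pruning of \cite{krivelevich2003largest}, the residual operator has norm $O(d + \uu^{7/16})$ (Proposition~\ref{prop:ks}), not $2\sqrt{d}$; and near the upper endpoint $d \approx \frac{\log N}{\log 4 - 1}$ one has $\uu \approx 4d$, so the gap $\sqrt{\alpha_x} - \|\text{bath}\|$ is only polynomially small in $\log N$ (this is exactly what the $(\log N)^{-\xi}$ margin in the hypothesis is compensating for). The exponential decay of the test vector should instead be argued from the ratio $\sqrt{d/\alpha_x}$ per level (as in Proposition~\ref{prop:expdecay}) or, in the ADK framework, read off directly from the explicit test vector on the pruned graph, rather than from a global spectral gap for the bath.
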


This is done by, given $\alpha_x$ and $\beta_x$, making an educated guess for the structure of the eigenvector. To use this approximate eigenvector it is crucial that the statistics of the local neighborhoods of high degree vertices are concentrated, in particular that the degrees of the vertices in the neighborhood is reasonably close to $d$, which is approximately their expected value. According to \eqref{eq:adk}, to be able to properly estimate eigenvalues using this strategy it is crucial that $d \gg 1$.

\subsection{Main results}

Finding similar behavior when $d$ is constant has been an open question asked by Guionnet \cite{guionnet2021bernoulli}. In this paper, we use new techniques to show that in fact, the same behavior holds in the constant $d$ regime, and continues until the average degree is subconstant. Namely, we prove the following.

\begin{thm}\label{thm:maineigenvalue}
Consider a $G\sim \GG(N,\frac dN)$ graph with $\log^{-1/9}N\leq d\leq \log^{1/40}N$. With high probability, for each of the $e^{\log ^{1/8}N}$ largest eigenvalues $\lambda$ of the adjacency matrix, there is some vertex $x$ such that 
\[\lambda=\sqrt{\alpha_x+\frac{\beta_x}{\alpha_x}+\frac{d^2+d}{\alpha_x}}+O((d^{3/2}+1)\uu^{-11/6}).\]
Moreover, for $k\leq e^{\log^{1/8}N}$, the vertex $x$ corresponding to the $k$th largest eigenvalue is the $k$th vertex in the lexicographic ordering $(\alpha_x, \beta_x)$.
\end{thm}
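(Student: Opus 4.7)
The plan is to analyze the top eigenvalue of the adjacency matrix $A$ restricted to a small ball around each high-degree vertex, and then transfer the resulting local information to a genuine eigenvalue of the full graph. The key novelty, as emphasized in the introduction, is to work directly with the \emph{true} eigenvector of the local neighborhood, which can be analyzed explicitly thanks to its tree structure, rather than with a hand-crafted approximation. To begin, I would define the candidate set $S$ of vertices $x$ whose degree $\alpha_x$ lies within a small window below $\uu$; standard first- and second-moment calculations on the degree sequence of $\GG(N,d/N)$ imply $|S|\ge e^{\log^{1/8}N}$ with high probability. Choosing a radius $r=\lceil C\log\uu\rceil$ for a suitable constant $C$, I would then establish that for every $x\in S$ the ball $B_r(x)$ is a tree, the balls $\{B_r(x)\}_{x\in S}$ are pairwise disjoint, and the degrees of all non-root vertices of $B_r(x)$ lie in a narrow range around $d$. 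Each of these is a routine sparsity estimate using $dr\ll\log N$ in the considered regime.

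Next I would analyze $\lambda_x$, the top eigenvalue of $A$ restricted to $B_r(x)$. Since $B_r(x)$ is a tree rooted at $x$, the eigenvector equation can be solved level by level: with $v_x$ the root entry and $v_{y_i}$ the entry at the $i$-th neighbor $y_i$ of $x$, the standard continued-fraction recursion gives a secular equation
\[
1 \;=\; \sum_{i:\,y_i\sim x} \frac{1}{\lambda_x^2 - s_i(\lambda_x)},
\]
where $s_i(\lambda_x)$ encodes the subtree hanging off $y_i$ and satisfies $s_i(\lambda_x)=b_i+O(\lambda_x^{-2})$, with $b_i$ the number of children of $y_i$. Using $\sum_i b_i=\beta_x$ by definition, together with Poisson-type concentration of $\sum_i b_i^2$ around its expectation $\alpha_x(d^2+d)$ over the $\alpha_x\sim\uu$ children counts, solving the secular equation to third order in $\lambda_x^{-2}$ yields
\[
\lambda_x^2 \;=\; \alpha_x+\frac{\beta_x}{\alpha_x}+\frac{d^2+d}{\alpha_x}+O\bigl((d^{3/2}+1)\uu^{-4/3}\bigr),
\]
which gives the claimed error after taking square roots.

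I would then transfer $\lambda_x$ to a genuine eigenvalue of $A$. The eigenvector built above decays geometrically away from $x$ at rate roughly $\sqrt{d/\uu}$, since $\sqrt{\uu}\gg 2\sqrt{d}$ and $2\sqrt{d}$ is the effective spectral radius of the ambient tree once the root is removed. Extending this eigenvector by zero to the full graph and applying a Courant-Fischer or Schur-complement argument then produces a true eigenvalue of $A$ within $O((d^{3/2}+1)\uu^{-11/6})$ of $\lambda_x$, the slightly worse exponent $-11/6$ absorbing the loss from vertices just outside $B_r(x)$ and from the mismatch between the local and global residuals. The lexicographic ordering follows because the map $(\alpha,\beta)\mapsto\sqrt{\alpha+\beta/\alpha+(d^2+d)/\alpha}$ separates distinct integer pairs by at least $\sim\uu^{-3/2}$, which exceeds the error bound throughout the considered range of $d$.

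The hardest step I foresee is the concentration used in the second paragraph: obtaining $\sum_i b_i^2$ and the subleading grandchildren contribution to the $\lambda_x^{-4}$ term close to its mean with sharp enough constants to reach the $\uu^{-11/6}$ precision. In the Alt-Ducatez-Knowles regime $d\gg(\log\log N)^4$ these local statistics are essentially deterministic on their natural scale, but at $d\asymp 1$ or $d\ll 1$ each $b_i$ has order-one fluctuation relative to its mean, and the joint law of $(b_i)_{y_i\sim x}$ conditioned on $\{\alpha_x\ge\uu-\tau\}$ must be treated vertex-by-vertex. This is precisely why a fixed approximate eigenvector (as in prior work) is inadequate and why one must analyze the true local top eigenvector, whose dependence on the random children counts is governed by the secular equation rather than a fixed ansatz.
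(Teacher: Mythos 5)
Your plan captures the core idea of the paper's argument: analyze the true top eigenvector of a tree-like local ball around a near-maximal-degree vertex via the continued-fraction secular equation, use concentration of the children/grandchildren statistics to get a closed-form approximation of $\lambda_x^2$, and transfer this to the full graph. The derivation of $\lambda_x^2 = \alpha_x + \beta_x/\alpha_x + (d^2+d)/\alpha_x + O((d^{3/2}+1)\uu^{-4/3})$ and the lexicographic-ordering observation match Lemmas \ref{lem:lambdaexpression} and \ref{lem:difference}.

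However, there is a genuine gap in the transfer step. Extending the localized vector by zero and invoking Courant--Fischer gives, for each $x$, \emph{some} eigenvalue of $A$ within $O(\|(A-\lambda_x)\ww\|)$ of $\lambda_x$, but this alone does not establish (i) that these are $e^{\log^{1/8}N}$ \emph{distinct} eigenvalues, (ii) that they are the \emph{top} $e^{\log^{1/8}N}$ eigenvalues of $A$ rather than being interlaced with or exceeded by eigenvalues coming from other parts of the graph, or (iii) the claimed bijective correspondence with vertices in lexicographic order. Ruling out competing contributions requires a separate bound on the rest of the spectrum, and this occupies a substantial part of the paper: the intermediate regime is handled by the two-sided estimate of Lemma \ref{lem:eigenvalueapproximation} (which shows $\lambda_x^2 \le \alpha_x + O(d)$, already too small by $\uu^{1/4}$); the rough regime $\cU\setminus\cV$ requires a pruned graph and a hand-crafted test vector (Lemma \ref{lem:roughconstruction}) because the local statistics there are not concentrated enough to run the fine-regime analysis; and the bulk requires the Krivelevich--Sudakov star decomposition (Proposition \ref{prop:ks}) to show $\|\cX\|\le(\tfrac{1}{\sqrt 2}+o(1))\sqrt\uu$. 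Your proposal does not address any of this, and without it the statement ``these are the top $e^{\log^{1/8}N}$ eigenvalues'' is unproven. A secondary, smaller issue: you take $r=\lceil C\log\uu\rceil$, whereas the paper works with a constant $r$; this is fine in principle but would require re-verifying the disjointness and tree-like structure of the enlarged balls, and it is unnecessary since $r=5$ already makes the truncation error $O((d^{5/2}+1)\uu^{-2})$ negligible compared to the secular-equation error $O((d^{3/2}+1)\uu^{-11/6})$.
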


As we will show, these high degree vertices are spaced throughout the graph, and have almost independent local statistics. Therefore, similar to \cite{alt2023poisson}, the distribution of the highest eigenvalues is described by a Poisson point process with density given by the probability of existence of an $(\alpha,\beta)$ pair, where $\alpha$ is close to maximal among all vertices. 

To this end, define the discrete intensity measure $\rho:\R\rightarrow \R$,
\[
\rho(\frac{s}{\uu}):= N\sum_{\ell=0}^{2\log^{1/8}N}\left(\frac{e^{-d}d^{\uu-\ell}}{(\uu-\ell)!}\frac{e^{-d(\uu-\ell)}(d(\uu-\ell))^{(s-\uu+\ell)(\uu-\ell)}}{ ((s-\uu+\ell)(\uu-\ell))!} \vone_{\langle s(u-\ell)\rangle=0}\vone_{(s-u+\ell)(u-\ell)\leq d(\uu-\ell)+\uu^{7/8}}\right).
\]
where $\vone_{\langle x\rangle =0}$ is the indicator that $x$ is a whole number. $\rho$ induces a Poisson point process $\Psi$. 
The meaning of $\rho$ is that it is the intensity measure of $\alpha+\frac{\beta}{\alpha}$ if $\alpha\sim Pois(d)$ and $\beta\sim Pois(d\alpha)$, restricted to $\alpha\in[\uu-2\log^{1/8}N,\uu],\beta\in[0,d\alpha+\uu^{7/8}]$. As we will see, Theorem \ref{thm:maineigenvalue} implies $\Psi$ approximates the density of $\lambda_x^2$ at the edge of the spectrum. 

Formally, we we will consider proximity in L\'evy-Prokhorov distance, which is a metrization of the weak topology. Namely we define, for two Borel measures $\nu_1,\nu_2$ on $\R$,
\[
D(\nu_1,\nu_2)=\inf\{\epsilon>0|\forall A\in \mathfrak B,\nu_1(A)\leq \nu_2(A_\epsilon)+\epsilon\textnormal{ and }\nu_2(A)\leq \nu_1(A_\epsilon)+\epsilon\}
\]
where $\mathfrak B$ is the set of Borel measurable sets in $\R$ and $A_\epsilon$ is the neighborhood of radius $\epsilon$ around $A$. 

Moreover, for $K>0$, define $\kappa(K)$ as
\begin{equation}\label{eq:kappadef}
\kappa(K)=\inf\{s\in\R,\rho([s,\infty))\leq K\}.
\end{equation}
 
\begin{thm}\label{thm:process}
Set $K=e^{\log^{1/8}N}$, and recall the definition of $\kappa(K)$ from \eqref{eq:kappadef}. Consider the density function
\[
\Phi:=\sum_{\lambda\in spec(A)}\delta_{\uu(\lambda^2-\frac{d^2+d}{\uu})}.
\]
Then for $d$ as defined in Theorem \ref{thm:maineigenvalue},
\[\lim_{N\rightarrow\infty}D(\Phi\vone_{[\kappa(K),\infty)},\Psi\vone_{[\kappa(K),\infty)})\rightarrow0.\]

\end{thm}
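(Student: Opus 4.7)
The plan is to deduce the Poisson convergence of $\Phi$ from a Chen--Stein Poisson approximation for the joint statistics $(\alpha_x,\beta_x)$ of near-maximal-degree vertices, then push this approximation forward through the spectral map given by Theorem \ref{thm:maineigenvalue}, absorbing all remaining error terms into the L\'evy--Prokhorov distance.

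The first step uses Theorem \ref{thm:maineigenvalue} to replace each of the top $K=e^{\log^{1/8}N}$ eigenvalues of $A$ by $\sqrt{\alpha_x+\beta_x/\alpha_x+(d^2+d)/\alpha_x}$ for the unique vertex $x$ assigned to it via the lexicographic correspondence. Under the rescaling $\lambda\mapsto \uu(\lambda^2-(d^2+d)/\uu)$, the pointwise $O((d^{3/2}+1)\uu^{-11/6})$ eigenvalue error becomes a shift in each atom of $\Phi$ of size $\uu\cdot\lambda\cdot O((d^{3/2}+1)\uu^{-11/6})=O((d^{3/2}+1)\uu^{-1/3})$, which is $o(1)$ since $d\leq\log^{1/40}N$ and $\uu=\Theta(\log N/\log\log N)$. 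This shift is absorbed into the $\varepsilon$-neighborhood in the L\'evy--Prokhorov distance. Under the same rescaling, the main contribution $\sqrt{\alpha_x+\beta_x/\alpha_x+(d^2+d)/\alpha_x}$ maps to the position coordinate encoded in the definition of $\rho$, up to vanishing sub-leading corrections stemming from $\alpha_x$ being within $2\log^{1/8}N$ of $\uu$. It therefore suffices to show that the deterministic pushforward of the counting measure $\sum_x \delta_{(\alpha_x,\beta_x)}\vone_{(\alpha_x,\beta_x)\in D}$, with $D=\{\alpha\in[\uu-2\log^{1/8}N,\uu],\ \beta\leq d\alpha+\uu^{7/8}\}$ the window appearing in $\rho$, is close to $\Psi$.

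The second step is the Chen--Stein Poisson approximation. For a single vertex $x$, $\alpha_x\sim\mathrm{Bin}(N-1,d/N)$ is close in total variation to $\mathrm{Pois}(d)$, and conditional on $\alpha_x=a$, $\beta_x$ is a sum of disjoint Bernoulli edge-indicators from the first neighborhood of $x$ to the remaining $N-1-a$ vertices, hence close to $\mathrm{Pois}(da)$; these match the two Poisson factors in $\rho$ exactly. For distinct vertices $x\neq y$ whose $2$-neighborhoods are disjoint, $(\alpha_x,\beta_x)$ and $(\alpha_y,\beta_y)$ depend on disjoint edge-indicators and are independent, so the Chen--Stein dependence term is controlled by the expected number of pairs of candidate high-degree vertices at graph distance $\leq 4$. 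Provided this expectation is $o(1)$, Chen--Stein yields total-variation convergence of the counting measure to a Poisson point process with intensity $N\,P(\alpha_x=\alpha,\beta_x=\beta)\vone_D(\alpha,\beta)$, whose pushforward under the spectral map is precisely $\rho$.

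The main technical obstacle I anticipate is the Chen--Stein second-moment estimate. Because $P(\alpha_x=\uu)$ is of order $1/N$ by definition of $\uu$, the expected number of vertices in $D$ is already of order $K=e^{\log^{1/8}N}$, so the naive union bound over all pairs at graph distance $\leq 4$ is too weak. One must use sharp Poisson tail bounds (of the kind that should already underlie Theorem \ref{thm:maineigenvalue}) together with the sparse-graph structure to show that pairs of vertices both in the narrow degree window and within a short neighborhood of each other contribute probability $o(K^{-2})$ per pair. A secondary subtlety is the discreteness of $\rho$: one must verify that the pushforward map $(\alpha,\beta)\mapsto$ (rescaled target coordinate) is essentially injective on atoms within $D$, so that the pushforward intensity matches $\rho$ atom-for-atom as written; collisions where two distinct $(\alpha,\beta)$ give the same target would otherwise need to be accounted for separately in the L\'evy--Prokhorov comparison.
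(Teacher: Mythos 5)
Your proposal correctly identifies the two-step structure: use Theorem \ref{thm:maineigenvalue} to pass from the spectral point process to the combinatorial point process of $(\alpha_x,\beta_x)$, then prove Poisson convergence of the latter and push it forward. The rescaling analysis in your first paragraph matches what the paper does. However, your choice of method for the Poisson approximation step is genuinely different from the paper's, and as you yourself flag, it has an unresolved gap.

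The paper does not use Chen--Stein. Instead it computes factorial moment measures (correlation functions) directly: Lemma \ref{lem:totalvariation} shows that the joint probability that $k$ specified vertices have specified $(\alpha,\beta)$ values \emph{and} have disjoint $1$-balls factorizes into independent Poisson probabilities up to a multiplicative error $1+N^{-1+o_N(1)}$, where $k$ can be as large as $e^{\log^{2/3}N}$. This is then fed into Lemma \ref{lem:smalltoall} (an inclusion--exclusion/Bonferroni identity from Alt--Ducatez--Knowles), whose alternating sum is truncated at $\sum\ell_i \le e^{\log^{2/3}N}/2-1$; the truncation error is controlled by a crude bound showing the remainder is doubly exponentially small. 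The key advantage of this route is that the factorial moment comparison is a \emph{relative} error $1+N^{-1+o(1)}$ uniform over $k$ in a wide range; one never needs to argue that a global second-moment quantity is $o(1)$.

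Your Chen--Stein plan requires the $b_2$-type term (the sum of joint probabilities $p_{ij}$ over dependent pairs $i\sim j$) to be small, and you correctly observe that this is delicate: the expected number of points in the window $D$ is already superpolylogarithmic, and the dependence between $(\alpha_x,\beta_x)$ and $(\alpha_y,\beta_y)$ is not confined to pairs at bounded graph distance, since $\alpha_x$ depends on the indicator of every potential edge $\{x,y\}$ and $\beta_x$ on edges out of $\Gamma_x$. You propose that sharp Poisson tails should close this, but you do not carry it out, and it is not clear the naive $b_1,b_2,b_3$ decomposition gives a workable bound here without essentially redoing the factorial-moment computation. In short: the reduction to the two-dimensional $(\alpha,\beta)$ process and the rescaling argument are correct and match the paper, but the Poisson-approximation step as you outline it has a real hole, and the paper's inclusion--exclusion route is not a cosmetic variant of Chen--Stein—it circumvents precisely the obstacle you identify. (Your secondary worry about injectivity of the pushforward is resolved in the paper simply by proving total-variation convergence of the two-dimensional process $(\alpha_v,\beta_v)$ and noting that $\rho$ is defined as a sum over $\ell$ that already accounts for collisions.)
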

We multiplied both point processes by $\uu$ to emphasize that the approximation of $\lambda^2$ in Theorem \ref{thm:maineigenvalue} is $o(1/\uu)$. 
As shown in Theorem \ref{thm:maineigenvalue}, the largest eigenvalues are approximately the square root of a rational function on local statistics, therefore it is simpler to consider the point process of $\lambda^2$, as it leads to a nicer expression. Much of our analysis will be on the squared eigenvalue $\lambda^2$.

Theorem \ref{thm:process} implies the fluctuations of the top eigenvalue. Similar to if there is increasing degree as in \cite{alt2023poisson}, fluctuations are determined at two scales. If the expected number of vertices of degree $\uu$ (that is $\mu_\uu$) is constant, then the maximum degree has nonzero variance and will dominate the fluctuation of the maximum eigenvalue. Therefore, in this case the top eigenvalue will fluctuate at a large scale, as its first order fluctuation is a shifted Bernoulli, based on whether there is a vertex of degree $\uu$ or not. 
If the expected number of vertices of degree $\uu$ is subconstant or superconstant, then the largest degree of the graph becomes deterministic, and the fluctuations are determined by $\beta$. As $\beta$ is then distributed as a Poisson, the fluctuations become much smaller and become those of the maximum of Poissons. 

The fact that these eigenvalues are almost completely determined by local neighborhoods is intrinsically linked to the fact that eigenvectors are Anderson localized (see \cite{anderson1958absence}). That is, they decay exponentially around a fixed vertex. We show the following, which implies eigenvectors are close to as localized as possible. 

\begin{thm}\label{thm:maineigenvector}
Define $B_r(x)$ to be the ball of radius $r$ around $x$ in $G$. For $c\leq 1/15$, consider $\log^{-c}N\leq d\leq \log^{1/40}N$, and $K_2=\log^{o_N(1)} N$. Moreover, we fix $r'\geq 1$, and if $c>0$, we add the requirement that $r'\leq 1/(3c)$. With high probability, the eigenvectors $\vv$ corresponding to the $K_2$ largest eigenvalues are \emph{exponentially localized} in the sense that for each $\vv$ there is some vertex $x$ such that
\[\vv|_{x}=\frac1{\sqrt 2}+O((1+d^{-1/2})\uu^{-1/3})\]
and for $1\leq i\leq r'$,
\[
\|\vv|_{S_i(x)}\| =\left(\frac{d}{\alpha}\right)^{(i-1)/2}\frac1{\sqrt 2}(1+O((1+d^{-1/2}+d^{-i+1})\uu^{-1/3}))
\]
and
\[
\|\vv|_{[N]\backslash B_i(x)}\|=\left(\frac{d}{\alpha}\right)^{i/2}\frac{1}{\sqrt 2}(1+O((1+d^{-1/2}+d^{-r'+1})\uu^{-1/3})).
\]
\end{thm}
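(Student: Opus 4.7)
The plan is to follow the paper's strategy of analyzing the \emph{true} eigenvector on a local neighborhood rather than guessing an approximate one. Given $\lambda$ and the associated high-degree vertex $x$ supplied by Theorem~\ref{thm:maineigenvalue}, set $\alpha:=\alpha_x$, fix a ball $B_R(x)$ slightly larger than $B_{r'}(x)$ (so that the ball is tree-like and its local degree statistics concentrate), and let $\psi_i$ denote the average of $\vv$ on the sphere $S_i(x)$. Applied at $x$ the eigenvalue equation gives $\lambda\vv(x)=\alpha\psi_1$; applied to a generic $y\in S_i(x)$ with $i\geq 1$, which on the tree-like ball has one parent in $S_{i-1}(x)$ and approximately $d-1$ children in $S_{i+1}(x)$, averaging yields the recursion $\lambda\psi_i\approx\psi_{i-1}+(d-1)\psi_{i+1}$. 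The decaying solution $\psi_i=r^{i-1}\psi_1$ with $(d-1)r^2-\lambda r+1=0$ has small root $r\approx 1/\lambda\approx 1/\sqrt{\alpha}$; combined with $|S_i(x)|\approx \alpha(d-1)^{i-1}$ this produces $\|\vv|_{S_i(x)}\|^2\approx (d/\alpha)^{i-1}\vv(x)^2$, and the geometric series $\vv(x)^2\bigl(1+\sum_{i\geq 1}(d/\alpha)^{i-1}\bigr)=1$ forces $\vv(x)\approx 1/\sqrt{2}$, which recovers all three claimed leading-order values.

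To make the sketch rigorous I would first truncate to $B_R(x)$. Decomposing $A$ into its restrictions to $B_R(x)$ and its complement plus the boundary crossings, the spectral gap between $\lambda\approx\sqrt{\alpha}\asymp\sqrt{\uu}$ and the top eigenvalue of the adjacency matrix with balls around the high-degree vertices removed (much smaller than $\sqrt{\uu}$ by the same sparse-matrix bounds already used in proving Theorem~\ref{thm:maineigenvalue}), together with a resolvent/perturbation argument, gives $\|\vv|_{[N]\setminus B_R(x)}\|\lesssim(d/\alpha)^{R/2}$, so the tail is negligible at the scale of interest. Inside $B_R(x)$ I would bootstrap the sphere-averaging: write $\vv(y)=\psi_{\mathrm{dist}(y,x)}+\epsilon_y$, substitute into the eigenvalue equation, and control $\max_{y\in S_i(x)}|\epsilon_y|$ inductively in $i$ using concentration of $|S_i(x)|$ around $\alpha(d-1)^{i-1}$, of the sphere-to-sphere degrees around $d-1$, and the near-absence of cycles in $B_R(x)$ for small $R$. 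Combined with the value of $\lambda$ from Theorem~\ref{thm:maineigenvalue}, this closes the recursion and pins down $\psi_0,\psi_1,\dots,\psi_{r'}$ as well as the tail $\|\vv|_{[N]\setminus B_i(x)}\|$ to the stated accuracy.

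The main obstacle is simultaneously controlling (i) the non-uniformity $\epsilon_y$ on each sphere, (ii) the Poisson-type fluctuation of neighborhood degrees around $d$, and (iii) the degeneration of the recursion coefficient $d-1$ when $d$ is subconstant. Degree fluctuations of order $\sqrt{d}$ near the root produce relative errors of order $d^{-1/2}\uu^{-1/3}$, matching the $d^{-1/2}$ term in the stated error. Further from the root the factor $(d-1)^{i-1}$ in the sphere size becomes fragile when $d\to 0$, and the recursion picks up a relative error of order $d^{-i+1}$ per step, explaining the $d^{-i+1}\uu^{-1/3}$ term; the condition $r'\leq 1/(3c)$ for $d=\log^{-c}N$ is exactly what is needed to keep $d^{-r'+1}\uu^{-1/3}=o(1)$, so these errors remain small out to distance $r'$. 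Assembling the tail bound, the sphere recursion, and these error estimates yields the three displayed equalities of Theorem~\ref{thm:maineigenvector}.
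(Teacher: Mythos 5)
Your sphere-averaging recursion for $\psi_i$ and the geometric-series normalization is essentially the paper's Lemma 4.6 (the vertex-to-vertex decay $\ww_+|_u=(1+O(\uu^{-2/3}))\lambda\ww_+|_v$) and Proposition 4.8 reinterpreted, so the inside-the-ball analysis is sound in spirit. The genuine gap is in your tail control. You argue that the spectral gap between $\lambda\approx\sqrt{\uu}$ and the top eigenvalue of $A$ with balls around high-degree vertices deleted (which is $O(\uu^{7/16})$ by the Krivelevich--Sudakov decomposition) plus a resolvent argument gives $\|\vv|_{[N]\setminus B_R(x)}\|\lesssim(d/\alpha)^{R/2}$. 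That gap only controls the mass $\vv$ places on the \emph{bulk}; it says nothing about the mass $\vv$ places on balls around \emph{other} high-degree vertices $x'\in\WW$, which is part of $[N]\setminus B_R(x)$. If two vertices $x,x'\in\WW$ happen to have nearly equal $(\alpha,\beta)$ statistics, the corresponding local eigenvalues $\lambda_x,\lambda_{x'}$ can be almost degenerate, and then the true eigenvector $\vv$ of $A$ can be an essentially arbitrary rotation inside the near-degenerate subspace spanned by $\ww_+(x)$ and $\ww_+(x')$, destroying localization around any single vertex.

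What is needed, and what the paper supplies, is a lower bound on the spacings between the top $K_2+1$ eigenvalues of $A$. This is Lemma~\ref{lem:lambdaubound}, which depends on the anticoncentration analysis of Section~\ref{sec:anticoncentration} (the Poisson comparison Lemma~\ref{lem:totalvariation}, the order-statistics bound Lemma~\ref{lem:largest-k-sphere}, and the spacing bound Lemma~\ref{lem:eigenvaluespacing} for $\beta_x$). With that gap, Proposition~\ref{prop:vecstructure} applies the degenerate-perturbation bound $\|\vv-\ww_+(x)\|\leq\|E_\WW\|\cdot(\min_{j\neq i}|\lambda-\lambda_j|)^{-1}=O(\uu^{-r/2+2})$, where $\|E_\WW\|=O((d^{r/2}+1)\uu^{-(r-1)/2})$ is the truncation error from Lemma~\ref{lem:fineerror}. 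The theorem then follows from this together with Proposition~\ref{prop:expdecay} (which already gives the claimed decay for $\ww_+(x)$) by a triangle inequality. Your proposal should therefore be restructured around establishing the eigenvalue spacing first; absent that, the claimed tail bound $\|\vv|_{[N]\setminus B_R(x)}\|\lesssim(d/\alpha)^{R/2}$ is unjustified.
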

Our desire to keep our result as general as possible has resulted in this long expression for our error. There are multiple error terms and for $d\asymp 1$, the ratio of $d,i,$ and $\uu$ governs which type of error will dominate. 

We consider a significantly smaller number of eigenvalues in this theorem than in Theorem \ref{thm:maineigenvalue} as in order to show eigenvector localization, we must quantify the gaps of the eigenvalues induced by Theorem \ref{thm:maineigenvalue} and Theorem \ref{thm:process}, whereas the previous theorems do not require such gaps. We do this only for the region given above, as our focus is the spectral edge, but most likely further analysis could extend this to further eigenvalues. 

\subsection{Idea of Proof}

We follow the general framework of the proof of \cite{alt2023poisson}. Our overall goal is to show that the largest eigenvalues are determined by the local geometry of the highest degree vertices. Once we show this, the Poisson point process and eigenvector structure follow from the randomness of the graph. We show this determination by classifying vertices by degree, and associating an eigenvector with each high degree vertex. The main issue with generalizing the classification in \cite{alt2023poisson} to constant degree is that in this regime, the fluctuations of statistics of the balls surrounding individual entries become too large to guess the eigenvector based on the inputs $\alpha_x,\beta_x$ only.

Such a formula already is quite technical, so we avoid directly coming up with a more involved equation that gives a more accurate guess. Instead of making a guess, \emph{we use the true top eigenvector of the ball of radius $r$ around the high degree vertex}. The advantage of such a method is that the only error in the eigenvector equation comes from truncating at level $r$. Therefore, if we can show the eigenvector is localized away from level $r$, then the error from this truncation can be drastically smaller. The disadvantage is that, considering we are not making a fixed guess, we initially have no information about the eigenvector or eigenvalue, even whether it is localized or not.

 The neighborhoods of the highest degree vertices are typically tree-like, and the central vertex has much higher degree than all others. Therefore, by analyzing the eigenvector equation at each vertex, we create a system of linear equations for the eigenvector and eigenvalue. Because the neighborhood is a tree, this simplifies into a recursive equation for the eigenvalue. Moreover, because the central vertex has much higher degree, we can show this equation can be truncated up to small error with a rational function of local statistics, giving near exact dependence. In summary, as opposed to guessing a specific approximate eigenvector, we show that with high probability whatever eigenvector \emph{does} occur can give the approximation in Theorem \ref{thm:maineigenvalue}. 

 When we fully write out the equation for the eigenvalue, the first two terms are $\alpha_x$ and $\frac{\beta_x}{\alpha_x}$, which are used in \cite{alt2023poisson} to completely determine the eigenvector. We show, explicitly giving the next few terms, that the equation for the eigenvalue beyond $\alpha,\beta$ concentrates. Specifically, although the fluctuation of statistics increase as degree decreases, the dependence on the fluctuating statistics decreases at a quicker rate. In fact, in our regime, the eigenvalue decays quickly enough that it implies the lexicographic ordering of Theorem \ref{thm:maineigenvalue}.

Given the dependence on statistics of the eigenvalue and eigenvectors of local neighborhoods, we translate this into statements about the overall graph. By standard perturbation theoretic arguments, this reduces to showing these local statistics are well separated for vertices corresponding to the edge of the spectrum. The requisite statistics are binomially distributed, which are approximately Poisson. Therefore it becomes useful to give precise tail bounds on the Poisson distribution. Tao recently gave such a tight, two-sided bound on his blog \cite{tao2022improved}, which is sufficient to show that $(\alpha_x,\beta_x)$ that are close to lexicographically maximizing are well separated.

Once we have proper control over these high degree vertices, we need to control the contribution of the rest of the spectrum. To do this, for vertices of still somewhat large degree, we proceed as per \cite{alt2023poisson} and take a localized test vector supported on a pruned graph, where edges are pruned in such a way that high degree vertices are separated and neighborhoods are tree-like. For all other vertices, we can use the decomposition of Krivelevich and Sudakov, who show that after deleting a subgraph of disjoint stars surrounding high degree vertices, the spectral radius is far from $\sqrt{\uu}$ \cite{krivelevich2003largest}. This makes it much simpler to bound the spectral radius of the operator away from the largest eigenvectors of the highest degree vertices, for which we also use that the second .

\subsection{Related Work}
There are many results concerning the bulk of the spectrum of random matrices. Focusing specifically on sparse Erd\H os-R\'enyi graphs, Khorunzhy, Shcherbina, and Vengerovsky, then Zakharevich, analyzed the moments of the limiting distribution of Wigner matrices of general models that include constant degree Erd\H os-R\'enyi graphs in order to study the limiting measure of the spectral distribution \cite{khorunzhy2004eigenvalue, zakharevich2006generalization}. Benaych-Georges, Guionnet, and Male give a central limit theorem for linear statistics of a model that includes constant degree Erd\H os-Renyi graphs \cite{benaych2014central}. Along with these bulk results, Bhaswar Bhattacharya and Ganguly, then Bhaswar Bhattacharya, Sohom Bhattacharya, and Ganguly, give a large deviation principle for the edge eigenvalues of very sparse Erd\H os-R\'enyi graphs \cite{bhattacharya2020upper, bhattacharya2021spectral}.

These random matrices have been studied as a model for quantum physics, specifically Hamiltonians of disordered systems. We see similar eigenvector localization in the edge of the spectrum in the Anderson model (see \cite{anderson1958absence}), where vertices on an integer lattice are given random potential, and we study the spectrum of the resulting Schr\"odinger operator. Eigenvectors near the edge of the spectrum are known to be localized for various models (e.g. \cite{goldsheid1977random, frohlich1983absence, aizenman1993localization, ding2020localization}) whereas there has been less progress on the structure of eigenvectors in the bulk. 

 L\'evy matrices, a model of Wigner matrices where entries are sampled from distributions with heavy tails, have also proved to be a useful model for studying eigenvector localization. Focusing specifically on results concerning the edge of the spectrum, there is a transition from delocalized eigenvectors in the bulk to localized eigenvectors at the edge \cite{bordenave2013localization,bordenave2017delocalization}. Moreover, similar to the sparse Erd\H os-R\'enyi model, eigenvalues near the edge of the spectrum in sparse L\'evy models are known to converge to a Poisson point process \cite{soshnikov2004poisson, auffinger2009poisson}.

\subsection{Negative eigenvalues}
The discussion above concerns the largest eigenvalues of the adjacency matrix, however, by the exact same analysis we can consider the most negative eigenvalues. Under the high probability assumption that the neighborhood of every high degree vertex is a tree, by the bipartite nature of a tree, every positive eigenvalue of a neighborhood of a tree has a corresponding negative eigenvalue that is of the same magnitude and has the same localization properties. Therefore, Theorems \ref{thm:maineigenvalue}, \ref{thm:process}, and \ref{thm:maineigenvector} all apply to the most negative eigenvalues as well. 

\subsection{Extension of results}
We believe that by increasing the analysis from our given set of local statistics to higher moments, our methods can be used to give even more accurate formulae for the largest eigenvalues based on the degree sequence of the highest degree vertices. Such an argument could show separation of the largest $\log^aN$ eigenvalues for any fixed $a\geq 0$, giving a more specific (and more complicated) Poisson point process and showing eigenvector localization for all these $\log^aN$ eigenvectors. However, for simplicity of the argument, in this work we only consider $K=\log^{o_N(1)}N$.

Using this same argument, we may be able to improve the necessary lower bound on $d$. Some estimates and concentration results required us to lower bound $d$ by $\log^{-c} N$ for some $c < 1$, and error bounds simplify given our concrete assumption on $d$, but there are also important structural consideration for smaller $d$. If $d=\log^{-c}N$ for $c>0$, then the connected components surrounding high degree vertices can be of small radius, which means that neighborhoods of high degree vertices could be identical, leading to the eigenvalues of those neighborhoods being identical, and thus the eigenvectors would no longer be localized around one high-degree vertex. This implies that we cannot remove the dependence of $d$ on $r$ in Theorem \ref{thm:maineigenvector}. 

On the other hand, including more terms in our expression for $\lambda_x$ could improve the lower bound on $d$ for Theorem \ref{thm:maineigenvalue} and Theorem \ref{thm:process}. However, in our opinion we will need new methods to achieve the threshold of $d=e^{-(\log\log N)^2}$ appearing in \cite{krivelevich2003largest} and \cite{bhattacharya2021spectral}. 
Such a threshold is natural as for $d\leq e^{-(\log\log N)^2}$, all connected components are of size $(1+o(1))\uu$, making localization and independence trivial.

Similarly, by using slightly tighter bounds on the probabilities of some tail events, we expect we can improve the upper bound on $d$ in Theorem \ref{thm:maineigenvalue}. However, there is a natural barrier at $\log^{1/2}N$, both in the exponential rate of decay of eigenvectors, and the application of the method in \cite{krivelevich2003largest}.  We are not motivated to optimize our technique for the upper bound considering results are already known for larger $d$ from \cite{alt2023poisson}.

\subsection{Overview of the paper}
In Section \ref{sec:prelim}, we overview the notation and give some preliminary theorems we will use throughout. In Section \ref{sec:regimes}, we define the different vertex degree regimes, where vertices are classified by degree. Our goal will be to be to give an explicit unitary transform $U$ such that $UAU^{*}$ has a block decomposition that is close to diagonal. Each diagonal block corresponding to high degree vertices will correspond to a space spanned by vectors localized around these vertices. 

This is similar to the decomposition from\cite{alt2023poisson}, and it also bears resemblance to the more directly combinatorial decompositions of other sparse matrix results \cite{krivelevich2003largest, bhattacharya2021spectral}. The majority of the rest of the proof is dedicated to analyzing each of these regimes. In Section \ref{sec:fine}, we analyze the eigenvector and eigenvalue of the largest eigenvalue of the ball of radius $r$ surrounding the highest degree vertices, and we show that it is localized and well approximated by a formula involving only $\alpha, \beta,$ and $d$. In Section \ref{sec:rough}, we use a test vector to show that vectors corresponding to vertices of high, but not too high degree, do not have large eigenvalues. In Section \ref{sec:finaldecomp}, we analyze the bulk using the decomposition from \cite{krivelevich2003largest}, and we show the given block decomposition is such that the highest degree vertices dominate. This gives Theorem \ref{thm:maineigenvalue}.  In Section \ref{sec:anticoncentration}, we use this formula to prove the Poisson process, and in Section \ref{sec:eigenvector} we show this implies Theorem \ref{thm:maineigenvector}.

\subsection*{Acknowledgements}
We thank Shirshendu Ganguly for introducing us to this question and for many helpful discussions.

\section{Preliminaries}\label{sec:prelim}

In this section we introduce the notation we will use, define the exact parameters we will use, as well as relevant results about Erd\H{o}s-R\'{e}nyi graphs for the regime we are interested in. We then introduce distributional results we will use, in particular about the Binomial and Poisson distributions, and comparisons between them. Finally we state some spectral properties of graphs that we will use repeatedly.

\subsection{Notation} 
We consider a graph $G=(V,E)$ sampled from the Erd\H{o}s-R\'enyi distribution $\GG(N,\frac dN)$ with adjacency matrix $A_G$. When $G$ is clear from the context, we write this as $A$. We consider our vertex set to be $[N]$. Each of the possible $\binom N2$ edges is included independently with probability $\frac dN$. In this paper, we work in the following regime.
\begin{dfn}[Choice of parameters]\label{dfn:rdfn}
\ecomm{}
We fix $c>0$ and we take the average degree $d(N)$ as any function such that for Theorem \ref{thm:maineigenvalue} and  Theorem \ref{thm:process}, $\log^{-1/9}N\leq  d\leq \log^{1/40} N$, and for Theorem \ref{thm:maineigenvector},  $\log^{-c}N\leq  d\leq \log^{1/40} N$ for $c\leq 1/15$.

Our analysis will be based on considering balls of radius $r$ around the highest degree vertices. Most results are true for sufficiently large $r$, but in fact, it is enough to take $r = 5$ in order to prove Theorems \ref{thm:maineigenvalue} and \ref{thm:process}. For Theorem \ref{thm:maineigenvector}, we need a slightly larger radius. So for the rest of this paper it is sufficient to take $r:=\max \left \{ 5, 2r' \right \}$,
where $r'$ is the parameter from Theorem \ref{thm:maineigenvector}.
\end{dfn}

For $i\geq 0$, we denote by $B_i(x)$ the ball of radius $i$ around $x$, rooted at $x$. Moreover, we define $S_i(x)$ to be the set of vertices $y$ such that the shortest path from $x$ to $y$ is of length $i$. In other words $S_i(x)$ are all vertices that are not in $B_{i-1}(x),$ and that are connected to $x$ by a path of length $i$. We also call $S_i(x)$ the sphere of radius $i$ around $x$.

Given a root vertex $x$, we define a partial ordering on vertices by writing $u \leq v$ if there is a shortest path from $x$ to $v$ that goes through $u$. We also write for $y\in [N]$,
\begin{equation}\label{eq:Ndef}
N_y:=\left|\{u\in [N]:u\geq y, u\sim y\}\right|
\end{equation}
as the number of children of $y$ in the rooted graph.
Similarly, for a rooted or unrooted graph, for a vertex $y\in [N]$ we define $\Gamma_y=\{z\in[N]:z\sim y\}$ to be the neighborhood of $y$.

The following parameters will be used to approximate the largest eigenvalues. 
\begin{dfn}\label{dfn:betadef}
    Our parameters are defined as follows.
    \begin{enumerate}
\item $\alpha_x:=|\Gamma_x|$, the degree of the vertex $x$ 
\item $\beta_x:=\sum_{y\sim x}N_y$, the number of vertices in $S_2(x)$,
\item $\beta^{(1,1)}_x=\sum_{y_2\in S_2(x)}N_{y_2}$, the number of vertices in $S_3(x)$, and
\item $\beta^{(2)}_x$=$\sum_{y\sim x}N_y^2$.
    \end{enumerate}
\end{dfn}
As we will see, the eigenvalue is mostly determined by these four statistics. In our regime, the last two statistics (as well as all others) are well concentrated enough that we can write an accurate enough formula for the eigenvalue based on only $\alpha,\beta$.  

The combinatorial aspects of an Erd\H os-R\'enyi graph are governed by binomial distributions. Therefore, by $\Bin(k;N,p)$ we denote the probability that a binomial random variable with $N$ trials and success probability $p$ is equal to $k$.

Recall the definition of $\uu$ from \eqref{eq:udef}. As $d$ is small, the maximum degree is almost deterministic. 

\begin{lemma}[\cite{bollobas01randomgraphs}, Theorem 3.7] \label{lem:max_degree}
Define $\mu_k=N\Bin(k;N-1,\frac dN)$ to be the expected number of vertices of degree $k$ in the graph. Define $\uu$ to be the positive integer value of $k$ that minimizes $\max\{\mu_k,\mu_k^{-1} \}$. Then if $d=o(\log N)$, with high probability the maximum degree is in $\{\uu-1,\uu\}$. 
\end{lemma}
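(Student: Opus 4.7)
The plan is a standard first/second moment argument on the counts $Y_k := |\{v \in [N] : \deg(v) \geq k\}|$. The basic input is the ratio
\[
\frac{\mu_{k+1}}{\mu_k} \;=\; \frac{(N-1-k)\,d/N}{(k+1)(1-d/N)} \;=\; (1+o(1))\,\frac{d}{k+1},
\]
which in our regime is $o(1)$ for all $k\geq \uu$, since $d=o(\log N)$ together with the definition of $\uu$ forces $\uu\gg d$ (indeed $\uu=\Theta(\log N/\log\log N)$). So $\mu_k$ decays geometrically past $k=\uu$.

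For the upper bound on the maximum degree, I would apply Markov's inequality to $Y_{\uu+1}$: the geometric decay above yields $\mathbb{E}[Y_{\uu+1}] = (1+o(1))\mu_{\uu+1}$, and the minimality of $\uu$ forces $\mu_{\uu+1}=o(1)$. For the lower bound, I would apply Chebyshev to $Y_{\uu-1}$. The mean tends to infinity: from $\mu_{\uu-1}/\mu_\uu \asymp \uu/d \to \infty$ together with the relation $\max(\mu_\uu,\mu_\uu^{-1})\leq\max(\mu_{\uu-1},\mu_{\uu-1}^{-1})$ coming from the minimality of $\uu$, at least one of $\mu_\uu$ or $\mu_{\uu-1}$ must blow up. For the variance, the standard observation that two distinct vertices $u,v$ share only the edge $uv$, so that conditional on $\mathbf{1}_{\{uv\in E\}}$ the pair $(\deg u,\deg v)$ factors into independent binomials, gives
\[
\mathbb{E}[Y_{\uu-1}^2] \;\leq\; \mathbb{E}[Y_{\uu-1}] + (1+o(1))\,\mathbb{E}[Y_{\uu-1}]^2 + \frac{d}{N}\,\mathbb{E}[Y_{\uu-2}]^2,
\]
in which the last term is negligible since $\mathbb{E}[Y_{\uu-2}]\lesssim(\uu/d)\,\mathbb{E}[Y_{\uu-1}]$ while $d\uu^2/N=o(1)$.

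The main technical nuisance is that the definition of $\uu$ only controls $\max(\mu_\uu,\mu_\uu^{-1})$, so $\mu_\uu$ itself could in principle be anywhere from subconstant to polynomially large. A short case analysis is therefore needed to deduce simultaneously $\mu_{\uu+1}=o(1)$ and $\mu_{\uu-1}\to\infty$ purely from the minimality of $\uu$ and the ratio formula above. Once these two facts are in hand, Markov delivers the upper bound and Chebyshev the lower bound, and together they give that the maximum degree lies in $\{\uu-1,\uu\}$ with high probability.
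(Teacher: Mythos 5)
The paper does not prove this lemma itself; it is cited directly from Bollob\'as (Theorem 3.7), whose proof is precisely the first/second moment argument on $Y_k$ that you outline. Your sketch is correct: the case analysis on whether $\mu_\uu \gtrless 1$ together with the ratio $\mu_{k+1}/\mu_k = (1+o(1))\,d/(k+1)$ and the minimality of $\uu$ does force $\mu_{\uu+1}=o(1)$ and $\mu_{\uu-1}\to\infty$, the geometric decay collapses $\E[Y_k]$ to $(1+o(1))\mu_k$, and the conditional-independence-given-$\vone_{uv\in E}$ variance bound (with the negligible $\tfrac{d}{N}\E[Y_{\uu-2}]^2$ cross term) closes the Chebyshev step, so this matches the standard proof.
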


In order to calculate $\uu$, note that by the Stirling approximation, having $\mu_k\approx 1$ implies
\[
(1+o_N(1))\log N-\uu\log \uu + \uu-d+\uu\log d-\frac12\log(2\pi \uu)=0.
\]

Therefore, in our regime of $d$,
\begin{equation}\label{eq:uapprox}
\uu =(1+o_N(1))\frac{\log N}{\log\log N-\log d}.
\end{equation}
and $\uu= \Theta \left (\frac{\log N}{\log\log N} \right )$.

Throughout, $\vone_X, \vone(X)$ denote the indicator on the event $X$ occurring. 

For a vector $v \in \mathbb{R}^2$ we denote by $\| v \|$ its Euclidean norm. For a matrix $A$ in $\mathbb{R}^{m \times n}$, we denote by $\| A \|$ the operator norm, i.e. $\| A \| = \sup_{v \in \mathbb{R}^n} \frac{ \| A v \|_2}{\| v\|_2}.$ 

\subsection{Distributional notation and comparisons}

 We will estimate our binomial distributions with  Poisson distributions, as the large $N$ limit of a binomial is a Poisson  when $p$ is small.
 The following approximations are standard, with the proofs given in the appendix in Section \ref{sec:estimates}.

\begin{lemma}\label{lem:binomtopois}
If $X\sim Binom(n,p)$ and $Y\sim Pois(np)$, and if $k,np\leq \sqrt n$, then
\[
\pr(X=k)= \left (1+O \left (\frac{k^2+(np)^2+1}{n} \right ) \right )\pr(Y=k).
\]
\end{lemma}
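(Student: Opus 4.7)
The plan is to compute the ratio $\pr(X=k)/\pr(Y=k)$ directly and estimate its logarithm. Writing
\[
\frac{\pr(X=k)}{\pr(Y=k)} = \binom{n}{k} p^k (1-p)^{n-k} \cdot \frac{k!}{(np)^k e^{-np}} = \prod_{j=0}^{k-1}\left(1-\tfrac{j}{n}\right) \cdot e^{np}(1-p)^{n-k},
\]
it suffices to show the logarithm of the right-hand side is $O((k^2+(np)^2+1)/n)$.

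Taking logs, the first factor contributes $\sum_{j=0}^{k-1} \log(1-j/n)$. Since $j/n \leq k/n \leq 1/\sqrt n$, the expansion $\log(1-x) = -x + O(x^2)$ gives
\[
\sum_{j=0}^{k-1} \log(1-j/n) = -\frac{k(k-1)}{2n} + O(k^3/n^2) = O(k^2/n),
\]
where the last step uses $k \leq \sqrt n$ to absorb $k^3/n^2$ into $k^2/n$. For the second factor, $\log(e^{np}(1-p)^{n-k}) = np + (n-k)\log(1-p)$; expanding $\log(1-p) = -p - p^2/2 + O(p^3)$ and cancelling $np$ with $-np$ gives
\[
kp - \frac{(n-k)p^2}{2} + O((n-k)p^3) = kp - \frac{(np)^2}{2n} + O\!\left(\tfrac{(np)^2}{n\sqrt n} + kp^2\right).
\]

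The only nontrivial step is handling the cross term $kp = k(np)/n$, which is not individually bounded by $(k^2+(np)^2)/n$ on its face; however, AM-GM gives $2k\cdot np \leq k^2 + (np)^2$, so $kp \leq (k^2+(np)^2)/(2n)$, exactly the desired order. Combining all contributions, the total log-ratio is $O((k^2+(np)^2)/n)$, a quantity bounded by an absolute constant under the hypothesis $k, np \leq \sqrt n$. Exponentiating and using $e^x = 1+O(x)$ for $x = O(1)$ yields the conclusion; the extra $+1$ in the stated error merely absorbs a harmless $O(1/n)$ slack (and handles trivially the boundary $k=np=0$ case).
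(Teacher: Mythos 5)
Your proof is correct. You take a slightly more elementary route than the paper: rather than applying Stirling's approximation to $n!/(n-k)!$ (which is what the paper does, introducing the $\sqrt{1/(1-k/n)}$ factor and a raw $1+O(1/n)$ error before further simplification), you rewrite $\binom{n}{k}k!/n^k$ as the product $\prod_{j=0}^{k-1}(1-j/n)$ and Taylor-expand its logarithm directly. Both approaches reduce to the same underlying computation — expanding the log of the ratio and verifying each term is $O((k^2+(np)^2+1)/n)$ — but yours avoids Stirling altogether, which is arguably cleaner. Your handling of the cross term $kp$ via AM--GM, namely $2k(np)\le k^2+(np)^2$, is exactly the step that makes the stated error bound tight and is implicit but not spelled out in the paper; making it explicit is a genuine improvement in clarity. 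One minor remark: the $+1$ in the error term is genuinely needed in the paper's proof to absorb the Stirling error $O(1/n)$, whereas in your proof it is superfluous (at $k=np=0$ the ratio is exactly $1$ and the error is $0$), but including it does no harm.
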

This implies that the tails are also the same up to a small error.

\begin{cor}\label{cor:bintopoistail}
If $X\sim Binom(n,p)$ and $Y\sim Pois(np)$, and if $k\leq \sqrt n$, $np\leq n^{1/2-c}$ for some fixed constant $c$, then
\[
\pr(X\geq k)= \left (1+O \left (\frac{k^2+(np)^2+1}{n} \right ) \right ) \pr(Y\geq k)+O\left(\left(e p\sqrt{n}\right)^{\sqrt n}\right).
\]
\end{cor}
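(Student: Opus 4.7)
The plan is to split the sum at the threshold $\lfloor\sqrt n\rfloor$, apply the pointwise approximation of Lemma \ref{lem:binomtopois} to the bulk, and dispatch the binomial far tail by a direct Chernoff-type estimate. Writing
\[
\pr(X\geq k) = \sum_{j=k}^{\lfloor\sqrt n\rfloor}\pr(X=j) + \pr(X>\lfloor\sqrt n\rfloor),
\]
every $j$ in the finite sum satisfies $j,np\leq \sqrt n$, so Lemma \ref{lem:binomtopois} applies termwise and yields
\[
\sum_{j=k}^{\lfloor\sqrt n\rfloor}\pr(X=j) = \sum_{j=k}^{\lfloor\sqrt n\rfloor}\pr(Y=j) + \sum_{j=k}^{\lfloor\sqrt n\rfloor} O\!\!\left(\tfrac{j^2+(np)^2+1}{n}\right)\pr(Y=j).
\]
The leading piece equals $\pr(Y\geq k) - \pr(Y>\lfloor\sqrt n\rfloor)$, giving the main term of the corollary up to a Poisson far tail.

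To control the multiplicative error, the $(np)^2/n$ and $1/n$ pieces pull cleanly out of the sum and contribute $O(((np)^2+1)/n)\pr(Y\geq k)$. The nontrivial part is $\sum_{j\geq k}j^2\pr(Y=j)/n$, which I will bound using the Poisson shift identity $j^2\pr(Y=j)=(np)^2\pr(Y=j-2)+(np)\pr(Y=j-1)$. Summing over $j\geq k$ gives $\sum_{j\geq k}j^2\pr(Y=j)=(np)^2\pr(Y\geq k-2)+(np)\pr(Y\geq k-1)$ (for $k\geq 2$; the cases $k\in\{0,1\}$ reduce directly to $E[Y^2]=np+(np)^2$). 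Decomposing $\pr(Y\geq k-2)$ and $\pr(Y\geq k-1)$ into the atoms $\pr(Y=k-2),\pr(Y=k-1)$ plus the shared tail $\pr(Y\geq k)$, and using the ratios $\pr(Y=k-1)/\pr(Y=k)=k/np$ and $\pr(Y=k-2)/\pr(Y=k)=k(k-1)/(np)^2$ together with $\pr(Y=k)\leq\pr(Y\geq k)$, elementary AM-GM inequalities yield
\[
\sum_{j\geq k}j^2\pr(Y=j)\leq \bigl(2k^2+2(np)^2+2\bigr)\pr(Y\geq k).
\]
Combining all pieces, the total multiplicative error is $O((k^2+(np)^2+1)/n)\pr(Y\geq k)$, as desired.

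For the far tail, the classical estimate $\pr(X=m)\leq \binom{n}{m}p^m\leq (enp/m)^m$ and the hypothesis $np\leq n^{1/2-c}$ give $enp/\sqrt n\leq en^{-c}<1/2$ for large $n$. Since $(enp/m)^m \leq (enp/\sqrt n)^m$ whenever $m\geq \sqrt n$, the geometric series over $m\geq \sqrt n$ sums to $\pr(X>\lfloor\sqrt n\rfloor) = O((enp/\sqrt n)^{\sqrt n}) = O((ep\sqrt n)^{\sqrt n})$. The analogous Chernoff bound for Poisson gives $\pr(Y>\lfloor\sqrt n\rfloor) = O((ep\sqrt n)^{\sqrt n})$, so both far-tail losses are absorbed into the stated additive error.

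The main obstacle is organizing the $j^2/n$ portion of the multiplicative error so that the single expression $O((k^2+(np)^2+1)/n)$ holds uniformly across all $k\leq \sqrt n$, covering both the small-$k$ regime where $\pr(Y\geq k)$ is a constant and the large-$k$ regime where it decays rapidly. The Poisson shift identity plus the above atom/tail decomposition handles both regimes in a single estimate, after which the remaining pieces of the argument are routine.
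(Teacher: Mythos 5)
Your proof is correct and takes essentially the same route as the paper: split the sum at $\sqrt n$, apply Lemma~\ref{lem:binomtopois} termwise, and control the two far tails by elementary Chernoff/union-bound estimates. The one place you add genuine content is the explicit bound $\sum_{j\geq k}j^2\pr(Y=j)\leq (2k^2+2(np)^2+2)\pr(Y\geq k)$ via the Poisson shift identity, a step the paper asserts without justification; your version is strictly more careful.
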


Having established these comparisons, we use very tight bounds on the Poisson tail. Tao gives such a tight bound on his blog \cite{tao2022improved}, where he notes that forms of this bound are given previously \cite{glynn1987upper, talagrand1995concentration}. In the post, Tao gives the proof of the upper bound and leaves the proof of the lower bound to the reader. We prove both sides in the appendix in Section \ref{sec:estimates}.

\begin{lemma}\label{lem:sharp_poisson_tail}
        For $X \sim \Pois(\lambda)$ and $\delta\geq \frac1{\sqrt{\lambda}}$, for sufficiently large $\lambda$
    \begin{equation*}
      \P \big ( X \geq \lambda ( 1 + \delta ) \big ) 
        \leq
        \frac{ e^{ -\lambda h ( \delta ) } }{ \sqrt{\lambda\min\{\delta, \delta^2\}  } },
    \end{equation*}
    where $ h(\delta) = (\delta+1) \log ( \delta + 1 ) - \delta$.

    Moreover, if $\lambda(1+\delta)$ is an integer, then there is a universal constant $\cpois$ such that for sufficiently large $\lambda$
    \[
        \P \big ( X \geq \lambda ( 1 + \delta ) \big ) 
        \geq\cpois\frac{ e^{ -\lambda h ( \delta ) } }{ \sqrt{\lambda\min\{\delta, \delta^2\}  } }.
    \]
\end{lemma}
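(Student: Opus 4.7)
My plan is to reduce both inequalities to Stirling's approximation applied at $k = \lambda(1+\delta)$, using a geometric-series argument from above and an explicit summation of $\Theta(1/\delta)$ comparably-sized terms from below.

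\textbf{Single-term estimate.} Starting from $\P(X=k)=e^{-\lambda}\lambda^k/k!$ and Stirling's formula $k!=\sqrt{2\pi k}(k/e)^k(1+O(1/k))$, direct simplification yields
\[
\P(X=k) = \frac{1+O(1/k)}{\sqrt{2\pi\lambda(1+\delta)}}\,\exp\bigl(-\lambda h(\delta)\bigr)
\]
for $k$ equal to $\lambda(1+\delta)$ (or within $O(1)$ of it), since the exponent collapses to $-\lambda\bigl((1+\delta)\log(1+\delta)-\delta\bigr)$. This is the engine of both halves of the proof.

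\textbf{Upper bound.} Set $m=\lceil\lambda(1+\delta)\rceil$. Since $\P(X=k+1)/\P(X=k)=\lambda/(k+1)$ is non-increasing in $k$ and bounded by $1/(1+\delta)$ for $k\geq m-1$, the geometric tail gives
\[
\P(X\geq m)\leq \P(X=m)\sum_{j\geq 0}(1+\delta)^{-j}=\frac{1+\delta}{\delta}\,\P(X=m).
\]
Plugging in the single-term estimate produces $(1+o(1))\sqrt{1+\delta}/(\delta\sqrt{2\pi\lambda})\cdot e^{-\lambda h(\delta)}$. Splitting into $\delta\geq 1$ and $\delta<1$ and checking the elementary inequality $\sqrt{1+\delta}/(\delta\sqrt{2\pi})\leq 1/\sqrt{\min\{\delta,\delta^2\}}$ in each case finishes the upper bound for all $\lambda$ large enough.

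\textbf{Lower bound.} When $m=\lambda(1+\delta)$ is an integer, $\P(X\geq m)\geq \P(X=m)$, which by Stirling is at least a constant times $e^{-\lambda h(\delta)}/\sqrt{\lambda(1+\delta)}$. For $\delta\geq 1$ this already matches $e^{-\lambda h(\delta)}/\sqrt{\lambda\delta}$ up to a universal constant. For $\delta<1$ the single-term bound is a factor of $1/\delta$ too small, so I sum the next $M=\lfloor 1/(2\delta)\rfloor$ terms. Here the hypothesis $\delta\geq 1/\sqrt\lambda$ enters crucially: it forces $i/\lambda\leq \delta/2$ for $i\leq M$, so $(m+i)/\lambda\leq 1+3\delta/2$ and
\[
\frac{\P(X=m+j)}{\P(X=m)}=\prod_{i=1}^{j}\frac{\lambda}{m+i}\geq (1+3\delta/2)^{-j}\geq e^{-3/4}
\]
for every $j\leq M$. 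Summing the $M+1$ comparable terms recovers the missing factor of $1/\delta$.

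\textbf{Main obstacle.} Stirling plus the geometric tail is routine. The substantive point is the lower bound when $\delta<1$: one is in the Gaussian rather than the large-deviation regime, so the tail mass is spread across a window of width $\Theta(1/\delta)$ rather than being concentrated near the threshold. The hypothesis $\delta\geq 1/\sqrt\lambda$ is precisely what makes the additive correction $i/\lambda$ in each ratio $\lambda/(m+i)$ negligible relative to $\delta$ over that window, so that $\Theta(1/\delta)$ consecutive Poisson masses all stay within a universal constant of the leading term.
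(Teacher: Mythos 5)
Your upper bound is the paper's argument verbatim: bound $\P(X\geq m)$ by the first term $\P(X=m)$ times the geometric series $\sum_j (1+\delta)^{-j}=(1+\delta)/\delta$, then apply Stirling. The lower bound is where you and the paper diverge, and yours is arguably cleaner. The paper starts the same way (Stirling at the threshold, $\P(X\geq m)\geq\P(X=m)$ alone suffices when $\delta\gtrsim 1$), but for small $\delta$ it recovers the extra factor $\asymp 1/\delta$ by comparing the Poisson tail sum to $\int_c^{c+c^{1/3}}e^{f(x)}\,dx$ with $c=\lambda(1+\delta)$ and a second-order Taylor/Laplace expansion of $f$, extracting $-1/f'(c)\asymp 1/\log(1+\delta)$. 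You instead sum the $M+1=\Theta(1/\delta)$ consecutive masses $\P(X=m+j)$ directly and control the ratios $\prod_{i\leq j}\lambda/(m+i)$ pointwise, using $\delta\geq 1/\sqrt\lambda$ exactly to ensure $i/\lambda\leq\delta/2$ throughout the window. The two are the same calculation in integral versus discrete form, but your explicit window of width $\Theta(1/\delta)$ is the natural scale and handles the boundary regime $\delta$ close to $\lambda^{-1/2}$ transparently; the paper's fixed window of width $c^{1/3}\asymp\lambda^{1/3}$ is narrower than $1/\delta$ when $\delta\ll\lambda^{-1/3}$, so the factor $1-e^{c^{1/3}f'(c)}$ that the paper absorbs as $1+o_\lambda(1)$ is actually $\Theta(\lambda^{1/3}\delta)\to 0$ there and needs a separate argument (e.g., widening the window and tolerating an $O(1)$ loss in the Laplace quadratic). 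In short: same upper bound, structurally similar but technically tighter lower bound on your side.
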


The integrality assumption is necessary as for very large $\delta$, the difference in probability between $\P( X \geq \lambda (1+\delta) )$ and $\P(X\geq\lambda(1+\delta)+1)$ is large enough that for very small $c>0$, this two sided bound could not possibly hold for $\lceil\lambda(1+\delta)\rceil$ and $\lfloor\lambda(1+\delta)\rfloor+c$ simultaneously. For its use in our paper, the integrality assumption is irrelevant, as we will only need the lower bound in the small $\delta$ regime. 

\begin{cor}\label{eq:poistailex}
For $X\sim Pois(\lambda)$, if $\lambda\delta^3=o_\lambda(1)$ and $\delta\geq \frac1{\sqrt{\lambda}}$, then
    \begin{equation}\label{eq:tightpoisson}
        (1 - o_\lambda(1)) \cpois
        \frac{ e^{  -\frac{\lambda\delta^2}{2}} }{\delta \sqrt{ \lambda  } }
        \leq 
        \P \big ( X \geq \lambda ( 1 + \delta ) \big ) 
        \leq
        (1+o_\lambda(1))\frac{ e^{ -\frac{\lambda\delta^2}{2}} }{ \delta\sqrt{ \lambda  } }.
    \end{equation}
\end{cor}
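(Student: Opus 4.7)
The plan is to deduce Corollary \ref{eq:poistailex} from Lemma \ref{lem:sharp_poisson_tail} by Taylor expanding the rate function $h(\delta) = (\delta+1)\log(\delta+1) - \delta$ around zero. Expanding $\log(1+\delta)$ as a power series and collecting terms gives $h(\delta) = \delta^2/2 - \delta^3/6 + O(\delta^4)$. Since by hypothesis $\delta \geq 1/\sqrt\lambda$ and $\lambda\delta^3 = o_\lambda(1)$, we have $\delta = o_\lambda(\lambda^{-1/3})$, so in particular $\delta \to 0$ and $\lambda\delta^4 \leq \delta \cdot \lambda\delta^3 = o_\lambda(1)$. Hence $\lambda h(\delta) = \lambda\delta^2/2 + o_\lambda(1)$, which gives $e^{-\lambda h(\delta)} = (1+o_\lambda(1))\,e^{-\lambda\delta^2/2}$. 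Since $\delta < 1$ eventually, $\min(\delta,\delta^2) = \delta^2$, and therefore $\sqrt{\lambda\min(\delta,\delta^2)} = \delta\sqrt\lambda$.

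For the upper bound, substituting these asymptotics directly into the upper bound of Lemma \ref{lem:sharp_poisson_tail} yields the claimed inequality immediately.

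For the lower bound, the lemma requires $\lambda(1+\delta)$ to be an integer, which is generally false here. To bypass this, set $k := \lceil \lambda(1+\delta)\rceil$ and write $k = \lambda(1+\tilde\delta)$ with $\tilde\delta \in [\delta, \delta+1/\lambda]$. Since $X$ is integer valued, $\P(X\geq \lambda(1+\delta)) = \P(X\geq k)$. I would then verify that $\tilde\delta$ still satisfies the hypotheses: clearly $\tilde\delta \geq 1/\sqrt\lambda$, and since $\lambda\delta \geq \sqrt\lambda \to \infty$ we have $\tilde\delta - \delta = O(1/\lambda) = o(\delta)$, so $\tilde\delta = (1+o(1))\delta$, $\lambda\tilde\delta^3 = (1+o(1))\lambda\delta^3 = o(1)$, and $\lambda\tilde\delta^2 = \lambda\delta^2 + O(\delta) = \lambda\delta^2 + o(1)$. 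Applying the integer case of Lemma \ref{lem:sharp_poisson_tail} with $\tilde\delta$ and then translating the resulting bounds back to $\delta$ using the estimates above yields the desired lower bound, up to absorbing all constants into $1-o_\lambda(1)$.

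The only mild obstacle is the bookkeeping around the integrality hypothesis in the lower bound; once that is dispatched via the rounding argument, the statement is simply a two-term Taylor expansion of $h$.
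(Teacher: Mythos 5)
Your proof is correct and follows essentially the same route as the paper: Taylor-expand $h(\delta) = \delta^2/2 + O(\delta^3)$, observe that the hypotheses force $\delta\to 0$ and $\lambda h(\delta) = \lambda\delta^2/2 + o_\lambda(1)$ and $\min\{\delta,\delta^2\}=\delta^2$, then plug into the two bounds of Lemma~\ref{lem:sharp_poisson_tail}. For the lower bound the paper sets $\delta' = \delta + \frac{1}{\lambda}$ so that $\lambda(1+\delta')=\lambda(1+\delta)+1 \geq \lceil\lambda(1+\delta)\rceil$ and relies on monotonicity in $\delta$ of the right-hand side of the lemma to pass from the ceiling to $\delta'$; you instead define $\tilde\delta$ so that $\lambda(1+\tilde\delta)=\lceil\lambda(1+\delta)\rceil$ exactly, which is marginally more explicit about satisfying the integrality hypothesis (the paper's choice of $\delta'$ does not itself make $\lambda(1+\delta')$ an integer, and the monotonicity step is left implicit). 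Both arguments are a one-step rounding and a two-term expansion of $h$, so the two proofs are the same in substance.
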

\begin{proof}
    For the upper bound,
    \[
     \P \big ( X \geq \lambda ( 1 + \delta ) \big ) \leq \frac{ e^{ -\lambda h ( \delta ) } }{ \sqrt{\lambda\min\{\delta, {\delta}^2\}  } }\leq (1+o_\lambda(1))\frac{ e^{ -\frac{\lambda\delta^2}{2}} }{ \delta\sqrt{ \lambda  } }
    \]
    by the Taylor expansion of $h(\delta)$.

    For the lower bound, define $\delta'=\delta+\frac1\lambda$. Then $\lambda(1+\delta)+1=\lambda(1+\delta')$. We have
     \[
     \P \big ( X \geq \lambda ( 1 + \delta ) \big ) =  \P \big ( X \geq \lceil\lambda ( 1 + \delta )\rceil \big ) \geq \cpois \frac{ e^{ -\lambda h ( \delta' ) } }{ \sqrt{\lambda\min\{\delta', {\delta'}^2\}  } }\geq (1-o_\lambda(1))\cpois\frac{ e^{ -\frac{\lambda{\delta}^2}{2}} }{ \delta\sqrt{ \lambda  } }.
    \]
\end{proof}

Because we will need these bounds repeatedly we also state the following estimate for the tails of binomials, which follows from Corollary \ref{cor:bintopoistail} and Lemma \ref{lem:sharp_poisson_tail}.
\begin{cor} \label{lem:binom_heavy_tail}
    If $X \sim \Binom(n,p)$, $np = o( \sqrt{n})$ and $\tau = o(\sqrt{n})$ then 
    \begin{equation*}
        \P \left ( X \geq \tau \right ) \leq (1+o(1)) e^{- \tau \log \tau + \tau \log( np )  - \tau + np}.
    \end{equation*}
\end{cor}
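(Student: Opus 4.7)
The plan is a short two-step reduction. The first step applies Corollary \ref{cor:bintopoistail} with $k = \tau$: under $np, \tau = o(\sqrt n)$ the multiplicative correction $1 + O((\tau^2 + (np)^2 + 1)/n)$ simplifies to $1 + o(1)$, and the additive term $(ep\sqrt n)^{\sqrt n}$ is negligible because $p\sqrt n = np/\sqrt n = o(1)$, so it decays faster than any polynomial in $n$ and is dominated by the exponential produced in step two. This gives $\P(X \geq \tau) \leq (1+o(1))\P(Y \geq \tau)$ up to a negligible additive error, with $Y \sim \Pois(np)$.

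The second step sets $\lambda := np$ and $\delta := \tau/\lambda - 1$ so that $\lambda(1+\delta) = \tau$, and applies Lemma \ref{lem:sharp_poisson_tail} to obtain
\[
\P(Y \geq \tau) \leq \frac{e^{-\lambda h(\delta)}}{\sqrt{\lambda\min\{\delta, \delta^2\}}}.
\]
A direct algebraic computation using $h(\delta) = (1+\delta)\log(1+\delta) - \delta$, $\lambda(1+\delta) = \tau$, and $\lambda\delta = \tau - np$ yields $\lambda h(\delta) = \tau\log(\tau/(np)) - (\tau - np)$, which produces the exponent appearing in the statement. The polynomial prefactor $1/\sqrt{\lambda \min\{\delta,\delta^2\}}$ is at most $1$ in the regime $\delta \geq 1/\sqrt\lambda$ covered by the lemma, so it is absorbed into the $(1+o(1))$ factor; outside that regime $\tau$ is so close to $\lambda$ that the stated exponential is $e^{O(1)}$ and the trivial bound $\P(X \geq \tau) \leq 1$ already suffices.

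The only obstacle is bookkeeping: verifying that the multiplicative error from Corollary \ref{cor:bintopoistail}, the polynomial prefactor from Lemma \ref{lem:sharp_poisson_tail}, and the additive tail error all collapse into the single clean $(1+o(1))$ factor advertised in the corollary. No substantive new estimates are required; both feeder results are already tailored for this level of precision, and one only needs to line up the ranges of $\lambda$ and $\delta$ with the stated hypothesis $\tau = o(\sqrt n)$.
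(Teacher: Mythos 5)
Your approach is exactly the one the paper indicates (it says the corollary ``follows from Corollary~\ref{cor:bintopoistail} and Lemma~\ref{lem:sharp_poisson_tail}''), and your algebraic manipulations are correct. However, the exponent your computation yields does \emph{not} match the one printed in the corollary, and you should not assert that it does. From your own identity
\[
\lambda h(\delta)=\tau\log\!\left(\tfrac{\tau}{np}\right)-(\tau-np)=\tau\log\tau-\tau\log(np)-\tau+np,
\]
you get $-\lambda h(\delta)=-\tau\log\tau+\tau\log(np)+\tau-np$, whereas the corollary states the exponent as $-\tau\log\tau+\tau\log(np)-\tau+np$ --- the signs on the last two terms are flipped. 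Your form is in fact the correct one: it agrees with the Stirling expansion of the Poisson mass at $\tau$, and it is also the form the paper actually applies downstream (in the proof of Lemma~\ref{lem:fine_balls_disjoint}, part~1, the displayed bound is $e^{-\tau\log\tau+\tau\log d+\tau-d}$ with $np\approx d$, with a $+\tau-d$ term, not $-\tau+d$). So the corollary statement contains a sign typo; your derivation is right, and the thing to fix in your write-up is the claim that it ``produces the exponent appearing in the statement.'' Two smaller bookkeeping points worth tidying: (i) Corollary~\ref{cor:bintopoistail} as written requires $np\leq n^{1/2-c}$ for a fixed $c>0$, strictly stronger than the hypothesis $np=o(\sqrt n)$ here, so you should either note that the additive error estimate in its proof goes through unchanged under $np=o(\sqrt n)$ (as your dominance argument effectively shows) or observe that all applications in the paper satisfy the stronger condition; and (ii) the bound is only meaningful for $\tau\geq np$, since the Chernoff form $e^{-\lambda h(\delta)}$ is an upper-tail bound and the stated exponential falls below $1$ when $\tau\ll np$; this is implicit in the applications but deserves a word.
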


We will use a weaker, simpler version of this bound for low probability events. 
\begin{lemma}[\cite{janson2011random} Theorem 2.1]
    If $X\sim Binom(N,p)$, and $\lambda=Np$, then for $t\geq 0$
    \begin{equation*}
        \pr(X-\lambda\geq t)\leq \exp\left(-\frac{t^2}{2\lambda+2t/3}\right),~~ \pr(X-\lambda\leq -t)\leq \exp\left(-\frac{t^2}{2\lambda}\right).
    \end{equation*}\label{lem:weakertail}
\end{lemma}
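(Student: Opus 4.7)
The plan is to use the standard Chernoff--Cram\'er moment generating function method. Writing $X = X_1 + \cdots + X_N$ as a sum of independent $\Bern(p)$ variables, for any $s > 0$ the exponential Markov inequality gives
\[
\pr(X - \lambda \geq t) \leq e^{-s(\lambda + t)} \E[e^{sX}] = e^{-s(\lambda+t)} (1 - p + p e^s)^N.
\]
Using the elementary bound $1 - p + pe^s \leq \exp(p(e^s - 1))$, the right-hand side is at most $\exp\bigl(\lambda(e^s - 1 - s) - st\bigr)$. The next step is to optimize in $s$: the choice $s = \log(1 + t/\lambda)$ yields the Bennett-type bound
\[
\pr(X - \lambda \geq t) \leq \exp\bigl(-\lambda h(t/\lambda)\bigr), \qquad h(x) = (1+x)\log(1+x) - x.
\]

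To convert Bennett's inequality into the stated Bernstein form, I would prove the elementary analytic inequality
\[
h(x) \geq \frac{x^2}{2 + \tfrac{2}{3} x}, \qquad x \geq 0,
\]
which is the only nontrivial calculation in the argument. This can be checked by multiplying through and showing that $F(x) := (2 + \tfrac{2}{3}x) h(x) - x^2$ satisfies $F(0) = F'(0) = F''(0) = 0$ and $F'''(x) \geq 0$, or alternatively via the standard integral representation $h(x) = \int_0^x \log(1+u)\, du$ combined with $\log(1+u) \geq \tfrac{2u}{2+u}$. Substituting $x = t/\lambda$ and simplifying then produces the claimed exponent $\tfrac{t^2}{2\lambda + 2t/3}$.

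For the lower tail, the same procedure with $s < 0$ gives $\pr(X - \lambda \leq -t) \leq \exp(-\lambda h(-t/\lambda))$ for $0 \leq t \leq \lambda$ (and the bound is trivial for $t > \lambda$ since probabilities are at most $1$). Here the improvement is that one can use the sharper inequality $h(-x) \geq x^2/2$ on $[0,1]$, which follows from the Taylor expansion $h(-x) = \sum_{k \geq 2} \tfrac{x^k}{k(k-1)}$, since all terms are nonnegative and the $k=2$ term already gives $x^2/2$. No correction in the denominator is needed, which is why the lower tail bound has the cleaner form $\exp(-t^2/(2\lambda))$.

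The proof is essentially routine; the only step requiring care is the verification of the Bernstein-type inequality for $h$, which I would expect to cite rather than reprove, as this is precisely the content of Theorem~2.1 in Janson--{\L}uczak--Ruci\'nski.
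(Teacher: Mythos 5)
Your proof is correct and is the standard Chernoff--Bennett--Bernstein derivation; the computations (optimizing the exponential moment bound, the $F'''\geq 0$ verification of $h(x)\geq x^2/(2+2x/3)$, and the term-by-term Taylor argument for $h(-x)\geq x^2/2$) all check out. Note, though, that the paper does not prove this lemma at all: it is simply cited as Theorem 2.1 of Janson--{\L}uczak--Ruci\'nski, so there is no in-paper argument to compare against, and your closing remark that one would cite rather than reprove is exactly what the authors do.
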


Because we are dealing with sparser matrices than \cite{alt2023poisson} and need more precise estimates, we need to bound the sum of \emph{squares} of the degrees of neighbors of high degree vertices. Therefore we require estimates for the sum of distributions with heavy Weibull tails. Such a bound follows from \ref{lem:weakertail} and the tail results in \cite{bakhshizadeh2023sharp}. Justification for this generalization is given in the appendix in Section \ref{sec:estimates}.

\begin{lemma}\label{lem:weibullbound}
For any $n>0$ and $d=o(n^{1/3})$, consider $n$ independent i.i.d. samples $X_1,\ldots X_n\sim \Binom(N,\frac dN)$. There is some constant $\cweib>0$ such that for any $t >n^{2/3}$,
 \[
 \pr\left ( \left |\sum_{i=1}^n X_i^2-\E\left [ \sum_{i=1}^n X_i^2 \right ]\right |>t \right )\leq 2n\exp\left(-\frac{\cweib}{d^3+1}\sqrt{t}\right).
 \]
 Moreover, if $t>2(d^2+1)n^{2/3}$,
  \[
 \pr\left ( \left |\sum_{i=1}^n X_i^2-\E\left [ \sum_{i=1}^n X_i^2 \right ]\right |>t \right )\leq 2n\exp\left(-\cweib\sqrt{t}\right).
 \]
\end{lemma}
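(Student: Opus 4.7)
The plan is to treat $Y_i := X_i^2$ as a sequence of Weibull-tailed random variables and apply a two-regime Bernstein / big-jump concentration inequality along the lines of \cite{bakhshizadeh2023sharp}. First I bound the pointwise tail of $Y_i$: Corollary \ref{lem:binom_heavy_tail} gives $\pr(X_i \geq \tau) \leq (1+o(1))\exp(-\tau\log(\tau/d) + \tau + d)$, and substituting $\tau = \sqrt{s}$ with $\sqrt{s} \geq e^2 d$ (which holds in our regime since $d = o(n^{1/3})$ and $t \geq n^{2/3}$) yields $\pr(Y_i \geq s) \leq \exp(-c\sqrt{s})$ for some universal constant $c > 0$. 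A direct moment computation for the binomial, or equivalently its Poisson approximation via Lemma \ref{lem:binomtopois}, gives $\mathrm{Var}(Y_i) \lesssim d^3 + 1$.

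Next I invoke the sharp-tail bound of \cite{bakhshizadeh2023sharp}, specialized to i.i.d.\ nonnegative random variables with tail $\pr(Y \geq y) \leq \exp(-c\sqrt{y})$. In the relevant range of $t$, their result gives
\[
\pr\Bigl(\Bigl|\sum_i Y_i - \E\sum_i Y_i\Bigr| > t\Bigr) \leq n\exp(-c'\sqrt{t}) + \exp\Bigl(-\frac{c' t^2}{n(d^3+1)}\Bigr).
\]
This inequality can alternatively be recovered directly by a truncation argument: truncate each $X_i$ at level $M := c_0\sqrt{t}$; the first term on the right then controls the union-bound probability that some $X_i$ exceeds $M$ (via Step~1), while the second is Bernstein's inequality from Lemma \ref{lem:weakertail} applied to the bounded, truncated sum $\sum X_i^2 \vone_{X_i \leq M}$, whose variance is at most $n\cdot \mathrm{Var}(Y_1) \lesssim n(d^3+1)$.

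It remains to match the two summands to the claimed bounds. The big-jump term $n\exp(-c'\sqrt{t})$ is already of the required shape. For the Gaussian term, $t^2/(n(d^3+1)) \geq \sqrt{t}/(d^3+1)$ is equivalent to $t^{3/2} \geq n$, which is guaranteed by $t > n^{2/3}$; this yields the first bound. For the improved second bound we need the stronger inequality $t^2/(n(d^3+1)) \geq \sqrt{t}$, i.e.\ $t^{3/2} \geq n(d^3+1)$, which holds up to absolute constants once $t > 2(d^2+1) n^{2/3}$.

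The main obstacle I anticipate is verifying that \cite{bakhshizadeh2023sharp} applies with the precise constants needed in both regimes; since our individual tail $\exp(-c\sqrt{s}\log\sqrt{s})$ is in fact \emph{sharper} than pure Weibull, either one appeals to their theorem as a black box under a slightly weakened tail function, or one inserts the truncation-plus-Bernstein argument above. After that, only routine algebra in comparing the competing exponents $\sqrt{t}/(d^3+1)$ and $t^2/(n(d^3+1))$ remains.
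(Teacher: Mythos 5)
Your primary route --- cite \cite{bakhshizadeh2023sharp} as a black box, split the resulting bound into a ``one big jump'' term $n\exp(-c'\sqrt{t})$ and a Gaussian term $\exp(-c't^2/(n(d^3+1)))$, and then verify by algebra that both are dominated by the claimed bound under $t>n^{2/3}$ (resp.\ $t>2(d^2+1)n^{2/3}$) --- is essentially what the paper does. The paper spends more effort than you sketch on verifying the hypotheses of Theorem~1 of \cite{bakhshizadeh2023sharp}: it establishes the individual tail bound $\P(X^2-(d^2+d)\ge y)\le e^{-\sqrt y/12}$ by substitution into Lemma~\ref{lem:weakertail}, checks the required integrability of an auxiliary functional (their $v(L,\beta)$), and explicitly computes the crossover point $t_{\max}$ between the Gaussian and big-jump regimes. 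Your algebraic comparison of the exponents is correct, as is the variance estimate $\Var(X_i^2)\lesssim d^3+1$.

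However, your proposed \emph{alternative} derivation --- ``truncate $X_i$ at $M=c_0\sqrt t$ and apply Bernstein to $\sum X_i^2\vone_{X_i\le M}$'' --- does not work, and this is worth being clear about because it is exactly the failure mode that makes \cite{bakhshizadeh2023sharp} a nontrivial input. The truncated summands $X_i^2\vone_{X_i\le M}$ are bounded by $M^2=c_0^2 t$, so Bernstein gives an exponent of order
\[
\frac{t^2}{n\,\Var(X_1^2) + M^2\, t} \;=\; \frac{t^2}{n(d^3+1) + c_0^2 t^2},
\]
which tends to the \emph{constant} $c_0^{-2}$ once $t^2 \gg n(d^3+1)$ --- not to anything of order $\sqrt t$. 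Making $c_0$ smaller only hurts the union-bound term. More generally, when the individual tails are semi-exponential (Weibull with exponent $1/2$, as here), truncation followed by a Bernstein/Bennett bound saturates at a constant exponent, because the essential supremum of the truncated squared variable scales like $t$ itself; this is precisely the obstruction that the refined ``Gaussian vs.\ one big jump'' analysis of \cite{bakhshizadeh2023sharp} is designed to overcome. So the black-box citation is not merely a convenience here --- it is doing real work --- and the self-contained shortcut should be dropped.

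Two minor remarks: Corollary~\ref{lem:binom_heavy_tail} has $-\tau$, not $+\tau$, in the exponent, but your (weaker) version still suffices. And Lemma~\ref{lem:weakertail} as stated in the paper is a binomial bound, so it cannot literally be ``applied to the truncated sum''; you would need a generic Bennett/Bernstein inequality for bounded i.i.d.\ summands --- which, as above, is in any case insufficient.
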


\subsection{Spectral properties of graphs}
We use the following spectral bound for trees.

\begin{lemma}\cite{kesten1959symmetric}   \label{lem:forest-bound}
    If $T$ is a forest with maximum degree bounded by $\Delta$, then $\lambda_{\max}(A_T)\le2\sqrt{\Delta-1}$.
\end{lemma}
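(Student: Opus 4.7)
The plan is to use the classical closed-walk / spectral-radius argument, combined with a Catalan-type bound on closed walks in the infinite $\Delta$-regular tree.

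First, I would reduce to the case of a single finite tree: since the spectrum of a forest's adjacency matrix is the disjoint union of the spectra of its connected components, it suffices to bound $\lambda_{\max}(A_T)$ when $T$ is a connected tree of maximum degree at most $\Delta$. Then, since $A_T$ is symmetric, I would write
\[
\lambda_{\max}(A_T)^{2k} \;\leq\; \mathrm{tr}(A_T^{2k}) \;=\; \sum_{v \in V(T)} C_{2k}(v, T),
\]
where $C_{2k}(v, T)$ denotes the number of closed walks of length $2k$ in $T$ based at $v$. Embedding $T$ as a rooted subtree of the infinite $\Delta$-regular tree $T_\Delta$ with $v$ mapped to the root, every closed walk in $T$ at $v$ is also a closed walk in $T_\Delta$ at the root, so $C_{2k}(v, T) \leq C_{2k}(\mathrm{root}, T_\Delta)$.

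The core of the argument is to show $C_{2k}(\mathrm{root}, T_\Delta) \leq P(k,\Delta)\,(2\sqrt{\Delta-1})^{2k}$ for a polynomially growing prefactor $P$. To get this I would decompose a closed walk into its irreducible excursions into the $\Delta$ subtrees hanging off the root; inside each such subtree every vertex has exactly $\Delta-1$ children, so the count $V_\ell$ of closed walks of length $2\ell$ at that subtree's root satisfies the Catalan-type recursion $V_k = \sum_{\ell=0}^{k-1} (\Delta-1)\,V_\ell V_{k-1-\ell}$ with $V_0 = 1$, whose solution is $V_k = C_k (\Delta-1)^k$ for $C_k$ the $k$-th Catalan number. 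The classical asymptotic $C_k \sim 4^k/(k^{3/2}\sqrt\pi)$ then yields the claimed bound, and a short generating-function argument extends it from the $(\Delta-1)$-ary subtree to the full $\Delta$-regular tree at the root.

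Substituting back gives $\mathrm{tr}(A_T^{2k}) \leq |V(T)|\,P(k,\Delta)\,(2\sqrt{\Delta-1})^{2k}$; taking $2k$-th roots and letting $k \to \infty$ with $T$ and $\Delta$ held fixed (so that $|V(T)|^{1/2k} \to 1$) yields $\lambda_{\max}(A_T) \leq 2\sqrt{\Delta-1}$. The step that requires the most care is the Catalan decomposition: one must verify that the bijection between closed walks at the root of $T_\Delta$ and labeled Dyck paths (each up-step labeled by the chosen child) is set up correctly, accounting for the fact that up-steps from the root have $\Delta$ labels while up-steps from deeper vertices have only $\Delta-1$ labels. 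All the other ingredients---the trace inequality, the embedding into $T_\Delta$, and the Catalan asymptotics---are entirely standard.
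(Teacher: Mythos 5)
The paper does not prove this lemma; it simply cites Kesten (1959) for the result. Your trace/moment-method argument is the standard way to derive this bound and is correct: the reduction to a connected tree, the inequality $\lambda_{\max}^{2k}\le \mathrm{tr}(A_T^{2k})$, the embedding into the infinite $\Delta$-regular tree $T_\Delta$, the Catalan recursion $V_k=(\Delta-1)\sum_{\ell}V_\ell V_{k-1-\ell}$ giving $V_k=C_k(\Delta-1)^k$ for the $(\Delta-1)$-ary subtree, and the generating-function step $w(x)=1/(1-\Delta x\, v(x))$ (which has the same radius of convergence $1/(4(\Delta-1))$ as $v$, since $\Delta x\, v(x)\to \Delta/(2(\Delta-1))\le 1$ at the branch point) all go through. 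Taking $2k$-th roots and sending $k\to\infty$ then kills the polynomial prefactor and $|V(T)|$, giving the stated bound. One small caveat worth flagging is that the statement, and your proof, implicitly require $\Delta\ge 2$: for $\Delta=1$ the forest is a matching with spectral radius $1>0=2\sqrt{\Delta-1}$, and the embedding step and Catalan recursion degenerate.
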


We use the following result to quantify the proximity of true eigenvalues and eigenvectors to approximate ones.
\begin{lemma}[See \cite{alt2023poisson} Lemma E.1]\label{lem:estimator}
   Consider a self-adjoint matrix $M$ and $\Delta, \epsilon>0$ satisfying $5\epsilon\leq \Delta$. For $\lambda\in \R$, assume $M$ has a unique eigenvalue $\mu$ in the interval $[\lambda-\Delta,\lambda+\Delta]$ with eigenvector $\ww$. If there is a normalized vector $\mathbf{v}$ such that $\|(M-\lambda)\mathbf{v}\|\leq \epsilon$, then 
   \[
   \mu-\lambda=\langle \vv,(M-\lambda)\vv\rangle+O\left (\frac{\epsilon^2}{\Delta}\right ),\|\ww-\mathbf{v}\|=O\left (\frac{\epsilon}{\Delta}\right)
   \]
\end{lemma}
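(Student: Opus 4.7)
The plan is to work in the eigenbasis of $M$. Let $\{\uu_i\}$ be an orthonormal eigenbasis with eigenvalues $\mu_i$, choosing $\uu_1 = \ww$ with $\mu_1 = \mu$ the unique eigenvalue in $[\lambda-\Delta, \lambda+\Delta]$. Write $\vv = \sum_i c_i \uu_i$ with $\sum_i c_i^2 = 1$. The hypothesis $\|(M-\lambda)\vv\|^2 \leq \epsilon^2$ translates to $\sum_i (\mu_i - \lambda)^2 c_i^2 \leq \epsilon^2$, and for every $i\neq 1$ the uniqueness assumption gives $|\mu_i - \lambda| > \Delta$. Restricting the previous sum to $i\neq 1$ yields
\[
\sum_{i\neq 1} c_i^2 \;\leq\; \frac{1}{\Delta^2}\sum_{i\neq 1}(\mu_i-\lambda)^2 c_i^2 \;\leq\; \frac{\epsilon^2}{\Delta^2}.
\]
This immediately controls the orthogonal projection of $\vv$ away from $\ww$.

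Next I would deduce the eigenvector bound. After possibly flipping the sign of $\ww$ we may assume $c_1 \geq 0$, so $c_1 \geq \sqrt{1-\epsilon^2/\Delta^2}\geq 1-\epsilon^2/\Delta^2$ (using $5\epsilon \leq \Delta$). Then
\[
\|\ww - \vv\|^2 \;=\; (1-c_1)^2 + \sum_{i\neq 1} c_i^2 \;\leq\; \frac{\epsilon^4}{\Delta^4} + \frac{\epsilon^2}{\Delta^2} \;=\; O\!\left(\frac{\epsilon^2}{\Delta^2}\right),
\]
giving $\|\ww - \vv\| = O(\epsilon/\Delta)$ as required.

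For the eigenvalue bound, expand
\[
\langle \vv, (M-\lambda)\vv\rangle \;=\; (\mu-\lambda)c_1^2 \;+\; \sum_{i\neq 1}(\mu_i-\lambda)c_i^2.
\]
The tail sum is handled by the Cauchy--Schwarz-style estimate
\[
\left|\sum_{i\neq 1}(\mu_i-\lambda)c_i^2\right| \;\leq\; \sum_{i\neq 1}\frac{(\mu_i-\lambda)^2 c_i^2}{|\mu_i-\lambda|} \;\leq\; \frac{1}{\Delta}\sum_{i\neq 1}(\mu_i-\lambda)^2 c_i^2 \;\leq\; \frac{\epsilon^2}{\Delta}.
\]
For the leading term, use $c_1^2 = 1 - \sum_{i\neq 1}c_i^2 = 1 + O(\epsilon^2/\Delta^2)$ together with $|\mu-\lambda|\leq \Delta$ to get $(\mu-\lambda)c_1^2 = (\mu-\lambda) + O(\epsilon^2/\Delta)$. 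Combining the two estimates delivers $\langle \vv,(M-\lambda)\vv\rangle = (\mu-\lambda) + O(\epsilon^2/\Delta)$, which is the first claim.

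There is no real obstacle here; the proof is the classical two-line argument from spectral perturbation theory, and the only subtlety is carefully using the reciprocal-weighting trick $|\mu_i-\lambda|^{-1} \leq \Delta^{-1}$ to convert the $L^2$ control on $(M-\lambda)\vv$ into an $L^1$-style bound on $\sum_{i\neq 1}(\mu_i-\lambda)c_i^2$, which is precisely what upgrades the naive $O(\epsilon)$ eigenvalue estimate to $O(\epsilon^2/\Delta)$.
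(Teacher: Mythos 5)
Your proof is correct. The paper itself does not supply a proof of this lemma---it is cited from Alt, Ducatez, and Knowles (their Lemma E.1)---so there is no in-paper argument to compare against, but your spectral-decomposition argument is exactly the standard one for this kind of perturbation statement: expand $\vv$ in the eigenbasis, use the spectral gap $|\mu_i - \lambda| > \Delta$ for $i \neq 1$ to convert the $L^2$ bound $\sum_i (\mu_i-\lambda)^2 c_i^2 \leq \epsilon^2$ into both the tail-mass bound $\sum_{i\neq 1} c_i^2 \leq \epsilon^2/\Delta^2$ and the weighted bound $|\sum_{i\neq 1}(\mu_i-\lambda) c_i^2| \leq \epsilon^2/\Delta$, and finish by estimating the $c_1^2$ correction. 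The only small remarks: (i) the lemma implicitly assumes $\mu$ is simple, which you use when identifying $\uu_1 = \ww$; and (ii) what you call a "Cauchy--Schwarz-style estimate" is really just the pointwise bound $|\mu_i-\lambda|^{-1} < \Delta^{-1}$---no Cauchy--Schwarz needed. Neither affects correctness.
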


In particular this lemma will be used in the following form:

\begin{lemma} \label{lem:basic_eigenvalue_approximation}
For $i\geq 2$, let $A$ be the adjacency matrix of the ball of radius $i$ around a vertex $x$ of degree $\alpha$, such that $B_i(x)$ is a tree, $ \frac{|S_2(x)|}{\alpha} \leq s(n) \ll \alpha$ and the degree of each vertex in $B_i(x)\setminus{\{x\}}$ is at most $t(n) \leq \frac{\alpha}{5}$. 

Then the maximum eigenvalue $\mu$ of $A$ satisfies
$$\mu =  \sqrt{\alpha} +  O \left ( \frac{s(n) }{ \sqrt{\alpha} } \right ).$$

\end{lemma}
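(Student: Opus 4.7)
The approach is to apply Lemma \ref{lem:estimator} with the natural ``star'' test vector: let $\vv$ be supported on $\{x\} \cup S_1(x)$ with $\vv|_x = 1/\sqrt{2}$ and $\vv|_y = 1/\sqrt{2\alpha}$ for each $y \sim x$. This vector is normalized and is exactly the top eigenvector of the star $K_{1,\alpha}$ rooted at $x$.

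The first step is to compute the residual $(A - \sqrt{\alpha})\vv$ entry by entry. At $x$: $(A\vv)|_x = \alpha \cdot (1/\sqrt{2\alpha}) = \sqrt{\alpha}/\sqrt{2} = \sqrt{\alpha}\,\vv|_x$. At each $y \sim x$: since $B_i(x)$ is a tree, the only neighbor of $y$ on which $\vv$ is nonzero is $x$, so $(A\vv)|_y = 1/\sqrt{2} = \sqrt{\alpha}\,\vv|_y$. Thus the residual is supported on $S_2(x)$, where each $z \in S_2(x)$ has a unique parent $p(z) \in S_1(x)$ and $(A\vv)|_z = \vv|_{p(z)} = 1/\sqrt{2\alpha}$, while $\sqrt{\alpha}\,\vv|_z = 0$. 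Summing,
$$\|(A-\sqrt{\alpha})\vv\|^2 = \frac{|S_2(x)|}{2\alpha} \leq \frac{s(n)}{2} =: \epsilon^2.$$
A parallel computation of $\langle \vv, A\vv\rangle$, using that $\vv$ is supported on $B_1(x)$ and the only contributing edges are the $\alpha$ star edges at $x$, gives $\langle \vv, A\vv\rangle = \sqrt{\alpha}$, hence $\langle \vv, (A - \sqrt{\alpha})\vv\rangle = 0$.

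Next I would establish a spectral gap $\Delta = \Theta(\sqrt{\alpha})$ so that $\sqrt{\alpha}$ is near a unique eigenvalue of $A$. By Rayleigh, $\mu \geq \langle \vv, A\vv\rangle = \sqrt{\alpha}$. By Cauchy interlacing applied to removing the row and column at $x$, $\lambda_2(A) \leq \lambda_1(A_{B_i(x)\setminus\{x\}})$; since $B_i(x)\setminus\{x\}$ is a forest of maximum degree at most $t(n) \leq \alpha/5$, Lemma \ref{lem:forest-bound} yields $\lambda_1(A_{B_i(x)\setminus\{x\}}) \leq 2\sqrt{t(n)-1} \leq (2/\sqrt{5})\sqrt{\alpha}$. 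Therefore $\mu$ is isolated from the rest of the spectrum by at least $(1 - 2/\sqrt{5})\sqrt{\alpha}$, and I may fix $\Delta$ to be any constant multiple of $\sqrt{\alpha}$ strictly less than this gap. Since $s(n) \ll \alpha$, the requirement $5\epsilon \leq \Delta$ from Lemma \ref{lem:estimator} holds automatically for all sufficiently large $n$.

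Plugging the bounds into Lemma \ref{lem:estimator} then gives $\mu - \sqrt{\alpha} = 0 + O(\epsilon^2/\Delta) = O(s(n)/\sqrt{\alpha})$, as required. The computation itself is routine; the only mildly subtle point is the spectral gap, but the hypothesis $t(n) \leq \alpha/5$ is precisely calibrated so that Kesten's forest bound $2\sqrt{t(n)-1}$ stays a fixed constant factor below $\sqrt{\alpha}$. Without this quantitative slack, there would be no way to isolate $\mu$ and the approximation error coming from Lemma \ref{lem:estimator} would blow up.
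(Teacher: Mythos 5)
Your proposal takes essentially the same route as the paper's own proof: same star test vector, same residual bound $\|(A-\sqrt{\alpha})\vv\|^2 = |S_2(x)|/(2\alpha)$, same Cauchy-interlacing-plus-Kesten spectral gap, same appeal to Lemma~\ref{lem:estimator}. The only (welcome) refinement is that you explicitly compute $\langle\vv,(A-\sqrt{\alpha})\vv\rangle = 0$, which makes the final application of Lemma~\ref{lem:estimator} slightly cleaner than the paper's terse ``plug in our values'' closing.
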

\begin{proof} 
We take as our test vector $\ww$ the eigenvector corresponding to the star graph consisting of the central vertex $x$ and its neighbors. Thus $\ww|_x = \frac1{\sqrt 2}$ and for $y \sim x$, $\ww|_y = \frac1{\sqrt{2 \alpha}}$. Since $B_i(x)$ is a tree, each $z \in S_2(x)$ has exactly one neighbor in $S_1(x)$, so $(A\ww)|_z=\frac{1}{\sqrt{2\alpha}}$. Moreover the number of non-zero entries in $A \ww - \sqrt{\alpha} \ww$ is $|S_2(x)| \leq \alpha s(n) $.

The above implies that
$$\| A \ww - \sqrt{\alpha} \ww \| \leq \sqrt { \alpha s(n)  \frac{1}{2 \alpha }} = \sqrt{ \frac{s(n)}{2} },$$
which corresponds to $\e$ in Lemma \ref{lem:estimator}. 

To utilize Lemma \ref{lem:estimator} we require $\Delta$ such that $A$ has a unique eigenvalue in $[\lambda - \Delta, \lambda + \Delta].$ For this we use eigenvalue interlacing: after deleting the row and column of $A$ corresponding to $x$, the matrix is the adjacency matrix of a forest with degree at most $ t(n) $. By the spectral radius of a tree from Lemma \ref{lem:forest-bound}, the maximum eigenvalue of this submatrix is at most $2 \sqrt{ t(n) }.$ Thus $A$ has at most one eigenvalue in the interval $\left [ 2.1 \sqrt{ t(n) }, 2 \sqrt{\alpha} - 2.1 \sqrt{ t(n) } \right ]$, namely, if any, its maximum eigenvalue. Thus we can take $\Delta = \sqrt{\alpha} - 2.1 \sqrt{ t(n) } \geq \left ( 1 - 2.1/\sqrt{5} \right ) \sqrt{\alpha}$ in Lemma \ref{lem:estimator}.
The estimates on the errors now simply follow from plugging in our values for $\e$ and $\Delta$, since $\e = \sqrt{\frac{s(n)}{2}} \ll \Delta$ by assumption.
\end{proof}

\section{Regimes}\label{sec:regimes}

\newcommand{\llfloor}{}
\newcommand{\rrfloor}{}
\newcommand{\llceil}{}
\newcommand{\rrceil}{}


In a first step we will analyze the spectral contribution of high degree vertices, which we separate into three regimes. After defining these regimes we analyze their sizes. Finally we state a theorem about the approximate diagonalization of the adjacency matrix $A$, that we will use to prove some of our main theorems.

\begin{dfn}
We define the following sets of high degree vertices in our graph.

We define the sets for $m\geq 0$ \[
\XX_m := \left \{x \in [N]: \alpha_x \geq \uu - m \right \}.
\]
This then gives the regimes
\begin{enumerate}
        \item The \emph{fine regime}: $\mathcal{W} := \XX_{\uu^{1/4}}$.
        \item 
        The  \emph{intermediate regime}: $\mathcal{V} :=\XX_{\uu^{2/3}}$.
    \item The \emph{rough regime}: $\mathcal{U} :=\XX_{\uu/2}$.
\end{enumerate}

\end{dfn}
The precise thresholds are not significant and rather of technical nature, our analysis would continue to work if we replaced these thresholds with $\uu-\uu^{c_1}, \uu-\uu^{c_2}, c_3 \uu$ for some constants satisfying $0<c_1<1/2, 1/2<c_2<1$ and $0<c_3<1$.

For large $N$, we have $\WW\subset \VV\subset \UU$. 
First we bound the sizes of these sets. The upper bounds will let us perform union bounds, whereas the lower bound on $|\XX_m|$ tells us that all of our highest degree vertices have almost the same degree.

\begin{lemma}\label{lem:sizes}
For $m\geq 0$, with probability $1-O((\frac{\uu}{d})^{-(m+1/2)})$
\begin{align}\label{eq:sizeupper}
  |\XX_m| \leq  \frac32\left(\frac{\uu}{d}\right)^{m+1/2}.
\end{align}

Moreover, for $1\leq m \leq \uu^{c}$ with $c<1/2$, with probability $1-O \left (\left(\frac{\uu}{d}\right)^{-(m-1/2)}\right )$, 
\begin{align}\label{eq:sizeslower}
   \ |\XX_m|\geq \frac12\left(\frac{\uu}{d}\right)^{m-1/2}.
    \end{align}
\end{lemma}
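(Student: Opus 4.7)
My plan is to prove both bounds by a Chebyshev's inequality argument, after a precise estimation of $\E[|\XX_m|]$ and a matching bound on $\mathrm{Var}(|\XX_m|)$ obtained from the near independence of vertex degrees.

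For the expectation, since $\alpha_x\sim\mathrm{Binom}(N-1,d/N)$ we have $\E[|\XX_m|]=N\cdot\P(\mathrm{Binom}(N-1,d/N)\geq\uu-m)$. The Poisson approximation of Lemma~\ref{lem:binomtopois} combined with the sharp Poisson tail of Lemma~\ref{lem:sharp_poisson_tail} reduces this to $(1+o(1))\mu_{\uu-m}$, since for $\uu-m\gg d$ the tail $\P(\mathrm{Pois}(d)\geq\uu-m)$ is dominated by its first term up to a factor $1+O(d/\uu)$. Using $\mu_{\uu-m}=(1+o(1))\mu_\uu(\uu/d)^m$ for $m\ll\uu$, together with the fact that the definition of $\uu$ as the minimizer of $\max\{\mu_k,\mu_k^{-1}\}$ and the ratio $\mu_{k+1}/\mu_k\approx d/\uu$ near $k=\uu$ pin $\mu_\uu\in[(1+o(1))(d/\uu)^{1/2},(1+o(1))(\uu/d)^{1/2}]$, I would deduce
\[ (1+o(1))(\uu/d)^{m-1/2} \;\leq\; \E[|\XX_m|] \;\leq\; (1+o(1))(\uu/d)^{m+1/2}. \]

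For the variance, writing $|\XX_m|=\sum_x\vone_x$ with $\vone_x:=\vone(\alpha_x\geq\uu-m)$, the only randomness shared between $\alpha_x$ and $\alpha_y$ for $x\neq y$ is the indicator of the edge $\{x,y\}$. Conditioning on this indicator gives
\[ \mathrm{Cov}(\vone_x,\vone_y)=\tfrac{d}{N}\bigl(1-\tfrac{d}{N}\bigr)\bigl(\P(\mathrm{Binom}(N-2,d/N)=\uu-m-1)\bigr)^2, \]
and summing yields $\mathrm{Var}(|\XX_m|)\leq\E[|\XX_m|]+O\bigl(\uu^2\E[|\XX_m|]^2/(dN)\bigr)=O(\E[|\XX_m|])$ throughout the ranges of $m$ and $d$ under consideration. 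With these bounds in place, Chebyshev finishes both claims. For the upper bound, setting $U=(\uu/d)^{m+1/2}$ and using $\E[|\XX_m|]\leq(1+o(1))U$, the event $|\XX_m|\geq\tfrac32 U$ forces $|\XX_m|-\E[|\XX_m|]\geq U/2$, so the failure probability is $\leq 4\mathrm{Var}(|\XX_m|)/U^2=O(1/U)=O((\uu/d)^{-(m+1/2)})$. For the lower bound, setting $L=(\uu/d)^{m-1/2}$ and $E:=\E[|\XX_m|]\geq(1+o(1))L$, the event $|\XX_m|<\tfrac12 L$ forces $E-|\XX_m|\geq E/2$, giving failure probability $\leq 4\mathrm{Var}(|\XX_m|)/E^2=O(1/E)=O((\uu/d)^{-(m-1/2)})$.

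The delicate ingredient is the two-sided constant-factor control of $\mu_\uu$: this is what produces the asymmetric exponents $m\pm 1/2$ in the bounds, and it genuinely relies on the sharp Poisson tail of Lemma~\ref{lem:sharp_poisson_tail} rather than a cruder Chernoff- or Stirling-type estimate. The covariance computation, while it must be done carefully, is then straightforward since vertex degrees share only a single edge.
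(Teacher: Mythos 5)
Your plan is correct and follows essentially the same high-level strategy as the paper: bound $\E[|\XX_m|]$ between $(\uu/d)^{m-1/2}$ and $(\uu/d)^{m+1/2}$ using the definition of $\uu$ as the minimizer of $\max\{\mu_k,\mu_k^{-1}\}$, then apply Chebyshev with a variance bound of the form $\mathrm{Var}(|\XX_m|)=O(\E[|\XX_m|])$.

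The one substantive difference is in how the variance bound is obtained. The paper simply cites Lemma~3.11 of Bollob\'as's \emph{Random Graphs}, which asserts precisely $\mathrm{Var}(|\XX_m|)=O(\E[|\XX_m|])$. You instead rederive it from scratch: writing $|\XX_m|=\sum_x\vone_x$, conditioning on the edge indicator $E_{xy}$, and noting that $\mathrm{Cov}(\vone_x,\vone_y)=\frac{d}{N}(1-\frac{d}{N})\bigl(\P(\Binom(N-2,d/N)=\uu-m-1)\bigr)^2$. This is correct — given $E_{xy}$ the two degree counts become independent, so the covariance reduces to exactly this variance-of-a-conditional-mean term — and summing over the $O(N^2)$ pairs gives $O(\uu^2\E^2/(dN))$ which is $o(\E)$ in the parameter range at hand. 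Your version is more self-contained, at the cost of the explicit covariance calculation; the paper's is shorter by appealing to a textbook. Either buys the same conclusion.

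A second, smaller stylistic difference: you estimate $\E[|\XX_m|]$ by passing through the Poisson approximation and then summing the tail, whereas the paper stays entirely with the binomial and works with the consecutive ratio $\mu_{k+1}/\mu_k=\frac{(N-k-1)p}{(k+1)(1-p)}$ directly, telescoping from $\mu_\uu$. The ratio manipulation avoids stacking two approximation errors and is a bit cleaner, but your route is equally valid for the stated regime of $d$ and $m$. Do note that your one-line argument pinning $\mu_\uu\in[(1+o(1))\sqrt{d/\uu},(1+o(1))\sqrt{\uu/d}]$ deserves to be written out: it follows from $\mu_\uu^{-1}\leq\mu_{\uu-1}\approx(\uu/d)\mu_\uu$ and $\mu_\uu\leq\mu_{\uu+1}^{-1}\approx(\uu/d)\mu_\uu^{-1}$, both consequences of $\uu$ being the minimizer and of the ratio formula; as stated your sketch is slightly too compressed but the idea is right.
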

\begin{proof}
By Lemma 3.11 in \cite{bollobas01randomgraphs}, we know that $\Var(|\XX_m|) = O(\E[|\XX_m|] )$. By a Chebyshev inequality, we have that with probability $1-O(\frac{1}{\E[|\XX_m|]})$,\begin{equation}\label{eq:bollobasvar}
\frac{1}{2}\E[|\XX_m|]\leq |\XX_m|\leq \frac32\E[|\XX_m|].
\end{equation}
Therefore, it is sufficient to show that $\E[|\XX_m|]$ and $\E[|\UU|]$ satisfy the above bounds, and that each is $\omega_N(1)$. Recall that $\mu_k := N \P( \text{ a vertex is of degree $k$ }) = N \Bin(k;N-1,p)$. Thus for any $k \in \mathbb{N}$,
\begin{equation}\label{eq:expratio}
\frac{\mu_{k+1}}{\mu_k} = \frac{\Bin(k+1;N-1,p)}{\Bin(k;N-1,p)} = \frac{(N-k-1)p}{(k+1)(1-p)}.
\end{equation}
By the definition of $\uu$, and the fact that $\mu_k$ monotonically decreases in $k$,
\begin{eqnarray*}
   \mu_{\uu}^{-1} \leq  \mu_{\uu-1}&=& \frac{N\uu(1-\frac dN)}{d(N-\uu)}\mu_{\uu},\\
    \mu_{\uu} \leq \mu_{\uu+1}^{-1}&=& \frac{N(\uu+1)(1-\frac dN)}{d(N-\uu-1)}\mu_{\uu}^{-1}
\end{eqnarray*}
Therefore
  \[
  (1-o_N(1))\sqrt{\frac{d}{\uu}}\leq \mu_\uu\leq (1+o_N(1))\sqrt{\frac{\uu}{d}}.
 \]
We have by \eqref{eq:expratio}, for $m \leq \uu,$
\begin{align*}
        \mu_{\uu-m  } 
        & = \mu_\uu \left ( \frac{N-d}{d} \right )^{m } 
        \prod_{ i = 1}^{m } \frac{\uu-i+1}{N-\uu+i-1} \\
&= (1+o_N(1))\mu_\uu  \frac{\uu^m}{d^m}\prod_{i=1}^m(1-\frac{i-1}{\uu}).
    \end{align*}
    An upper bound on this is $(1+o_N(1))(\frac{\uu}{d})^{m+1/2}$. Summing over all $0\leq n\leq m$ gives \eqref{eq:sizeupper}. Assuming that $m\leq \uu^c$ for $c<1/2$, 
    \[
    (1+o_N(1))\mu_\uu  \frac{\uu^m}{d^m}\prod_{i=1}^m(1-\frac{i-1}{\uu})\geq (1+o_N(1))\mu_\uu  \frac{\uu^m}{d^m} e^{-m^2/\uu}\geq (1+o_N(1))\mu_\uu  \frac{\uu^m}{d^m},
    \]
    giving \eqref{eq:sizeslower}.

    \end{proof}

\begin{cor} \label{cor:regimesizes}
    For our regimes this implies that with high probability
    \begin{align*}
        | \cW | \ll e^{u^c} \text{ for any } c > \frac{1}{4} \\
        | \cV | \ll e^{u^c} \text{ for any } c > \frac{2}{3} \\
        | \cU | \ll N^c \text{ for any } c > \frac{1}{2}
    \end{align*}
\end{cor}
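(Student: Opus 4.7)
The plan is to apply the upper bound \eqref{eq:sizeupper} of Lemma \ref{lem:sizes} directly with $m=\uu^{1/4}$, $m=\uu^{2/3}$, and $m=\uu/2$, corresponding to $\WW$, $\VV$, and $\UU$ respectively. This immediately gives the deterministic estimate $|\XX_m|\leq \tfrac{3}{2}(\uu/d)^{m+1/2}$ with probability $1-O((\uu/d)^{-(m+1/2)})$. In each of the three cases $m\to\infty$ and $\uu/d\to\infty$, so the failure probabilities are $o(1)$, and a union bound makes all three size estimates hold simultaneously with high probability.

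The remaining work is a short asymptotic calculation. From \eqref{eq:uapprox} together with the regime $\log^{-1/9}N\leq d\leq \log^{1/40}N$, we have $\log\uu=(1+o(1))\log\log N$ and $\log(\uu/d)=(1+o(1))(\log\log N-\log d)=\Theta(\log\log N)$. Since $\uu$ is a positive power of $\log N$ up to a $\log\log N$ factor, we have $\log\log N=o(\uu^{\epsilon})$ for every $\epsilon>0$. Therefore
\[
\log|\WW|\leq \bigl(\uu^{1/4}+\tfrac12\bigr)\log(\uu/d)+O(1)=O\bigl(\uu^{1/4}\log\log N\bigr)\ll \uu^{c}
\]
for every $c>1/4$, and the identical computation with $\uu^{1/4}$ replaced by $\uu^{2/3}$ handles $\VV$.

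For $\UU$ I would invoke the sharper identity $\uu\log(\uu/d)=(1+o(1))\log N$, obtained by multiplying the formula $\uu=(1+o(1))\log N/(\log\log N-\log d)$ from \eqref{eq:uapprox} with $\log(\uu/d)=(1+o(1))(\log\log N-\log d)$. This gives
\[
\log|\UU|\leq \bigl(\tfrac{\uu}{2}+\tfrac12\bigr)\log(\uu/d)+O(1)=\bigl(\tfrac12+o(1)\bigr)\log N,
\]
hence $|\UU|\leq N^{1/2+o(1)}\ll N^{c}$ for any $c>1/2$.

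No step is substantial; the only mildly delicate point is the identity $\uu\log(\uu/d)=(1+o(1))\log N$, but this is essentially already packaged in the Stirling derivation leading to \eqref{eq:uapprox}, so no new input is required beyond the lemmas already stated.
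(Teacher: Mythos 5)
Your proposal is correct and is the same calculation the paper intends by "this implies": plug $m=\uu^{1/4},\uu^{2/3},\uu/2$ into the upper bound of Lemma \ref{lem:sizes} and simplify using $\log(\uu/d)=\Theta(\log\log N)$ and $\uu\log(\uu/d)=(1+o(1))\log N$, the latter being immediate from \eqref{eq:uapprox}. Nothing to add.
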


    \begin{rem}
        Note that in the proof we derive bounds on the expected values of the sizes of these sets, which immediately imply bounds on the probability that a given vertex falls into one of the sets, since for any set $\cT$, $\E[|\cT|] = N \P( \text{ vertex } 1 \in \cT ).$
    \end{rem}
    
    Given this, we can introduce a structure theorem, subdividing $A$ according to the different regimes. We will decompose our matrix using a unitary transform $U$ made clear later. Here $D_\WW, D_{\VV\backslash \WW},D_{\UU\backslash \VV}$ are diagonal operators associated with the balls surrounding vertices in $\UU$. 

A summary of the results concerning this decomposition is as follows. 

    \begin{thm}\label{thm:structure}
        With high probability, there is a unitary transformation $U:\R^{N}\rightarrow\R^N$ such that we can write
            \begin{equation}\label{eq:blockdecomp}
       A=U\left(\begin{array}{cccc}
    D_{\WW}&0&0&E_{\WW}^*\\
    0& D_{\VV\backslash \WW}&0&E_{\VV\backslash \WW}^*\\
    0&0& D_{\UU\backslash \VV}+\EE_{\UU\backslash \VV}&E_{\UU\backslash \VV}^*\\
    E_{\WW}& E_{\VV\backslash \WW }&E_{\UU\backslash \VV }&\XX
\end{array}\right)U^{*}
\end{equation}
where \eqref{eq:blockdecomp} satisfies the following
        \begin{enumerate}
        \item 
The \emph{fine regime} operator $D_{\WW}$ is diagonal, of dimension $2|\WW|$, and has at least $e^{\log^{1/8}N}$ eigenvalues of value at least $\sqrt{\uu}-O(\uu^{-3/8})$. 
        \item
            The \emph{intermediate regime} operator $D_{\VV\backslash \WW}$ is diagonal, of dimension $2|\VV\backslash \WW|$, and $\|D_{\VV\backslash \WW}\|\leq \sqrt{\uu}-\Omega(\uu^{-1/4})$.
           \item
        The \emph{rough regime} operator $D_{\UU\backslash \VV}$ is diagonal and of dimension $2|\UU\backslash \VV|$ and satisfies $\|D_{\UU\backslash \VV}\|\leq \sqrt{\uu}-\uu^{1/6-o_N(1)}$.
        \item 
The \emph{bulk} operator $\XX$ satisfies $\|\XX\|=(\frac{1}{\sqrt{2}}+o_N(1))\sqrt{\uu}$.
        \item
        The error terms satisfy
        $\|E_\WW\|,\|E_{\VV\backslash \WW}\|=O( (d^{r}+1)\uu^{-r/2+1})$, $\|\EE_{\UU\backslash \VV}\|+\|E_{\UU\backslash \VV}\|=O((\log\log N)^2)$.

        \end{enumerate}
    \end{thm}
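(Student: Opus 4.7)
The plan is to construct the unitary $U$ from orthonormal vectors localized on the balls $B_r(x)$ around vertices $x \in \UU$, and then to verify each of the five claims by combining the local spectral analysis of Sections \ref{sec:fine} and \ref{sec:rough} with the Krivelevich--Sudakov decomposition of the complement in Section \ref{sec:finaldecomp}.

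\textbf{Construction of $U$.} I would first argue that, with high probability, for every $x \in \UU$ the ball $B_r(x)$ is a tree and the balls around distinct vertices of $\UU$ are essentially disjoint. Disjointness follows from the observation that two vertices of $\UU$ can be at distance at most $2r$ from each other only on a low-probability event, controlled by the tail bound in Lemma \ref{lem:sharp_poisson_tail} combined with the size estimate $|\UU| \ll N^{1/2+o(1)}$ from Corollary \ref{cor:regimesizes}; the few colliding pairs can be absorbed by a small pruning of edges whose contribution to the various norms is negligible. Because the adjacency operator on a tree is bipartite, the restriction of $A$ to each ball $B_r(x)$ has a positive top eigenvalue $\mu_x$ with unit eigenvector $\ww_x^+$ and its negative $-\mu_x$ with unit eigenvector $\ww_x^-$. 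Since the pruned balls are vertex disjoint, the $2|\UU|$ vectors $\{\ww_x^{\pm}\}_{x \in \UU}$ are orthonormal; I would extend this family to an orthonormal basis of $\R^N$ and let $U$ be the corresponding change of basis. The block $D_\WW$ collects the $\pm\mu_x$ for $x \in \WW$, and similarly for $D_{\VV\backslash\WW}$ and $D_{\UU\backslash\VV}$.

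\textbf{Diagonal blocks.} For $x \in \WW$, I would invoke the fine-regime analysis from Section \ref{sec:fine} to conclude $\mu_x = \sqrt{\alpha_x + \beta_x/\alpha_x + (d^2+d)/\alpha_x} + O((d^{3/2}+1)\uu^{-11/6})$, which for $\alpha_x \geq \uu - \uu^{1/4}$ and $\beta_x$ concentrated near $d\alpha_x$ by Corollary \ref{lem:binom_heavy_tail} is at least $\sqrt{\uu} - O(\uu^{-3/8})$; the lower bound $|\WW| \geq (\uu/d)^{\uu^{1/4}-1/2}$ from \eqref{eq:sizeslower} gives at least $e^{\log^{1/8}N}$ such eigenvalues. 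For $x \in \VV\setminus\WW$ the degree satisfies $\alpha_x \leq \uu - \uu^{1/4}$, so Lemma \ref{lem:basic_eigenvalue_approximation} yields $\mu_x \leq \sqrt{\alpha_x} + O(\alpha_x^{-1/2}\cdot d\alpha_x/\alpha_x) \leq \sqrt{\uu} - \Omega(\uu^{-1/4})$. For $x \in \UU\setminus\VV$ the fluctuations in $\beta_x$ are too large to extract a clean leading formula, so I would set the diagonal entries of $D_{\UU\backslash\VV}$ to $\pm\sqrt{\alpha_x}$ and absorb the correction $\mu_x - \sqrt{\alpha_x}$ into $\EE_{\UU\backslash\VV}$; Lemma \ref{lem:basic_eigenvalue_approximation} then gives $\|D_{\UU\backslash\VV}\| \leq \sqrt{\uu - \uu^{2/3}} \leq \sqrt{\uu} - \uu^{1/6 - o_N(1)}$ and $\|\EE_{\UU\backslash\VV}\| = O((\log\log N)^2)$ once we impose the deterministic degree cap $\alpha_y \leq (\log\log N)^2$ for vertices $y$ in the neighborhoods (which holds with high probability by Lemma \ref{lem:weakertail}).

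\textbf{Off-diagonal blocks and bulk.} The columns of $E_\WW, E_{\VV\backslash\WW}, E_{\UU\backslash\VV}$ record the action of $A$ on $\ww_x^\pm$ that escapes $B_r(x)$; since each vertex in $S_r(x)$ has expected degree $d$ outside the ball and the eigenvector mass on $S_r(x)$ is at most $(d/\alpha_x)^{r/2}$ (by the localization statement I would prove for these local eigenvectors in Section \ref{sec:fine}), we get $\|E_\WW\|, \|E_{\VV\backslash\WW}\| = O((d^r+1)\uu^{-r/2+1})$; for $\UU\backslash\VV$ the looser control on $\alpha_x$ gives the weaker bound $O((\log\log N)^2)$. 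Finally, $\XX$ is the compression of $A$ to the orthogonal complement of $\mathrm{span}\{\ww_x^\pm : x \in \UU\}$. I would apply the argument of \cite{krivelevich2003largest}: after deleting the disjoint stars centered at $\UU$, the remaining graph has spectral radius at most $(1/\sqrt{2} + o_N(1))\sqrt{\uu}$ by Lemma \ref{lem:forest-bound} and an analysis of the pruned graph's degree profile; projecting away from $\mathrm{span}\{\ww_x^\pm\}$ only costs a further $o(1)\sqrt{\uu}$.

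\textbf{Main obstacle.} The delicate step is making the construction of $U$ genuinely unitary while preserving all of the claimed norm bounds simultaneously: the localized vectors $\ww_x^{\pm}$ are only exactly orthogonal once the high-degree balls are forced to be vertex-disjoint, and the pruning needed for this must not spoil the $(d^r+1)\uu^{-r/2+1}$ bound on $E_\WW, E_{\VV\backslash\WW}$ or the bulk bound on $\XX$. Ensuring the Krivelevich--Sudakov-type bound survives the projection onto $\mathrm{span}\{\ww_x^\pm\}^\perp$ (rather than the combinatorially simpler removal of stars) is the technical heart of the argument and requires comparing the true local eigenvectors with the star approximation used implicitly in \cite{krivelevich2003largest}.
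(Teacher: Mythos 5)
Your overall skeleton (take local vectors around $\UU$ as the first columns of $U$, complete arbitrarily, then read off the blocks) matches the paper, and your treatment of $D_\WW$, $D_{\VV\setminus\WW}$, $E_\WW$, $E_{\VV\setminus\WW}$ and $\XX$ is essentially the paper's. The place where your plan departs from the paper --- and where it has a real gap --- is the rough regime $\UU\setminus\VV$.

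You propose to use the true top eigenvector of $B_r(x)$ for every $x\in\UU$, with $\pm\sqrt{\alpha_x}$ on the diagonal and the residue $\mu_x-\sqrt{\alpha_x}$ in $\EE_{\UU\setminus\VV}$. The paper does something different for $\UU\setminus\VV$: it does not use true eigenvectors at all. Instead, Section~\ref{sec:rough} builds a pruned subgraph $\hat G$ in which the radius-$3$ balls around $\UU$ are genuinely disjoint trees (Lemma~\ref{lem:prunedgraph}), and then takes \emph{explicit test vectors} $\ww_\sigma(x)$ supported on $\hat B_2(x)$ of the form in \eqref{eq:rough_eigenvector}, with diagonal entries $\pm\sqrt{\alpha_x+\beta_x/\alpha_x}$. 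The reason this detour is needed is precisely the thing your plan glosses over: the structural event $\omegam$ (disjoint treelike $B_{r+3}(x)$, sphere-size concentration, degree caps on neighbours) is established only for $\VV$, via union bounds that exploit $|\VV|\ll e^{\uu^{2/3+o(1)}}$. The set $\UU$ has size up to $N^{1/2+o(1)}$, which is far too large to run those union bounds, and indeed Lemma~\ref{lem:rough_tree_disjoint} gives only bounded excess edges, not trees. Without $\omegam$-type control you cannot prove the analogue of Lemma~\ref{lem:vertchange} for $x\in\UU\setminus\VV$, so you have no exponential-decay statement for the true eigenvector of $B_r(x)$, and therefore no way to bound the truncation error $\|(A-A_{B_r(x)})\ww_\pm(x)\|$ that becomes $E_{\UU\setminus\VV}$. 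Saying ``the few colliding pairs can be absorbed by a small pruning'' is too optimistic: the pruning construction, the move from true eigenvectors to test vectors, and the change of radius from $r$ to $3$ are all forced, not cosmetic.

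Two smaller points. First, your explanation of $\|\EE_{\UU\setminus\VV}\|=O((\log\log N)^2)$ via ``a deterministic degree cap $\alpha_y\le(\log\log N)^2$ for vertices in the neighborhoods'' is not right --- neighbours of $x\in\UU$ can have degree up to $\uu^{3/4}$ or more, and the actual source of the bound is the concentration $\sum_{y\in\hat S_1(x)}\bigl(\hat N_y-\hat\beta/\hat\alpha\bigr)^2=O((\log N)^2)$ from Lemma~\ref{lem:prunedgraph}, combined with the explicit test-vector calculation in the proof of Lemma~\ref{lem:roughconstruction}. Second, for the bulk bound you describe the Krivelevich--Sudakov step as ``delete disjoint stars centered at $\UU$,'' but the relevant decomposition deletes stars centered at the much larger set $\YY_1$ (degree $\ge\uu^{3/4}$), leaving $\|A_{G\setminus\mathcal H}\|=O(d+\uu^{7/16})$; the factor $\tfrac{1}{\sqrt2}$ then comes from bounding $\langle\vv,A_{\tilde B_1(x)}\vv\rangle$ for test vectors $\vv\perp U_\UU$ using the localization of $\ww_\pm$, as in Lemma~\ref{lem:bulk}, not from the spectral radius of the residual graph alone.
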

These results, along with results concerning the structure of the eigenvectors associated with the operator surrounding vertices of $\WW$, imply that the edge eigenvectors and eigenvalues come from the operator associated with $D_\WW$. This in turn will give the theorems from Section \ref{sec:intro}. Much of the remainder of the paper is dedicated to showing this decomposition.

\section{Fine and Intermediate Regime}\label{sec:fine}

In this section we start by proving structural results about the balls around vertices in $\cV$, which we then use to derive a recursion for the largest eigenvalue and eigenvector of the balls around the vertices in that regime. We also derive a first approximation of the largest eigenvalue for all vertices in $\cV$. In part \ref{sec:fine_eigenvector} we then use these ingredients to prove the exponential decay of this eigenvector for all vertices in $\cV$. In the last part \ref{sec:fine_eigenvalue} we then derive a more precise expression for the eigenvalue of vertices in $\cW$, based on concentration results from part \ref{sec:fine_general}.

\subsection{General Structure} \label{sec:fine_general}
 In this section, we create a test vector and test eigenvalue for the neighborhoods of vertices in $\VV$. In order to do this, we first define an event under which the balls around vertices in $\VV$ have a nice structure. While the eigenvalue and eigenvector will be defined using $B_r(x)$, the following structural results are for slightly larger balls so that we can also bound the error coming from truncating the balls and guarantee that there is no intersection with balls of radius 3 around vertices from $\UU.$

 \begin{dfn}\label{dfn:omegadef}
    Define $\omegam$ to be the event that the following are true.
    \begin{enumerate}
        \item 
        For all $x \neq y\in \VV$, $B_{r+3}(x)\cap B_{r+3}(y)=\emptyset$. 
        \item 
         
        For all $x\in \VV$, $B_{r+3}(x)$ is a tree.
        \item 
       For $1\leq i\leq r$ and every vertex $x\in \VV$, 
\[\left||S_{i}(x)|-d^{i-1}\alpha_x\right|= O (d^{i - 3/2} + 1) \uu^{\frac{7}{8}}\]

        Moreover, for every vertex $x\in \WW$, 
\[\left||S_{i}(x)|-d^{i-1}\alpha_x\right|\leq O (d^{i - 3/2} + 1) \uu^{\frac23}.\]
        
        \item 
For $x\in \VV$, every $y\in B_{r+3}(x) \setminus \{ x \},$ satisfies $N_y\leq \uu^{3/4}$.  

Moreover, for $x\in \WW$, every $y\in B_{r+3}(x)\setminus \{ x \}$ satisfies $N_y\leq \uu^{1/3}$.
\item
For every $x\in \VV$,
\[
\left|\sum_{y\in S_1(x)} N_y^2 -(d^2+d)\alpha_x\right|\leq O \left ( \uu^{3/2} \right ).
\]
Moreover, for every $x\in \WW$,
\[
\left|\sum_{y\in S_1(x)} N_y^2-(d^2+d)\alpha_x\right|\leq O \left ( \uu^{2/3} \right ).
\]

    \end{enumerate}
\end{dfn}

Note that statement 3 in \ref{dfn:omegadef} implies that $|S_2(x) | \leq 2 d \alpha_x$ for our regime of $d$. If $d$ was smaller than $\log^{-c} N,$ for some $c > \frac{1}{4},$ this would no longer be true.

Because of the sparsity of the graph and concentration of independent binomials, we can show that the event $\omegam$ almost always occurs. Considering statements similar to most of these bounds have appeared previously, in, e.g. \cite{alt2023poisson}, we defer the proof of the following lemma to the appendix in Section \ref{sec:structure}.
\begin{lemma} \label{lem:fine_balls_disjoint}
The event $\omegam$ occurs with high probability.
\end{lemma}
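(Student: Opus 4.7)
\medskip

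\noindent\textbf{Proof proposal.}
The plan is to verify the five clauses of Definition~\ref{dfn:omegadef} one by one, each via a union bound over vertices in $\VV$ (or $\WW$), combined with the size estimates of Corollary~\ref{cor:regimesizes} and the sharp binomial/Poisson tails from Section~\ref{sec:prelim}. The key quantitative input is that $|\VV| \leq e^{\uu^{2/3 + o(1)}}$ and $|\WW| \leq e^{\uu^{1/4 + o(1)}}$ whp, and that by \eqref{eq:uapprox}, $N \asymp \exp(\uu \log(\uu/d))$ is vastly larger than any polynomial in $(d+1)^r$ times $|\VV|^2$. Throughout, I will condition on the ``good'' event that the maximum degree is at most $\uu$ (Lemma~\ref{lem:max_degree}), so that the total vertex count in any $B_{r+3}(x)$ with $x \in \VV$ is at most $\uu \cdot (\uu^{3/4})^{r+2} \leq \uu^{O(r)}$ on the event we are establishing (a bootstrap argument will be needed, revealing the ball level by level).

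For clauses (1) and (2), first reveal $B_{r+3}(x)$ in a breadth-first manner. By Lemma~\ref{lem:sharp_poisson_tail}, the probability that a fixed vertex lies in $\VV$ is at most $N^{-1}e^{\uu^{2/3 + o(1)}}$, so the expected number of ordered pairs $x \neq y$ in $\VV$ with $\mathrm{dist}(x,y) \leq 2(r+3)$ is at most $N \cdot (d+1)^{2(r+3)} \cdot (\E[\vone_{x\in\VV}])^2 = o(1)$, giving (1). Cycles in $B_{r+3}(x)$ arise only when a fresh neighbor attaches to an already-explored vertex; given a partial ball of size $k \leq \uu^{O(r)}$, each new edge lands in the ball with probability $kd/N$, and summing over all edges revealed gives total cycle probability $\uu^{O(r)}/N$, which stays $o(|\VV|^{-1})$, yielding (2) after union bound.

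For clauses (3) and (4), I would proceed inductively in $i$. Given $B_{i-1}(x)$ is a tree on an event of probability $1-o(1)$, the number $|S_i(x)|$ is, conditionally, a sum of $|S_{i-1}(x)|$ independent $\Binom(N - |B_{i-1}(x)|, d/N)$ random variables. Applying Bernstein's inequality in the form of Lemma~\ref{lem:weakertail} at deviation $\uu^{7/8}$ (resp.\ $\uu^{2/3}$) beats any $|\VV|$-sized (resp.\ $|\WW|$-sized) union bound, since $\uu^{7/8} \gg \sqrt{d^{i-1}\uu} \cdot \log|\VV|$ in our parameter range. This simultaneously controls (3) and, by applying the same Poisson tail bound to each individual $N_y$ with deviation $\uu^{3/4}$ (resp.\ $\uu^{1/3}$), controls (4) after a union bound over the at most $\uu^{O(r)}$ vertices in $B_{r+3}(x)$; here Lemma~\ref{lem:sharp_poisson_tail} applied to $\Pois(d)$ with $\delta = \uu^{3/4}/d$ is the sharp input that makes the probability much smaller than $|\VV|^{-1}\uu^{-O(r)}$.

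Clause (5) is where the Weibull-tail bound of Lemma~\ref{lem:weibullbound} is tailor-made: conditionally on $\alpha_x$ and on $B_1(x)$ being disjoint from everything already revealed, $\sum_{y \sim x} N_y^2$ is a sum of $\alpha_x$ i.i.d.\ $\Binom(N', d/N)^2$-distributed variables (with $N' = N - \alpha_x - 1$), with mean $(d^2 + d + o(1))\alpha_x$. Applying Lemma~\ref{lem:weibullbound} at threshold $t = \uu^{3/2}$ for $\VV$ and $t = \uu^{2/3}$ for $\WW$ yields failure probability $\exp(-\uu^{\Omega(1)})$, again beating the union bound. The main technical obstacle is to ensure the inductive revelation of balls is compatible across the five clauses — in particular that clauses (3) and (4) can be assumed when establishing (5) on the same event — but this is handled by revealing level by level and taking the intersection of all the good events before the final union bound over $x \in \VV$.
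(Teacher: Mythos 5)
Your overall plan --- verifying the five clauses of $\omegam$ one by one via union bounds over $\VV$ (or $\WW$), revealing the ball level by level, and invoking a Bernstein bound for the sphere sizes, the sharp Poisson tail for the individual $N_y$'s, and the Weibull-tail Lemma~\ref{lem:weibullbound} for $\sum_{y\sim x}N_y^2$ --- is essentially the paper's. For clause (2) you replace the paper's excess-edge counting input (Lemma~\ref{lem:excess_edges}) by a direct breadth-first cycle-probability argument; that is a valid and slightly more elementary variation, and the paper's version is chosen because the same counting lemma is reused for the rough regime.

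However your treatment of clause (3) has a genuine quantitative gap. You apply Bernstein at deviation $\uu^{7/8}$ for $\VV$ and cite the inequality $\uu^{7/8}\gg \sqrt{d^{i-1}\uu}\cdot\log|\VV|$. This inequality is false: since $\log|\VV|=\Theta(\uu^{2/3}\log(\uu/d))$, already at $d=1$, $i=1$ the right-hand side is of order $\uu^{7/6}\log\uu\gg\uu^{7/8}$. The correct Bernstein criterion is $t^2/\lambda\gg\log|\VV|$, i.e.\ $t\gg\sqrt{\lambda\log|\VV|}$; with $\lambda\approx d^{i-1}\uu$ and $t=\uu^{7/8}$ this becomes $d^{i-1}\ll\uu^{1/12}/\log\uu$, which fails once $d$ approaches the upper bound $\log^{1/40}N$ and $i$ approaches $r+3$. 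This is exactly why clause (3) of $\omegam$ carries the $d$-dependent prefactor $(d^{i-3/2}+1)$ on the deviation; taking $t\asymp(d^{i-3/2}+1)\uu^{7/8}$ yields a failure probability $e^{-\Omega(\uu^{3/4})}$ uniformly over the admissible range of $d$ and $i$, and that is what the paper's level-by-level induction (carrying along the events that the lower spheres are already controlled) actually establishes. As written, your deviation scale does not close the union bound for all allowed $(d,i)$, and the justification you offer would need to be repaired to include this prefactor.
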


We also know from Lemma \ref{lem:max_degree}, that with high probability the maximum degree is bounded by $\uu$. Therefore, we will consistently use these structural properties.

\begin{assumption}
For the rest of Section \ref{sec:fine}, we assume that $\omegam$ occurs. Moreover we assume the high probability event that the maximum degree in $G$ is in $\{ \uu-1, \uu \}$.
\end{assumption}

Under this event we know enough about the structure of the neighborhoods around vertices in the intermediate regime, to analyze its top eigenvalue and eigenvector, which we use as the test eigenvector and eigenvalue.

\begin{dfn}\label{dfn:wdef}
For $x\in \VV$, we define $\lambda_x$ to be the top eigenvalue of $A_{B_r(x)}$. Moreover, define $\ww_{+}(x)$ to be the eigenvector corresponding to $\lambda_x$ and $\ww_{-}(x)$ to be the eigenvector of the most negative eigenvalue of $A_{B_r(x)}$. We use the notation $\ww_{\pm}(x)$, when a statement is true for both $\ww_{+}(x)$ and $\ww_{-}(x)$. Depending on the context, we will also use $\ww_{\pm}(x)$ to denote the above eigenvector padded with $0$'s to make it a vector in $\R^{N}$.
\end{dfn}

Under $\omegam$, $B_r(x)$ is a tree, therefore it is bipartite. Therefore the most negative eigenvalue is $-\lambda_x$, and if $y\in B_r(x)$, then
\[
\ww_{-}(x)|_y=(-1)^{d(x,y)}\ww_{+}(x)|_y.
\]
Moreover, as $B_r(x)$ is a tree, its eigenvectors and eigenvalues satisfy a nice recursion. Consider an eigenvector $\ww$ of $A_{B_r(x)}$ with eigenvalue $\lambda$. We consider the eigenvector equation at $x$:

\[
\lambda \ww|_x=\sum_{y\sim x} \ww|_{y}
\]
which, if $\ww|_x\neq 0$, can be rewritten as 
\begin{equation}\label{eq:rootvertex}
\lambda=\sum_{y\sim x} \frac{\ww|_{y}}{ \ww|_x}.
\end{equation}
More generally, we have
for $v\sim u,  v\geq u$, i.e. for $v$ a child of $u$ in the tree rooted at $x$, if $\ww|_v\neq 0$, 

\begin{equation}\label{eq:eigenvector}
\lambda \ww|_v = \ww|_u + \sum_{y_1\sim v, y_1\geq v} \ww|_{y_1}
\Rightarrow
\ww|_v=\frac1{\lambda-\sum_{y_1\sim v, y_1\geq v}\frac{\ww|_{y_1}}{\ww|_v}}\ww|_u.
\end{equation}
Plugging in \eqref{eq:eigenvector} into \eqref{eq:rootvertex} for every $y_1\sim x$ gives
\[
\lambda=\sum_{y_1\sim x} \frac{\frac1{\lambda-\sum_{y_2\sim y_1, y_2\geq y}\frac{\ww|_{y_2}}{\ww|_{y_1}}}\ww|_x}{ \ww|_x}=\sum_{y_1\sim x} \frac1{\lambda-\sum_{y_2\sim y_1, y_2\geq y_1}\frac{\ww|_{y_2}}{\ww|_{y_1}}}.
\]
Repeating this process for all vertices gives that
\begin{equation}\label{eq:eigdef}
\lambda=\sum_{y_1\sim x}\frac{1}{\lambda-\sum_{y_2\sim y_1, y_2\geq y_1} \frac{1}{\lambda-\sum_{y_3\sim y_2, y_3\geq y_2} \frac{1}{\lambda-\sum_{y_4\sim y_3, y_4\geq y_3} \cdots }}},
\end{equation}
where the right hand side is a continued fraction of at most $r$ levels.

Since $A_{B_r(x)}$ is a connected graph, its adjacency matrix is irreducible and this implies by the Perron-Frobenius theorem, that the top eigenvector $\ww|_{+}(x)$ of $B_{r}(x)$ is the only positive eigenvector, implying in particular that \eqref{eq:eigdef} does not contain any 0 denominators. This means that we can use \eqref{eq:eigdef} for our definition of $\lambda_x$. To further examine this, we require an initial two sided bound, based on $B_r(x)$ being close to a star graph. This will be enough to bound the contribution of balls around vertices in $\cV \setminus \cW$, and for vertices in $\cW$, we will eventually bootstrap it into a tighter bound in Lemma \ref{lem:lambdaexpression}.

\begin{lemma} \label{lem:eigenvalueapproximation} 
For any vertex $x\in \VV$,  

\begin{equation*}
\alpha_x\leq \lambda^2_x \leq \alpha_x +O(d).
\end{equation*}
\end{lemma}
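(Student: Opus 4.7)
The plan is to establish the two sides separately, both leveraging structural control under $\omegam$ together with the star approximation of $B_r(x)$.

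For the lower bound $\lambda_x^2 \geq \alpha_x$, I would simply apply the variational characterization of $\lambda_x$ with the test vector coming from the star graph centered at $x$: set $\ww|_x = 1/\sqrt{2}$, $\ww|_y = 1/\sqrt{2\alpha_x}$ for each $y \sim x$, and $\ww|_z = 0$ otherwise. A direct calculation gives $\|\ww\|=1$ and $\langle \ww, A_{B_r(x)} \ww \rangle = \sqrt{\alpha_x}$, since the only nonzero contributions come from the $\alpha_x$ edges between $x$ and its neighbors. Hence $\lambda_x \geq \sqrt{\alpha_x}$.

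For the upper bound, the plan is to invoke Lemma \ref{lem:basic_eigenvalue_approximation} with $i = r$. The hypotheses are verified directly from $\omegam$ and the regime parameters: item 2 of Definition \ref{dfn:omegadef} gives that $B_r(x)$ is a tree; item 3 gives $|S_2(x)| \leq d\alpha_x + O\bigl((d^{1/2}+1)\uu^{7/8}\bigr)$, so
\[
\frac{|S_2(x)|}{\alpha_x} \leq d + O\!\left(\frac{(d^{1/2}+1)\uu^{7/8}}{\alpha_x}\right) = d + o(1),
\]
using $\alpha_x \geq \uu - \uu^{2/3}$ for $x \in \VV$ and $d \leq \log^{1/40} N$; and item 4 gives that every non-root vertex in $B_r(x)$ has degree at most $\uu^{3/4} + 1$, which is far less than $\alpha_x/5$ for large $N$. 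Thus we may take $s(n) = d + o(1)$ and $t(n) = \uu^{3/4}+1$ in Lemma \ref{lem:basic_eigenvalue_approximation}, yielding
\[
\lambda_x = \sqrt{\alpha_x} + O\!\left(\frac{d}{\sqrt{\alpha_x}}\right).
\]

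Squaring then gives $\lambda_x^2 = \alpha_x + O(d) + O(d^2/\alpha_x) = \alpha_x + O(d)$, as $d^2/\alpha_x \leq d$ in our regime. There is no real obstacle here: the estimate is essentially bookkeeping once Lemma \ref{lem:basic_eigenvalue_approximation} and Definition \ref{dfn:omegadef} are in hand. The only mildly delicate point is confirming that the contribution of the $\uu^{7/8}$-type error in $|S_2(x)|$ is absorbed into the final $O(d)$, which holds because $\alpha_x$ is of order $\uu$ while the error term in $|S_2(x)|/\alpha_x$ is $o(1)$.
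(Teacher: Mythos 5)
Your proof is correct and takes essentially the same route as the paper: the lower bound comes from the star test vector (equivalently, monotonicity of the spectral radius with the star subgraph), and the upper bound is a direct application of Lemma \ref{lem:basic_eigenvalue_approximation} with $s(n) = O(d)$ and $t(n) = \uu^{3/4}$ read off from $\omegam$. The only difference is cosmetic: you derive the lower bound by an explicit Rayleigh-quotient computation and track the $|S_2(x)|/\alpha_x$ error a bit more carefully, whereas the paper simply cites the star's spectral radius and uses $s(n) = 2d$; both are valid under the paper's standing assumptions on $d$.
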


\begin{proof}
As the spectral radius of a star is the square root of the degree of the central vertex, $\lambda^2_x\geq \alpha_x$. For the upper bound, we apply Lemma \ref{lem:basic_eigenvalue_approximation}. By the definition of $\omegam$, (3) we can take $s(n) =  2 d $, and by the definition of $\omegam$, (4) we can take $t(n) = \uu^{3/4}$. Lemma \ref{lem:basic_eigenvalue_approximation} thus implies that $\lambda_x = \sqrt{\alpha_x} + O \left (\frac{d}{\sqrt{\alpha_x}} \right )$ implying that $\lambda_x^2 = \alpha_x + O(d).$

\end{proof}

\subsection{Eigenvector structure} \label{sec:fine_eigenvector}
We now prove that the entries of the eigenvector decay exponentially with the distance from the central vertex.
For easier readability, we will suppress $x$ in the notation for the rest of the section, so we write $\alpha:=\alpha_x$, $\lambda:=\lambda_x$, $\ww_{\pm}=\ww_{\pm}(x),$ and $S_1 = S_i(x).$
Moreover we define $N_{y^*}=\max_{y\in B_{r}(x)}N_y$.

\begin{lemma}\label{lem:vertchange}
If $x\in \VV$, then for $u,v\in B_r(x)$ such that $ u \sim v$ and $u \leq v$, 
\[
\ww_+|_u= \left (1+O\left (\uu^{-1/4}\right ) \right )\lambda\ww_+|_v
\]
and if $x\in \WW$, then for $u\in B_r(x)$, 
\[
\ww_+|_u=\left (1+O \left (\uu^{-2/3} \right ) \right )\lambda\ww_+|_v.
\]
\end{lemma}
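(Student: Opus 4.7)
The plan is to exploit the tree structure of $B_r(x)$ (guaranteed by $\omegam$, Definition \ref{dfn:omegadef}(2)) and set up a bottom-up recursion on ratios of entries of $\ww_+$. For each non-root $v \in B_r(x)$ with parent $u$, define $f_v := \ww_+|_v/\ww_+|_u$. Connectedness of $B_r(x)$ and Perron--Frobenius make $\ww_+$ strictly positive, so each $f_v$ is well defined and positive. Writing the eigenvector equation at $v$ and dividing by $\ww_+|_v$ yields the continued-fraction-style recursion
\[
f_v \;=\; \frac{1}{\lambda - \sum_{y \sim v,\, y \geq v} f_y}.
\]
The conclusion of the lemma is equivalent to $\sum_{y \sim v,\, y \geq v} f_y/\lambda = O(\uu^{-1/4})$ (respectively $O(\uu^{-2/3})$), since the same eigenvector equation rearranges as $\ww_+|_u = \lambda \ww_+|_v\bigl(1 - \sum_{y \sim v,\, y\geq v} f_y/\lambda\bigr)$.

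The main step is a bottom-up induction on the level of $v$, showing that there exists an absolute constant $C$ such that every non-root $v \in B_r(x)$ satisfies $f_v \leq (1+C\uu^{-1/4})/\lambda$. At a leaf of $B_r(x)$ (either $v \in S_r(x)$ or a leaf of the original rooted graph), the recursion collapses to $f_v = 1/\lambda$, providing the base case. For the inductive step, assume the bound for all children of $v$. Summing gives $\sum_{y \sim v,\, y \geq v} f_y \leq N_v(1+C\uu^{-1/4})/\lambda$. For $x \in \VV$, Definition \ref{dfn:omegadef}(4) gives $N_v \leq \uu^{3/4}$, while Lemma \ref{lem:eigenvalueapproximation} together with $x \in \VV$ yields $\lambda^2 \geq \alpha \geq \uu - \uu^{2/3}$. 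Therefore $N_v/\lambda^2 \leq \uu^{-1/4}(1+o(1))$, and applying the Taylor expansion of $1/(1-z)$ to the recursion gives $f_v \leq (1/\lambda)(1+O(\uu^{-1/4}))$, closing the induction. The matching lower bound $f_v \geq 1/\lambda$ is immediate from positivity of the $f_y$.

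Combining the inductive bound with the identity $\ww_+|_u = \lambda \ww_+|_v(1 - \sum_y f_y/\lambda)$ proves the $\VV$ case, since $\sum_y f_y/\lambda \leq N_v/\lambda^2 \cdot (1+o(1)) = O(\uu^{-1/4})$. The $\WW$ case is identical, using the stronger bound $N_v \leq \uu^{1/3}$ from Definition \ref{dfn:omegadef}(4) together with $\lambda^2 \geq \uu - \uu^{1/4}$ to obtain $N_v/\lambda^2 = O(\uu^{-2/3})$. The main obstacle is not analytic but rather verifying that the implicit constant $C$ in the inductive hypothesis does not degrade across the at-most-$r$ levels of the recursion; since $r$ is a fixed constant and each level contributes a multiplicative factor of $1+O(\uu^{-1/4})$, a single absolute $C$ (depending only on $r$) is enough.
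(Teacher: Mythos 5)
Your proof is correct and follows essentially the same route as the paper's: you both perform a bottom-up induction from the leaves of the (tree) $B_r(x)$, use the eigenvector recursion at each internal vertex, and close the induction via the $\omegam$ bounds $N_v \le \uu^{3/4}$ (resp.\ $\uu^{1/3}$) together with $\lambda^2 \ge \alpha$ from Lemma \ref{lem:eigenvalueapproximation}. The packaging of ratios as $f_v = \ww_+|_v/\ww_+|_u$ is a cosmetic reformulation of the paper's direct argument and the claim that the implicit constant survives the constant number of levels is exactly the (implicit) observation the paper relies on.
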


\begin{proof}
We will show that for a vertex $u$ such that $u\sim v$, $u\leq v$, it holds that 
\begin{equation}\label{eq:rougheigequation}
\ww_+|_u= \left (1+O \left ( \frac{N_{y^*}}{\lambda^2} \right ) \right ) \lambda\ww_+|_v.
\end{equation}
Together with Lemma \ref{lem:eigenvalueapproximation} and Lemma \ref{dfn:omegadef}, this implies the result.

By the eigenvector equation \eqref{eq:eigenvector}, we must have $\ww_+|_u\leq \lambda\ww_+|_v$. For a lower bound on $\ww_+|_u$, we proceed by induction on the distance from $x$, starting from the leaves in $B_r(x)$ (note that these are not necessarily leaves in $G$). Any leaf $v$ only has one neighbor $u$, making this base case trivial, as there is only one neighbor and
$ 
\ww_+|_u=\lambda \ww_+|_v.
$

Now, assume \eqref{eq:rougheigequation} is true for all $z\geq u$.
Then, applying \eqref{eq:eigenvector} to $v$, we get
\begin{eqnarray}
\nonumber\ww_+|_u&=&\left({\lambda-\sum_{y\sim v, y\geq v}\frac{\ww_+|_y}{\ww_+|_v}}\right) \ww_+|_v \\
\nonumber &\geq& \left(\lambda-\frac{N_{y^*}}{\lambda-O \left (\frac{N_{y^*}}{\lambda} \right )}\right)\ww_+|_v\\
&\geq&\left(1-O \left (\frac{N_{y^*}}{\lambda^2} \right ) \right)\lambda\ww_+|_v,\label{eq:eigenlocal}
\end{eqnarray}
where we used the inductive hypothesis to get the first inequality and then used that by Lemma \ref{lem:fine_balls_disjoint} the rooted trees around $\lambda$ satisfy $N_{y*}\ll \lambda^2$.
\end{proof}
Such a tight bound implies exponential decay on various levels. We can now bound the error from approximating these eigenvectors using the truncation. 

\begin{prop}\label{lem:fineerror}
We define 
\[\Lambda:=\sum_{x\in \VV} \bigg(\lambda_x\ww_{+}(x)\ww_{+}(x)^*-\lambda_x\ww_{-}(x)\ww_{-}(x)^*\bigg).
\]
Then
\[
\max_{x\in \VV, \sigma\in\{\pm1\}}\|(A-\Lambda)\ww_{\sigma}(x)\|=O( (d^{r/2}+1)\uu^{-(r-1)/2}).
\]
\end{prop}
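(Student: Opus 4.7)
My plan is to show that $(A - \Lambda)\ww_\sigma(x)$ is supported on the single sphere $S_{r+1}(x)$ just past the truncation radius, and then to bound its $\ell^2$ norm by combining the exponential decay of $\ww_\sigma(x)$ from Lemma \ref{lem:vertchange} with the structural size estimate for $|S_{r+1}(x)|$ coming from the event $\omegam$.

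First, I will argue $\Lambda \ww_\sigma(x) = \sigma \lambda_x \ww_\sigma(x)$. By item (1) of Definition \ref{dfn:omegadef}, the balls $B_{r+3}(y)$ for $y \in \VV$ are pairwise disjoint, so the vectors $\ww_\pm(y)$ have pairwise disjoint supports across distinct $y$, killing all cross-terms $y \neq x$ in the sum defining $\Lambda$. For the remaining $y = x$ contributions, $\ww_+(x)$ and $\ww_-(x)$ are orthogonal (eigenvectors of $A_{B_r(x)}$ with distinct eigenvalues $\pm \lambda_x$), so only the $\tau = \sigma$ term survives, producing a clean scalar action.

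Next I will locate the support of $(A - \Lambda)\ww_\sigma(x)$. For $v \in B_r(x)$, every neighbor of $v$ either lies in $B_r(x)$ or outside the support of $\ww_\sigma(x)$, hence $(A\ww_\sigma(x))|_v = (A_{B_r(x)} \ww_\sigma(x))|_v = \sigma \lambda_x \ww_\sigma(x)|_v$, which cancels with $\Lambda\ww_\sigma(x)|_v$. For $v \notin B_{r+1}(x)$, no neighbor of $v$ meets $B_r(x)$, so $(A\ww_\sigma(x))|_v = 0$. Thus the support is contained in $S_{r+1}(x)$, and because $B_{r+3}(x)$ is a tree (item (2) of $\omegam$), each $v \in S_{r+1}(x)$ has a unique neighbor $p(v) \in S_r(x)$, yielding $(A - \Lambda)\ww_\sigma(x)|_v = \ww_\sigma(x)|_{p(v)}$.

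Finally, I will iterate Lemma \ref{lem:vertchange} along the unique $x$-to-$u$ path in the tree to obtain $|\ww_+(x)|_u| \leq (1 + O(\uu^{-1/4}))^r \lambda_x^{-r} |\ww_+(x)|_x| \leq (1 + o(1))\lambda_x^{-r}$ for every $u \in S_r(x)$; bipartiteness of $B_r(x)$ gives the same magnitude bound for $\sigma = -1$. Grouping the boundary contributions by parent,
\[
\|(A - \Lambda)\ww_\sigma(x)\|^2 = \sum_{u \in S_r(x)} N_u\, |\ww_\sigma(x)|_u|^2 \leq (1 + o(1))\, \lambda_x^{-2r}\, |S_{r+1}(x)|.
\]
The structural sphere-size bound $|S_{r+1}(x)| = O((d^r + 1)\uu)$ from $\omegam$, together with $\lambda_x^{-2r} = O(\uu^{-r})$ from Lemma \ref{lem:eigenvalueapproximation}, gives $O((d^r + 1)\uu^{-(r-1)})$; taking a square root yields the claimed estimate uniformly in $x$ and $\sigma$. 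The only genuine subtlety is bookkeeping: the per-step multiplicative error $1 + O(\uu^{-1/4})$ in Lemma \ref{lem:vertchange} stays $1 + o(1)$ under $r$-fold iteration only because $r$ is the fixed constant of Definition \ref{dfn:rdfn}, which is the structural reason the argument is confined to fixed-radius neighborhoods.
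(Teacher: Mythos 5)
Your proposal is correct and follows essentially the same route as the paper: identify $(A-\Lambda)\ww_\sigma(x)=(A-A_{B_r(x)})\ww_\sigma(x)$, show its support is confined to $S_{r+1}(x)$ using disjointness and tree structure from $\omegam$, bound the boundary entries via iterated application of Lemma \ref{lem:vertchange} and Lemma \ref{lem:eigenvalueapproximation}, and combine with the $\omegam$ sphere-size estimate. The extra care you take in justifying $\Lambda\ww_\sigma(x)=\sigma\lambda_x\ww_\sigma(x)$ (disjoint supports kill cross-terms, orthogonality of $\ww_\pm(x)$ kills the opposite-sign term) is a detail the paper leaves implicit, but the argument is the same.
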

\begin{proof}
For any $x\in \VV, \sigma\in \{\pm1\}$, $\ww_\sigma(x)$  satisfies $(A-\Lambda)\ww_{\sigma}(x)=(A-A_{B_r(x)})\ww_{\sigma}(x)$. The only nonzero entries of this vector are supported on $S_{r+1}(x)$. This holds because the only rows of $(A-A_{B_r(x)})$ that have non-zero entries corresponding to $B_r(x)$ are vertices in $S_{r+1}(x)$, since $B_{r+1}(x)$ are disjoint trees by $\omegam$. By Lemma \ref{lem:eigenvalueapproximation} and Lemma \ref{lem:vertchange}, each entry in $\ww_\sigma(x)$ corresponding to vertices in $S_{r}(x)$ has value at most $(1+o_N(1))\uu^{-r/2}$. Moreover, under $\omegam$, the number of vertices in the $r+1$ level is $d^{r}\alpha_x+O(d^{r - 1/2} + 1) \uu^{\frac{7}{8}} \leq O( (d^r + 1) \u)$. 
Therefore, $\|(A-\Lambda)\ww_{\sigma(x)}\| = O( (d^{r/2}+1) \uu^{-r/2+1/2})$.
\end{proof}

This error term gives us sufficient information about $\VV \backslash \WW$, and we can now focus only on the fine regime $\WW$. The following proposition gives bounds on the total mass the eigenvector assigns to each sphere.

\begin{prop}\label{prop:expdecay}
 For all $x\in \WW$, the eigenvector $\ww_+$ satisfies
\begin{equation}\label{eq:centerbound}
\ww_+|_x=\frac1{\sqrt 2}+O \left ((1+d^{-1/2})\uu^{-1/3} \right ).
\end{equation}
    and for $1\leq i \leq r $,
    \begin{equation}\label{eq:annulusbound}
\|\ww_+|_{S_i}\| =\left(\frac d\alpha\right)^{(i-1)/2}\frac1{\sqrt 2} \left (1+O \left ((1+d^{-1/2}+d^{-i+1})\uu^{-1/3} \right ) \right )
\end{equation}
and 
\begin{equation}\label{eq:complementnorm}
\|\ww_+|_{[N]\backslash B_{i}}\|= \sqrt{\frac{1}{1-\frac d\alpha}}\left(\frac{d}{\alpha}\right)^{i/2}\frac{1}{\sqrt 2}(1+O((1+d^{-1/2}+d^{-r+1})\uu^{-1/3})).
\end{equation}
\end{prop}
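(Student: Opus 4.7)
The plan is to exploit the tree structure of $B_r(x)$ under $\omegam$ together with the sharp per-edge estimate from Lemma \ref{lem:vertchange} to derive tight control on the mass of $\ww_+$ on each sphere $S_i(x)$, and then close the system by imposing $\|\ww_+\| = 1$.

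First, I would iterate Lemma \ref{lem:vertchange} along the (unique) shortest path from $x$ down to $v \in S_i(x)$; each of the $i$ edges contributes a factor of $(1+O(\uu^{-2/3}))\lambda^{-1}$, and since $r$ is a fixed constant by Definition \ref{dfn:rdfn} the accumulated multiplicative error stays $1+O(\uu^{-2/3})$. This gives $\ww_+|_v = (1+O(\uu^{-2/3}))\lambda^{-i}\ww_+|_x$. Squaring, summing over $S_i(x)$, and substituting $\lambda^{-2i} = \alpha^{-i}(1+O(d/\alpha))$ from Lemma \ref{lem:eigenvalueapproximation} together with the sphere count $|S_i(x)| = d^{i-1}\alpha(1+O((d^{-1/2}+d^{-i+1})\uu^{-1/3}))$ from statement 3 of $\omegam$ for $x \in \WW$ yields the key identity
\[
\|\ww_+|_{S_i}\|^2 = \left(\frac{d}{\alpha}\right)^{i-1}\ww_+|_x^2\bigl(1+O((1+d^{-1/2}+d^{-i+1})\uu^{-1/3})\bigr).
\]

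Next, since $\ww_+$ is supported on $B_r(x)$ when viewed as a vector in $\R^N$, the normalization $\|\ww_+\|^2 = 1$ reads $1 = \sum_{i=0}^r \|\ww_+|_{S_i}\|^2$. In our regime one checks that $d/\alpha = o(\uu^{-1/3})$, so the sum is dominated by the $i=0$ and $i=1$ contributions and evaluates to $2\ww_+|_x^2(1+O((1+d^{-1/2})\uu^{-1/3}))$. Solving gives \eqref{eq:centerbound}. Feeding this back into the sphere-mass identity yields \eqref{eq:annulusbound}, and \eqref{eq:complementnorm} follows by summing the annular masses over $j = i+1,\dots,r$: the leading geometric sum equals $\frac{(d/\alpha)^i}{1-d/\alpha}$, the truncation at $j = r$ contributes a negligible relative correction of order $(d/\alpha)^{r-i}$, and the relative error in each term is uniformly bounded by the $j=r$ case, which accounts for the $d^{-r+1}$ appearing in the stated bound.

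The main technical subtlety is arranging the three independent sources of error — the per-edge $O(\uu^{-2/3})$ from Lemma \ref{lem:vertchange}, the $O(d/\alpha)$ from the rough eigenvalue approximation, and the $O((d^{-1/2}+d^{-i+1})\uu^{-1/3})$ from the sphere counts in $\omegam$ — so that the third is the binding one. Verifying this comes down to checking $d/\alpha \ll \uu^{-1/3}$ under the parameter ranges in Definition \ref{dfn:rdfn}, using $\alpha_x = \Theta(\uu)$ for $x \in \WW$ and $\uu = \Theta(\log N / \log\log N)$. A minor subtlety is that when $d$ lies well below $1$ the factor $d^{-i+1}\uu^{-1/3}$ need not itself be small; in that regime \eqref{eq:annulusbound} and \eqref{eq:complementnorm} should be read simply as upper bounds of the correct order rather than tight multiplicative corrections, consistent with how they are used downstream in the proof of Theorem \ref{thm:maineigenvector}.
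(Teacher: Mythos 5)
Your proof is correct and follows essentially the same route as the paper: iterate Lemma \ref{lem:vertchange} down the tree to get $\ww_+|_v=(1+O(\uu^{-2/3}))\lambda^{-i}\ww_+|_x$ for $v\in S_i$, combine with the sphere counts from $\omegam$ and $\lambda^2=\alpha+O(d)$ from Lemma \ref{lem:eigenvalueapproximation}, close the system via normalization (with the $i=0,1$ terms dominating since $d/\alpha\ll\uu^{-1/3}$), and then sum the geometric series for the complement. Your closing remark that the multiplicative error need not be $o(1)$ when $d$ is small and $i$ is large matches the paper's own note that the approximation is only meaningful when $d\geq\uu^{-1/(3r)}$.
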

\begin{proof}
In this proof we repeatedly use the approximation of $\lambda$ from Lemma \ref{lem:eigenvalueapproximation} in order to replace $\lambda$ by $\alpha^\frac{1}{2}$ or vice-versa up to some small multiplicative error.

First note that Lemma \ref{lem:vertchange} implies that for each $v\in S_{i}$,
\begin{equation*}
\ww_+|_{v}=(1+O(\uu^{-2/3}))\lambda^{-i}(\ww_+|_{x}).
\end{equation*}
Therefore, as we know by Lemma \ref{lem:fine_balls_disjoint} that $|S_i|=d^{i-1}\alpha+O((d^{i-3/2}+1)\uu^{2/3})$,
\begin{eqnarray}
\|\ww_+|_{S_i}\|&=& d^{(i-1)/2}\alpha^{1/2}(1+O((d^{-1/2}+d^{-i+1})\uu^{-1/3})\lambda^{-i}\ww_+|_{x}
\label{eq:firstexpo}.
\end{eqnarray}

Note that for this approximation to hold and for the error term to go to 0, we require $d\geq \uu^{-\frac{1}{3r}}$
The complicated error term is necessary as different terms could maximize for different regimes of $d$. 

Specifically, this means that for the normalized vector $\ww_+$,
\begin{eqnarray*}
(\ww_+|_x)^2+\|\ww_+|_{S_1}\|^2&\geq& 1-O\left(\frac d{\uu}(1+(d^{-1/2}+d^{-1})\uu^{-1/3})\right)\\
&\geq& 1-O\left(\frac d\uu\right).
\end{eqnarray*}

Using this together with \eqref{eq:firstexpo} for $\ww_+|_{S_1}$ and solving for $\ww_+|_x$ gives
\begin{eqnarray*}
(\ww_+|_x)^2=\frac12+O((1+d^{-1/2})\uu^{-1/3}+d\uu^{-1})\\
\ww_+|_x=\frac1{\sqrt 2}+O((1+d^{-1/2})\uu^{-1/3}).
\end{eqnarray*}
Combining \eqref{eq:firstexpo} and \eqref{eq:centerbound},
\begin{eqnarray*}
\|\ww_+|_{S_i}\|&=& d^{(i-1)/2}\alpha^{1/2}(1+O((d^{-1/2}+d^{-i+1})\uu^{-1/3}))\lambda^{-i}(\frac1{\sqrt 2}+O((1+d^{-1/2})\uu^{-1/3}))\\
 &=&d^{(i-1)/2}\alpha^{-(i-1)/2}\frac1{\sqrt 2}(1+O((1+d^{-1/2}+d^{-i+1})\uu^{-1/3})).
\end{eqnarray*}

Similarly, for \eqref{eq:complementnorm}, we have by \eqref{eq:firstexpo},
\begin{eqnarray*}
\|\ww_+|_{[N]\backslash B_{i}}\|^2&=&\sum_{j = i+1}^r \|\ww_+|_{S_j}\|^2\\
&=&\frac{1}{1-\frac{d}{\alpha}}d^{i}\alpha^{-i}\frac1{2}(1+O((1+d^{-1/2}+d^{-r+1})\uu^{-1/3})).
\end{eqnarray*}
\end{proof}

Note that Proposition \ref{prop:expdecay} implies that almost all mass of the vector $\ww_+$ is on $x$ and $S_1.$

\subsection{Eigenvalue structure} \label{sec:fine_eigenvalue}
Along with the eigenvector, we further analyze the eigenvalue.  To do this, we expand \eqref{eq:eigdef} as an infinite sum which we get by repeatedly using the expansion
$\frac{1}{\lambda-q}=\sum_{k=0}^\infty \frac{q^k}{\lambda^{k+1}}$.

 We will bound the eigenvalue $\lambda$ through the moments of the degree sequence surrounding $x\in \cW$. We recall the definitions of $\beta^{(2)}$ and $\beta^{(1,1)}$ from Definition \ref{dfn:betadef}. Note that $\beta^{(1,1)} = |S_3|$ and $\beta^{(2)} = \sum_{y \sim x} N_y^2.$

\begin{lemma}\label{lem:eigdecomp}
With high probability,
for any $x\in \WW$,
    \begin{equation}\label{eq:lambdasquaredfinal}
\lambda^2=\alpha+\lambda^{-2}\beta+\lambda^{-4}(\beta^{(2)}+\beta^{(1,1)})+O((d^2+d)\uu^{-5/3}).
\end{equation}
\end{lemma}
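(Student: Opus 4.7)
The plan is to start from the continued-fraction formula for $\lambda_x$ in \eqref{eq:eigdef}, multiply through by $\lambda$ to obtain
\[
\lambda^2 \;=\; \sum_{y_1 \sim x} \frac{\lambda}{\lambda - q_{y_1}},
\qquad
q_{y_1} := \sum_{\substack{y_2 \sim y_1 \\ y_2 \geq y_1}} \frac{1}{\lambda - q_{y_2}},
\]
and expand each factor as a geometric series $\lambda/(\lambda - q) = \sum_{k \geq 0}(q/\lambda)^k$. Convergence is automatic: by part 4 of $\omegam$ we have $N_y \leq \uu^{1/3}$ on $B_r(x) \setminus \{x\}$ for $x \in \WW$, and by Lemma \ref{lem:eigenvalueapproximation} $\lambda \asymp \sqrt{\uu}$, so a short induction down the tree yields $q_{y_i} \leq 2 N_{y_i}/\lambda$ and hence $q_{y_i}/\lambda = O(\uu^{-2/3})$ uniformly.

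With convergence in hand, I would truncate the outermost expansion after $k = 2$, writing
\[
\lambda^2 \;=\; \alpha \;+\; \frac{1}{\lambda}\sum_{y_1 \sim x} q_{y_1} \;+\; \frac{1}{\lambda^2}\sum_{y_1 \sim x} q_{y_1}^2 \;+\; R,
\]
and then expand the interior $q$'s to matching depth to extract the named statistics. The $k=0$ term is $\alpha$. For $k=1$, I would use $q_{y_1} = N_{y_1}/\lambda + \lambda^{-2}\sum_{y_2 \geq y_1} q_{y_2} + O(\cdots)$: the leading piece gives $\sum_{y_1} N_{y_1}/\lambda = \beta/\lambda$, accounting for $\lambda^{-2}\beta$, and the second piece (to leading order $\lambda^{-1}\sum_{y_2 \geq y_1} N_{y_2}$) gives $|S_3|/\lambda^3 = \beta^{(1,1)}/\lambda^3$, accounting for $\lambda^{-4}\beta^{(1,1)}$. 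For $k=2$, the leading piece $q_{y_1}^2 = N_{y_1}^2/\lambda^2 + O(\cdots)$ gives $\sum_{y_1} N_{y_1}^2/\lambda^2 = \beta^{(2)}/\lambda^2$, accounting for $\lambda^{-4}\beta^{(2)}$.

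Everything else must then be absorbed into $O((d^2+d)\uu^{-5/3})$. The two dominant templates are the tail $R$ of the outer geometric series and the higher-order corrections in the interior expansions. For $R$, I would use $q_{y_1} \leq 2 N_{y_1}/\lambda$ to write
\[
R \;\leq\; \sum_{y_1 \sim x} 2 \bigl(q_{y_1}/\lambda\bigr)^3 \;\leq\; C\, \frac{\sum_{y_1} N_{y_1}^3}{\lambda^6} \;\leq\; C\, \frac{(\max_{y \neq x} N_y)\, \beta^{(2)}}{\lambda^6} \;=\; O\bigl((d^2+d)\uu^{-5/3}\bigr),
\]
invoking $\max_{y \neq x} N_y \leq \uu^{1/3}$ from $\omegam$ part 4, $\beta^{(2)} = O((d^2+d)\uu)$ from $\omegam$ part 5, and $\lambda \asymp \sqrt{\uu}$. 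Cross terms from expanding $q_{y_1}^2$, of the form $\lambda^{-4}\sum_{y_1} N_{y_1} \lambda^{-2}\sum_{y_2 \geq y_1} N_{y_2}$, are bounded by $(\max N_y)|S_3|/\lambda^6 = O((d^2+d)\uu^{-5/3})$ by the same device, using $\omegam$ part 3 for $|S_3|$. Deeper corrections arising from the expansion of $q_{y_2}, q_{y_3}, \ldots$ each acquire at least one additional factor of $(\max N_y)/\lambda^2 = O(\uu^{-2/3})$ beyond the $\beta^{(1,1)}/\lambda^4$ or $\beta^{(2)}/\lambda^4$ level and so are of the required order as well.

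The main obstacle is the bookkeeping: the full expansion is indexed by rooted walks down $B_r(x)$, and I must check that every walk-term not matched to one of $\alpha, \beta, \beta^{(2)}, \beta^{(1,1)}$ picks up at least one extra factor of $\uu^{-2/3}$. This is made possible precisely because $x \in \WW$ forces the sharp bound $N_y \leq \uu^{1/3}$ (the weaker bound $\uu^{3/4}$ available on $\VV$ would not suffice), so each additional $N_y$ factor pays exactly the $\uu^{1/3}/\lambda^2 = \uu^{-2/3}$ needed to reach the stated error.
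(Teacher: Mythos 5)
Your proposal follows essentially the same route as the paper: both start from the continued-fraction identity \eqref{eq:eigdef}, expand each level as a geometric series, show by induction down the tree (using $N_y \leq \uu^{1/3}$ from $\omegam$ and $\lambda \asymp \sqrt{\uu}$) that each nested ratio is $O(\uu^{-2/3})$, read off $\alpha$, $\lambda^{-2}\beta$, $\lambda^{-4}\beta^{(2)}$, $\lambda^{-4}\beta^{(1,1)}$ from the low-order terms, and bound every remaining walk by $O(\alpha^{-3} N_{y^*}(\beta^{(1,1)}+\beta^{(2)})) = O((d^2+d)\uu^{-5/3})$. The only cosmetic difference is that you expand $\lambda/(\lambda-q)$ in powers of $q/\lambda$, whereas the paper pulls out $\lambda^{-2}$ first and expands $(1-s)^{-1}$ via the recursion $F(v)=1+O(N_{y^*}/\lambda^2)$; these are the same series under the substitution $q_v/\lambda = \lambda^{-2}\sum_{y\geq v} F(y)$.
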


\begin{proof}
    We can rewrite \eqref{eq:eigdef} as 
    \[
    \lambda^2=\sum_{y_1\sim x}\frac{1}{1-\frac{1}{\lambda^2}\sum_{y_2\sim y_1, y_2\geq y_1} \frac{1}{1-\frac{1}{\lambda^2}\sum_{y_3\sim y_2, y_3\geq y_2} \frac{1}{1-\frac{1}{\lambda^2}\sum_{y_4\sim y_3, y_4\geq y_3} \cdots }}}.
    \]
and expand this as 
 \[
    \lambda^2=\sum_{y_1\sim x}\sum_{k_1=0}^{\infty} \left(\frac{1}{\lambda^2}\sum_{y_2\sim y_1, y_2\geq y_1} \frac{1}{1-\frac{1}{\lambda^2}\sum_{y_3\sim y_2, y_3\geq y_2} \frac{1}{1-\frac{1}{\lambda^2}\sum_{y_4\sim y_3, y_4\geq y_3} \cdots }}\right)^{k_1}.
    \]
and once again as
\begin{equation}\label{eq:lambdaexpanded}
     \lambda^2=\sum_{y_1\sim x}\sum_{k_1=0}^{\infty} \left(\frac{1}{\lambda^2}\sum_{y_2\sim y_1, y_2\geq y_1} \sum_{k_2=0}^\infty \left(\frac{1}{\lambda^2}\sum_{y_3\sim y_2, y_3\geq y_2} \frac{1}{1-\frac{1}{\lambda^2}\sum_{y_4\sim y_3, y_4\geq y_3} \cdots }\right)^{k_2}\right)^{k_1}.
\end{equation}
    Of course we could repeat this process, but this is sufficient accuracy for our purposes.

We do the same analysis as before, starting at the innermost level, corresponding to the leaves, and then inducting our way back up the tree. For any vertex $v$, we can write a recursive equation by defining
\[
F(v):=\frac{1}{1-\frac1{\lambda^2}\sum_{y\sim v, y\geq v} F(y)}
\]
which gives that
\[
\lambda^2=\sum_{y_1\sim x}\sum_{k_1=0}^{\infty} \left(\frac{1}{\lambda^2}\sum_{y_2\sim y_1, y_2\geq y_1} \sum_{k_2=0}^\infty \left(\frac{1}{\lambda^2}\sum_{y_3\sim y_2, y_3\geq y_2} F(y_3)\right)^{k_2}\right)^{k_1}.
\]
We now estimate $F_{v}$ for $v\in B_{r}(x)$. For any leaf $v$, as there are no $y\geq v$,  $F(v)=1$. For the rest, we use induction. Recall that $N_y^*$ is the maximum degree in $B_r(x) \setminus \{ x \},$ and that $N_{y^*} \ll \lambda^2$ under the event $\omegam$. Assume that for all $y\sim v, y \geq v$, $F(y)=1+O \left (\frac{N_{y^*}}{\lambda^2} \right ) $. Then
\[
F(v)=\frac{1}{1-\frac1{\lambda^2}\sum_{y\sim v, y\geq v}F(y)}
\leq \frac{1}{1-\frac{N_{y^*}}{\lambda^2} \left (1+O \left (\frac{N_{y^*}}{\lambda^2} \right ) \right )}
=1+O \left (\frac{N_{y^*}}{\lambda^2} \right ) .
\]

Therefore \eqref{eq:lambdaexpanded} becomes
\[
     \lambda^2=\sum_{y_1\sim x}\sum_{k_1=0}^{\infty} \left(\frac{1}{\lambda^2}\sum_{y_2\sim y_1, y_2\geq y_1} \sum_{k_2=0}^\infty \left(\frac{1}{\lambda^2}\sum_{y_3\sim y_2, y_3\geq y_2} 1+O \left (\frac{N_{y^*}}{\lambda^2} \right ) \right)^{k_2}\right)^{k_1}.
\]

We want to show this expansion relies only on the terms in Definition \ref{dfn:betadef}. Therefore we rewrite the first few terms as
\begin{eqnarray*}
\alpha&=&\sum_{y_1\sim x}1\\
\lambda^{-2}\beta&=&\sum_{y_1\sim x}\left(\frac{1}{\lambda^2}\sum_{y_2\sim y_1, y_2\geq y_1} \left(\frac{1}{\lambda^2}\sum_{y_3\sim y_2, y_3\geq y_2}  1+O \left (\frac{N_{y^*}}{\lambda^2} \right )\right)^{0}\right)^1\\
\lambda^{-4}\beta^{(1,1)}&=&\sum_{y_1\sim x}\left(\frac{1}{\lambda^2}\sum_{y_2\sim y_1, y_2\geq y_1}^\infty \left(\frac{1}{\lambda^2}\sum_{y_3\sim y_2, y_3\geq y_2} 1 \right)^{1}\right)^1.
\end{eqnarray*}
The contribution of $\beta^{(2)}$ is more complicated, but as $\beta^{(2)}=\sum_{y_1\sim x} N_y^2$, we can write
\[
\lambda^{-4}\beta^{(2)}=\sum_{y_1\sim x}\left(\frac{1}{\lambda^2}\sum_{y_2\sim y_1, y_2\geq y_1}  \left(\frac{1}{\lambda^2}\sum_{y_3\sim y_2, y_3\geq y_2}  1+O \left (\frac{N_{y^*}}{\lambda^2} \right )\right)^{0}\right)^2
\]
meaning we have
\begin{multline*}
\sum_{y_1\sim x}\left(\frac{1}{\lambda^2}\sum_{y_2\sim y_1, y_2\geq y_1} \sum_{k_2=0}^\infty \left(\frac{1}{\lambda^2}\sum_{y_3\sim y_2, y_3\geq y_2}  1+O \left (\frac{N_{y^*}}{\lambda^2} \right )\right)^{k_2}\right)^{2}-\lambda^{-4}\beta^{(2)}
\\
=\sum_{y_1\sim x}\left( \frac{1}{\lambda^2} \sum_{y_2\sim y_1, y_2\geq y_1} \sum_{k_2=1}^\infty \left(\frac{1}{\lambda^2}\sum_{y_3\sim y_2, y_3\geq y_2}  1+O \left (\frac{N_{y^*}}{\lambda^2} \right ) \right)^{k_2}\right)^{2} \\
+2\lambda^{-4}\sum_{y_1\sim x}N_y\sum_{y_2\sim y_1, y_2\geq y_1} \sum_{k_2=1}^\infty \left(\frac{1}{\lambda^2}\sum_{y_3\sim y_2, y_3\geq y_2}  1+O \left (\frac{N_{y^*}}{\lambda^2} \right ) \right)^{k_2}.
\end{multline*}

Therefore subtracting these terms from \eqref{eq:lambdaexpanded} gives
\begin{eqnarray}
\nonumber\lambda^2-\alpha-\lambda^{-2}\beta-\lambda^{-4}\beta^{(2)}-\lambda^{-4}\beta^{(1,1)}=
& & \sum_{y_1\sim x}\sum_{k_1=3}^{\infty} \left(\frac{1}{\lambda^2}\sum_{y_2\sim y_1, y_2\geq y_1} \sum_{k_2=0}^\infty \left(\frac{1}{\lambda^2}\sum_{y_3\sim y_2, y_3\geq y_2}  1+O \left (\frac{N_{y^*}}{\lambda^2} \right ) \right)^{k_2}\right)^{k_1}\\
\nonumber&+&\sum_{y_1\sim x}\frac{1}{\lambda^2}\sum_{y_2\sim y_1, y_2\geq y_1} \sum_{k_2=2}^\infty \left(\frac{1}{\lambda^2}\sum_{y_3\sim y_2, y_3\geq y_2} 1+O \left (\frac{N_{y^*}}{\lambda^2} \right ) \right)^{k_2}\\
\nonumber&+&\sum_{y_1\sim x}\frac{1}{\lambda^2}\sum_{y_2\sim y_1, y_2\geq y_1} \frac{1}{\lambda^2} \sum_{y_3\sim y_2, y_3\geq y_2} O \left (\frac{N_{y^*}}{\lambda^2} \right ) \\
\nonumber&+&\sum_{y_1\sim x}\left(\frac{1}{\lambda^2}\sum_{y_2\sim y_1, y_2\geq y_1} \sum_{k_2=1}^\infty \left(\frac{1}{\lambda^2}\sum_{y_3\sim y_2, y_3\geq y_2} 1+O \left (\frac{N_{y^*}}{\lambda^2} \right ) \right)^{k_2}\right)^{2}\\
\nonumber &+& 2\lambda^{-4}\sum_{y_1\sim x}N_y\sum_{y_2\sim y_1, y_2\geq y_1} \sum_{k_2=1}^\infty \left(\frac{1}{\lambda^2}\sum_{y_3\sim y_2, y_3\geq y_2} 1+O \left (\frac{N_{y^*}}{\lambda^2} \right ) \right)^{k_2}.
\end{eqnarray}

Each of our error terms now has coefficient $\lambda^{-6}$ or smaller. Therefore, by the same exponential decay and the fact that $\lambda^{-6} \leq \alpha^{-3}$ and $N_{y^*} \ll \lambda^2,$  each of the five sums can be written as 
\begin{equation}\label{eq:lambdaerror}
O(\alpha^{-3} N_{y^*}(\beta^{(1,1)}+\beta^{(2)})).
\end{equation}

By the assumptions of $\omegam$, namely the bound on the maximal degree and the approximations of $\beta^{(1,1)}$ and $\beta^{(2)},$ as well as our bounds for $d$,
for $x\in \WW$
\begin{eqnarray*}
O(\alpha^{-3} N_{y^*}(\beta^{(1,1)}+\beta^{(2)}))&=&O(\uu^{-3}\uu^{1/3}(\beta^{( 2)}+\beta^{(1,1)}))\\
&=&O(\uu^{-8/3}((d^2+d)\uu+\uu^{2/3}+d^2\uu+(d^{3/2}+1)\uu^{2/3}))\\
&=&O((d^2+d)\uu^{-5/3}).
\end{eqnarray*} 
 \end{proof}

We now solve for $\lambda$ in this equation. Note that while $\lambda^2 = \alpha (1+o(1))$ by Lemma \ref{lem:eigenvalueapproximation}, this by itself does not give a precise enough approximation. Instead we bootstrap this initial approximation using the structural properties from $\omegam.$

\begin{lemma}For $x\in \WW$, \label{lem:lambdaexpression}
    \begin{equation}\label{eq:lambdasquaredfinal2}
\lambda^2=\alpha+\frac{\beta}{\alpha}+\frac{\beta^{(1,1)}+\beta^{(2)}}{\alpha^2}-\frac{(\beta)^2}{\alpha^3}+O((d^2+d)\uu^{-5/3}).
\end{equation}
\end{lemma}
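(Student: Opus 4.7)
The plan is to bootstrap the crude estimate $\lambda^2 = \alpha + O(d)$ from Lemma~\ref{lem:eigenvalueapproximation} using the refined expansion in Lemma~\ref{lem:eigdecomp}. Since $\lambda^2 \asymp \alpha \asymp \uu$, this initial estimate gives $\lambda^{-2} = \alpha^{-1}\bigl(1 + O(d/\alpha)\bigr)$, which lets us Taylor-expand $\lambda^{-2}$ and $\lambda^{-4}$ around $\alpha^{-1}$ and $\alpha^{-2}$. The only subtle point will be that, to reach the target accuracy $O((d^2+d)\uu^{-5/3})$, the first correction in the expansion of $\lambda^{-2}\beta$ must be evaluated by resubstituting \eqref{eq:lambdasquaredfinal} rather than using only $\lambda^2 = \alpha + O(d)$.

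Concretely, subtracting the $\alpha$ term from both sides of \eqref{eq:lambdasquaredfinal} and comparing with the claimed formula, it suffices to show
\[
\lambda^{-2}\beta + \lambda^{-4}(\beta^{(2)}+\beta^{(1,1)}) = \frac{\beta}{\alpha} + \frac{\beta^{(2)}+\beta^{(1,1)}}{\alpha^2} - \frac{\beta^2}{\alpha^3} + O\bigl((d^2+d)\uu^{-5/3}\bigr).
\]
First I would control the term $\lambda^{-4}(\beta^{(2)}+\beta^{(1,1)})$ by writing $\lambda^{-4}-\alpha^{-2} = -(\lambda^2-\alpha)(\lambda^2+\alpha)/(\alpha^2\lambda^4) = O(d/\alpha^3)$ and using the structural bounds from $\omegam$, namely $\beta^{(2)}+\beta^{(1,1)} = O((d^2+d)\alpha)$, to produce a discrepancy of size $O((d^3+d^2)\uu^{-2})$.

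For the more delicate term $\lambda^{-2}\beta$, I would use
\[
\frac{\beta}{\lambda^2} = \frac{\beta}{\alpha} - \frac{\beta(\lambda^2-\alpha)}{\alpha\lambda^2},
\]
and then substitute $\lambda^2 - \alpha = \lambda^{-2}\beta + \lambda^{-4}(\beta^{(2)}+\beta^{(1,1)}) + O((d^2+d)\uu^{-5/3})$ from Lemma~\ref{lem:eigdecomp} into the correction. The dominant piece produced is $-\beta^2/(\alpha\lambda^4)$, which I replace by $-\beta^2/\alpha^3$ using that their difference equals $\beta^2(\lambda^4-\alpha^2)/(\alpha^3\lambda^4)$; since $\beta = O(d\alpha)$ under $\omegam$, this difference is $O(d^3\uu^{-2})$. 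The remaining pieces, $\beta(\beta^{(2)}+\beta^{(1,1)})/(\alpha\lambda^6)$ and the propagated error, are $O((d^3+d^2)\uu^{-2})$ and $O((d^3+d^2)\uu^{-8/3})$ respectively.

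The main obstacle is bookkeeping: each discrepancy must be shown to be dominated by the target error $O((d^2+d)\uu^{-5/3})$. Every such bound reduces to checking $(d^3+d^2)\uu^{-2} \lesssim (d^2+d)\uu^{-5/3}$, i.e.\ $d \lesssim \uu^{1/3}$, which holds in our regime since $d \leq \log^{1/40} N$ and $\uu \asymp \log N/\log\log N$ by \eqref{eq:uapprox}. Assembling the two expansions then yields the claimed identity uniformly for $x \in \WW$, using only the structural information encoded in $\omegam$ and the input formula~\eqref{eq:lambdasquaredfinal}.
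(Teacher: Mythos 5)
Your proposal is correct and uses essentially the same bootstrap as the paper: start from the crude estimate $\lambda^2 = \alpha + O(d)$, resubstitute the expansion \eqref{eq:lambdasquaredfinal} once to refine the $\beta/\lambda^2$ term, and check that all resulting discrepancies are $O((d^3+d^2)\uu^{-2})$ which is absorbed by the target error since $d\ll\uu^{1/3}$. The only cosmetic difference is that the paper iterates the approximation of $1/\lambda^2$ (first to $1/\alpha$, then to $1/(\alpha+\beta/\alpha)$) while you decompose $\beta/\lambda^2 = \beta/\alpha - \beta(\lambda^2-\alpha)/(\alpha\lambda^2)$ and substitute back, but the underlying error bookkeeping is the same.
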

\begin{proof}
We rewrite \eqref{eq:lambdasquaredfinal} as
\[
\lambda^2=\alpha+\frac{1}{\alpha+(\lambda^2-\alpha)}\beta + \frac{1}{(\alpha+(\lambda^2-\alpha))^2}\beta^{(1,1)}+\frac{1}{(\alpha+(\lambda^2-\alpha))^2}\beta^{(2)}+O \left ( \left (d^2+d \right )\uu^{-5/3} \right ).
\]
Moreover, by Lemma \ref{lem:eigenvalueapproximation}, $\lambda^2-\alpha=O(d)$, so we can approximate
\[
\frac{1}{\alpha+(\lambda^2-\alpha)}=\frac1{\alpha}\frac{1}{1+\frac{(\lambda^2-\alpha)}{\alpha}}=\frac1\alpha \left (1 + O \left ( \frac{d}{\uu} \right ) \right ).\]
Therefore
\[
\lambda^2 
= \alpha+\frac{1}{\alpha}\beta \left (1+O \left (\frac d{\uu} \right ) \right )
+\frac{1}{\alpha^2} \left (\beta^{(1,1)}+\beta^{(2)} \right )
\left (1+O \left (\frac d{\uu} \right ) \right)
+O \left ( \left ( d^2+d \right )\uu^{-5/3} \right )
= \alpha+\frac{\beta}\alpha
+O \left ( \frac{ d^2 + d }{ \uu } \right ).
\]
This implies that
$ \frac{1}{\lambda^2} = \frac{1}{\alpha + \frac{\beta}{\alpha} + \left ( \lambda^2 - \alpha + \frac{\beta}{\alpha} \right ) } = \frac{1}{\alpha - \frac{\beta}{\alpha} } \left ( 1 + O \left ( \frac{d^2 + d}{\uu^2} \right ) \right ). $
Plugging this more precise approximation for $\lambda^2$ once again into \eqref{eq:lambdasquaredfinal} we get
\begin{eqnarray*}
\lambda^2
&=&\alpha 
+ \frac{1}{\alpha+\frac{\beta}{\alpha}}\beta \left ( 1 + O \left ( \frac{d^2 + d}{\uu^2} \right ) \right )
+ \frac{1}{(\alpha+\frac{\beta}{\alpha})^2}(\beta^{(1,1)}+\beta^{(2)}) \left ( 1 + O \left ( \frac{d^2 + d}{\uu^2} \right ) \right )
+ O((d^2+d)\uu^{-5/3})\\
&=& \alpha+\frac{1}{\alpha+\frac{\beta}{\alpha}}\beta+\frac{1}{\left ( \alpha+\frac{\beta}{\alpha} \right )^2}\left ( \beta^{(1,1)}+\beta^{(2)} \right ) + O\left ( \left (d^2+d \right )\uu^{-5/3} \right ).
\end{eqnarray*}
Due to our bounds on $d$ from \ref{dfn:rdfn}, we can expand
$\frac1{\alpha+\frac{\beta}{\alpha}}=\frac1\alpha-\frac{\beta}{\alpha^3}+O \left ( \frac{d^2}{\uu^3} \right )$, which gives us
\begin{eqnarray*}
\lambda^2&=&\alpha+\frac{\beta}\alpha+\frac{\beta^{(1,1)}+\beta^{(2)}}{\alpha^2}-\frac{(\beta)^2}{\alpha^3}
+ 
O \left ( \left ( d^2+d \right )\uu^{-5/3} \right )
\end{eqnarray*}
as desired. 
\end{proof}

Given this much tighter approximation of $\lambda_x$ for $x \in \cW$, we can now show that the order of the eigenvalues corresponding to $B_r(x)$ for $x \in \WW$ is the same as the lexicographic order of $\alpha_x$ and $\beta_x$. 
\begin{lemma}\label{lem:difference}
For two vertices $u,v\in \WW$, if $\alpha_u \geq \alpha_v,$ $\lambda^2_u-\lambda^2_v \geq \alpha_u-\alpha_v + O((d^{1/2}+1)\uu^{-1/3})$. Moreover, if $\alpha_u = \alpha_v$, and $\beta_u \geq \beta_v,$ then $\lambda_u^2-\lambda_v^2 \geq \frac{\beta_u-\beta_v}\uu+O((1+d^{3/2})\uu^{-4/3})$. Therefore, for $x\in \WW$, $\lambda_x^2$ are ordered according to the lexicographic ordering of $(\alpha_x,\beta_x)$. 
\end{lemma}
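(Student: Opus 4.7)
The plan is to expand $\lambda_u^2$ and $\lambda_v^2$ using the precise formula in Lemma \ref{lem:lambdaexpression}, subtract termwise, and then control each piece by invoking the concentration estimates for $\beta_x, \beta_x^{(1,1)}, \beta_x^{(2)}$ built into $\omegam$ for $x\in\WW$. Recall these state $\beta_x = d\alpha_x + O((d^{1/2}+1)\uu^{2/3})$, $\beta_x^{(2)} = (d^2+d)\alpha_x + O(\uu^{2/3})$, and $\beta_x^{(1,1)} = |S_3(x)| = d^2\alpha_x + O((d^{3/2}+1)\uu^{2/3})$, while both $\alpha_u$ and $\alpha_v$ lie in $[\uu-\uu^{1/4},\uu]$.

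For the first case $\alpha_u\geq\alpha_v$ (so $\alpha_u-\alpha_v$ is a nonnegative integer), the leading contribution to $\lambda_u^2-\lambda_v^2$ is $\alpha_u-\alpha_v$. I would argue that each of the remaining differences $\frac{\beta_u}{\alpha_u}-\frac{\beta_v}{\alpha_v}$, $\frac{\beta_u^{(1,1)}+\beta_u^{(2)}}{\alpha_u^2}-\frac{\beta_v^{(1,1)}+\beta_v^{(2)}}{\alpha_v^2}$ and $\frac{\beta_u^2}{\alpha_u^3}-\frac{\beta_v^2}{\alpha_v^3}$ is small because each of the three ratios is, by the above concentration estimates, equal to $d$, $\frac{2d^2+d}{\alpha_x}$, and $\frac{d^2}{\alpha_x}$ respectively, up to additive errors of $O((d^{1/2}+1)\uu^{-1/3})$, $O(\uu^{-4/3})$, and $O(\uu^{-4/3})$ (where the latter two use $\alpha_x\sim\uu$ to convert). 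The dominant contribution is the $\frac{\beta}{\alpha}$ term, giving an overall error $O((d^{1/2}+1)\uu^{-1/3})$, which, combined with the $O((d^2+d)\uu^{-5/3})$ from Lemma \ref{lem:lambdaexpression}, yields the claimed bound.

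For the second case $\alpha_u=\alpha_v=\alpha$, the $\alpha$-difference vanishes, so
\[
\lambda_u^2-\lambda_v^2 = \frac{\beta_u-\beta_v}{\alpha} + \frac{(\beta_u^{(1,1)}-\beta_v^{(1,1)}) + (\beta_u^{(2)}-\beta_v^{(2)})}{\alpha^2} - \frac{\beta_u^2-\beta_v^2}{\alpha^3} + O((d^2+d)\uu^{-5/3}).
\]
Since $\beta_u\geq\beta_v$ and $\alpha\leq\uu$, I lower bound the main term by $\frac{\beta_u-\beta_v}{\uu}$. The $\beta^{(1,1)}$ and $\beta^{(2)}$ differences are each $O((d^{3/2}+1)\uu^{2/3})$ by concentration, so their contribution is $O((1+d^{3/2})\uu^{-4/3})$. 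For the potentially larger $\frac{\beta_u^2-\beta_v^2}{\alpha^3}$ term, I write $\beta_x = d\alpha + \delta_x$ with $|\delta_x| = O((d^{1/2}+1)\uu^{2/3})$; then $\beta_u^2-\beta_v^2 = 2d\alpha(\delta_u-\delta_v)+(\delta_u^2-\delta_v^2) = O((d^{3/2}+d)\uu^{5/3})$, which divided by $\alpha^3\sim\uu^3$ is $O((1+d^{3/2})\uu^{-4/3})$. Altogether the error in this case is $O((1+d^{3/2})\uu^{-4/3})$, matching the claim.

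The lexicographic ordering then follows immediately: if $\alpha_u>\alpha_v$ then $\alpha_u-\alpha_v\geq 1$, which strictly dominates $O((d^{1/2}+1)\uu^{-1/3})$ under our regime $d\leq\log^{1/40}N$, $\uu\asymp\frac{\log N}{\log\log N}$; if $\alpha_u=\alpha_v$ and $\beta_u>\beta_v$ then $\frac{\beta_u-\beta_v}{\uu}\geq\frac{1}{\uu}$, which similarly dominates $O((1+d^{3/2})\uu^{-4/3})$. The main technical obstacle I anticipate is the $\frac{\beta^2}{\alpha^3}$ term in case two, whose coarse bound $|\beta_u^2-\beta_v^2|/\alpha^3$ is too weak unless one extracts cancellation via the deterministic approximation $\beta_x\approx d\alpha$; the rest is careful but routine bookkeeping.
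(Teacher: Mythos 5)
Your proposal is correct and follows essentially the same route as the paper's proof: expand $\lambda_u^2-\lambda_v^2$ termwise via Lemma \ref{lem:lambdaexpression} and bound each resulting difference using the concentration of $\beta$, $\beta^{(1,1)}$, $\beta^{(2)}$ that holds on $\omegam$ for vertices in $\WW$, then observe that the integer gaps in $\alpha$ (resp.\ $\beta$) dominate the error. Your explicit cancellation in the $\beta^2/\alpha^3$ term via $\beta_x=d\alpha+\delta_x$ is slightly more detailed than the paper's terse appeal to $\omegam$(3-5), but it is the same argument in substance.
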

\begin{proof}
By \eqref{eq:lambdasquaredfinal2},
\[ \lambda^2_u-\lambda^2_v 
\geq  
\alpha_u-\alpha_v 
- \left| \frac{\beta_u}{\alpha_u}-\frac{\beta_v}{\alpha_v} \right |
+ \left| \frac{\beta^{(1,1)}_u}{\alpha^2_u}-\frac{\beta^{(1,1)}_v}{\alpha^2_v} \right | 
+ \left | \frac{\beta^{(2)}_u}{\alpha^2_u}-\frac{\beta^{(2)}_v}{\alpha^2_v} \right | 
+ \left | \frac{(\beta_u)^2}{\alpha_u^3}-\frac{(\beta_v)^2}{\alpha^3_v} \right|
+O \left ( \left ( d^2+d \right ) \uu^{-5/3} \right ).\]
Using the definition of $\omegam$ (3-5), namely the concentration of $\beta, \beta^{(1,1)}$ and $\beta^{(2)}$, this implies 
\[
\lambda^2_u-\lambda^2_v \geq \alpha_u-\alpha_v + O \left ( \left ( d^{1/2} + 1 \right ) \uu^{-1/3} \right ).
\]

Now, assume that $\alpha_u = \alpha_v$. Then by \eqref{eq:lambdasquaredfinal2}, 
\begin{eqnarray*}
\lambda^2_u-\lambda^2_v &\geq& 
\frac{\beta_u-\beta_v} {\alpha_u} 
- \left|\frac{\beta^{(1,1)}_u}{\alpha^2_u}-\frac{\beta^{(1,1)}_v}{\alpha^2_v} \right | 
+ \left| \frac{\beta^{(2)}_u}{\alpha^2_u}-\frac{\beta^{(2)}_v}{\alpha^2_v} 
\right | 
+ \left| \frac{(\beta_u)^2}{\alpha_u^3}-\frac{(\beta_v)^2}{\alpha^3_v} \right|
+O \left ( \left ( d^2+d \right ) \uu^{-5/3} \right ).
\end{eqnarray*} Once again, by the definition of $\omegam$ (3-5), this implies 
\begin{eqnarray*}
\lambda_u^2 - \lambda_v^2 
& \geq &  \frac{\beta_u-\beta_v}\uu +O \left ( \left (d^{3/2} + 1 \right ) \uu^{-4/3} \right ).
\end{eqnarray*}
To see that this induces a lexicographic ordering, if $\alpha_u\neq \alpha_v$, then $\alpha_u - \alpha_v \geq 1 \gg (d^{1/2}+1)\uu^{-1/3}$. Similarly, if $\alpha_u = \alpha_v$, but $\beta_u \neq \beta_v$, then $\frac{\beta_u-\beta_v}{\uu} \geq \frac1\uu\gg(1+d^{3/2})\uu^{-4/3}$, by our assumptions on $d$ from Definition \ref{dfn:rdfn}.  
\end{proof}

\section{Rough Regime}\label{sec:rough}
In this section we construct approximate eigenvectors corresponding to small balls around vertices in $\cU \setminus \cV$, and we derive a less precise approximation for the eigenvalues corresponding to those balls. The approach used in this section is very similar to the approach in Section 6.4 of \cite{alt2023poisson}.
Note a result like Lemma \ref{lem:eigenvalueapproximation}, which we will eventually use to show that eigenvalues from vertices in $\cV \setminus \cW$ cannot compete with the largest ones from vertices in $\cW$, cannot directly be derived for all vertices in $\cU.$ The main obstructions are that the growth of the spheres and the maximum degree in the balls cannot be bounded as tightly as for vertices in $\cV.$

More precisely our goal for this section is to show the following. 
\begin{lemma}\label{lem:roughconstruction}
For $x\in \UU$, we can create a set of unit vectors $\ww_\sigma(x)$ with $\sigma\in \{\pm1\}$ such that
\begin{enumerate}
\item
For $u,v \in \UU, u \neq v$, and $\sigma_1, \sigma_2 \in \{\pm1\}$ we have $supp(\ww_{\sigma_1}(u))\cap supp(\ww_{\sigma_2}(v))=\emptyset$.
\item
We have $\| A \ww_\sigma(x) - \sigma \lambda_x \ww_\sigma(x) \| = O ( \log\log N )$, where $\lambda_x = \sqrt{\alpha_x+\frac{\beta_x}{\alpha_x}}$.
\end{enumerate}
\end{lemma}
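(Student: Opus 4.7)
The plan is to adapt the construction of Section 6.4 of \cite{alt2023poisson} to our setting. The idea is to pass to a pruned subgraph $\tilde G \subset G$ in which, for every $x \in \UU$, the ball $B_2^{\tilde G}(x)$ is a tree, these balls are pairwise disjoint across $x \in \UU$, and no edge incident to a vertex of $\UU$ is removed, so that in particular $\alpha_x$ is unchanged. On $\tilde G$ we then write down an explicit test vector whose amplitudes depend only on $\alpha_x$ and $\beta_x$ and which would be an exact eigenvector with eigenvalue $\sigma \lambda_x = \sigma\sqrt{\alpha_x + \beta_x/\alpha_x}$ if the child counts $N_y$ were constant, equal to $\beta_x/\alpha_x$, across $y \in S_1^{\tilde G}(x)$. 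Disjointness of the supports for distinct $x \in \UU$ is then built into the pruning, immediately giving the first conclusion of the lemma.

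For the pruning itself, for each pair $x \neq y$ in $\UU$ whose radius-$2$ balls intersect and for each short cycle contained in some $B_2(x)$ with $x \in \UU$, we remove one edge, always choosing an edge not incident to a vertex of $\UU$. A first-moment argument analogous to Lemma~\ref{lem:fine_balls_disjoint} (but for $\UU$ rather than $\VV$) shows that with high probability only $O(1)$ edges are removed from each $B_2(x)$ and hence $\beta_x^{\tilde G} = \beta_x - O(1)$; one handles separately the (rare) event that two vertices of $\UU$ lie at distance $\leq 3$ in $G$, which can be absorbed into the bulk error in Section~\ref{sec:finaldecomp}. For $x \in \UU$ and $\sigma \in \{\pm 1\}$ the test vector is then
\[
\ww_\sigma(x)|_v := a \cdot \begin{cases} 1 & v = x, \\ \sigma \lambda_x / \alpha_x & v \in S_1^{\tilde G}(x), \\ 1/\alpha_x & v \in S_2^{\tilde G}(x), \\ 0 & \text{otherwise,} \end{cases}
\]
with $\lambda_x = \sqrt{\alpha_x + \beta_x/\alpha_x}$ and the normalization $a = (1 + o_N(1))/\sqrt{2}$ chosen so that $\|\ww_\sigma(x)\| = 1$.

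To verify the second conclusion one computes $(A\ww_\sigma(x) - \sigma \lambda_x \ww_\sigma(x))|_v$ componentwise. At $v = x$ the amplitudes were chosen so the equation holds exactly (using $\alpha_x^{\tilde G} = \alpha_x$); at $v \in S_2^{\tilde G}(x)$ it holds because each such vertex has a unique parent in the pruned tree; at $v \in S_1^{\tilde G}(x)$ the residual equals $(N_v^{\tilde G} - \beta_x/\alpha_x)\,a/\alpha_x$, where $N_v^{\tilde G}$ is the number of children of $v$ in $B_2^{\tilde G}(x)$; and at $v \notin B_2^{\tilde G}(x)$ the residual is nonzero only if $v$ is $G$-adjacent, via a pruned edge, to some vertex in the support of $\ww_\sigma(x)$, in which case it has size $O(\alpha_x^{-1/2})$ per pruned edge. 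Summing squared residuals, the $S_1$ contribution is at most $a^2 \alpha_x^{-2}\bigl(\beta_x^{(2),\tilde G} - (\beta_x^{\tilde G})^2/\alpha_x\bigr)$, which is $O(1)$ by a concentration estimate for $\beta_x^{(2)}$ obtained via Lemma~\ref{lem:weibullbound} (a weakened analogue of item 5 in $\omegam$ covering all of $\UU$), while the pruned-edge contribution is controlled by the number of pruned edges together with the high-probability bound $O(\log\log N)$ on the maximum degree in $G$ restricted to $[N] \setminus \UU$, yielding the $O(\log \log N)$ error stated in the lemma.

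The main obstacle is to establish the structural and concentration estimates uniformly across $\UU$, not just across $\VV$: for $x \in \UU \setminus \VV$, $\alpha_x$ can be as small as $\uu/2$, so fluctuations of $\beta_x$ and $\beta_x^{(2)}$ are substantially larger than in the fine or intermediate regime and must be bounded using the sharp binomial tail estimates of Corollary~\ref{lem:binom_heavy_tail} and Lemma~\ref{lem:weibullbound}. A related subtlety is making the simultaneous pruning across the (possibly polynomial-in-$N$) many vertices of $\UU$ consistent, i.e.\ ensuring that the total number of edges removed is small enough that $\beta_x^{\tilde G} = \beta_x + O(1)$ holds for every $x \in \UU$ at once; this requires a union bound over all pairs in $\UU \times \UU$ and a careful choice of which edge to delete at each collision so that one never has to remove edges incident to a vertex of $\UU$.
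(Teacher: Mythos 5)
Your overall strategy — prune to disjoint tree-like balls, write down an explicit two-parameter test vector, and bound the residual — is the paper's strategy, and your test vector is the paper's (the amplitude ratios $1:\sigma\lambda_x/\alpha_x:1/\alpha_x$ agree with equation~\eqref{eq:rough_eigenvector} after renormalizing). However, there are several concrete errors in the execution.

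First, the pruning as you describe it is not achievable. You require that removed edges are never incident to $\UU$, yet two vertices $u, v \in \UU$ at distance $1$ or $2$ in $G$ force you to delete an edge incident to one of them. This is not a rare event: $|\UU| \asymp N^{1/2+o(1)}$ by Corollary~\ref{cor:regimesizes}, so the expected number of adjacent pairs in $\UU$ is already $\asymp |\UU|^2 d/N = \omega(1)$. The paper's construction (Lemma~\ref{lem:prunedgraph}) deliberately removes $\UU$-incident edges $(x,y)$; as a consequence $\hat\alpha_x$ may differ from $\alpha_x$ (by a constant), which is why the test vector is built from the pruned parameters $\hat\alpha,\hat\beta$ and why an extra triangle-inequality term $\|\hat\lambda_x \ww - \lambda_x \ww\| = O(1)$ appears. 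Your claim ``so that in particular $\alpha_x$ is unchanged'' does not hold, and the residual at $v=x$ is then not identically zero.

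Second, your residual accounting at $v \notin B_2^{\tilde G}(x)$ is wrong: you say it is nonzero only via pruned edges, but every $v \in S_3^{\tilde G}(x)$ is adjacent in $\tilde G$ to $S_2^{\tilde G}(x) \subset \mathrm{supp}\,\ww_\sigma(x)$, giving $(A\ww)|_v = \Theta(1/\alpha_x) > 0$ via unpruned edges. The paper prunes to radius $3$ precisely so that each $S_3$-vertex has a unique $S_2$-parent, giving an exact $|\hat S_3|/(\hat\alpha\hat\lambda^2)$ term in the squared residual, which is $O((d+\log\log N)^2/\uu)$ — subdominant but not absent, and harder to control cleanly if only $B_2$ is a tree.

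Third, and this is where the statement's error rate actually originates, your claim that $\alpha_x^{-2}\sum_{y \in \hat S_1}(\hat N_y - \hat\beta/\hat\alpha)^2 = O(1)$ is too strong. A union bound over $|\UU|\approx N^{1/2}$ vertices via Lemma~\ref{lem:weibullbound} only forces the fluctuation to threshold $t \gtrsim (\log N)^2$ (so that $2n e^{-\cweib\sqrt t} \lesssim N^{-1}$), giving $\sum_y(\hat N_y - d)^2 = O((\log N)^2)$ as in Lemma~\ref{lem:size_balls}. Dividing by $\hat\alpha\hat\lambda^2 = \Theta(\uu^2) = \Theta((\log N / \log\log N)^2)$ yields $O((\log\log N)^2)$, not $O(1)$. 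This is the dominant term and is exactly where the stated $O(\log\log N)$ error comes from. Correspondingly, the pruned-edge contribution $\|A\ww - \hat A\ww\|$ is $O(1)$ because $G - \hat G$ has bounded degree (Lemma~\ref{lem:prunedgraph}~(3)); there is no maximum-degree bound of $O(\log\log N)$ on $[N]\setminus\UU$, whose degrees can be up to $\uu/2 = \Theta(\log N/\log\log N)$. Your attribution of the $\log\log N$ to pruned edges has the two sources swapped.
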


Towards this end, we will first prove two weaker structural lemmas around vertices in $\UU$. Firstly we have weaker bounds on the neighborhood growth and the fluctuations of the degrees of the neighbors.
The following lemma has a similar proof as Lemma \ref{lem:fine_balls_disjoint}, and we defer the proof to Section \ref{sec:structure} of the appendix.
\begin{lemma} \label{lem:size_balls}
We have that with high probability for any vertex $x \in \cU$ simultaneously and any $i \leq 3$, the sphere $S_i(x)$ at distance $i$ from $x$ satisfies

$$ \left |S_i(x) \right | = O \left ((d + \log \log N)^{i-1}\uu \right ) .$$
Moreover, 
\[
\sum_{ y \in S_1(x) } \left ( N_y - d \right )^2\leq O \left (  \left ( \log N \right )^2 \right ).
\]

\end{lemma}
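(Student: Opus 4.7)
The plan is to follow the template of Lemma~\ref{lem:fine_balls_disjoint}: control each quantity for a single vertex $x \in \mathcal{U}$ via Binomial/Poisson tail bounds, then union bound over $\mathcal{U}$ using Corollary~\ref{cor:regimesizes}, which gives $|\mathcal{U}| \leq N^{1/2 + o_N(1)}$ with high probability. Hence each single-vertex event must fail with probability $o(N^{-1/2 - o_N(1)})$.

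For $|S_i(x)|$, the case $i = 1$ is immediate from Lemma~\ref{lem:max_degree} since $|S_1(x)| = \alpha_x \leq \uu$ with high probability. For $i = 2$, I would condition on $B_1(x)$ to make the offspring counts $(N_y)_{y \sim x}$ independent $\mathrm{Bin}(N - \alpha_x - 1, d/N)$ variables, so that $|S_2(x)| \leq \sum_{y \sim x} N_y$ is stochastically dominated by a $\mathrm{Bin}(\alpha_x (N-1), d/N)$. Comparing to $\mathrm{Pois}(\alpha_x d)$ via Corollary~\ref{cor:bintopoistail} and invoking Lemma~\ref{lem:sharp_poisson_tail} at the threshold $C(d + \log\log N)\uu$, a direct calculation of the Poisson rate function $\lambda h(\delta)$ at $1+\delta \gtrsim \frac{(d + \log\log N)\uu}{\alpha_x d}$ produces a deviation probability of $N^{-\omega(1)}$, comfortably absorbing the union bound. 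The $i = 3$ case is identical after further conditioning on $B_2(x)$ and using the $i = 2$ bound in place of $\alpha_x$ as the effective number of trials.

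For the sum of squares, I would expand
\[
\sum_{y \sim x} (N_y - d)^2 = \sum_{y \sim x} N_y^2 - 2 d\, |S_2(x)| + \alpha_x d^2,
\]
so that it suffices to control $\sum_{y \sim x} N_y^2$ against the previously controlled $|S_2(x)|$ and the deterministic term $\alpha_x d^2 = o(\log^2 N)$. The expectation $\E\bigl[\sum_{y \sim x} N_y^2 \mid \alpha_x\bigr] = \alpha_x (d^2 + d) = O(\uu(d^2 + d))$ is $o(\log^2 N)$ in the regime $d \leq \log^{1/40} N$. Applying Lemma~\ref{lem:weibullbound} with $n = \alpha_x \leq \uu$ and $t = C\log^2 N$ for a large constant $C$, where the condition $t \gg (d^2 + 1) n^{2/3}$ puts us in the second, $d$-independent form of the Weibull bound, gives a tail of $2\uu \exp(-\cweib \sqrt{C}\, \log N) = N^{-\omega(1)}$. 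A union bound over $\mathcal{U}$ then yields $\sum_{y \sim x}(N_y - d)^2 = O(\log^2 N)$.

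The main technical obstacle is calibrating these tail bounds so that the per-vertex failure probability comfortably beats $N^{-1/2}$. In particular, it is essential to be in the regime where the second, $d$-independent form of Lemma~\ref{lem:weibullbound} applies, since the first form (with the $\tfrac{1}{d^3+1}$ factor in the exponent) would be too weak once $d$ approaches $\log^{1/40} N$. A secondary subtlety is that in the $i = 3$ bound the ``number of trials'' $|S_2(x)|$ is itself random; I would handle this by working on the favorable event supplied by the $i = 2$ step before invoking Lemma~\ref{lem:sharp_poisson_tail} again.
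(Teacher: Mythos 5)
Your proposal is correct and follows essentially the same template as the paper: establish the bound for a single vertex by conditioning on previous shells and applying a tail bound, then union bound over $\cU$ using its polynomial size. The one genuine difference is in the tail bound used for $|S_i(x)|$: you invoke the sharp Poisson machinery (Corollary~\ref{cor:bintopoistail} plus Lemma~\ref{lem:sharp_poisson_tail}), whereas the paper uses the elementary Chernoff-type bound Lemma~\ref{lem:weakertail} directly on the conditionally Binomial $|S_i|$. Both deliver a per-vertex failure probability of $N^{-\omega(1)}$, so this is a matter of overkill rather than a gap; the paper's choice is lighter (no Poisson comparison step, no need to verify the $\delta \geq 1/\sqrt\lambda$ and $k \leq \sqrt n$ side conditions), and also records the more explicit constant $4^{i-1}(d+\log\log N-\log d)^{i-1}\uu$. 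For the second claim your expansion $\sum_{y\sim x}(N_y-d)^2 = \sum N_y^2 - 2d|S_2| + \alpha_x d^2$ and the use of the $d$-independent form of Lemma~\ref{lem:weibullbound} with $t = \Theta(\log^2 N)$ matches the paper exactly; you are right that checking $t \gg (d^2+1)n^{2/3}$ is the step that keeps the $\frac{1}{d^3+1}$ penalty out of the exponent. One small thing worth saying explicitly in a write-up: for $i=3$ the ``number of trials'' is random, so you should condition on $B_2$ and work on the good event from the $i=2$ step before applying the tail bound, which you flagged correctly but only as a ``secondary subtlety'' --- it is in fact the bootstrapping structure of the whole argument.
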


Next, we show the balls around vertices in $\cU$ are close to disjoint trees: with high probability the neighborhoods around vertices in the rough regime are almost trees and contain few disjoint paths that contain other vertices from the rough regime. This result basically corresponds to Lemma 5.5 and Lemma 7.3 in \cite{alt2021extremal}, albeit for a different regime of $d$. The proof is also very similar and is deferred to Section \ref{sec:structure} of the appendix.

\begin{lemma} \label{lem:rough_tree_disjoint}
    Let $\cU_\eta = \left \{ x \in [N]: \alpha_x \geq \eta \uu \right \},$ and $s$ be some positive integer, then with high probability for some constants $C_1$ and $C_2$ that only depend on $\eta$, simultaneously for all $x \in \cU_\eta$,
    \begin{enumerate}
        \item $|E \left ( B_s(x) \right )| < \left | V \left ( B_s(x) \right ) \right | - 1 + C_1$
        and
        \item $B_s(x)$ contains less than $C_2$ edge disjoint paths in $B_s(x)$ containing other vertices from $\cU_\eta$.
    \end{enumerate}
\end{lemma}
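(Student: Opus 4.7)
The plan is to prove both statements via first-moment (union bound) arguments, leveraging two features of our sparse regime: by Corollary \ref{lem:binom_heavy_tail} together with \eqref{eq:uapprox}, the probability that a fixed vertex lies in $\cU_\eta$ is $\P(\alpha_x \geq \eta\uu) \leq N^{-\eta + o(1)}$; and each ``extra'' edge beyond a spanning tree, or each edge lying on a specified path, contributes only a factor of $d/N \leq N^{-1 + o(1)}$ in our regime. Together these two effects force the expected count of bad configurations to be $o(1)$ once the appropriate finite constant $C_1 = C_1(\eta)$ or $C_2 = C_2(\eta)$ is chosen.

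For (1), I would start from the observation that if $B_s(x)$ has tree excess at least $C_1$, then by taking the union of $C_1$ excess edges together with BFS paths from $x$ to their endpoints one obtains a connected subgraph $H \subseteq G$ with $x \in V(H)$, $|V(H)| = k \leq 2sC_1 + 1$, and $|E(H)| \geq k - 1 + C_1$. A first-moment count over such pairs $(x, H)$ is bounded by
\[
N \cdot N^{-\eta + o(1)} \sum_{k \leq 2sC_1+1} \sum_{c \geq C_1} N^{k-1} \cdot k^{k-2} \binom{k(k-1)/2}{c} \left(\frac{d}{N}\right)^{k-1+c} \leq N^{1 - \eta - C_1 + o(1)},
\]
using the Cayley count $k^{k-2}$ for spanning trees and $\binom{k(k-1)/2}{c}$ for the extra edges. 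Any $C_1 > 1 - \eta$ makes this vanish, so Markov's inequality yields the claim with high probability.

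For (2), I would similarly extract, from any collection of $C_2$ edge-disjoint paths in $B_s(x)$ through other $\cU_\eta$-vertices, a tuple of $C_2$ distinct vertices $y_1, \ldots, y_{C_2} \in \cU_\eta \setminus \{x\}$ together with edge-disjoint paths of length at most $s$ joining them to $x$. The expected number of such configurations is bounded by
\[
N \cdot N^{C_2} \cdot N^{(s-1)C_2} \cdot \left(\frac{d}{N}\right)^{sC_2} \cdot N^{-\eta(C_2+1) + o(1)} \leq N^{1 - \eta(C_2+1) + o(1)},
\]
where the five factors come respectively from the choice of root $x$, of the tuple $y_1,\ldots,y_{C_2}$, of the intermediate path vertices, of the independent path edges (each present with probability $d/N$), and of the joint tail event that $x, y_1, \ldots, y_{C_2}$ all lie in $\cU_\eta$. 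Any $C_2 > 1/\eta - 1$ makes the exponent negative.

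The main technical subtlety I anticipate is justifying the near-factorization of the last factor in each display, since the degree events $\{x \in \cU_\eta\}$ and $\{y_i \in \cU_\eta\}$ involve edge sets that partially overlap those defining the subgraph $H$ or the paths $P_i$. The resolution is that conditioning on a bounded number of specified edges shifts the binomial parameter governing each degree by at most an additive constant, hence perturbs each one-point tail bound by only a $(1+o(1))$ factor. Combined with Lemma \ref{lem:size_balls} (whose high-probability ball-size bound justifies restricting to $k \leq 2sC_1+1$ in (1) and to paths of length at most $s$ in (2)), this suffices to conclude both parts with, say, $C_1 = \lceil 2-\eta \rceil$ and $C_2 = \lceil 1/\eta \rceil$, both finite and depending only on $\eta$.
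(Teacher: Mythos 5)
Your approach is essentially the paper's: both parts are proved by a first-moment union bound over extracted subgraph configurations, with the degree tail $\P(\alpha_x\geq\eta\uu)\leq N^{-\eta+o(1)}$ providing one factor and each ``excess'' or path edge contributing $d/N$. For part (1), the paper packages the configuration count into a separate conditional estimate (Lemma \ref{lem:excess_edges}), which bounds $\P(E(B_s(x))\geq V(B_s(x))-1+C_1\mid S_1(x))\leq Cd^{3C_1}(|S_1|/N)^{C_1}$; your direct union bound over subgraphs $H$ with $k$ vertices, a Cayley-counted spanning tree, and $c\geq C_1$ extra edges is the same computation. For part (2), your extraction of $C_2$ edge-disjoint paths to distinct $\cU_\eta$-vertices and the resulting count is again the paper's argument.

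Two remarks. First, the justification of the ``near-factorization'' is slightly misstated: conditioning on $m$ specified edges incident to a vertex changes $\P(\alpha_x\geq\eta\uu)$ into $\P(\Bin(N-1-m,d/N)\geq\eta\uu-m)$, and the ratio of these two tails is of order $(\uu/d)^m=(\log N)^{O(1)}$, not $1+o(1)$. This is still $N^{o(1)}$ and is absorbed into your exponents, so the conclusion stands, but the claim as written is inaccurate. The paper sidesteps this by proving the conditional Lemma \ref{lem:excess_edges} given $S_1(x)$, so that the degree event and the excess-edge event are handled in a genuinely factorized manner. Second, a small efficiency point you leave on the table: in part (1) the configuration count alone already decays like $N^{-C_1+o(1)}$, so after multiplying by the $N$ choices of $x$ one can take $C_1$ independent of $\eta$ (any $C_1\geq 2$, as in the paper) without ever invoking the degree tail; the degree factor only makes the bound stronger. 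Your threshold $C_1>1-\eta$ is correct but not needed. For part (2) your bookkeeping of all $C_2+1$ degree tails gives $C_2>1/\eta-1$, which is in fact slightly tighter than the paper's stated $C_2>2/\eta$.
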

Note that $\cU = \cU_\frac{1}{2}$ and that it is enough to take constants $C_1 = 2$ and $C_2 > \frac{2}{\eta}$.

We now construct a ``pruned'' graph in which the neighborhoods of vertices in $\cU$ are disjoint trees. The construction works in the same manner as in Lemma 7.2 of \cite{alt2021extremal} and we use it to prove a statement similarly to Proposition 6.19 in \cite{alt2023poisson}. Once more the proof can be found in the appendix.
\begin{lemma}\label{lem:prunedgraph}
Recall that we denote by $G$ the random graph sampled from $\cG(N,\frac dN)$. With high probability, there is a subgraph $\hat G\subset G$ such that for all vertices $x \in \cU,$
\begin{enumerate}
\item
Balls of radius 3 around $x$ in $\hat{G}$, which we denote by $\hat{B}_3(x)$, are disjoint;
\item
The subgraphs induced by $\hat{B}_3(x)$ are trees;
\item
The maximum degree of $G - \hat G$ is bounded; 
\item
For $i\leq 3$, the spheres $\hat{S}_i(x)$ in the pruned graph $\hat{G}$ satisfy
\[
\left |\hat{S}_{i}(x) \right | = O \left ( (d + \log \log N)^{i-1}\uu\right )  ;
\]
\item
We have that
\[
\sum_{y \in \hat{S}_1(x)} \left ( \hat N_y(x)- \frac{\hat{\beta}}{\hat \alpha} \right )^2\leq  O \left ( (\log N)^2 \right ).
\]
\end{enumerate}
\end{lemma}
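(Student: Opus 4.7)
The plan is to build $\hat G$ from $G$ by a greedy edge-deletion procedure whose correctness rests on Lemma \ref{lem:rough_tree_disjoint}. Applied with $\eta = 1/2$ and $s = 3$, that lemma gives, with high probability, that every $x \in \UU$ satisfies $|E(B_3(x))| \le |V(B_3(x))| - 1 + C_1$ and $B_3(x)$ contains fewer than $C_2$ edge-disjoint paths to other vertices of $\UU$, for absolute constants $C_1, C_2$. I would enumerate $\UU = \{x_1, \ldots, x_m\}$ arbitrarily and, at step $i$, inspect $B_3(x_i)$ in the graph that remains after steps $1, \ldots, i-1$: pick a spanning tree of it and delete the at most $C_1$ surplus edges, then delete one edge on each of the at most $C_2$ edge-disjoint paths in $B_3(x_i)$ reaching any other $x_j \in \UU$. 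Properties (1) and (2) hold by construction at the end of step $m$.

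For property (3), the goal is that no vertex $y \in [N]$ loses more than $O(1)$ incident edges. Since an edge at $y$ is deleted only while processing some $x_j \in \UU$ with $d(x_j, y) \le 3$, and each such step removes at most $C_1 + C_2$ edges inside $B_3(x_j)$ and hence at most $C_1 + C_2$ edges incident to $y$, it suffices to establish the uniform bound $|B_3(y) \cap \UU| = O(1)$ with high probability. I would prove this by a union bound over tuples $(y; z_1, \ldots, z_k)$ of distinct $z_\ell \in \UU$ within distance $3$ of $y$, combining the small-probability event $\alpha_{z_\ell} \ge \uu/2$ via Lemma \ref{lem:sharp_poisson_tail} with the independence of short paths in $\GG(N, d/N)$, in the spirit of the combinatorial argument behind Lemma \ref{lem:rough_tree_disjoint}(2). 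Properties (4) and (5) then follow from Lemma \ref{lem:size_balls}: removing $O(1)$ edges per ball perturbs $|\hat S_i(x)|$ by at most $O((d + \log\log N)^{i-1})$, which is lower order than the target bound, and changes each $\hat N_y$ by $O(1)$, so expanding $(\hat N_y - \hat\beta/\hat\alpha)^2 = (N_y - \beta/\alpha)^2 + O(|N_y - \beta/\alpha|) + O(1)$ and summing over $y \in \hat S_1(x)$ absorbs the correction into the $O((\log N)^2)$ bound, using $|\hat S_1(x)| \le \uu$.

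The main obstacle will be the uniform bound $|B_3(y) \cap \UU| = O(1)$ underlying property (3). This is delicate because $y$ may itself lie in $\UU$ or have atypically large degree, and $B_3(y)$ could a priori house many moderate-degree vertices. The argument must combine Erd\H os--R\'enyi independence with the polynomial size bound $|\UU| \lesssim N^{1/2 + o(1)}$ from Corollary \ref{cor:regimesizes}, splitting into cases by $\alpha_y$: for small $\alpha_y$, $|B_3(y)|$ is itself small and the bound is trivial; for large $\alpha_y$, the probability that several vertices within distance $3$ of $y$ simultaneously attain degree at least $\uu/2$ is sub-polynomially small by the sharp Poisson tail. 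Without this input, the greedy procedure could strip arbitrarily many edges from a single high-degree vertex, violating (3) and breaking all downstream spectral comparisons that rely on $\|G - \hat G\|$ being controlled.
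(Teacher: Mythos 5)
Your proposal follows the paper's basic strategy — greedy pruning around vertices of $\cU$ powered by Lemma \ref{lem:rough_tree_disjoint} — but diverges in one consequential respect: the paper never deletes an edge unless it is \emph{incident to the $\cU$-vertex being processed}. When processing $x$, the paper first makes $B_3(x)$ a tree by pruning edges $(x,y)$ whose subtree is cyclic or loops back to $x$, and then additionally prunes $(x,y)$ whenever the subtree of $B_3(x)$ rooted at $y$ contains another $\cU$-vertex. Because those subtrees are disjoint, the number of children $y$ with a $\cU$-vertex below them is bounded by the edge-disjoint-path count in Lemma \ref{lem:rough_tree_disjoint}(2), so each step cuts $O(1)$ edges, all at $x$. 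Property (3) then reduces to a purely local count: a vertex $z\notin\cU$ loses the edge $(x,z)$ only when it is adjacent to some processed $x\in\cU$, and the edges $(z,x')$ with $x'\in\cU\setminus\{x\}$ are edge-disjoint length-one paths inside $B_2(x)$ containing $\cU$-vertices, hence fewer than $C_2$ of them by Lemma \ref{lem:rough_tree_disjoint}(2); and a vertex $x\in\cU$ is affected only during its own processing step (any edge $(x',x)$ with $x'\in\cU$ would already have been cut when $x$ was processed, since $x\in V_{x'}$). No bound on $|B_3(y)\cap\cU|$ over all $y\in[N]$ is ever invoked.

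Your construction, by allowing deletions anywhere in $B_3(x_i)$, is exactly what forces the uniform bound $|B_3(y)\cap\cU|=O(1)$ that you correctly flag as the main obstacle; that bound is likely provable but it is avoidable. There is also a real gap in your treatment of property (1): after you have made $B_3(x_i)$ a tree, paths from $x_i$ are unique, so ``edge-disjoint paths reaching another $x_j\in\cU$'' really means distinct children of $x_i$ whose subtree contains a $\cU$-vertex, and you must delete the \emph{root} edge $(x_i,y)$ on each such path. Deleting an arbitrary interior edge still leaves the prefix of the path in $\hat B_3(x_i)$, so it does not sever the overlap between $\hat B_3(x_i)$ and $\hat B_3(x_j)$. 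Once you fix the deletion to happen at the root edge you recover the paper's construction, the local degree bound above goes through, and the extra probabilistic input for property (3) becomes unnecessary. Your treatment of (4) and (5) — removing $O(1)$ edges per ball, then expanding the square and using Lemma \ref{lem:size_balls} — matches the paper's estimates, though for (4) the cleaner observation is simply that pruning can only increase distances, so $\hat B_i(x)\subseteq B_i(x)$ and the bound is inherited directly.
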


\begin{proof}[Proof of Lemma \ref{lem:roughconstruction}]
Define $\hat\alpha_x,\hat\beta_x$ to be the parameters of the pruned graph $\hat G$. We use the same test vector as \cite{alt2023poisson}, restated with our parameters this is the unit vector 
\begin{equation} \label{eq:rough_eigenvector}
    \ww_{\sigma}(x):=
    \frac{1}{\sqrt{2}}\left(\frac{\sqrt{\hat \alpha_x}}{\sqrt{\hat\alpha_x+\frac{\hat\beta_x}{\hat\alpha_x}}} \vone_{x}
    +\sigma\frac{1}{\sqrt{\hat\alpha_x}}\vone_{\hat S_1(x)}
    +\frac{1}{\sqrt{\hat\alpha_x(\hat\alpha_x +\frac{\hat\beta_x}{\hat\alpha_x})}}\vone_{\hat S_2(x)}\right). 
\end{equation}

The first statement of the Lemma now follows by Lemma \ref{lem:prunedgraph}. 

We now fix $x$ and $\sigma$ and drop them from our notation for better readability. To prove the second statement we define $\hat\lambda = \sqrt{\hat\alpha+\frac{\hat\beta}{\hat\alpha}}$ and $\hat{A} = A_{\hat{G}}$
and use a triangle inequality to bound $$\| A \ww - \sigma \lambda \ww \| \leq \left \|A \ww - \hat{A} \ww \right \| 
+ \left \|\hat{A} \ww - \sigma \hat{\lambda} \ww  \right \| 
+ \left \|\hat{\lambda} \ww  - \lambda \ww  \right \|.$$

The first term on the right hand side is at most constant, as by Lemma \ref{lem:prunedgraph} the maximum degree of $G - \hat G$ is bounded by a constant and since the maximum row sum is an upper bound for the maximum eigenvalue of a positive symmetric matrix.

Similarly the last term is bounded since by Lemma \ref{lem:prunedgraph}, $\hat{\alpha}$ differs from $\alpha$ by at most a constant, and $\beta$ and $\hat{beta}$ can both be bounded by $(d + \log \log N) \u.$ This implies that
\[
|\lambda - \hat{\lambda}| = \sqrt{\alpha +\frac {\beta }{\alpha }}-\sqrt{\hat \alpha +\frac {\hat\beta }{\hat\alpha }}=\sqrt{\alpha }\sqrt{1+\frac {\beta }{\alpha ^2}}-\sqrt{\alpha }\sqrt{1+\frac{\hat \alpha -\alpha }{\alpha }+\frac {\hat\beta }{\alpha \hat\alpha }}\ll 1.
\]

The second term can be computed as
\begin{align*}
& \sqrt{2} \left ( \hat A  \ww -\sigma \hat \lambda \ww\right )   \\
& =
\left( \sigma \sqrt{\hat{\alpha}}- \sigma \sqrt{\hat{\alpha}}\right)\vone_{x}
+
\sum_{y\in \hat{S}_1} \left(\frac{\sqrt{\hat{\alpha}}}{\hat\lambda}+\frac{\hat{N}_y}{\sqrt{\hat{\alpha}}\hat{\lambda}}-\frac{\hat{\lambda}}{\sqrt{\hat{\alpha}}}\right)\vone_y
+\sum_{y\in \hat S_2}
\left(\frac{\sigma}{\sqrt{\hat{\alpha}}}
-
\frac{\sigma}{\sqrt{\hat\alpha}}\right)
\vone_y
+ \sum_{y\in \hat S_3} \frac{1}{\sqrt{\hat{\alpha}} \hat \lambda}\vone_y.
\end{align*}

Therefore, using Lemma \ref{lem:prunedgraph} and the lower bound on $\alpha_x$ for $x \in \cU$, we get
\begin{eqnarray*}
2 \left \|\hat A \ww -\sigma \hat \lambda \ww \right \|^2 
&\leq&
\frac{1}{\hat{\alpha} \left (\hat\alpha+\frac{\hat\beta}{\hat\alpha} \right )}\sum_{y\in \hat S_1}\left(\hat{N}_y-\frac{\hat{\beta}}{\hat\alpha}\right)^2+|\hat{S}_3|\frac{1}{\hat{\alpha}\left ( \hat{\alpha}+\frac{\hat\beta}{\hat\alpha}\right )}\\
&\leq&
O \left ( (\log\log N)^2 +\frac{(d+\log\log N)^2}{\uu} \right ).
\end{eqnarray*}

Putting these three bounds together and using our expression \eqref{eq:uapprox} for $\u$ and the bounds in \ref{dfn:rdfn} for $d$, we get that, 
\[
\|A \ww -\sigma \lambda \ww\| = O(\log\log N).
\]
\end{proof}

\section{Block Decomposition}\label{sec:finaldecomp}

This section is devoted to proving Theorem \ref{thm:structure}, for which we use results from Sections \ref{sec:fine} and \ref{sec:rough}. For this we first bound the contribution of the remainder of the matrix, i.e. from vectors that are orthogonal to the largest eigenvectors of small balls around the high-degree vertices.
We end this section by using the approximate diagonalizetion to prove Theorem \ref{thm:maineigenvalue}.

\subsection{Bulk vectors}
We now prove that there is no contribution from any other vector. To do this, we use the decomposition of Krivelevich and Sudakov \cite{krivelevich2003largest}. This lets us reduce to only considering stars around high degree vertices. Here, we state a structure theorem that combines elements of the proof of Theorem 1.1 and Lemma 2.2 in \cite{krivelevich2003largest}. To do this, recall that $\Gamma_x$ are all vertices adjacent to $x$ and consider the sets of vertices 
    \begin{eqnarray*}
\YY_1&:=&\left\{x\in[N]:\alpha_x\geq \uu^{3/4}\right\}\\
\YY_2&:=&\left\{x\in[N]:\Gamma_x\cap \YY_1\neq \emptyset\right\}
\end{eqnarray*}

\begin{prop}[\cite{krivelevich2003largest}]\label{prop:ks}
    For $G\sim \GG(N,\frac{d}{N})$ graph, if $d=o(\log^{1/2} N)$, then with high probability, there is a subgraph $\mathcal{H}\subset G$ such that 
    \begin{enumerate}
        \item
        $\mathcal{H}$ is contained in the bipartite subgraph induced by $\YY_1$ and $\YY_2$,
    
        \item
        $\mathcal{H}$ is a union of stars on disjoint vertices,
         \item 
        $\|A_{G\backslash \mathcal{H}}\|=O(d+\uu^{7/16})$.
    \end{enumerate}
\end{prop}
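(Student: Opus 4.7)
The plan is to adapt the star-peeling construction of Krivelevich and Sudakov \cite{krivelevich2003largest} to our parameter range. The guiding intuition is that in a sparse Erd\H{o}s--R\'enyi graph the extreme spectrum is dominated by the star neighborhoods at vertices of near-maximal degree, and that once a well-chosen family of such stars is excised the residual graph is close enough to $d$-regular for a walk-counting argument to control its spectral radius.

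I would define $\mathcal{H}$ greedily. Enumerate the vertices of $\YY_1$ in an arbitrary order, and for each $x$ in turn append to $\mathcal{H}$ the star consisting of $x$ and those neighbors of $x$ that have not already been absorbed into a previously added star. Properties (2) and (1) are then immediate: $\mathcal{H}$ is a vertex-disjoint union of stars whose edges join a center $x \in \YY_1$ to a leaf in $\Gamma_x$, and any neighbor of $x \in \YY_1$ automatically lies in $\YY_2$ by definition. The delicate point is that vertices of $\YY_1$ may sit close to one another, in which case the greedy procedure ``skips'' some of them, leaving behind centers with nontrivial residual degree. I would show via first-moment estimates on pairs of vertices of degree $\geq \uu^{3/4}$ at small distance, mirroring the proofs of Lemmas \ref{lem:fine_balls_disjoint} and \ref{lem:size_balls} but with the looser threshold $\uu^{3/4}$ in place of the $\omegam$-thresholds, that these conflicts are rare enough that every remaining vertex in $H := G \setminus \mathcal{H}$ has $H$-degree strictly less than $\uu^{3/4}$.

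To establish (3) I would use the trace method: for any integer $k$,
\begin{equation*}
    \lambda_1(A_H)^{2k} \leq \operatorname{tr}\!\left(A_H^{2k}\right),
\end{equation*}
and the right-hand side counts closed walks of length $2k$ in $H$. Closed walks decompose into a tree skeleton plus a small number of extra edges; under the high-probability event that small neighborhoods in $G$ contain at most one excess edge (the same type of event used in the proof of Lemma \ref{lem:fine_balls_disjoint}), the number of walks of length $2k$ rooted at any fixed vertex is bounded by the count on a tree with the same degree sequence, for which I would separate the contribution of typical-degree vertices (totalling $(C d)^{2k}$) from that of vertices of degree up to $\uu^{3/4}$, using the Poisson tail bounds of Lemma \ref{lem:sharp_poisson_tail} to show that walks visiting too many high-degree vertices are vanishingly rare. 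Summing over roots and taking a $2k$-th root for $k$ growing with $\log \uu$ then yields $\|A_H\| = O(d + \uu^{7/16})$.

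The main obstacle is landing on the correct exponent $7/16$: the naive upper bound $\lambda_1 \leq \Delta$ applied to the max degree of $H$ gives only $\uu^{3/4}$, while the tree-type bound $\lambda_1 \leq 2\sqrt{\Delta}$ from Lemma \ref{lem:forest-bound} would give the stronger $\uu^{3/8}$ but is not valid for $H$ because of the occasional short cycles produced by excess edges. Interpolating between these requires a careful walk-count that tracks how many edges of a walk visit the intermediate degree range $(d, \uu^{3/4})$ and uses independence across short paths in $G$; this matches the argument in Section 2 of \cite{krivelevich2003largest}, whose input hypotheses are met in our regime $d = o(\log^{1/2} N)$.
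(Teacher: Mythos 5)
The paper does not actually prove this proposition: it is stated as a citation to Krivelevich--Sudakov, and the text immediately afterwards identifies $\mathcal{H}$ as the graph $G_6-H$ from \cite{krivelevich2003largest}. So there is no argument of the paper's to compare against; the only reference point is the original Krivelevich--Sudakov proof.

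Your proposal takes a genuinely different route from theirs, and the key step is not carried out. Krivelevich and Sudakov do not use the trace method at all. Their engine is a deterministic spectral estimate for bipartite graphs (their Lemma~2.1): if $H$ is bipartite with sides of maximum degree $\Delta_1$ and $\Delta_2$, then $\|A_H\|\leq\sqrt{\Delta_1\Delta_2}$. They decompose $G$ into induced subgraphs on high- and low-degree vertices plus the bipartite cross piece, prune the stars, and then apply this bipartite bound to what remains, using a separation property of the high-degree set to control the residual degrees on each side. The exponent $7/16$ comes out of $\sqrt{\Delta_1\Delta_2}$ for the specific thresholds in that decomposition. None of this is a walk-counting argument.

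By contrast, your outline identifies the two easy endpoints ($\lambda_1\leq\Delta$ gives $\uu^{3/4}$, the forest bound would give $\uu^{3/8}$ if $H$ were a tree) and then asserts that a ``careful walk-count that tracks how many edges of a walk visit the intermediate degree range'' will interpolate to $\uu^{7/16}$. That interpolation is precisely the content you would need to supply, and it is not obvious that the trace method produces this exponent without essentially rediscovering the bipartite decomposition (the mechanism by which a long walk is forced to ``pay'' for each visit to a high-degree vertex by traversing low-degree territory is exactly what the bipartite lemma encodes in closed form). In addition, since $H$ is random, the trace method requires either controlling $\mathbb{E}[\operatorname{tr} A_H^{2k}]$ or conditioning on a high-probability structural event for all of $G$ simultaneously; your sketch conflates a per-root count with the sum over $N$ roots and does not explain how the $N$ factor is absorbed for $k$ of the size you indicate. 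Finally, your closing claim that this ``matches the argument in Section~2 of \cite{krivelevich2003largest}'' is not accurate: their Section~2 argument is the bipartite-norm decomposition, not a moment computation. Given that the paper treats this as an imported black box, the cleanest fix would be to drop the walk-counting sketch and instead verify that the hypotheses of Krivelevich--Sudakov's Theorem~1.1 and Lemma~2.2 hold in the present regime $d=o(\log^{1/2}N)$, which is what the authors implicitly do.
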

Note that $\mathcal{H}$ is the graph $G_6-H$ in \cite{krivelevich2003largest}. This is strong enough to show that no other vector interferes in the largest eigenvalues.

Define $U_{\UU}$ to be the space spanned by $\ww_{\pm}(x)$ as defined in Definition \ref{dfn:wdef} for $x\in \VV$, and $\ww_{\pm}(x)$ as defined in Lemma \ref{lem:roughconstruction} for $x\in \UU\backslash \VV$. 
\begin{lemma}\label{lem:bulk} For any vector $\vv \in \mathbb{R}^N$ that satisfies $\|v |\ = 1$ and $\vv\perp U_\UU$,
    \begin{equation}
        \langle \vv, A \vv \rangle \leq (1+o_N(1))\frac{1}{\sqrt{2}}\sqrt{\uu}.
    \end{equation}
\end{lemma}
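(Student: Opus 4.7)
My plan is to combine the Krivelevich-Sudakov decomposition with the fact that the nontrivial eigenvectors of the stars in $\mathcal{H}$ are closely approximated by our vectors in $U_\UU$. First I will use Proposition~\ref{prop:ks} to write $A = A_\mathcal{H} + (A - A_\mathcal{H})$, where $\mathcal{H}$ is a disjoint union of stars with centers in $\YY_1$ and $\|A - A_\mathcal{H}\| = O(d + \uu^{7/16}) = o(\sqrt{\uu})$. This reduces the problem to bounding $\langle \vv, A_\mathcal{H}\vv\rangle$ up to an additive $o(\sqrt{\uu})$. Each star centered at $x$ with $k_x$ leaves has nonzero eigenvalues $\pm\sqrt{k_x}$ with unit eigenvectors $\phi_x^{\pm} = \tfrac{1}{\sqrt{2}}\mathbf{1}_x \pm \tfrac{1}{\sqrt{2k_x}}\mathbf{1}_{\mathrm{leaves}(x)}$.

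I then split $\YY_1 = \UU \sqcup (\YY_1 \setminus \UU)$. For $x \in \YY_1 \setminus \UU$ one has $k_x \leq \alpha_x < \uu/2$, so the direct sum of these stars has operator norm at most $\sqrt{\uu/2 - 1} \leq \tfrac{1}{\sqrt{2}}\sqrt{\uu}$, contributing at most $\tfrac{1}{\sqrt{2}}\sqrt{\uu}$ to $\langle \vv, A_\mathcal{H}\vv\rangle$. For the stars centered in $\UU$, I will use that $\vv \perp \ww_+(x)$ for every $x \in \UU$: setting $\eta_x := \phi_x^+ - \ww_+(x)$, we get $\langle \vv, \phi_x^+\rangle = \langle \vv, \eta_x\rangle$, so after bounding $|\langle\vv,\phi_x^+\rangle|^2 - |\langle\vv,\phi_x^-\rangle|^2 \leq |\langle\vv,\phi_x^+\rangle|^2$ the contribution is at most $\sum_{x \in \UU}\sqrt{k_x}|\langle \vv, \eta_x\rangle|^2$.

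The argument then rests on two inputs. The first is a uniform bound $\max_{x \in \UU}\|\eta_x\|^2 = o(1)$: for $x \in \WW$ this follows from Proposition~\ref{prop:expdecay} because the mass of $\ww_+(x)$ outside $\{x\} \cup S_1(x)$ is only $O(d/\uu)$ and on $\{x\} \cup S_1(x)$ the eigenvector matches the star weights up to similar error; for $x \in \VV \setminus \WW$ it follows from Lemmas~\ref{lem:eigenvalueapproximation} and~\ref{lem:vertchange}; and for $x \in \UU \setminus \VV$ by direct comparison of $\phi_x^+$ with the explicit test vector in~\eqref{eq:rough_eigenvector}. The second input is that the $\eta_x$ for distinct $x \in \UU$ have pairwise disjoint supports, which follows from the disjointness of the stars in $\mathcal{H}$ together with the disjointness of the supports of $\ww_\pm(x)$ across $\UU$ (guaranteed by $\omegam$ and Lemma~\ref{lem:roughconstruction}(1)). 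Given these, I will normalize $\hat\eta_x := \eta_x/\|\eta_x\|$ to obtain an orthonormal family and compute
\[
\sum_{x \in \UU}|\langle \vv, \eta_x\rangle|^2 = \sum_{x \in \UU}\|\eta_x\|^2 |\langle \vv, \hat\eta_x\rangle|^2 \leq \bigl(\max_{x \in \UU} \|\eta_x\|^2\bigr)\sum_{x \in \UU} |\langle \vv, \hat\eta_x\rangle|^2 \leq \max_{x \in \UU}\|\eta_x\|^2 = o(1),
\]
so that $\sum_{x \in \UU}\sqrt{k_x}|\langle \vv, \eta_x\rangle|^2 \leq \sqrt{\uu}\cdot o(1) = o(\sqrt{\uu})$. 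Combining all three contributions then gives $\langle \vv, A\vv\rangle \leq \tfrac{1}{\sqrt{2}}\sqrt{\uu} + o(\sqrt{\uu})$ as required.

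The hard part will be establishing the uniform closeness $\|\phi_x^+ - \ww_+(x)\| = o(1)$ in the rough regime $\UU \setminus \VV$, since there $\phi_x^+$ comes from the Krivelevich-Sudakov pruning $\mathcal{H}$ while $\ww_+(x)$ is built from the pruning $\hat G$ of Lemma~\ref{lem:prunedgraph}; comparing both to the star on $\{x\} \cup \Gamma_x$ and tracking the discrepancies via Lemmas~\ref{lem:size_balls} and~\ref{lem:prunedgraph} will require some care. A secondary check will be that the support-disjointness of the $\eta_x$ persists when mixing the regimes $\WW$, $\VV \setminus \WW$, and $\UU \setminus \VV$.
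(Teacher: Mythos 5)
Your overall strategy — Krivelevich--Sudakov decomposition followed by exploiting orthogonality to $U_\UU$ to suppress the high-degree stars — is the same as the paper's. The key difference is which auxiliary vector you expand against. You set $\eta_x = \phi^+_x - \ww_+(x)$, where $\phi^+_x$ is the eigenvector of the star around $x$ in $\mathcal{H}$, and you need $\sum_{x\in\UU}|\langle\vv,\eta_x\rangle|^2 \leq \max_x\|\eta_x\|^2$. The paper instead works with the single coordinate $\vone_x$, using the identity $\langle\vv,\vone_x\rangle = \langle\vv,\xi_x\rangle$ for $\xi_x := \vone_x - \tfrac{1}{\sqrt 2}(\ww_+(x)+\ww_-(x))$ and the fact that $\langle\vv,A_{\tilde B_1(x)}\vv\rangle = 2\vv(x)\sum_{y\sim_\mathcal{H}x}\vv(y)$ is linear in $\vv(x)$. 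Since $x \in \mathrm{supp}(\ww_\pm(x))$, one has $\mathrm{supp}(\xi_x) \subseteq \mathrm{supp}(\ww_\pm(x))$, so pairwise disjointness of these supports across $\UU$ is immediate from $\omegam$ and Lemma~\ref{lem:roughconstruction}(1), and the Bessel step is clean.

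Your disjoint-support claim for $\eta_x$ is a genuine gap. You argue it from disjointness of the stars in $\mathcal{H}$ plus disjointness of the $\ww_\pm(x)$ supports, but disjointness of two families separately does not give disjointness of the unions $\mathrm{supp}(\phi^+_x)\cup\mathrm{supp}(\ww_+(x))$. Concretely, for $x,x'\in\UU\setminus\VV$ the star $\tilde B_1(x)$ in $\mathcal{H}$ lives inside $B_1(x)$ in $G$, not inside $\hat B_1(x)$, while $\mathrm{supp}(\ww_+(x')) = \hat B_2(x')$; an edge $(x,y)$ retained by $\mathcal{H}$ but pruned in building $\hat G$ can put $y$ into $\tilde B_1(x)\cap\hat B_2(x')$ when $d_G(x,x')\leq 3$, and Lemma~\ref{lem:rough_tree_disjoint} only bounds the number of such near-collisions by a constant — it does not forbid them. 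Relatedly, your first input $\max_x\|\eta_x\|^2 = o(1)$ in $\UU\setminus\VV$ requires $\tilde B_1(x)$ (a star in the KS pruning $\mathcal{H}$) to essentially coincide with $\hat S_1(x)$ (in the different pruning $\hat G$), a comparison neither $\omegam$ nor Proposition~\ref{prop:ks} gives you; you flag this as the hard part, and it is indeed not resolved by the lemmas available. Both problems vanish if you replace $\eta_x$ by $\xi_x$: the support identity is automatic, and $\|\xi_x\|^2$ is read off directly from Proposition~\ref{prop:expdecay} (fine regime) or the explicit form of $\ww_\pm$ in~\eqref{eq:rough_eigenvector} with the bounds of Lemma~\ref{lem:prunedgraph} (rough regime), making no reference to $\mathcal{H}$.
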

\begin{proof}
By Proposition \ref{prop:ks}, we have that
\[
\langle \vv,A\vv\rangle \leq \max_{x\in \YY_1} \langle \vv,A_{\tilde B_1(x)}\vv\rangle+O(d+\uu^{7/16})
\]
where $\tilde B_{1}(x)$ is the ball of radius $1$ around $x$ in $\mathcal H$ from Proposition \ref{prop:ks} and $A_{\tilde B_{1}(x)}$ is the adjacency matrix of the graph on the vertices $[N]$ induced by the ball $\tilde B_{1}(x).$

Therefore we split into cases based on the degree of $x$. For $x\in \VV$, we know that $\vv\perp \frac1{\sqrt{2}}(\ww_+(x)+\ww_-(x))$. Therefore
\[
\langle \vv, \vone_x\rangle =\langle \vv ,\frac1{\sqrt{2}}(\ww_+(x)+\ww_-(x))\rangle +\langle \vv ,\vone_x-\frac1{\sqrt{2}}(\ww_+(x)+\ww_-(x))\rangle\leq \|\vone_x-\frac1{\sqrt{2}}(\ww_+(x)+\ww_-(x))\|.
\]
By Proposition \ref{prop:expdecay}, 
\[
\|\vone_x-\frac1{\sqrt{2}}(\ww_+(x)+\ww_-(x))\|=O(\uu^{-1/3}).
\]
 We then have
\[
\langle \vv,A_{\tilde B_{1}(x)} \vv\rangle\leq 2|\langle \vv, \vone_x\rangle|\sum_{y\sim x}|\langle \vv, \vone_y\rangle|=O(\uu^{-1/3}\cdot \sqrt{\uu}).
\]
 
Similarly, for $x\in \UU\backslash \VV$, we have $\langle \vv, \vone_x\rangle\leq \|\vone_x-\frac1{\sqrt{2}}(\ww_+(x)+\ww_-(x))\|$. By the definition of the eigenvector in \eqref{eq:rough_eigenvector}, and using properties from Lemma \ref{lem:prunedgraph},
we have that 
\[
\left \|\vone_x-\frac1{\sqrt{2}}(\ww_+(x)+\ww_-(x)) \right \|
=
O \left ( 
\sqrt{
\left [ 
\frac{\sqrt{\hat{\alpha}+\frac{\hat{\beta}}{\hat{\alpha}}}-\sqrt{\hat{\alpha}} }{\sqrt{\hat{\alpha}+\frac{\hat{\beta}}{\hat{\alpha}}}} 
\right ]^2
+
\hat{\beta} \frac{1}{\hat{\alpha} \left ( \hat{\alpha} + \frac{ \hat{\beta}}{\hat{\alpha}} \right )} 
}
\right )
=
O \left ( \sqrt{ \frac{ \beta^2}{\alpha^3} + \frac{\beta}{\alpha^{2}} } \right) 
= 
O \left ( \frac{ d+ \log \log N}{\uu} \right ).
\]

Therefore, by the same argument as before
\[
\langle \vv,A_{\tilde B_1(x)}\vv\rangle=
O \left ( \frac{ d + \log \log N}{\sqrt{\uu}} \right )
=o_N \left ( 1 \right ).
\]

For any vertex $x\in \YY_1\backslash \UU$, the maximum degree is $\uu/2$ and the spectral norm is given by the spectral radius of a star graph, namely for any vector $\vv$ such that $\| \vv \| = 1,$
\[\langle \vv,A_{\tilde B_1(x)}\vv\rangle\leq \sqrt{\frac{\uu}{2}}.\] Combining these cases gives the result.
\end{proof}

\subsection{Structure Theorem}
We now have all the ingredients to prove the structure theorem.
\begin{proof}[Proof of Theorem \ref{thm:structure}]
We can now fully define the block decomposition from \eqref{eq:blockdecomp},
   \begin{equation*}
       A=U\left(\begin{array}{cccc}
    D_{\WW}&0&0&E_{\WW}^*\\
    0& D_{\VV\backslash \WW}&0&E_{\VV\backslash \WW}^*\\
    0&0& D_{\UU\backslash \VV}+\EE_{\UU\backslash \VV}&E_{\UU\backslash \VV}^*\\
    E_{\WW}& E_{\VV\backslash \WW }&E_{\UU\backslash \VV }&\XX
\end{array}\right)U^{*}
\end{equation*}

We first define the unitary matrix $U$. We set the first $2|\WW|$ columns of $U$ to vectors $\ww_{\pm}(x)$  for $x\in \WW$, and denote this part of the matrix by $U_\cW$, then we set the next $2|\VV\backslash \WW|$ columns to $\ww_{\pm}(x)$ for $x \in \VV\backslash \WW$ and denote this part of the matrix by $U_{\cV \setminus \cW}$, for $\ww_{\pm}(x)$ as defined in Definition \ref{dfn:wdef}. The next $2 | \cU \setminus \cV|$ columns are the vectors $\ww_\pm(x)$ for $x \in \cU \setminus \cV$ as defined in Lemma \ref{lem:roughconstruction}, and we denote this part of the matrix by $U_{\cU \setminus \cV}$. We denote these three parts of the matrix together by $U_{\UU}$. We then complete $U$ arbitrarily with a basis of the rest of $\R^N$, namely $U_{\UU^\perp}\subset \R^N$. 

It is implied by the definition of $U$ that the diagonal matrices $D_\cW$ and $D_{\cV \setminus \cW}$ have entries $\sigma \lambda_x$ on the diagonal, i.e. the eigenvalue of the truncated balls corresponding to each $w_\sigma(x)$. The diagonal operator $D_{\UU\backslash \VV}$, is defined to have entries $\sigma \lambda_x$, from Lemma \ref{lem:roughconstruction}.

$0$'s exist in the requisite places as we can assume by $\omegam$ that balls of vertices in $\VV$ are disjoint, and for each $x\in \VV$, the maximum degree of a vertex in $B_{r+3}(x)\backslash x$ is $\uu^{3/4}$, implying that there are no intersections with balls of radius 3 around vertices in $\cU \setminus \cV$. By Lemma \ref{lem:sizes}, with high probability, the $e^{\log^{1/8}N}$ vertices of largest degree have degree at least $\uu-2\log^{1/8}N$. Therefore the eigenvalues corresponding to these vertices have value at least $\sqrt{\uu-2\log^{1/8}N}=\sqrt{\uu}-O(\log^{-3/8}N)$ by Lemma \ref{lem:eigenvalueapproximation}.

To get a bound on $D_{\cV \setminus \cW},$ we use the upper bound from Lemma \ref{lem:eigenvalueapproximation}. This gives that for any vertex $x \in \cV,$ and for the range of $d$ defined in \ref{dfn:rdfn},
$$ 
\lambda_x 
\leq 
\sqrt{\uu - \uu^\frac{1}{4} + O(d)} 
= 
\sqrt{\u} 
- \frac{ \uu^{-\frac{1}{2}}}{2} 
+ O \left ( \frac{d}{\sqrt{\uu}} \right )
\leq 
\sqrt{\u} 
- \Theta \left ( \uu^{-\frac{1}{2}} \right ).
$$

By Lemma \ref{lem:fineerror}, for any $x\in \VV$, $\sigma\in \{\pm 1\}$, $\|(A-\Lambda)\ww_{\sigma}(x)\|=O((d^{r/2}+1)\uu^{-(r-1)/2})$. We will now show that this implies a bound on $\| E_W \|:$
Using that $U^*_{\cW}$ is a surjective projection of $\mathbb{R}^N$ onto $\mathbb{R}^{2 | \cW|}$ and $U_{\UU^\perp}$ is an injective embedding of $\mathbb{R}^{N - 2|\cU|}$ onto $\mathbb{R}^N,$ we can transform $E_W$, which maps $\mathbb{R}^{2|\cW|}$ to $\mathbb{R}^{N - 2 |\cU|},$ into an operator from $\mathbb{R}^N$ to $\mathbb{R}^N$, with the same spectral properties. Therefore, using additionally that outside of the choice of $\sigma\in \{\pm1\}$, the supports of $\ww_\sigma(x)$ are independent,
\begin{multline*}
\|E_{\WW}\|=
\|U_{\UU^\perp}E_{\WW}U_\WW^*\|
=\max_{\vv\in span(U_\WW), \| \vv \| = 1}\|U_{\UU^\perp}E_\WW U_\WW^*\vv\|
\leq 
\max_{x\in \WW,\sigma\in \{\pm 1\}} 2 \|U_{\UU^\perp}E_\WW U_\WW^*\ww_\sigma (x)\|\\
= \max_{x\in \WW,\sigma\in \{\pm 1\}} 2 \|(A-U_{\WW}D_\WW U_\WW^*)\ww_\sigma (x)\|
= \max_{x\in \WW,\sigma\in \{\pm 1\}}2\|(A-\Lambda)\ww_{\sigma}(x)\|
= O( (d^{r/2}+1)\uu^{-(r-1)/2}).
\end{multline*}
Here we use the definition of $\Lambda$ from Lemma \ref{lem:fineerror}. The same is true for $\|E_{\VV\backslash \WW}\|$.

Instead of bounding the operator norms of $E_{\cU \setminus \cV}$ and $\cE_{\cU \setminus \cV}$ individually, we bound the operator norm of their concatenation, which will be an upper bound for both.
Similarly to before we can write
$$
\left \| \begin{bmatrix}
   \cE_{\cU \setminus \cV} \\
   E_{\cU \setminus \cV}
\end{bmatrix} \right \| 
= \left \| \begin{bmatrix}
    U_{\cU \setminus \cV} U_{\cU^\perp}
\end{bmatrix} \begin{bmatrix}
   \cE_{\cU \setminus \cV} \\
   E_{\cU \setminus \cV}
\end{bmatrix} 
U_{\cU\setminus \cV}^*
\right\|. $$
By subsequently proceeding in the same way as for the error coming from the fine regime, Lemma \ref{lem:roughconstruction}, 2., gives the desired bound.

Finally for the bulk, $\|\XX\|\leq (\frac1{\sqrt2}+o_N(1))\sqrt\uu$ by Lemma \ref{lem:bulk}.
\end{proof}

This immediately gives the following. 
\begin{cor}\label{claim:errorbound} 
\[
\left\|\left(\begin{array}{ccc}
     D_{\VV\backslash \WW}&0&E_{\VV\backslash \WW}^*\\
    0& D_{\UU\backslash \VV}+\EE_{\UU\backslash \VV}&E_{\UU\backslash \VV}^*\\
     E_{\VV\backslash \WW }&E_{\UU\backslash \VV }&\XX
\end{array}\right)\right\|\leq \sqrt{\uu}-\Theta(\uu^{-1/4}).
\]
\end{cor}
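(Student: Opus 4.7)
The plan is to view the matrix $M$ appearing in Corollary \ref{claim:errorbound} as a perturbation of its block-diagonal part and apply the elementary fact that, for any self-adjoint block matrix $\begin{pmatrix} A & B \\ B^* & C \end{pmatrix}$ with $A, C$ self-adjoint, one has $\left\|\begin{pmatrix} A & B \\ B^* & C \end{pmatrix}\right\| \leq \max(\|A\|, \|C\|) + \|B\|$. This is immediate by writing the matrix as its block-diagonal part plus an off-diagonal part, whose norm is $\|B\|$.

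A naive application to the full $3\times 3$ block structure would add $\|E_{\UU\backslash\VV}\| = O((\log\log N)^2)$ directly to the diagonal norms, which swamps the target gap $\uu^{-1/4}$ coming from $D_{\VV\backslash\WW}$. The key idea is therefore to apply the above lemma in a \emph{nested} way, so that the large error $E_{\UU\backslash\VV}$ is absorbed by the much larger gap $\uu^{1/6-o_N(1)}$ of $D_{\UU\backslash\VV}$, and only the tiny error $E_{\VV\backslash\WW}$ competes with the $\uu^{-1/4}$ gap.

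Concretely, I would first isolate the $\VV\backslash\WW$ block and write
\[
M = \begin{pmatrix} D_{\VV\backslash\WW} & B_1 \\ B_1^* & M_2 \end{pmatrix}, \qquad B_1 = \begin{pmatrix} 0 & E_{\VV\backslash\WW}^* \end{pmatrix}, \qquad M_2 = \begin{pmatrix} D_{\UU\backslash\VV}+\EE_{\UU\backslash\VV} & E_{\UU\backslash\VV}^* \\ E_{\UU\backslash\VV} & \XX \end{pmatrix},
\]
so that $\|M\| \leq \max(\|D_{\VV\backslash\WW}\|, \|M_2\|) + \|E_{\VV\backslash\WW}\|$. Applying the same inequality to $M_2$ and invoking Theorem \ref{thm:structure}, I would obtain $\|D_{\UU\backslash\VV}+\EE_{\UU\backslash\VV}\| \leq \sqrt{\uu} - \uu^{1/6-o_N(1)} + O((\log\log N)^2)$, $\|\XX\| \leq (\tfrac{1}{\sqrt{2}}+o_N(1))\sqrt{\uu}$, and $\|E_{\UU\backslash\VV}\| = O((\log\log N)^2)$. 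Since $\uu \asymp \log N/\log\log N$ gives $\uu^{1/6-o_N(1)} \gg (\log\log N)^2$, all three contributions to $\|M_2\|$ combine into $\|M_2\| \leq \sqrt{\uu} - \Omega(\uu^{1/6-o_N(1)})$.

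Finally, since $\uu^{1/6-o_N(1)} \gg \uu^{-1/4}$, the binding constraint in the outer bound is $\|D_{\VV\backslash\WW}\| \leq \sqrt{\uu} - \Omega(\uu^{-1/4})$ from Theorem \ref{thm:structure}, and the residual off-diagonal error is $\|E_{\VV\backslash\WW}\| = O((d^r + 1)\uu^{-r/2+1})$; for $r = 5$ and $d \leq \log^{1/40} N$ this is easily $o(\uu^{-1/4})$. Combining these yields $\|M\| \leq \sqrt{\uu} - \Theta(\uu^{-1/4})$. The only subtlety in the argument is the nested bookkeeping — one must resist the temptation to lump $E_{\UU\backslash\VV}$ and $E_{\VV\backslash\WW}$ together, and instead match each off-diagonal error to the diagonal block whose spectral gap is large enough to absorb it.
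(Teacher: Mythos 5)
Your proof is correct and follows essentially the same nested block-decomposition as the paper: first peel off the $D_{\VV\backslash\WW}$ block so that only $E_{\VV\backslash\WW}$ competes with the $\uu^{-1/4}$ gap, then bound the inner $2\times2$ block using the much larger $\uu^{1/6-o_N(1)}$ gap to absorb $E_{\UU\backslash\VV}$ and $\EE_{\UU\backslash\VV}$. The paper's two-line proof is exactly this iterated $\|(\begin{smallmatrix}A&B\\B^*&C\end{smallmatrix})\|\le\max\{\|A\|,\|C\|\}+\|B\|$ estimate, just written less explicitly.
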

\begin{proof}
This norm is at most
\begin{multline}
\max \left \{\|D_{\VV\backslash \WW}\|,\left\|\left(\begin{array}{cc}
   D_{\UU\backslash \VV}+\EE_{\UU\backslash \VV}&E_{\UU\backslash \VV}^*\\
     E_{\UU\backslash \VV }&\XX
\end{array}\right)\right\| \right \}+\|E_{\VV\backslash \WW}\|\\
\leq \max \Big \{\|D_{\VV\backslash \WW}\|,\max \big \{\|D_{\UU\backslash \VV} + \cE_{\cU \setminus \cV} \|,\|\XX\| \big \}+\|E_{\UU\backslash \VV} \Big \}+\|E_{\VV\backslash \WW}\|.
\end{multline}
The bound then follows from Theorem \ref{thm:structure} and our bounds on $d$ from \ref{dfn:rdfn}. 
\end{proof}

With this, we can show that the top eigenvalues correspond to $\WW$. 

\begin{prop}\label{prop:eigstructure}
    For every $k\leq e^{\log^{1/8}N}$, the $k$th largest eigenvalue $\lambda$ of $A$ corresponds to the $k$th largest lexicographic maximizer $x\in \WW$ of $(\alpha_x,\beta_x)$ in that $\lambda=\lambda_x+O( (d^r+1)\uu^{-r+1})$.

\end{prop}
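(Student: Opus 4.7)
The plan is to translate the block decomposition of Theorem~\ref{thm:structure} into a sharp, order-preserving bijection between the top eigenvalues of $A$ and the lexicographic maximizers of $(\alpha_\cdot,\beta_\cdot)$ in $\WW$. The starting point is spectral separation: by Lemma~\ref{lem:sizes} together with Lemma~\ref{lem:eigenvalueapproximation}, each of the $e^{\log^{1/8} N}$ largest diagonal entries of $D_\WW$ is at least $\sqrt{\uu}-O(\log^{-3/8} N)$, while Corollary~\ref{claim:errorbound} shows the remaining blocks contribute spectrum of norm at most $\sqrt{\uu}-\Theta(\uu^{-1/4})$. Since $\log^{-3/8} N \ll \uu^{-1/4}$, the top $e^{\log^{1/8} N}$ positive eigenvalues of $D_\WW$ are separated from the rest of the spectrum of the block-diagonal part of \eqref{eq:blockdecomp} by a gap of order $\uu^{-1/4}$.

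Writing $A = A_0 + E$ where $A_0$ is the block-diagonal part and $E$ collects the off-diagonal coupling blocks, Theorem~\ref{thm:structure} shows that in the vicinity of $D_\WW$ the coupling is governed by $\|E_\WW\| = O((d^{r/2}+1)\uu^{-(r-1)/2})$. A Weyl perturbation bound, together with the fact that this is much smaller than the $\Theta(\uu^{-1/4})$ inter-block gap, places each of the top $2|\WW|$ eigenvalues of $A$ within $O(\|E_\WW\|)$ of some $\pm\lambda_y$ with $y\in\WW$. To promote this into an order-preserving bijection I invoke Lemma~\ref{lem:difference}, which gives $|\lambda_u-\lambda_v|=\Omega(\uu^{-3/2})$ for distinct $u,v\in\WW$, with ordering matching the lexicographic ordering of $(\alpha_\cdot,\beta_\cdot)$. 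For $r\geq 5$ and $d$ in our range, the Weyl error is much smaller than this intra-$\WW$ gap, so the $k$th largest eigenvalue of $A$ must sit close to the $k$th largest $\lambda_x$, which by Lemma~\ref{lem:difference} is the $k$th lex-maximizer.

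To sharpen the approximation from the $O(\|E_\WW\|)$ scale to the claimed quadratic-scale bound $O((d^r+1)\uu^{-r+1})$, I invoke Lemma~\ref{lem:estimator} with $M=A$, test vector $\vv=\ww_+(x)$, and approximate eigenvalue $\lambda=\lambda_x$. By Proposition~\ref{lem:fineerror}, $\epsilon:=\|(A-\lambda_x)\vv\|=O((d^{r/2}+1)\uu^{-(r-1)/2})$ since $(A-\Lambda)\ww_+(x)$ is supported on $S_{r+1}(x)$. Crucially, the first-order correction $\langle\vv,(A-\lambda_x)\vv\rangle$ vanishes identically: $\ww_+(x)$ is supported in $B_r(x)$ while $(A-\Lambda)\ww_+(x)$ lives on $S_{r+1}(x)$, and these sets are disjoint under $\omegam$. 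The gap condition $5\epsilon\leq\Delta$ needed by Lemma~\ref{lem:estimator} is verified using the intra-$\WW$ separation from Lemma~\ref{lem:difference} together with the bulk gap from Corollary~\ref{claim:errorbound}, which together guarantee a unique eigenvalue of $A$ in a suitable interval around $\lambda_x$. Lemma~\ref{lem:estimator} then gives $\lambda-\lambda_x=O(\epsilon^2/\Delta)$, from which the stated error follows after plugging in $\epsilon^2 = O((d^r+1)\uu^{-r+1})$.

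The main obstacle is the interplay between the two spectral gap scales. The $\Theta(\uu^{-1/4})$ gap separating $D_\WW$ from the complementary block keeps any non-$\WW$ eigenvalue from intruding near $\lambda_x$, but the relevant separation from within $\WW$ is only $\Omega(\uu^{-3/2})$; reconciling these so that Lemma~\ref{lem:estimator} identifies a single nearby eigenvalue of $A$ requires care in choosing the interval of uniqueness. The essential structural input is the vanishing of the first-order inner product, a direct consequence of the disjoint supports guaranteed by $\omegam$, which is what lifts the approximation from $\epsilon$-scale (Weyl) to $\epsilon^2$-scale and allows the truncation error in Proposition~\ref{prop:eigstructure} to be dominated by the concentration errors that appear later in Theorem~\ref{thm:maineigenvalue}.
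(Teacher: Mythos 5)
Your first two paragraphs recover the paper's argument faithfully: block separation via Corollary \ref{claim:errorbound}, Weyl to localize, and Lemma \ref{lem:difference} for the lexicographic ordering. The divergence is in the refinement step, and there is a genuine gap there.

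The paper applies abstract off-diagonal block perturbation theory (Baumg\"artel, Bamieh) directly to the summand $E_\WW$: since $E_\WW$ couples $D_\WW$ only to the bulk, the second-order shift of the $D_\WW$ spectrum is governed by the gap between $\mathrm{spec}(D_\WW)$ (top entries $\geq \sqrt{\uu}-O(\log^{-3/8}N)$) and the complementary block (norm $\leq \sqrt{\uu}-\Theta(\uu^{-1/4})$). Crucially, this bound is a \emph{cluster} bound: it controls how the whole $D_\WW$ spectrum moves, with no requirement that the entries $\lambda_x$ within $D_\WW$ be pairwise separated. You instead apply Lemma \ref{lem:estimator} to the full matrix $A$ with test vector $\ww_+(x)$. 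That lemma requires a $\Delta$ such that $A$ has a \emph{unique} eigenvalue in $[\lambda_x-\Delta,\lambda_x+\Delta]$, which forces $\Delta$ to be at most (half of) the spacing of $\lambda_x$ from \emph{every} other eigenvalue of $A$, including the other $\lambda_y$ for $y\in\WW$. This is a much smaller scale than the $\Theta(\uu^{-1/4})$ bulk gap. Two consequences. First, your final line implicitly treats $\Delta$ as order one when you ``plug in $\epsilon^2$'': Lemma \ref{lem:estimator} gives $O(\epsilon^2/\Delta)$, and with the intra-$\WW$ spacing $\Delta=\Omega(\uu^{-3/2})$ from Lemma \ref{lem:difference} the resulting error is $O((d^r+1)\uu^{-r+5/2})$, not the stated $O((d^r+1)\uu^{-r+1})$; this weaker bound would still suffice downstream for $r\geq 5$, but it is not what you claimed. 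Second and more seriously, the $\Omega(\uu^{-3/2})$ separation from Lemma \ref{lem:difference} holds only when $(\alpha_u,\beta_u)\neq(\alpha_v,\beta_v)$. The proposition covers $e^{\log^{1/8}N}$ eigenvalues, and $|\WW|\sim e^{\uu^{1/4}}$ vastly exceeds the number $O(d\,\uu^{5/4})$ of admissible $(\alpha,\beta)$ pairs, so collisions in $(\alpha,\beta)$ are unavoidable. For those vertices no interval of uniqueness is guaranteed, Lemma \ref{lem:estimator} cannot be invoked, and your argument does not produce any estimate. The observation that the first-order term $\langle\ww_+(x),(A-\lambda_x)\ww_+(x)\rangle$ vanishes by disjoint supports is correct and is the same structural fact the paper exploits, but without switching to a cluster-level perturbation bound it does not close the argument.
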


\begin{proof}
We start with the matrix 
\[
U\left(\begin{array}{cccc}
    D_{\WW}&0&0&0\\
    0& D_{\VV\backslash \WW}&0&E_{\VV\backslash \WW}^*\\
    0&0& D_{\UU\backslash \VV}+\EE_{\UU\backslash \VV}&E_{\UU\backslash \VV}^*\\
    0& E_{\VV\backslash \WW }&E_{\UU\backslash \VV }&\XX
\end{array}\right)U^{*}
\]

 We then make the transformation by performing the summation
\[
U\left(\begin{array}{cccc}
    D_{\WW}&0&0&0\\
    0& D_{\VV\backslash \WW}&0&E_{\VV\backslash \WW}^*\\
    0&0& D_{\UU\backslash \VV}+\EE_{\UU\backslash \VV}&E_{\UU\backslash \VV}^*\\
    0& E_{\VV\backslash \WW }&E_{\UU\backslash \VV }&\XX
\end{array}\right)U^{*}
+U \left(\begin{array}{cccc}
    0&0&0&E_{\WW}^*\\
     0&0&0&0\\
      0&0&0&0\\
     E_{\WW}&0&0&0.
\end{array}\right)U^{*}
\]
By perturbation theory, e.g. \cite[7.1.1]{baumgartel1985analytic}, \cite[Equation 23]{bamieh2020tutorial}, each eigenvalue has changed by at most $O(\|E_\WW\|^2)=O( (d^r+1)\uu^{-r+1})$.
Moreover, as $r\geq 5$, by Theorem \ref{thm:structure} and Corollary \ref{claim:errorbound}, after the perturbation, nothing outside of $D_{\WW}$ can correspond to one of the $e^{\log^{1/8}N}$ largest eigenvalues. Moreover, by Lemma \ref{lem:difference}, the ordering of eigenvalues must match the ordering in $D_W$, inducing the lexicographic ordering.
\end{proof}

\begin{proof}[Proof of Theorem \ref{thm:maineigenvalue}]
Consider the vertex $x$ corresponding to one of the $e^{\log^{1/8}N}$ largest eigenvalues. By Lemma \ref{lem:lambdaexpression} and the concentration results from the definition of $\omegam$, (3-5), we have that 
\[\lambda_x^2=\alpha_x+\frac{\beta_x}{\alpha_x}+\frac{d^2}{\alpha_x}+\frac{d^2+d}{\alpha_x}-\frac{d^2}{\alpha_x}+O(\frac{d^{3/2}+1}{\uu^{4/3}}).\]
 Therefore by Proposition \ref{prop:eigstructure}, the true eigenvalue $\lambda $ satisfies
\begin{eqnarray*}
\lambda&=&\sqrt{\alpha_x+\frac{\beta_x}{\alpha_x}+\frac{d^2+d}{\alpha_x}+O(\frac{d^{3/2}+1}{\uu^{4/3}})}+O( (d^{r}+1)\uu^{-{(r-1)}}).\\
&=&\sqrt{\alpha_x+\frac{\beta_x}{\alpha_x}+\frac{d^2+d}{\alpha_x}}+O((d^{3/2}+1)\uu^{-11/6}+ (d^{r}+1)\uu^{-{(r-1)}})\\
&=&\sqrt{\alpha_x+\frac{\beta_x}{\alpha_x}+\frac{d^2+d}{\alpha_x}}+O((d^{3/2}+1)\uu^{-11/6}).
\end{eqnarray*}
as we have assumed $r\geq5$. The lexicographic ordering follows immediately from Proposition \ref{prop:eigstructure}.

\end{proof}

\section{Anticoncentration}\label{sec:anticoncentration}
In this section we prove that the distribution for $\lambda^2$ is anticoncentrated at the edge of the spectrum. To start, we show that the joint distribution of $\{\alpha_x\}_{x\in \WW}$ and $\{\beta_x\}_{x\in \WW}$ is approximately that of independent Poissons.  This result is similar to \cite{alt2023poisson} Lemma 7.1, but proven in a somewhat different way. We then proceed similarly to \cite{alt2023poisson} to derive that this implies Theorem \ref{thm:process}, which says that that the maximal pairs $(\alpha_x, \beta_x)$ are close to the maximal values of a Poisson process. This result is then used to show Lemma \ref{lem:eigenvaluespacing}, which gives a lower bound on the distance between the size of the 2-spheres around vertices with maximal or almost maximal degree. In the final lemma of this section we show that this implies spacing of the largest eigenvalues.

\begin{lemma}\label{lem:totalvariation}
    For $d$ according to \ref{dfn:rdfn}, let $G$ be a graph generated from the Erd\H{o}s-R\'{e}nyi graph distribution $\cG \left ( N, \frac{d}{N} \right )$. Moreover, for $k \leq e^{log^{2/3} N}$, consider vertices $z_1,\ldots z_k\in[N]$, along with $\frac12\uu\leq v_1,\ldots, v_k\leq 2\uu$ and $w_1,\ldots w_k$ such that  $1\leq w_i\leq dv_i+\uu^{7/8}$ for $1\leq i\leq k$. Then define i.i.d. $X_1,X_2,\ldots X_k\sim Pois(d)$ and independent $Y_{v_1}\sim Pois(dv_1),Y_{v_2}\sim Pois(dv_2),\ldots, Y_{v_k} \sim Pois(dv_k)$. If $A$ is the event that there are no intersections between the balls of radius 1 around the vertices $z_1, \dots, z_k$, and no edges from $S_1(z_i)$ to $S_1(z_j)$ for any $i,j$ (including i=j), then
\begin{equation}\label{eq:decorr}
\pr\left(\bigcap_{i=1}^k
\left \{ \alpha_{z_i} = v_i, \beta_{z_i} =w_i\right \}\cap A\right)
= \left(1+N^{-1+o_N(1)} \right)\prod_{i=1}^k\pr(X_i=v_i) \pr(Y_{v_i}=w_i)
\end{equation}
\end{lemma}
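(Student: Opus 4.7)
The plan is to compute the left-hand side of~\eqref{eq:decorr} combinatorially, by summing over realizations of the neighborhoods $S_1(z_i)$ and then factoring the result across $i$. Under the event $A$, the edges determining $\alpha_{z_i}$ (those incident to $z_i$) and the edges determining $\beta_{z_i}$ (those between $S_1(z_i)$ and $[N]\setminus B_1(z_i)$) form disjoint sets across $i$, so the relevant Bernoulli trials are independent and the joint probability factors.

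Concretely, I would fix disjoint subsets $T_1,\ldots,T_k\subset[N]\setminus\{z_1,\ldots,z_k\}$ with $|T_i|=v_i$ and write
\[
\mathrm{LHS} \;=\; \sum_{(T_i)}\biggl(\prod_{i=1}^k \Bigl(\tfrac{d}{N}\Bigr)^{v_i}\Bigl(1-\tfrac{d}{N}\Bigr)^{N-1-v_i}\biggr)\cdot P_{(T_i)},
\]
where the prefactor accounts for the event $\{S_1(z_i)=T_i\,\forall i\}$ via the edges incident to the $z_j$'s, and $P_{(T_i)}$ is the conditional probability of $\bigcap_i\{\beta_{z_i}=w_i\}\cap A$ given the $T_i$. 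Under $A$, the quantity $\beta_{z_i}=\sum_{y\in T_i}N_y$ equals the number of edges from $T_i$ to $W_i:=[N]\setminus B_1(z_i)\setminus \bigcup_{j\neq i}S_1(z_j)$, while the remaining edges within $T_i$ or across distinct $T_i,T_j$ must be absent. Setting $M_i:=v_i|W_i|$ and letting $E_A=O\bigl((\sum_i v_i)^2\bigr)$ denote the number of forbidden pairs, this gives
\[
P_{(T_i)} \;=\; \biggl(\prod_{i=1}^k \binom{M_i}{w_i}\Bigl(\tfrac{d}{N}\Bigr)^{w_i}\Bigl(1-\tfrac{d}{N}\Bigr)^{M_i-w_i}\biggr)\Bigl(1-\tfrac{d}{N}\Bigr)^{E_A}.
\]

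Summing over $(T_i)$ contributes $\prod_i\binom{N-k-\sum_{j<i}v_j}{v_i}=(1+N^{-1+o_N(1)})\prod_i\binom{N-1}{v_i}$, using that $k\sum_i v_i \leq 2k^2\uu=N^{o_N(1)}$ from $k\leq e^{\log^{2/3}N}$ and $v_i\leq 2\uu$. By the same bound, $E_A=N^{o_N(1)}$ gives $(1-d/N)^{E_A}=1+N^{-1+o_N(1)}$, and $M_i=v_i(N-1)(1+N^{-1+o_N(1)})$. Collecting these factors,
\[
\mathrm{LHS} \;=\; (1+N^{-1+o_N(1)})\prod_{i=1}^k \Bin\!\Bigl(v_i;N-1,\tfrac{d}{N}\Bigr)\,\Bin\!\Bigl(w_i;M_i,\tfrac{d}{N}\Bigr).
\]
I would then invoke Lemma~\ref{lem:binomtopois} on each binomial: the errors $(v_i^2+d^2+1)/N$ and $(w_i^2+(dv_i)^2+1)/M_i$ are each $N^{-1+o_N(1)}$ for our parameter range ($v_i\leq 2\uu$, $w_i\leq dv_i+\uu^{7/8}$, $d\leq\log^{1/40}N$). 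A final shift of the Poisson parameter from $M_id/N$ to $dv_i$ costs another $(1+N^{-1+o_N(1)})$ factor (since $|M_id/N-dv_i|=dv_i\cdot N^{-1+o(1)}$ and $w_i=N^{o(1)}$), yielding the right-hand side of~\eqref{eq:decorr}.

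The main obstacle is verifying that the accumulated multiplicative error across the $k\leq e^{\log^{2/3}N}$ factors remains $(1+N^{-1+o_N(1)})$: since each individual factor contributes relative error $N^{-1+o(1)}$, the product of the errors is bounded by $kN^{-1+o(1)}=N^{-1+o_N(1)}$, which just closes the estimate. Careful bookkeeping of the disjointness corrections for the $T_i$'s, the forbidden-pair exponent $E_A$, and the Poisson parameter shift is required to ensure no individual error term blows up and that all corrections genuinely lie within $N^{-1+o_N(1)}$.
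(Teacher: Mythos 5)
Your proposal is correct and essentially follows the same route as the paper, just packaged as a sum over realizations $(T_i)$ rather than a sequential conditioning. The paper fixes the event $A_Z=0$ first, then reveals the neighborhoods $\Gamma_{z_1},\Gamma_{z_2},\dots$ one at a time (so the $\alpha_{z_i}$'s become a sequence of independent binomials with slightly shrinking $N$'s), conditions on $\Gamma_Z$, reveals the edges out of each $\Gamma_{z_i}$ similarly, and applies Lemma~\ref{lem:binomtopois} to each factor; you arrive at the same product of binomial-then-Poisson factors by noting that $P_{(T_i)}$ depends on the $T_i$'s only through their cardinalities, so the sum over tuples factors out as a product of binomial coefficients. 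Either way the key observation is identical: under $A$ the Bernoulli trials determining the $\alpha_{z_i}$ and the $\beta_{z_i}$ are supported on disjoint vertex-pairs, so the joint probability factorizes, and one then pays a $(1+N^{-1+o_N(1)})$ price per factor for (a) the binomial-to-Poisson step, (b) shrinking the ambient vertex set by $k+\sum_j v_j = N^{o_N(1)}$, and (c) the forbidden-edge factors; the product of $N^{o_N(1)}$ such errors stays $1+N^{-1+o_N(1)}$ because $k=N^{o_N(1)}$.

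Two small bookkeeping slips in your write-up, neither of which affects the conclusion but worth flagging: your prefactor $\prod_i(d/N)^{v_i}(1-d/N)^{N-1-v_i}$ double-counts each missing $z_i z_j$ edge (it should contribute $(1-d/N)^{\binom k2}$, not $(1-d/N)^{2\binom k2}$), and your $W_i=[N]\setminus B_1(z_i)\setminus\bigcup_{j\neq i}S_1(z_j)$ does not remove $\{z_j\}_{j\neq i}$ even though conditioning on $S_1(z_j)=T_j$ already forces no edges from $T_i$ to $z_j$; both discrepancies amount to $(1-d/N)^{O(k^2)}$ or $O(k v_i)$ in $M_i$, which are $1+N^{-1+o_N(1)}$ and absorbed exactly as you say. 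Your sanity-check that $\beta_{z_i}$ under $A$ is precisely the edge count from $T_i$ into the complement, and that this is a binomial independent across $i$, is the crux, and you have it right.
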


\begin{proof}
    Let $Z = \{ z_1, \dots, z_k\}$, with $A_Z$ being the adjacency matrix of $Z$. Recall that $\Gamma_{z_i}$ denotes the neighbors of a vertex $z_i$.
    
    We analyse the event that $\cap_i \{ \alpha_{z_i} = v_i \}$ and that there are no edges between any $z_i$, as well as no intersection between the neighborhoods of the $z_i$, sequentially. 
    
    That $A_Z = 0$ happens with probability $(1-d/N)^{\binom{k}{2}}.$ Then we first need to choose exactly $v_1$ vertices among $[N] \setminus Z$ connected to $z_1$. 
    Subsequently we need to choose exactly $v_2$ vertices among $[N] \setminus ( Z \cup \Gamma_{z_1} )$ and make sure that there are no edges between $z_2$ and $\Gamma_{z_1}$, and so on. 
    Note that this way the edges we consider at each step are independent of the previously considered events and moreover the number of edges between $z_i$ and $[N] \setminus ( Z \cup \cup_{j=1}^{i-1} \Gamma_j)$ is binomially distributed with parameters $N - k - \sum_{j=1}^{i-1} v_i$ and $d/N.$ This gives 
    
    \begin{align*}
        & \P \left ( \cap_{i=1}^k \{ \alpha_{z_i} = v_i \} \cap A_Z = 0 \cap \{ \cap_{i=1}^k \Gamma_{z_i} = \emptyset \} \right ) \\
        & = \left [ 
            \prod_{i = 1}^k 
            \P \left ( \Binom \left ( N - k - \sum_{j=1}^{i-1} v_j, \frac{d}{N} \right ) = v_i \right ) 
            \left ( 1 - \frac{d}{N} \right )^{\sum_{j=1}^{i-1} v_i}
        \right ]
        \left ( 1 - \frac{d}{N} \right )^{\binom{k}{2}}.
    \end{align*}
    We now use Lemma \ref{lem:binomtopois} to approximate the binomial probabilities, the bound on the error terms follows from our assumptions on $v_i$, $k$ and the bounds on $d$ from \ref{dfn:rdfn}.
    
    \begin{align*}
         \P \left ( \Binom \left ( N - k - \sum_{j=1}^{i-1} v_j, \frac{d}{N} \right ) = v_i  \right )
        & = \left ( 1 + \tilde{O} \left ( N^{-1} \right ) \right )
        \P \left ( \Pois \left ( d - \frac{ d \left (k + \sum_{j=1}^{i-1} v_j \right )}{N} \right ) = v_i \right ) \\
        & = \left( 1 + N^{-1+o_N(1)} \right ) \P \left (\Pois(d) = v_i  \right ).
    \end{align*}
    
    Moreover $\left ( 1 - \frac{d}{N} \right )^{\sum_{j=1}^{i-1} v_i}$ as well as $\left ( 1 - \frac{d}{N} \right )^{\binom{k}{2}}$ can also be written as $1 + N^{-1+o_N(1)}$. There are $N^{o_N(1)}$ such error terms, which all together implies that
    \begin{align} \label{eq:alpha_approx}
        \P \left ( \cap_{i=1}^k \{ \alpha_{z_i} = v_i \} \cap \{ A_Z = 0 \} \ \cap \{ \cap_{i=1}^k \Gamma_{z_i} = \emptyset \} \right )
        & = 
        \left( 1 + N^{-1+o_N(1)} \right ) \P \left (\Pois(d) = v_i  \right ).
    \end{align}
    We now condition on the event $\cap_{i=1}^k \{ \alpha_{z_i} = v_i \} \cap \{ A_Z = 0 \} \cap \{ \cap_{i=1}^k \Gamma_{z_i} = \emptyset \}$ and similarly analyse $\{\beta_{z_i} = w_i \}$. Note that the event $A$ does not require that the $S_2(z_i)$ are all disjoint which makes the analysis slightly simpler. The number of edges from $S_1(z_i)$ to $[N] \setminus ( Z \cup \Gamma_Z )$ is Binomial with parameters $v_i ( N - k - \sum_{j=1}^k v_j)$ and $\frac{d}{N}$, and those random variables are independent since we condition on $A_Z = 0 $ and $ \cap_{i=1}^k \Gamma_{z_i} = \emptyset $. Finally, the probability that there are no edges within and across any $\Gamma_{z_i}$ is equal to $(1-d/N)^{\sum_{i \neq j} v_iv_j + \sum_i \binom{v_i}{2}}$.

Thus, defining $A_\Gamma$ to be the adjacency matrix of $ \Gamma_Z = \cup_{i=1}^k \Gamma_{z_i},$ we get
\begin{align*}
    & \P \left ( \cap_{i=1}^k \{ \beta_{z_i} = w_i \} \cap A_\Gamma = 0 \big | \cap_{i=1}^k \{ \alpha_{z_i} = v_i \} \cap \{ A_Z = 0 \} \cap \left \{ \cap_{i=1}^k \Gamma_{z_i} = \emptyset \right \} \right ) \\
    & = 
    \left [
        \prod_{i = 1}^{k} 
        \P \left ( \Binom \left ( v_i \left ( N - k - \sum_{j = 1}^k v_j \right ), \frac{d}{N} \right ) = w_i \right )
        \right ]
    \left ( 1 - \frac{d}{N} \right )^{ \sum_{i,j=1, i \neq j}^k v_iv_j + \sum_{i=1}^k \binom{v_i}{2} }
\end{align*}
The last term can as before be written as $1 + N^{-1+o_N(1)}$. When $v_i=0$, we can immediately replace the Binomial random variables by Poisson random variables with parameter 0, since they are both constant 0. For $v_i>0$, we once more use the Poisson approximation from Lemma \ref{lem:binomtopois}, which together with our bounds on $v_i$, $w_i$, $k$ and $d$, gives
\begin{align*}
    \P \left ( 
    \Binom \left ( v_i \left ( N - k - \sum_{j = 1}^k v_j \right ), \frac{d}{N} 
    \right ) = w_i \right )
    & = 
    \left ( 1 + \tilde{O}( N^{-1} )\right )
    \P \left ( \Pois \left ( v_i d \left ( 1 - \frac{ k + \sum_{j=1}^k v_j }{N} \right ) \right ) = w_i \right ) \\
    & = 
    \left ( 1 + N^{-1+o_N(1)}\right )
    \P \left ( \Pois \left ( v_i d ) = w_i \right ) \right )
\end{align*}
Combining this with \eqref{eq:alpha_approx}, implies the result.
\end{proof}

We need to extend our analysis from a small number of vertices to the entire set. Therefore, we once again emulate the argument of \cite{alt2023poisson}. Define the parameter $(Z_x)_{x\in[N]}$ to be an exchangeable family of random variables in a measurable space $\mathcal{Z}$. We then define for $F\subset \mathcal Z^k$ and the point process $\Phi$, 
\[
q_{\Phi}(F):=N(N-1)\cdots (N-k+1)\pr((Z_1,\ldots Z_k)\in F). 
\]
\begin{lemma}[\cite{alt2023poisson} Lemma 7.8]\label{lem:smalltoall}

For $n,m\in \N$ and disjoint, measurable $I_1,\ldots I_n$
\begin{multline}
\pr \left ( \Phi(I_1)=k_1,\ldots,\Phi(I_n)=k_n \right )
=\frac1{k_1!\cdots k_{n}!}\sum_{\substack{ \ell_1,\ldots,\ell_n \\
\sum \ell_i \leq m}}
\frac{(-1)^{\sum_{i}\ell_i}}{\ell_1!\cdots \ell_n!}q_\Phi \left ( I_1^{k_1+\ell_1}\times \cdots \times I_n^{k_n+\ell_n} \right )\\
+O \left (\frac1{k_1!\cdots k_{n}!}\sum_{\substack{ \ell_1,\ldots,\ell_n \\
\sum \ell_i = m+1}}
\frac{(-1)^{\sum_{i}\ell_i}}{\ell_1!\cdots \ell_n!}q_\Phi \left (I_1^{k_1+\ell_1}\times \cdots \times I_n^{k_n+\ell_n} \right ) \right ).
\end{multline}
\end{lemma}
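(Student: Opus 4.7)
The plan is to reduce the identity to the standard factorial-moment inversion for point processes, combined with a Bonferroni-type truncation. First I would unpack $q_\Phi$: by exchangeability, $N(N-1)\cdots(N-k+1)\pr((Z_1,\dots,Z_k)\in F)$ equals the expected number of ordered distinct $k$-tuples $(x_1,\dots,x_k)$ with $(Z_{x_1},\dots,Z_{x_k})\in F$, so disjointness of the $I_i$ yields
\begin{equation*}
q_\Phi\bigl(I_1^{k_1+\ell_1}\times\cdots\times I_n^{k_n+\ell_n}\bigr) \;=\; \E\Bigl[\prod_{i=1}^n (\Phi(I_i))_{k_i+\ell_i}\Bigr],
\end{equation*}
where $(y)_j = y(y-1)\cdots(y-j+1)$ denotes the falling factorial.

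The core algebraic identity I would use next is the pointwise equality
\begin{equation*}
\vone\{X=k\} \;=\; \sum_{\ell\geq 0} \frac{(-1)^\ell}{k!\,\ell!}\,(X)_{k+\ell}, \qquad X\in\mathbb{Z}_{\geq 0},
\end{equation*}
which follows from $\binom{X}{k}(1-1)^{X-k} = \vone\{X=k\}$ (with the convention $0^0=1$) after rewriting $\binom{X}{k}\binom{X-k}{\ell}$ as $(X)_{k+\ell}/(k!\,\ell!)$. Taking the product of these identities over the $n$ disjoint windows, expanding, and then taking expectations yields the untruncated analogue of the claimed formula with the sum over all tuples $(\ell_1,\dots,\ell_n)$.

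The remaining and most delicate step is the Bonferroni truncation. In a single variable, Pascal's rule gives $\sum_{\ell=0}^m (-1)^\ell \binom{X-k}{\ell} = (-1)^m \binom{X-k-1}{m}$, so the pointwise truncation error at order $m$ is bounded by the first omitted term $(X)_{k+m+1}/(k!\,(m+1)!)$. I would promote this to $n$ dimensions by induction on coordinates: fix all but one coordinate, apply the one-dimensional bound, then iterate while tracking signs so that the alternating structure persists after each partial summation. The total pointwise error is then dominated by the sum of absolute values of the terms with $\sum_i \ell_i = m+1$, and taking expectations and reinvoking the first step converts this back into the stated $q_\Phi$ remainder. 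The main obstacle is precisely this multivariate Bonferroni step, since a naive expansion only produces an error ranging over all tuples of total order exceeding $m$; the coordinate-by-coordinate unwinding, using that in each slice the series is alternating with nonnegative factorial terms, is what isolates the single shell $\sum_i \ell_i = m+1$ appearing in the lemma.
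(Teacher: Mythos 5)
The paper itself only cites this lemma to \cite{alt2023poisson} and does not supply a proof, so there is no in-paper argument to compare against. Judged on its own terms, your plan starts correctly and most of it is sound, but the final multivariate truncation step has a real gap.

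Steps (1)--(3) are correct: by exchangeability and disjointness of the $I_i$, $q_\Phi(I_1^{k_1+\ell_1}\times\cdots\times I_n^{k_n+\ell_n})=\E\bigl[\prod_i(\Phi(I_i))_{k_i+\ell_i}\bigr]$; the identity $\vone\{X=k\}=\sum_{\ell\ge0}\frac{(-1)^\ell}{k!\ell!}(X)_{k+\ell}$ is exactly $\binom{X}{k}(1-1)^{X-k}$; and in one variable the telescoping identity $\sum_{\ell=0}^m(-1)^\ell\binom{X-k}{\ell}=(-1)^m\binom{X-k-1}{m}$ together with $\binom{X-k-1}{m}\le\binom{X-k}{m+1}$ bounds the truncation error by the first omitted term. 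All of that is fine.

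The problem is the passage to $n$ variables. You claim you can "iterate while tracking signs so that the alternating structure persists after each partial summation." It does not: once you sum out one coordinate, the inner sum contributes $(-1)^{m-\ell_1-\cdots-\ell_{n-1}}\binom{X_n-k_n-1}{m-\ell_1-\cdots-\ell_{n-1}}$, and the resulting expression in the remaining $\ell_i$ is a Vandermonde-type convolution that is no longer an alternating series. The naive alternating-series bound therefore does not apply slice by slice, and that is exactly the step you flag as the "main obstacle." What actually makes the argument close is a Vandermonde collapse: writing $Y_i=X_i-k_i$ and $Y=\sum_i Y_i$, one has $\sum_{\sum\ell_i=\ell}\prod_i\binom{Y_i}{\ell_i}=\binom{Y}{\ell}$, so the total-degree truncation reduces identically to the univariate case with $Y$ in place of $X-k$. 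Hence the truncated sum equals $\bigl(\prod_i\binom{X_i}{k_i}\bigr)(-1)^m\binom{Y-1}{m}$, the pointwise error has absolute value $\bigl(\prod_i\binom{X_i}{k_i}\bigr)\binom{Y-1}{m}$, and $\binom{Y-1}{m}\le\binom{Y}{m+1}=\sum_{\sum\ell_i=m+1}\prod_i\binom{Y_i}{\ell_i}$ gives exactly the claimed single-shell remainder. Taking expectations and rewinding step (1) then finishes the proof. Your outline is salvageable, but you should replace the "alternating structure persists" justification with this Vandermonde reduction, which is both the correct mechanism and considerably cleaner.
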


We can then use this to show that edge eigenvalues form a Poisson process.
\begin{proof}[Proof of Theorem \ref{thm:process}]
By Lemma \ref{lem:sizes}, Theorem \ref{thm:maineigenvalue}, and Lemma \ref{lem:difference}, with high probability, only vertices of degree between $\uu-2\log^{1/8}N$ and $\uu$ contribute to the top $e^{\log^{1/8}N}$ eigenvalues. 
Similarly, for every relevant vertex $v$, $\beta\leq d\alpha_v+\uu^{7/8}$ for every relevant vertex $v$ with high probability by Lemma \ref{lem:fine_balls_disjoint}. In this region, $\lambda^2=\alpha_v+\beta_v/\alpha_v+{(d^2+d)}/{\alpha_v}+O((d^{3/2}+1)\uu^{-4/3})$. Moreover, with high probability, neighborhoods of size $r$ around such vertices $v$ are disjoint and treelike by Lemma \ref{lem:fine_balls_disjoint}. 
Therefore, define $\mathcal{A}(x,y)$ to be the event that for the ordered pair $(x,y)\in \N^2$, $\uu-2\log^{1/8}N\leq  x\leq \uu$ and $0\leq y\leq dx+\uu^{7/8}$, and let $\cB$ be the event that the neighborhoods around all such vertices $v \in [N]$ are disjoint tree-like. If we define $\Lambda(\alpha,\beta)=\alpha+\beta/\alpha$, then with high probability $\Phi$ is contained in the intensity measure of $(\Lambda(\alpha_v,\beta_v)+\epsilon(d,N))\vone_{\mathcal{A}(\alpha_v,\beta_v) \cap \cB}$, where $\epsilon=O(\frac{d^{3/2}+1}{\uu^{4/3}})$. 
Moreover, $\Psi$ is contained in the intensity measure of $\Lambda(X,Y_{X})\vone_{\mathcal{A}(X,Y_X)}$, where $X\sim Pois(d)$ and $Y_X\sim Pois(dX)$.

It is then sufficient to show that for $X\sim Pois(d)$,
\begin{equation}\label{eq:tvar}
d_{TV}\left((\alpha_v,\beta_v)\vone_{\mathcal{A}(\alpha_v,\beta_v) \cap \cB},(X,Y_X)\vone_{\mathcal{A}(X,Y_X)}\right)=o_N(1).
\end{equation}
Define the two dimensional point process ${\Phi'}$ given by $(\alpha_v,\beta_v)\vone_{\mathcal{A}(\alpha_v,\beta_v) \cap \cB}$, and the Poisson process $\Psi'$ as induced by $(X,Y_X)\vone_{\mathcal{A}(X,Y_X)}$. We consider all potential ordered pairs $(x,y)$ such that $\uu-2\log^{1/8}N\leq x\leq \uu$ and $0\leq y\leq dx+\uu^{7/8} $. Therefore, if $n_x:=2\log^{1/8}N+1, n_y=d\uu+\uu^{7/8}$+1, there are at most $n:=n_xn_y$ possibilities.

Define $|\Phi'((x,y))|$ to be the number of points the point process $\Phi'$ has at $(x,y)$. 
We consider an event $E$, which for some set $K_E$ of vectors in $\N^{n}$, is defined as follows. We write $\EE(X)$ here to mean that the event $X$ occurs. 
\[
E:=\bigsqcup_{\mathbf{k}\in K_E} 
\EE \left ( 
\left |\Phi'((x_{1},y_1)) \right |=k_1,\ldots, 
\left |\Phi'((x_{n_x},y_{n_y})) \right | = k_n \right ).
\]
By Lemma \ref{lem:smalltoall},
\begin{multline}
\Pr \left ( |{\Phi'}((x_1,y_1))|=k_1,\ldots, |\Phi'((x_{n_x},y_{n_y}))|=k_n \right )
=\frac1{k_1!\cdots k_{n}!}\sum_{\substack{\ell_1,\ldots,\ell_{n}\\ \sum l_i \leq e^{\log^{2/3}N}/2-1}}
\frac{(-1)^{\sum_{i}\ell_i}}{\ell_1!\cdots \ell_{n}!}
q_{\Phi'} \left ( I_1^{k_1+\ell_1}\times \cdots \times I_{n}^{k_n + \ell_{n}} \right )\\
+O\left(\frac{1}{k_1!\cdots k_{n}!}\sum_{\substack{\ell_1,\ldots,\ell_{n}\\
\sum \ell_i =e^{\log^{2/3}N}/2}}
q_{\Phi'} \left ( I_1^{k_1+\ell_1}\times \cdots \times I_{n}^{k_n+\ell_{n}} \right ) \right )
\end{multline}
where $I_i$ is the lattice point $(x_i,y_i)$.

We use this threshold for $\sum k_i,$ as by Lemma \ref{lem:sizes} and Markov's inequality, with probability $1-e^{-\Omega(\log^{2/3}N)}$, there are at most $e^{\log^{2/3}N}/2-1$ vertices with degree larger than $\uu - 2 \log^{1/8}N$, which implies that we only need to consider vectors such that $k:=\sum_{i=1}^{n} k_i\leq e^{\log^{2/3}N}/2-1$.

By Lemma \ref{lem:totalvariation},
\begin{multline*}
\sum_{\mathbf{k}\in K_{E}}\frac1{k_1!\cdots k_{n}!}\sum_{\substack{\ell_1,\ldots,\ell_{n}\\ \sum l_i \leq e^{\log^{2/3}N}/2-1}}
\frac{(-1)^{\sum_{i}\ell_i}}{\ell_1!\cdots \ell_{n}!}q_{\Phi'}(I_1^{k_1+\ell_1}\times \cdots \times I_{n}^{k_n + \ell_{n}})\\=(1+N^{-1+o_N(1)})
\sum_{\mathbf{k}\in K_{E}}\frac1{k_1!\cdots k_{n}!}\sum_{\substack{\ell_1,\ldots,\ell_{n}\\ \sum l_i \leq e^{\log^{2/3}N}/2-1}}
N^{k+\ell} \frac{(-1)^{\sum_{i}\ell_i}}{\ell_1!\cdots \ell_{n}!} \prod_{i=1}^n\left(\P(Pois(d)=x_i) \P(Pois(dx_i)=y_i)\right)^{k_i + \ell_i}.
\end{multline*}
Also,
\begin{multline*}
\frac{1}{k_1!\cdots k_{n} !}\sum_{\substack{\ell_1,\ldots,\ell_{n}\\
\sum \ell_i =e^{\log^{2/3}N}/2}}
q_{\Phi'}(I_1^{k_1+\ell_1}\times \cdots \times I_{n}^{k_n + \ell_{n}})\\
=O\left(\frac{1}{k_1!\cdots k_{n}!}\sum_{\substack{\ell_1,\ldots,\ell_{n}\\
\sum \ell_i =e^{\log^{2/3}N}/2}}
\frac{1}{(\ell/n)!^n}N^{k+\ell}\pr \left ( \Pois(d)=\uu-2\log^{1/8}N  \right ) ^{k+\ell} \right).
\end{multline*}
where $\ell=\sum_{i} \ell_i=e^{\log^{2/3}N}/2$.
By the definition of the Poisson,
\[
\frac{1}{k_1!\cdots k_{n}!}\sum_{\substack{\ell_1,\ldots,\ell_{n}\\
\sum \ell_i =e^{\log^{2/3}N}/2}}
\frac{1}{((\ell/n)!)^n}N^{k+\ell}\pr \left (Pois(d)=\uu-2\log^{1/8}N \right )^{k+\ell} =e^{-\Omega(e^{\log^{2/3}N})}.
\]

By using Lemma \ref{lem:smalltoall} once again,
\begin{multline*}
\frac1{k_1!\cdots k_{n}!}\sum_{\sum \ell_1\ldots,\ell_{n}\leq e^{\log^{2/3}N}/2-1}N^{k+\ell} \frac{(-1)^{\sum_{i}\ell_i}}{\ell_1!\cdots \ell_{n}!}\prod_{i=1}^n\left(\P(Pois(d)=x_i) \P(Pois(dx_i)=y_i)\right)^{k_i+\ell_i}\\
=(1+N^{-1+o_N(1)})\Pr(\Psi'(I_1)=k_1+\ell_1,\ldots, \Psi'(I_n)=k_n+\ell_n,\Psi'(I_{n})=k_{n})+e^{-\Omega(e^{\log^{2/3}N})}.
\end{multline*}
We now wish to pass from this error to total variation distance. The total number of possibilities of $k$ for $\sum_{i=1}^n k_i\leq e^{\log^{2/3}N}/2$ is given by the balls and bins paradigm as $\sum_{k=0}^{e^{\log^\frac{2}{3} N}/2} \binom{n + k - 1}{k - 1} \leq  e^{\log^{3}N}$. Therefore, the error for any event is at most 
\begin{equation} \label{eq:2dPoissonProcess}
d_{TV}\left((\alpha_v,\beta_v)\vone_{\mathcal{A}(\alpha_v,\beta_v) \cap \cB},(X,Y_X)\vone_{\mathcal{A}(X,Y_X) } \right)=e^{\log^3 N}e^{-\Omega(e^{\log^{2/3}N})}+N^{-1+o_N(1)}=o_N(1).
\end{equation}
\end{proof}

Now we can simply work with independent Poissons, for which the distribution of the maximum is easier to analyze. We start by determining an interval into which the maximizers of the $Y_i$, which approximate the $\beta_x$, fall.

\begin{lemma} \label{lem:largest-k-sphere}
    Consider any function $\zeta(N)=\omega_N(1)$ and $1\leq K=\zeta^{o_N(1)}$.   For fixed $m>0$, $\aa=\Theta(\uu)$, and i.i.d. $Y_1, \dots, Y_\zeta\sim Pois(d\aa)$,  with probability $1-O_N(\frac{1}{\sqrt{\log \zeta}})$, the $K$ largest values $Y_{(1)},\ldots,Y_{(K)}$ are such that for every $1\leq i\leq K$,
    $$ Y_{(i)}\in \left [ 
    d\aa+  \sqrt{ 2 d\aa\log \zeta} - \sqrt{d\aa}\frac{\log K+\frac12\log2-\log \cpois+\frac32\log \log \zeta }{\sqrt{2\log \zeta}}, d\aa+ \sqrt{2 d\aa\log \zeta}
    \right ]
    $$
    where $\cpois$ is the constant from Lemma \ref{lem:sharp_poisson_tail}.
\end{lemma}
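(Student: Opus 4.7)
The plan is to apply the sharp two-sided Poisson tail bound of Corollary \ref{eq:poistailex} at two carefully chosen thresholds, then combine a union bound for the upper endpoint with a second-moment (Chebyshev) argument for the lower endpoint.

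For the upper endpoint, set $t_{\mathrm{up}} = d\aa + \sqrt{2 d\aa \log \zeta}$, i.e. $\delta_{\mathrm{up}} = \sqrt{2\log\zeta/(d\aa)}$. Under the standing assumptions on $d$ and $\uu$ (which imply $d\aa \gg (\log \zeta)^3$, so the cubic error in Corollary \ref{eq:poistailex} is $o(1)$, and $\delta_{\mathrm{up}} \geq 1/\sqrt{d\aa}$), Corollary \ref{eq:poistailex} gives
$$\P(Y_i \geq t_{\mathrm{up}}) \leq (1+o(1))\,\frac{e^{-\log\zeta}}{\sqrt{2\log\zeta}} = \frac{1+o(1)}{\zeta\sqrt{2\log\zeta}}.$$
A union bound over the $\zeta$ variables gives $\P(Y_{(1)} \geq t_{\mathrm{up}}) = O(1/\sqrt{\log\zeta})$, which bounds every $Y_{(i)}$ from above on the complementary event.

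For the lower endpoint, choose $\delta_*$ so that
$$\frac{d\aa\,\delta_*^2}{2} = \log\zeta - \log K - \tfrac12\log 2 + \log \cpois - \tfrac32 \log\log\zeta,$$
and set $t_* = d\aa(1+\delta_*)$. A straightforward first-order expansion of $\sqrt{d\aa}\,\delta_* = \sqrt{2\log\zeta - 2\log K - \log 2 + 2\log\cpois - 3\log\log\zeta}$ recovers $\sqrt{2d\aa\log\zeta} - \sqrt{d\aa}\cdot(\log K + \tfrac12\log 2 - \log \cpois + \tfrac32\log\log\zeta)/\sqrt{2\log\zeta}$ up to lower-order terms, matching the stated lower endpoint. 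Substituting into the lower half of Corollary \ref{eq:poistailex} and cancelling gives
$$p_* := \P(Y_i \geq t_*) \geq (1-o(1))\,\frac{K \log\zeta}{\zeta},$$
so the count $N_* := \bigl|\{i : Y_i \geq t_*\}\bigr| \sim \mathrm{Bin}(\zeta, p_*)$ has $\E[N_*] \geq (1-o(1))\,K\log\zeta$.

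To conclude, note that $Y_{(K)} \geq t_*$ is equivalent to $N_* \geq K$. Since $N_*$ is binomial, $\mathrm{Var}(N_*) \leq \E[N_*]$, and Chebyshev's inequality gives
$$\P(N_* < K) \leq \frac{\mathrm{Var}(N_*)}{(\E[N_*] - K)^2} = O\!\left(\frac{1}{K\log\zeta}\right) = o\!\left(\frac{1}{\sqrt{\log\zeta}}\right).$$
A union bound with the upper-endpoint event produces the claimed $O(1/\sqrt{\log\zeta})$ failure probability. The main technical care is in (i) picking the second-order term of $\delta_*$ so the Taylor expansion produces exactly the stated correction $(\log K + \tfrac12 \log 2 - \log \cpois + \tfrac32 \log\log\zeta)/\sqrt{2\log\zeta}$, and (ii) verifying that $\delta_* \geq 1/\sqrt{d\aa}$ and $d\aa\,\delta_*^3 = o(1)$ in the paper's regime, so that the sharp Poisson tail bound is applicable.
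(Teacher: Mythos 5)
Your proof is essentially correct in structure and uses the same tail machinery (Corollary~\ref{eq:poistailex}) and the same union bound for the upper endpoint, but handles the lower endpoint by a genuinely different argument. Where you use a second-moment/Chebyshev bound on the count $N_* = |\{i : Y_i \geq t_*\}| \sim \mathrm{Bin}(\zeta, p_*)$ to control $\P(N_* < K) = O(1/(K\log\zeta))$, the paper instead bounds $\P(N_* \leq K-1)$ by a union bound over the $\binom{\zeta}{K-1}$ possible index sets of the values exceeding $T$, giving the much sharper $1 - \zeta^{-1+o_N(1)}$. Your Chebyshev route is simpler and more elementary, and since the upper-endpoint union bound dominates at $O(1/\sqrt{\log\zeta})$ anyway, the weaker lower-endpoint bound does not change the final result. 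There is, however, a real (though small and acknowledged) gap in your choice of threshold: defining $\delta_*$ via $d\aa\delta_*^2/2 = \log\zeta - \log K - \tfrac12\log 2 + \log\cpois - \tfrac32\log\log\zeta$ and then Taylor-expanding gives $t_* = d\aa(1+\delta_*)$ strictly \emph{below} the stated lower endpoint $T_{\mathrm{low}}$ (the dropped higher-order term is negative), so proving $Y_{(K)} \geq t_*$ does not imply $Y_{(K)} \geq T_{\mathrm{low}}$. The paper avoids this by working directly at $T_{\mathrm{low}}$: it sets $\delta = (T_{\mathrm{low}} - d\aa)/(d\aa)$, computes $d\aa\delta^2/2 = \log\zeta - \bigl(\log K + \tfrac12\log 2 - \log\cpois + \tfrac32\log\log\zeta\bigr) + \frac{(\log K + \ldots)^2}{4\log\zeta}$, and absorbs the quadratic remainder into the $1 + o_N(1)$ factor. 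Rewriting your $\delta_*$ step in this form closes the gap.
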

\begin{proof}
If $Y_{(K)}$ is less than some value $T$, then there are at least $\zeta-K+1$ $Y_i$'s less than $T$. Therefore,
\[
\P(Y_{(K)} \geq T) \geq 1 - \binom{\zeta}{K-1}( 1 - \pr(Y_1 \geq T))^{\zeta-K+1} \geq 1 - \binom{\zeta}{K-1}e^{-(\zeta-K+1) \P(Y_1 \geq T)}\geq 1-e^{(K-1)\log \zeta-(\zeta-K+1) \P(Y_1 \geq T)}.
\]

    To bound $\P(Y_1 \geq T)$ for $T = d\aa +\sqrt{ 2 d \aa \log \zeta} - \frac{\sqrt{ d \aa}(\log K+\frac12\log2-\log \cpois+\frac32\log \log \zeta )}{\sqrt{2\log \zeta}}$, we use the tail bound from Corollary \ref{eq:poistailex}, with
  \[\delta := \frac{1}{\sqrt{d\aa}} \left ( \sqrt{ 2 \log \zeta} - \frac{\log K+\frac12\log2-\log \cpois+\frac32\log \log \zeta }{\sqrt{2\log \zeta}}\right ).\] As
\[d\aa \delta^2/2 = \log \zeta- \log K+\frac12\log2+\log \cpois-\frac32\log \log \zeta + \frac{(\log K+\frac12\log2-\log \cpois+\frac32\log \log \zeta)^2 }{4\log \zeta},\]
Corollary \ref{eq:poistailex} gives that
    \begin{align*}
        &\P \left ( Y_1 \geq d\aa+ \sqrt{d\aa} \left ( \sqrt{ 2 \log \zeta} - \frac{\log K+\frac12\log2-\log \cpois+\frac32\log \log \zeta }{\sqrt{2\log \zeta}}\right )\right)\\
        &\geq \frac{\cpois}{\zeta} \frac{e^{ \log K+\frac12\log2-\log \cpois+\frac32\log \log \zeta - \frac{(\log K+\frac12\log2-\log \cpois+\frac32\log \log \zeta)^2 }{4\log \zeta} }}{  \sqrt{ 2 \log \zeta} - \frac{\log K+\frac12\log2-\log \cpois+\frac32\log \log \zeta }{\sqrt{2\log \zeta}} }\\
        &=(1+o_N(1))\frac{K\log \zeta}{\zeta}.
    \end{align*}
    Thus
  \begin{equation}
       1- e^{(K-1)\log \zeta - (\zeta-K+1) \P( Y_1 \geq T)} = 1-\zeta^{-1+o_N(1)}.
  \end{equation}

    To prove the upper bound we proceed similarly, using that by a union bound
    \begin{equation}\label{eq:maxupper}
    \P(Y_{(1)} \geq T ) \leq  \zeta\P( Y_1 \geq T).
    \end{equation}
    Using once more the tail bound from Corollary \ref{eq:poistailex} with $\delta = \sqrt{\frac{2 \log \zeta}{d\aa}}$, we obtain 
    \begin{align*}
        \P(Y_1 \geq T) \leq \frac{e^{-\log \zeta}}{\sqrt{2 \log \zeta}},
    \end{align*}
    which means that \eqref{eq:maxupper} can be upper bounded by
    $\frac{1}{\sqrt{2\log \zeta}}.$
    
\end{proof}

The following lemma will imply spacing between eigenvalues.

\begin{lemma}\label{lem:eigenvaluespacing}
Fix $\aa\in \{\uu-1,\uu\}$. For any $K=\log^{o(1)}N$, with high probability the maximum $K+1$ values of $\beta^{(1)}_x$ of vertices with degree $\aa$ are separated by at least $\frac{(d\uu)^{1/2}}{\log(\frac{\uu}{d})^3(K+1)^{3\log\log \log N}}$. 
\end{lemma}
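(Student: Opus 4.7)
The plan is to reduce the problem to the i.i.d.\ Poisson setting, and then to union-bound the probability that two among the top $K+1$ values land close together, exploiting the $1/\sqrt{d\aa}$ local density of a Poisson near the relevant upper tail.

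First I would Poissonize. Exactly as in the derivation of \eqref{eq:2dPoissonProcess} in the proof of Theorem \ref{thm:process} --- combining Lemma \ref{lem:totalvariation} with Lemma \ref{lem:smalltoall} --- and restricted to the high-probability event of Lemma \ref{lem:fine_balls_disjoint} that the relevant neighborhoods $B_2(x)$ are disjoint trees, the joint law of $\{\beta_x:\alpha_x=\aa\}$ is within $o_N(1)$ total variation of an i.i.d.\ family $Y_1,\dots,Y_\zeta\sim\Pois(d\aa)$, where $\zeta=|\{x:\alpha_x=\aa\}|$. By Lemma \ref{lem:sizes}, $\zeta \leq (\uu/d)^{O(1)}$ with high probability, so $\log\zeta=O(\log\log N)$. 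It thus suffices to prove the separation for these i.i.d.\ Poissons. Moreover, by Lemma \ref{lem:largest-k-sphere} applied to $\zeta$ with $K+1$ top values, with probability $1-O(1/\sqrt{\log\zeta})$ all of $Y_{(1)},\dots,Y_{(K+1)}$ exceed a threshold $T\approx d\aa+\sqrt{2d\aa\log\zeta}$, and $\delta_0:=\sqrt{2\log\zeta/(d\aa)}$ is the relevant local tail rate.

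The core step is the pairwise estimate: for any $i\neq j$,
\begin{equation*}
\P\bigl(Y_i\geq T,\ |Y_i-Y_j|\leq\eta\bigr)\ \lesssim\ \frac{\eta\,e^{\eta\delta_0}}{d\aa\,\zeta^2\,\delta_0},
\end{equation*}
where $\eta:=\sqrt{d\uu}/\bigl((\log(\uu/d))^3(K+1)^{3\log\log\log N}\bigr)$. This follows by writing the left-hand side as $\sum_{k\geq T}\P(Y_i=k)\,\P(Y_j\in[k-\eta,k+\eta])$, bounding the inner probability by $(2\eta+1)\,\P(Y_j=k-\eta)$ (monotonicity of the Poisson density past the mean), and invoking the Stirling local estimate $\P(Y=k)\asymp (d\aa)^{-1/2}\exp(-d\aa\,h(\delta_k))$ with $\delta_k=(k-d\aa)/d\aa$. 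The ratio $\P(Y=k+1)/\P(Y=k)=d\aa/(k+1)\approx e^{-\delta_0}$ near $T$ gives both $\P(Y=k-\eta)/\P(Y=k)\approx e^{\eta\delta_0}$ and a geometric sum over $k\geq T$ with rate $e^{-2\delta_0}$, yielding the $1/\delta_0$ factor.

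Finally, summing over the $\binom{\zeta}{2}$ pairs,
\begin{equation*}
\binom{\zeta}{2}\cdot\frac{\eta\,e^{\eta\delta_0}}{d\aa\,\zeta^2\,\delta_0}\ \lesssim\ \frac{\eta\,e^{\eta\delta_0}}{\sqrt{d\aa\log\zeta}}\ \lesssim\ \frac{e^{\eta\delta_0}}{(\log(\uu/d))^3(K+1)^{3\log\log\log N}\sqrt{\log\zeta}}.
\end{equation*}
In our regime, $\log(\uu/d)\gtrsim\log\log N$, hence $(\log(\uu/d))^3\to\infty$, and one checks $\eta\delta_0=O\bigl(\sqrt{\log\zeta}/(\log\log N)^3\bigr)=o(1)$, so the whole bound is $o_N(1)$. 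The main technical hurdle is obtaining the local density $(d\aa)^{-1/2}$ (rather than the crude bound $1$) and carefully tracking the $e^{\eta\delta_0}$ multiplicative factor produced by the downward shift by $\eta$; the peculiar denominator in the definition of $\eta$, namely $(\log(\uu/d))^3(K+1)^{3\log\log\log N}$, is tuned precisely so that $\eta\delta_0$ remains $o(1)$ while still allowing the pair-count $\binom{\zeta}{2}$ to be absorbed against the $\zeta^{-2}$ coming from $\P(Y=k)^2$ near $T$.
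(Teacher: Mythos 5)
Your argument follows the same broad structure as the paper's proof in the large-$\zeta$ regime (where $\zeta$ is the number of vertices of degree $\aa$): reduce to i.i.d.\ Poissons via the Poissonization in the proof of Theorem \ref{thm:process}, invoke Lemma \ref{lem:largest-k-sphere} to locate the top $K+1$ values in a window near $d\aa+\sqrt{2d\aa\log\zeta}$, and exploit the improved local density $\asymp (\zeta\sqrt{d\aa}\,\delta_0)^{-1}$ near that threshold to absorb the $\binom{\zeta}{2}$ pair count. Your bookkeeping (summing $\P(Y_i=k)\P(Y_j=k-\eta)$ over $k\ge T$ and using the geometric rate $e^{-2\delta_0}$) differs mildly from the paper's (covering the window by $\eta^{-1}\sqrt{d\uu}$ sub-intervals of length $2\eta$), but both give the same estimate up to constants, and your tracking of the $e^{\eta\delta_0}$ factor is correct.

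However, there is a genuine gap: you never handle the case where $\zeta$ is small compared to $K$. Lemma \ref{lem:largest-k-sphere} requires $\zeta=\omega_N(1)$ and, crucially, $K=\zeta^{o_N(1)}$. If $\zeta$ is on the order of $K$ or only polynomially larger than $K$ --- which is entirely possible, since for $\aa\in\{\uu-1,\uu\}$ the quantity $\zeta$ is roughly a fluctuating Poisson count that can be as small as $O(1)$ --- then $K$ is \emph{not} $\zeta^{o_N(1)}$, Lemma \ref{lem:largest-k-sphere} does not apply, and the threshold $T$ and the crucial $\zeta^{-2}$ from the local density near $T$ are unavailable. The paper resolves this by splitting on the threshold $\zeta\lessgtr(K+1)^{\log\log\log N}$: when $\zeta$ is below it, one forgoes the threshold $T$ entirely and uses the crude bound that the Poisson mode has probability at most $1/\sqrt{d\aa}$, so $\P(|\beta_u-\beta_v|\le\eta)\le 2\eta/\sqrt{d\aa}$; the union bound over $\binom{\zeta}{2}\le(K+1)^{2\log\log\log N}$ pairs then still vanishes because $\eta/\sqrt{d\aa}=1/[(\log(\uu/d))^3(K+1)^{3\log\log\log N}]$ decays faster than $(K+1)^{2\log\log\log N}$ grows. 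Your argument, as written, only covers the regime $\zeta\ge(K+1)^{\log\log\log N}$, and to be complete you would need to add this second, easier case.
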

\begin{proof}
Denote by $\zeta$ the number of vertices of degree $\aa$. We will split into two cases, based on whether $\zeta$ is small relative to $K$. As we will see, if $\zeta$ is small, then we can bound the probability using the anticoncentration of the Poisson. If $\zeta$ is larger, we can shift our focus to the regime of Lemma \ref{lem:largest-k-sphere}. It is sufficient to split our cases according to $(K+1)^{\log\log\log(N)}$.
Consider two vertices $u,v$ such that $\alpha_u=\alpha_v=\aa$. 
If $\zeta \leq (K+1)^{\log\log\log(N)}$, then \eqref{eq:2dPoissonProcess} in the proof of Theorem \ref{thm:process} implies that the distribution of the $\beta_x$'s approaches the distribution of Poissons, so the probability that $|\beta^{(1)}_u-\beta^{(1)}_v|\leq \eta$ is at most $\frac{2\eta}{\sqrt{d\aa}}+\tilde O(N^{-1/2})$, considering the mode of a Poisson is at its mean, with probability at most $\frac{1}{\sqrt{d\aa}}$. Therefore the probability that any pair is within distance $\eta$ is at most
\[
\binom{\zeta}2 \frac{2\eta}{\sqrt{d\aa}}\leq \frac{\eta(K+1)^{2\log\log \log N}}{\sqrt{d\aa}}
\]
This converges to 0 for $\eta=\frac{(d\uu)^{1/2}}{(K+1)^{3\log\log \log N}}$. 

Otherwise, if $\zeta \geq (K+1)^{\log\log\log(N)}$, referring once more to \eqref{eq:2dPoissonProcess} in the proof of Theorem \ref{thm:process} and Lemma \ref{lem:largest-k-sphere}, with high probability the $K$ maximizers $x$ of $\beta^{(1)}_x$ satisfy 
\begin{equation}\label{eq:window}
\beta^{(1)}_x
\in 
\left [
d\aa+\sqrt{ 2 d\aa\log \zeta} - \sqrt{d\aa}\frac{\log K+\frac12\log2-\log \cpois+\frac32\log \log \zeta }{\sqrt{2\log \zeta}},d\aa+\sqrt{2(\log\zeta) d\aa}
\right ].
\end{equation}
To show the improvement in density, we consider the probability that $\beta^{(1)}_x=d\aa+t$, for $|t| = (1+o(_N1)) \sqrt{2(\log \zeta)d\aa}.$ We then have by the Stirling approximation,
\[
\frac{e^{-d\aa}(d\aa)^{d\aa+t}}{(d\aa+t)!}= (1+o_N(1))\frac{e^{t}}{(1+\frac{t}{d\aa})^{d\aa+t}\sqrt{2\pi( d\aa+t)}}.
\]
To approximate this, we have
\begin{eqnarray*}
\left ( 1+\frac{t}{d\aa} \right )^{d\aa+t}
&=& e^{\log \left ( 1 + \frac{t}{d \aa} \right ) (d \aa + t )} 
= e^{\left ( \frac{t}{d \aa} - \frac{t^2}{2(d\aa)^2}  + O \left ( \frac{t^3}{(d\aa)^3} \right ) \right ) (d \aa + t)} 
= e^{t + \frac{t^2}{2d \aa} + O \left ( \frac{ t^3}{(da)^2} \right )}
\end{eqnarray*}
In our window, $t\geq \sqrt{2(\log\zeta)d\aa}-\sqrt{d\aa}\frac{\log K+\frac12\log2-\log \cpois+\frac32\log \log \zeta }{\sqrt{2\log \zeta}}$, and we have
\[
\frac{e^{-d\aa}(d\aa)^{d\aa+t}}{(d\aa+t)!}= \frac{1}{e^{(1+o_N(1))t^2/(2d\aa))}\sqrt{2\pi( d\aa+t)}}\leq \frac{e^{\log K+\frac12\log2-\log \cpois+\frac32\log \log \zeta}}{\zeta^{1-O(\sqrt{\frac{\log \zeta}{d\aa}})} \sqrt{2\pi( d\aa + t)}}.
\]

Here we must have $\sqrt{\frac{\log\zeta}{d\uu}}\rightarrow 0$, therefore, we use the assumption that $d\gg \frac{(\log\log N)^2}{\log N}$. The probability that there are at least two vertices in a window of length $2\eta$ around some $d \aa + t$ with $t = (1+o_N(1)) \sqrt{2 da \log \zeta}$ is therefore \begin{equation}\label{eq:windowprob}
\binom{\zeta}{2}\left (\frac{2\eta e^{\log K+\frac12\log2-\log \cpois+\frac32\log \log \zeta}}{ \zeta^{1-O(\sqrt{\frac{\log \zeta}{d\aa}})}\sqrt{2\pi(d\aa+t)}}\right )^2\leq4\cpois^{-1} K^2 \eta^2 \log(\frac{\uu}{d})^3(d\uu)^{-1}
\end{equation}
for sufficiently large $N$, 
considering that with high probability $\zeta\leq (\frac{\uu}{d})^{3/2}$ by Lemma \ref{lem:sizes}.

To translate this into distance between $\beta^{(1)}_u$ and $\beta^{(1)}_v$, we cover the large interval corresponding to \eqref{eq:window} with small intervals of length $2\eta$, and centers spaced at distance $\eta$. To cover this large interval, we need at most $\eta^{-1}\sqrt{d\uu}$ small intervals. Therefore, union bounding the probability \eqref{eq:windowprob} gives that
\begin{align*}
&\pr\bigg(\exists u,v\in [N]:\alpha_u=\alpha_v=\aa,|\beta^{(1)}_u-\beta^{(1)}_v|\leq \eta\bigg)\vone\left(\zeta\geq (K+1)^{\log\log\log N}\right)\\
&\leq4\cpois^{-1} K^2 \eta^2 \log(\frac{\uu}{d})^3(d\uu)^{-1} \eta^{-1}\sqrt{d\uu}\\
&\leq5\cpois^{-1} K^2 \eta \log(\frac{\uu}{d})^3(d\uu)^{-1/2}
\end{align*}
This probability converges to 0 for $ \eta=\frac{(d\uu)^{1/2}}{\log(\frac{\uu}{d})^3(K+1)^{3\log\log \log N}}$.
\end{proof}

\begin{lemma}\label{lem:lambdaubound}
With high probability, for $u\neq v\in \WW$ corresponding to the largest $K+1$ $\lambda$'s, we have $|\lambda_u-\lambda_v|\geq \frac{d^{1/2}}{3\uu\log(\frac{\uu}{d})^3(K+1)^{3\log\log \log N}}$.
\end{lemma}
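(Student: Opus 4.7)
The plan is to combine the lexicographic ordering of eigenvalues from Lemma \ref{lem:difference} with the $\beta$-separation from Lemma \ref{lem:eigenvaluespacing}, splitting on whether two relevant vertices share a common degree. First, by Lemma \ref{lem:sizes} (with $m=1$), with high probability $|\XX_1| \geq \tfrac{1}{2}(\uu/d)^{1/2}$, which is $\omega_N(K+1)$ since $K = \log^{o_N(1)} N$ and $(\uu/d)^{1/2} = \log^{1/2-o_N(1)} N$. Combined with Lemma \ref{lem:max_degree}, this implies the vertices corresponding to the top $K+1$ values of $\lambda_x$ (for $x \in \WW$) all satisfy $\alpha_x \in \{\uu-1, \uu\}$, using the lexicographic ordering established in Lemma \ref{lem:difference}.

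For any pair $u \neq v$ among these top $K+1$ vertices with $\alpha_u \neq \alpha_v$, Lemma \ref{lem:difference} yields $\lambda_u^2 - \lambda_v^2 \geq 1 + O((d^{1/2}+1)\uu^{-1/3}) \geq 1/2$ for $N$ large, and Lemma \ref{lem:eigenvalueapproximation} gives $\lambda_u + \lambda_v \leq 2\sqrt{\uu + O(d)}$, so $|\lambda_u - \lambda_v| \geq (5\sqrt{\uu})^{-1}$, which is much larger than the target bound.

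The nontrivial case is $\alpha_u = \alpha_v = \aa$ for some $\aa \in \{\uu-1, \uu\}$. Applying Lemma \ref{lem:eigenvaluespacing} for each of these two values and union bounding, with high probability any such pair (among the top $K+1$) satisfies
$$|\beta_u - \beta_v| \geq \frac{(d\uu)^{1/2}}{\log(\uu/d)^3 (K+1)^{3\log\log\log N}}.$$
Plugging into Lemma \ref{lem:difference} then gives
$$|\lambda_u^2 - \lambda_v^2| \geq \frac{|\beta_u - \beta_v|}{\uu} - O\bigl((1+d^{3/2})\uu^{-4/3}\bigr) \geq (1-o_N(1))\frac{d^{1/2}}{\uu^{1/2} \log(\uu/d)^3 (K+1)^{3\log\log\log N}},$$
and dividing by $\lambda_u + \lambda_v \leq (2+o_N(1))\sqrt{\uu}$ yields the claimed separation.

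The only item requiring verification is that the additive error $O((1+d^{3/2})\uu^{-4/3})$ from Lemma \ref{lem:difference} is dominated by the main term. Its ratio to the $\beta$-separation term is $\Theta\bigl(d^{1/2}\uu^{5/6}/((1+d^{3/2})\cdot\mathrm{polylog})\bigr)$, and since $\uu = \Theta(\log N/\log\log N)$, $d \in [\log^{-1/9} N, \log^{1/40} N]$, and $(K+1)^{3\log\log\log N} = \log^{o_N(1)} N$, this ratio is $\omega_N(1)$ in both $d \leq 1$ and $d > 1$ sub-regimes. The factor of $3$ (as opposed to $2$) in the denominator of the target then absorbs the $2(1+o(1))$ arising from $\lambda_u + \lambda_v$ together with the $(1-o_N(1))$ above.
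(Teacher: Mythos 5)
Your argument follows essentially the same route as the paper's own proof: split on whether $\alpha_u = \alpha_v$, dispatch the unequal case via Lemma \ref{lem:difference} and Lemma \ref{lem:eigenvalueapproximation}, and in the equal case invoke Lemma \ref{lem:eigenvaluespacing} and divide $|\lambda_u^2-\lambda_v^2|$ by $\lambda_u+\lambda_v$. You spell out a bit more explicitly that the top $K+1$ vertices have degree in $\{\uu-1,\uu\}$ and that the additive error $O((1+d^{3/2})\uu^{-4/3})$ is dominated; the paper treats both points implicitly. The proof is correct and is not a genuinely different approach.
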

\begin{proof}
   By Lemma \ref{lem:difference}, this is immediately true if $\alpha_u \neq \alpha_v$. Therefore assume $\alpha_u=\alpha_v$. By Lemma \ref{lem:eigenvaluespacing}, with high probability the $K+1$ maximizers of $\beta^{(1)}_u$ are spaced at distance $\frac{(d\uu)^{1/2}}{\log(\frac{\uu}{d})^3(K+1)^{3\log\log \log N}}$. Therefore, by Lemma \ref{lem:difference}, for $u\neq v$ and sufficiently large $N$, and as $d\gg \log^{-5/3} N$,
   \begin{eqnarray}
   \nonumber|\lambda_u-\lambda_v|&=&\frac{|\lambda^2_u-\lambda^2_v|}{|\lambda_u+\lambda_v|}\\
   \nonumber&\geq&\left(\frac{\frac{(d\uu)^{1/2}}{\log(\frac{\uu}{d})^3(K+1)^{3\log\log \log N}}}{\uu} +O((1+d^{3/2})\uu^{-4/3})\right)\frac{1}{2\sqrt{\uu}+O(\frac d{\sqrt{\uu}})}\\
   \label{eq:eigenvaluespacing}
   &\geq&\frac{d^{1/2}}{3\uu\log(\frac{\uu}{d})^3(K+1)^{3\log\log \log N}}
   \end{eqnarray}
   by the lower bound on $d$.
\end{proof}

\section{Eigenvector Structure}\label{sec:eigenvector}
\begin{prop}\label{prop:vecstructure}

For $k\leq K=\log^{o_N(1)}N$, define $x$ to be the vertex corresponding to the $k$th largest eigenvalue of $A$, as per Theorem \ref{thm:maineigenvalue}. The eigenvector $\vv$ of $\lambda$ satisfies 

    \[
    \|\vv-\ww_+(x)\|=O \left ( \uu^{-r/2+2} \right ).
    \]
\end{prop}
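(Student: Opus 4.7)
The plan is to apply the perturbation estimator from Lemma \ref{lem:estimator} with $M=A$, test vector $\vv_0:=\ww_+(x)$, and test eigenvalue $\lambda_x$. Two ingredients are needed: a residual bound $\epsilon$ on $\|(A-\lambda_x)\ww_+(x)\|$, and a spectral gap $\Delta$ so that $\lambda$ is the unique eigenvalue of $A$ in $[\lambda_x-\Delta,\lambda_x+\Delta]$. Once these are in hand, the bound $\|\vv-\ww_+(x)\|=O(\epsilon/\Delta)$ produced by the lemma will deliver the claim, provided $\epsilon/\Delta=O(\uu^{-r/2+2})$.

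For the residual, I would first observe that under $\omegam$ the balls $B_r(y)$ for $y\in \VV$ are pairwise disjoint, so $\ww_+(x)$ is orthogonal to every $\ww_\pm(y)$ with $y\neq x$ as well as to $\ww_-(x)$. This gives $\Lambda \ww_+(x)=\lambda_x\ww_+(x)$ for the operator $\Lambda$ from Proposition \ref{lem:fineerror}, so
\[
\|(A-\lambda_x)\ww_+(x)\|=\|(A-\Lambda)\ww_+(x)\|=O\bigl((d^{r/2}+1)\uu^{-(r-1)/2}\bigr)=:\epsilon.
\]
For the gap, I would combine Lemma \ref{lem:lambdaubound} with Proposition \ref{prop:eigstructure}. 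The former lower bounds the spacing between any two of the top $K+1$ eigenvalues $\lambda_y$ by $\Omega(d^{1/2}\uu^{-1-o_N(1)})$, while the latter puts each true eigenvalue of $A$ at distance $O((d^r+1)\uu^{-r+1})$ from such a $\lambda_y$, which is negligible. Outside this top band, Corollary \ref{claim:errorbound} bounds all other eigenvalues of $A$ by $\sqrt\uu-\Theta(\uu^{-1/4})$, whereas $\lambda_x\geq\sqrt\uu-O(\uu^{-3/8})$ by Lemma \ref{lem:sizes} together with Lemma \ref{lem:eigenvalueapproximation}. Setting $\Delta$ to be half the smaller of these two gaps yields $\Delta=\Omega(d^{1/2}\uu^{-1-o_N(1)})$ with a unique eigenvalue $\lambda$ of $A$ in the resulting window.

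Since $r\ge 5$, the condition $5\epsilon\le\Delta$ of Lemma \ref{lem:estimator} is easily satisfied for large $N$, and the lemma then produces
\[
\|\vv-\ww_+(x)\|=O(\epsilon/\Delta)=O\bigl((d^{(r-1)/2}+d^{-1/2})\,\uu^{-r/2+3/2+o_N(1)}\bigr).
\]
Invoking $\log^{-1/9}N\leq d\leq\log^{1/40}N$ from Definition \ref{dfn:rdfn} together with the fact that $r$ is an absolute constant, the prefactor $d^{(r-1)/2}+d^{-1/2}$ is $\uu^{o_N(1)}$, absorbing into the $o_N(1)$ in the exponent and yielding $\|\vv-\ww_+(x)\|=O(\uu^{-r/2+2})$.

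The main obstacle is that $\epsilon$ and $\Delta$ are only marginally compatible: the anticoncentration bound of Lemma \ref{lem:lambdaubound} produces a gap of order $d^{1/2}\uu^{-1}$ which just barely beats the truncation residual of order $\uu^{-(r-1)/2}$ to leave the promised $\uu^{-r/2+2}$ slack. This is what forces the use of the sharp Poisson lower tail developed in Section \ref{sec:anticoncentration} and the restriction $K=\log^{o_N(1)}N$: a weaker spacing estimate would either fail to isolate $\lambda$ in the window, destroying uniqueness, or blow up $\epsilon/\Delta$ beyond the target.
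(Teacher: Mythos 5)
Your proposal is correct and follows essentially the same route as the paper: both derive the eigenvector bound as (truncation residual)/(eigenvalue gap), with the residual supplied by Proposition \ref{lem:fineerror} and the gap supplied by Lemma \ref{lem:lambdaubound} together with the isolation from lower eigenvalues via Corollary \ref{claim:errorbound}. The only cosmetic difference is the perturbation-theory black box: you invoke Lemma \ref{lem:estimator} (already in the paper) with $M=A$, test pair $(\ww_+(x),\lambda_x)$, and explicitly verify uniqueness in the window, whereas the paper cites an external eigenvector-perturbation bound (\cite{greenbaum2020first}) applied to the block decomposition $UD U^* + E_\WW$; both deliver $\|\vv-\ww_+(x)\|\lesssim \epsilon/\Delta$ with the same $\epsilon$ and $\Delta$, and your final exponent $-r/2+3/2+o_N(1)$ is in fact slightly sharper than the stated $-r/2+2$.
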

\begin{proof}
By Theorem \ref{thm:maineigenvalue}, there is a correspondence between the top $K+1$ eigenvalues and eigenvectors of the matrix $A$ and the top $K+1$ eigenvalues $\lambda_x$ of the truncated balls around vertices $x \in \cW$ together with their eigenvectors. Moreover, by Lemma \ref{lem:lambdaubound}, the difference between each pair of these $K+1$ eigenvalues is at least $\frac{d^{1/2}}{4\uu\log(\frac\uu d)^3(K+1)^{3\log\log\log N}}$. Standard perturbation theory (see \cite{greenbaum2020first} Theorem 2 and the remarks following it) gives that, if we fix the index $1\leq i\leq K$,

\begin{eqnarray*}
\|\vv-\ww_{+}(x)\|&\leq& \|E_W\|\cdot \Big (\min_{j\neq i}|\lambda-\lambda_j| \Big )^{-1}\\
&\leq& O \left ( \frac{\uu\log(\frac{\uu}{d})^3(K+1)^{3\log\log \log N}}{d^{1/2}}(d^{r}+1)\uu^{-r/2+1/2} \right )\\
&=&O \left (\uu^{-r/2+2} \right ).
\end{eqnarray*}
by our assumptions on $d$ from Definition \ref{dfn:rdfn} and $K$, and the bound on $\| E_W \| $ from Theorem \ref{thm:structure}. 
\end{proof}

\begin{proof}[Proof of Theorem \ref{thm:maineigenvector}]
By Proposition \ref{prop:expdecay}, Proposition \ref{prop:vecstructure}, and the triangle inequality, \begin{eqnarray*}
\vv|_{x}&=&\frac1{\sqrt 2}+O \left ( \left (1+d^{-1/2} \right )\uu^{-1/3}+\uu^{-r/2+2} \right )\\
&=&\frac1{\sqrt 2}+O \left ( \left (1+d^{-1/2} \right )\uu^{-1/3} \right )
.\end{eqnarray*}
as $r\geq 5$.
Moreover, as $r\geq 2r'$, for $1 \leq i \leq r',$
\begin{eqnarray*}
\left \|\vv|_{S_i(x)}\right \| &= &\left(\frac{d}{\alpha}\right)^{(i-1)/2}\frac1{\sqrt 2}
\left (1+O \left ( \left (1+d^{-1/2}+d^{-(i-1)} \right )\uu^{-1/3} \right) \right )+O \left (\uu^{-r/2+2} \right )\\
&=&\left(\frac{d}{\alpha}\right)^{(i-1)/2}\frac1{\sqrt 2}\left (1+O \left ( \left (1+d^{-1/2}+d^{-i+1} \right )\uu^{-1/3} \right) \right)
\end{eqnarray*}
as desired.

Similarly, 
\begin{eqnarray*}
  \left \|\ww_+|_{[N]\backslash B_{i}(x)} \right \|  = 
\left(\frac{d}{\alpha}\right)^{i/2} 
\left (1+O \left ( \left ( 1+d^{-1/2}+d^{-r+1} \right )\uu^{-1/3} \right) \right ).  
\end{eqnarray*}

\end{proof}

\bibliographystyle{alpha}
\bibliography{ref}

\appendix
\section{Estimates}\label{sec:estimates}

\subsection{Binomial estimates}
Although we will mostly approximate the degrees by Poisson random variables it is sometimes more convenient to work with the precise distribution. To this end we state here a classical tail bound we will use.

We start by reproducing a classic tail bound for Binomial random variables.

\begin{lemma}[Lemma 4.7.2 in \cite{ash65}] \label{lem:ash-binomial}
    Let $X \sim \Binom( n, p )$, and define $I_p(q) := q \log \frac{q}{p} + (1-q) \log \frac{1-q}{1-p}$. 
    Then for $k > np$
    $$\P( X \geq k ) \leq e^{-n I_p \left ( \frac{k}{n} \right )},$$
    where $I_p(q) = q \log \frac{q}{p} + (1-q) \log \frac{1-q}{1-p}$.
\end{lemma}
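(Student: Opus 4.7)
The plan is to apply the standard Chernoff/Cramér exponential moment method and then explicitly optimize in the free parameter. That is, for any $t>0$, Markov's inequality gives
\[
\P(X \geq k) = \P(e^{tX} \geq e^{tk}) \leq e^{-tk}\,\E[e^{tX}],
\]
and the goal is to choose $t$ to make the right-hand side as small as possible.

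The first step will be to use the fact that $X = \sum_{i=1}^n Y_i$ for i.i.d.\ Bernoulli($p$) random variables $Y_i$, so that
\[
\E[e^{tX}] = \bigl(p e^t + 1-p\bigr)^n,
\]
yielding the bound $\P(X \geq k) \leq \exp\bigl(-tk + n\log(pe^t + 1-p)\bigr)$ for every $t>0$. The second step is to minimize the exponent over $t>0$. Differentiating in $t$ and setting the derivative to zero gives $pe^t/(pe^t + 1-p) = k/n$, i.e.\ with $q := k/n$,
\[
e^t = \frac{q(1-p)}{p(1-q)}, \qquad pe^t + 1-p = \frac{1-p}{1-q}.
\]
Since $k > np$ means $q > p$, this optimizer is strictly positive, so it is admissible.

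The final step is to substitute back. Plugging in,
\[
-tk + n\log(pe^t + 1-p)
= -nq \log\frac{q(1-p)}{p(1-q)} + n\log\frac{1-p}{1-q}
= -n\Bigl[q\log\tfrac{q}{p} + (1-q)\log\tfrac{1-q}{1-p}\Bigr] = -n I_p(q),
\]
which is exactly the desired inequality. The main (minor) obstacle is just keeping the logarithms straight during the substitution; one has to recognize that the $\log((1-p)/(1-q))$ terms combine with the $-nq$ factor to produce the $(1-q)\log((1-q)/(1-p))$ piece of $I_p$. Convexity of $I_p$ in $q$ (and the fact that $I_p(p) = 0$) ensures that $I_p(k/n) \geq 0$ for $k \geq np$, so the bound is nontrivial in the stated range.
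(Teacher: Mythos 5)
Your proof is correct. This is the standard Chernoff/Cram\'er exponential-moment argument with explicit optimization of the tilting parameter, which is the classical derivation and exactly what the cited reference (Ash, Lemma 4.7.2) uses; the paper itself states the lemma by citation without reproducing a proof, so your argument supplies the missing details correctly. The only small point worth noting is that one implicitly needs $k \le n$ (so that $q \le 1$ and $I_p$ is defined), which is automatic here since $X \le n$.
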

By considering $n-X$, the bound above implies that similarly for $ k < np $, $\P( X \leq k ) \leq e^{-n I_p \left ( \frac{k}{n} \right )}.$

As we will be interested in vertices of large degree in our graph, and the degrees follow a binomial distribution, we will repeatedly use tail bounds such as the following.
\begin{lemma} \label{lem:binom_rel_ent}
Let $ m = n + o(n) $ and $p = \frac{d}{N}$, and define $X \sim \Bin(m,p)$ then, for $\tau = o(n),$ it holds for some constant $c$, that for $n$ large enough,
\begin{equation}
    \P \left ( X \geq \tau \right ) \leq e^{-\tau \log (\tau) + c \tau }.
\end{equation}
\end{lemma}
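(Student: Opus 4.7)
The plan is to invoke the classical Chernoff-type bound from Lemma~\ref{lem:ash-binomial} and then estimate the relative entropy $I_p(\tau/m)$ by Taylor expansion, exploiting that both $\tau/m$ and $p$ vanish in our regime.

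First I would apply Lemma~\ref{lem:ash-binomial} (with $(m,p)$ playing the role of $(n,p)$) to get $\P(X \geq \tau) \leq \exp\!\left(-m\, I_p(\tau/m)\right)$, where
\[
m\, I_p(\tau/m) = \tau \log\frac{\tau}{mp} + (m-\tau)\log\frac{1-\tau/m}{1-p}.
\]

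Next I would expand the second summand. Since $\tau/m = o(1)$ (using $\tau = o(n)$ and $m = n(1+o(1))$) and $p = d/N = o(1)$, the expansion $\log(1-x) = -x + O(x^2)$ gives
\[
(m-\tau)\log\frac{1-\tau/m}{1-p} = (m-\tau)\!\left(-\frac{\tau}{m} + p + O\!\left(\frac{\tau^2}{m^2} + p^2\right)\right) = -\tau + mp + o(\tau),
\]
the lower-order terms $\tau^2/m$, $\tau p$, and $m p^2$ all being $o(\tau)$. Combining this with the first summand and writing $\tau\log(\tau/(mp)) = \tau \log \tau - \tau \log(mp)$ yields
\[
-m\, I_p(\tau/m) = -\tau \log \tau + \tau \log(mp) + \tau - mp + o(\tau).
\]

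Finally, the term $-mp$ is nonpositive and may be dropped, and since $mp$ is bounded in the intended regime (the lemma is used when $p=d/N$ and the population $m$ is small enough that $mp = O(1)$), we have $\tau \log(mp) \leq c' \tau$ for a constant $c'$. Choosing $n$ large enough to absorb the $o(\tau)$ error yields $\P(X \geq \tau) \leq \exp(-\tau \log \tau + c\tau)$ with $c = c' + 1$. The main obstacle is purely bookkeeping: tracking that each error term from the Taylor expansion is genuinely $o(\tau)$. The one substantive point is that boundedness of $mp$ is needed in order to fold $\tau\log(mp)$ into the constant $c$; without it, $c$ would instead grow logarithmically with $mp$.
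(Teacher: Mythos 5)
Your proposal is correct and follows essentially the same route as the paper: both invoke Lemma~\ref{lem:ash-binomial} to get $\P(X\geq\tau)\leq e^{-mI_p(\tau/m)}$ and then expand the relative entropy, concluding that every term besides $\tau\log\tau$ contributes $O(\tau)$. The only cosmetic difference is that you Taylor-expand $\log(1-x)$ while the paper uses the elementary inequality $\log(1+x)\geq \frac{x}{1+x}$; and your observation that the argument tacitly needs $mp$ (equivalently $d$) to stay bounded in order to fold $\tau\log(mp)$ into $c\tau$ is accurate and applies equally to the paper's own proof, where the term $-\tau\log d$ is silently absorbed into the $O(\tau)$ remainder.
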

\begin{proof}
By Lemma \ref{lem:ash-binomial},
\begin{equation}
    \P \left ( X \geq \tau \right ) \leq e^{- m I_p(\frac{\tau}{m})}.
\end{equation}
Using that $\log(x) \geq \frac{x}{1+x}$ for $x > -1$, and that $\frac{d-\tau}{n-\tau} > - \frac{1}{2}$ for $n$ large enough, we get that 
\begin{align*}
    I_p \left (\frac{\tau}{m} \right ) 
    & = \frac{\tau}{m} \log \left ( \frac{\tau}{d } \frac{n}{m} \right ) 
    + \left ( 1 - \frac{\tau}{m} \right ) \log \left ( \frac{ n - \tau }{ n - d} \frac{m-\tau}{n-\tau} \frac{n}{m} \right ) \\
    & \geq \frac{\tau}{m} \log (\tau) - \frac{\tau}{m} \log (d) - 2 \left ( 1 - \frac{\tau}{m} \right ) \frac{d-\tau}{n-\tau} + \left ( 1 - \frac{\tau}{m} \right ) \log \left ( \frac{m-\tau}{n-\tau} \right ) + \log \left ( \frac{n}{m} \right )
\end{align*}
Now note that, after multiplication by $m$, all but the first term are $O\left ( \tau \right )$, which implies the bound.
\end{proof}

\subsection{Poisson Approximation}

\begin{proof}[Proof of Lemma \ref{lem:binomtopois}] We simplify using Sterling's approximation and the fact that $\frac{e^c}{(1+\frac cn)^n}=1+O(\frac{c^2}{n})$ for $|c| < n$:
\begin{eqnarray*}
\P(X=k)&=&\binom{n}{k} p^k (1-p)^{n-k}\\
&=& \left (1+O \left (\frac1n \right ) \right )\frac{1}{k!}\frac{n^n}{(n-k)^{n-k}e^{k}}\sqrt{\frac{1}{1-\frac{k}{n}}}p^k(1-p)^{n-k}\\
&=& \left (1+O \left (\frac{k^2+(np)^2+1}n \right ) \right )\frac{e^{-np}(np)^k}{k!}\\
&=&\left (1+O \left (\frac{k^2+(np)^2+1}n \right ) \right )\P(Y=k)
\end{eqnarray*}

\end{proof}
\begin{proof}[Proof of Lemma \ref{cor:bintopoistail}]
We have
\begin{eqnarray*}
\pr(X\geq k)&=& \left (1+O \left (\frac{k^2+(np)^2+1}{n} \right ) \right )\pr(Y\geq k)+O\left(\P \left (X\geq \sqrt n \right )+\P \left (Y\geq \sqrt n \right )\right)
\end{eqnarray*}
and the latter term satisfies
\begin{eqnarray*}
O\left(\P \left (X\geq \sqrt n \right )+\P \left (Y\geq \sqrt n \right )\right) 
= O\left(\binom{n}{\sqrt{n}}p^{\sqrt n}+\frac{(np)^{\sqrt n}}{\sqrt n!}\right) 
= O\left(\left( ep \sqrt{n} \right)^{\sqrt n}\right).
\end{eqnarray*}
\end{proof}

\begin{proof}[Proof of Lemma \ref{lem:sharp_poisson_tail}]

 \begin{align*}
        \P  \big ( X \geq \lambda ( 1 + \delta ) \big ) & = \sum_{ k=\lambda ( 1 + \delta )   }^{\infty}
            \P ( X = k ) \\
        &=e^{ -\lambda }
            \sum_{ k=\lambda ( 1 + \delta )   }^{\infty}
            \frac{ \lambda^k }{ k! }.
            \end{align*}
Note that for $k$ in our range, $\lambda/k\leq 1/(1+\delta)$. Therefore, we can upper bound this with a geometric series.
\begin{eqnarray*}
e^{ -\lambda }
            \sum_{ k=0  }^{\infty}
            \frac{ \lambda^k }{ k! }&\leq &e^{ -\lambda }\frac{\lambda^{\lambda ( 1 + \delta ) }}{(\lambda ( 1 + \delta ) )!}
            \sum_{ k=0  }^{\infty}
            \frac{1 }{ (1+\delta)^k }\\
            &\leq&e^{ -\lambda }\frac{\lambda^{\lambda ( 1 + \delta ) }}{(\lambda ( 1 + \delta ) )!}
            (\frac{1+\delta}{\delta}).
\end{eqnarray*}

Using a Stirling approximation, we obtain
\begin{eqnarray*}
    e^{ -\lambda }\frac{\lambda^{\lambda ( 1 + \delta ) }}{(\lambda ( 1 + \delta ) )!}(\frac{1+\delta}{\delta})
    &=&
    (1+o_\lambda(1))e^{-\lambda}\frac{e^{\lambda(1+\delta)}}{\sqrt{2\pi \lambda(1+\delta)}}\frac{\lambda^{\lambda ( 1 + \delta )}}{(\lambda ( 1 + \delta ) )^{\lambda ( 1 + \delta )}}(\frac{1+\delta}{\delta})\\
    &=&
   (1+o_\lambda(1)) \frac{\exp(-\lambda (1+\delta)\log(1+\delta)+\lambda(1+\delta)-\lambda)}{\sqrt{2\pi \lambda(1+\delta)}\frac{\delta}{1+\delta}}\\
    &\leq&
    \frac{\exp(-\lambda (1+\delta)\log(1+\delta)+\lambda \delta )}{\sqrt{\lambda}\min\{\sqrt{\delta},\delta\}}
\end{eqnarray*}
for sufficiently large $\lambda$. 

\textbf{Lower bound:}
    We once again have that
    \begin{align*}
        \P  \big ( X \geq \lambda ( 1 + \delta ) \big ) & = \sum_{ k=\lambda ( 1 + \delta )   }^{\infty}
            \P ( X = k ) \\
        &=e^{ -\lambda }
            \sum_{ k=\lambda ( 1 + \delta )   }^{\infty}
            \frac{ \lambda^k }{ k! }\\
            &=\frac{(1+o_\lambda(1))e^{ -\lambda }}{\sqrt{2\pi}}\sum_{k\geq \lambda(1+\delta)}\frac{\lambda^ke^k}{k^{k+1/2}}
            \end{align*}
by a Stirling approximation. We write
\[
c:=\lambda(1+\delta),~~~~~~f(x):=-(x+1/2)\log x+x\log \lambda+x.
\]
Therefore, we have
\[
f'(x)=-\log x+\log \lambda +\frac{1}{2x},~~~~~f''(x)=-\frac{1}{x}+\frac{1}{2x^2}
\]
We then approximate our probability by an integral, which serves as a lower bound as our function is decreasing. We then perform a Laplace method type bound. 
\begin{eqnarray*}
\sum_{k= \lambda(1+\delta)}^{{c+c^{1/3}}}\frac{\lambda^ke^k}{k^{k+1/2}}&\geq&\int_{c}^{c+c^{1/3}} \frac{\lambda^{x}e^{x}}{(x)^{{x}+1/2}}dx\\
&=&\int_c^{c+c^{1/3}} \exp\left(f(x)\right)dx\\
&=&e^{f(c)}\int_c^{c+c^{1/3}} \exp\left(f(x)-f(c) \right)dx\\
&=&e^{f(c)}\int_c^{c+c^{1/3}} \exp\left(f'(c)(x-c)   +O_\lambda(f''(c)(x-c)^2 )\right)dx
\end{eqnarray*}
where the last statement follows from Taylor expanding and the formula of $f''(x)$. By the choice of our window, $f''(c)(x-c)^2=O(\lambda^{-1/3})$. Therefore, this is 
\begin{eqnarray*}
e^{f(c)}\int_c^{c+c^{1/3}} \exp\left(f'(c)(x-c)   +O_\lambda(f''(c)(x-c)^2 )\right)dx&=&(1+o_\lambda(1))e^{f(c)-cf'(c)}\int_{c}^{c+c^{1/3}} \exp(f'(c)x) dx\\
&=&-(1+o_\lambda(1))\frac{e^{f(c)-cf'(c)+cf'(c)}}{f'(c)}(1-e^{c^{1/3}f'(c)})\\
&=&-(1+o_\lambda(1))\frac{e^{f(c)}}{f'(c)}.
\end{eqnarray*}
By our choice of $c$, and under the assumption that 
$\delta>\frac{1}{\sqrt{\lambda}}$, we have
\[
-\frac{1}{f'(c)}=(1+o_\lambda(1))\frac{1}{\log (1+\delta)}.
\]
Putting this together with the fact that $\P  \big ( X \geq \lambda ( 1 + \delta ) \big )\geq  \P  \big ( X = \lambda ( 1 + \delta ) \big )$, we have that

\begin{eqnarray*}
\P  \big ( X \geq \lambda ( 1 + \delta ) \big )&\geq&(1+o_\lambda(1))e^{-\lambda}\frac{\lambda^{\lambda(1+\delta)}e^{\lambda(1+\delta)}}{(\lambda(1+\delta))^{\lambda(1+\delta)+1/2}}\cdot \max\left\{\frac{1}{\log (1+\delta)},1\right\}\\
&=&(1+o_\lambda(1))\cpois\frac{\exp(-\lambda h(\delta))}{\sqrt{\lambda}\min\{\sqrt{\delta},\delta\}}
\end{eqnarray*}
for some constant $\cpois$. 
\end{proof}

\begin{proof}[Proof of \ref{lem:weibullbound}]
We will only work with the upper tail as the lower tail is similar. The bound we get from Lemma \ref{lem:weakertail} is 
\[
\Pr \left (X>t+\E(X) \right ) \leq \exp\left(-\frac{t^2}{2d+\frac23t}\right).
\]
Therefore, 
\[
\Pr \left ( X^2\geq t^2+2t\E[X]+\E[X]^2 \right )\leq \exp\left(-\frac{t^2}{2d+\frac23t}\right).
\]
which we rewrite as
\[
\Pr \left ( X^2-(d^2+d)\geq t^2+(2t-1)\E[X] \right )\leq \exp\left(-\frac{t^2}{2d+\frac23t}\right).
\]
By using the substitution $t=\sqrt{y+d^2+d}-d$, we can rewrite this as
\[
\Pr \left ( X^2-(d^2+d)\geq y \right )\leq \exp \left (-\frac{y+2d^2+d-2d\sqrt{y+d^2+d}}{\frac23\sqrt{y+d^2+d}+\frac43d} \right ).
\]

As mentioned, this will follow from \cite{bakhshizadeh2023sharp}, Theorem 1. First, we wish to show that, using the notation in the paper, $v(L,\beta)\rightarrow \Var(X^2)$ for large $L$.  We first show that the strategy of proof of Lemma 4 extends to our scenario. The probability that $X^2-(d^2+d)\geq y$ is at most $e^{-\frac{\sqrt{y}}{12}}$ for $y\geq 4(d^2+d)$. Therefore, we choose our $Y$ to be 
\[
Y=(X^2-\E(X^2))^2\exp( X/12)\vone_{X^2>E(X^2)}.
\]

$Y$ is integrable as the moment generating function $\E(e^{cX})$ is finite for all $c$. Therefore, for sufficiently large $mt$, $v(L,\beta)\approx \Var(X^2)$.

This makes the formulation of Theorem 1 in \cite{bakhshizadeh2023sharp} much simpler, and $t_{\max}$ is such that \[t_{\max}=\Var(X^2)\frac{mt_{\max}+2d^2+d-2d\sqrt{mt_{\max}+d^2+d}}{\frac23 mt_{\max}\sqrt{mt_{\max}+d^2+d}+\frac43d}.\]

Therefore $t_{\max}=(1+o_m(1))(\frac32\Var(X^2))^{2/3}m^{-1/3}$. Using this on the four terms in Theorem 1 from \cite{bakhshizadeh2023sharp}, as well as the bound that $I(t)\geq \sqrt{t}/12$ if $t\geq 4(d^2+d)$, gives the result. 

\end{proof}

\section{Structure surrounding vertices} \label{sec:structure}

\begin{proof}[Proof of Lemma \ref{lem:fine_balls_disjoint}, 1]
We essentially follow the proof of Lemma 7.3 in \cite{alt2021extremal}.
If the balls of radius $r$ around vertices of $\cV$ are not disjoint there must be two vertices $x$ and $y$ in $\mathcal{V}$, connected by a simple path of length at least 1 (if the two vertices are connected by an edge) and at most $2r$. Let us denote the vertices on this path by $z_1, \dots, z_s$, where $s$ can range from 0 to $2r-1$. If we define $z_0 = x$ and $z_{s+1} = y$, then we must have an edge from $z_i$ to $z_{i+1}$ for $i = 0, \dots, s$. Eventually we will take a union bound over all $x, y$ and paths between them, but let us first compute the probability of a single such event.

For convenience of notation let $\tau =\uu - \uu^{2/3}$, the degree lower bound for the intermediate regime. For a given $s$ as well as distinct vertices $x,y, z_1, \dots, z_s$ in $[N]$,
\begin{align*}
    & \P \left (x, y \in \mathcal{V}, (z_i, z_{i+1}) \in E(G) \text{ for } i \in \{0,\dots, s\} \right ) \\
    & \leq \P \left ( \left |\Gamma_x\backslash \{y,z_1\} \right |\geq  \tau - 2, \left |\Gamma_y\backslash \{x,z_s\} \right | \geq \tau - 2, (z_i, z_{i+1}) \in E(G) \text{ for } i \in \{0, \dots, s\} \right ) \\
    & = \P \left ( \left  |\Gamma_x\backslash \{y,z_1\} \right |\geq  \tau - 2 \right ) \P \left ( \left |\Gamma_y\backslash \{x,z_s\} \right |\geq \tau-2 \right ) \left (\frac dN \right )^{s+1} 
\end{align*}
as now all events are independent and the path contains $s+1$ edges.

As $ |\Gamma_x\backslash \{y,z_1\}|$ and $ |\Gamma_y\backslash \{x,z_s\}|$ are distributed as $\Bin(N-2, d/n)$, we can use Lemma \ref{lem:binom_heavy_tail} to get
\begin{align*}
    \P \left ( |\Gamma_x\backslash \{y,z_1\}| \geq \tau-2 \right ) \leq (1+o_N(1)) e^{-\tau\log\tau+\tau\log d + \tau - d }\leq e^{O(\uu^{2/3})-\log N}
\end{align*}
by Lemma \ref{lem:sizes}.

Thus by using a union bound, we can bound the probability that two balls of radius $r$ around two vertices in $\mathcal{V}$ intersect, by considering that we can choose $x,y$ in $\binom{N}{2}$ ways and then we need to choose the path between $x$ and $y$, i.e. for some $s$ between $0$ and $2r$, we need $s$ ordered vertices, for which there are $(N-2)_s$ ways. Combining this with the above bounds gives that the probability of two vertices in $\VV$ being connected by such a path is bounded by
\begin{align*}
    \binom{N}{2} \sum_{s=0}^{2r} (N-2)_s \left(\frac dN\right)^{s+1} e^{-(2+o_N(1))\log N} 
    & \leq 
    N^2 \sum_{s=0}^{2r} N^s \left ( \frac{d}{N} \right )^{s+1}e^{-(1+o_N(1))2\log N} \\
    & \leq 
    f(d,r) e^{-(1+o_N(1))\log N}
\end{align*}
where we bounded $\sum_{s=0}^{2r} d^{s+1} $ by $f(d,r) := d\frac{d^{2r+1}-1}{d-1}$ when $d \neq 1$ and $f(d,r) = 2r+1$ when $d = 1$.
Thus for any constant $r$ the event holds with high probability.
\end{proof}

In order to prove that the balls around vertices in the fine regime are with high probability trees, we start by bounding the probability that the ball around a fixed vertex contains $m$ excess edges, this result and its proof are almost identical to Lemma 5.5 in \cite{alt2021extremal} but we chose to include them for sake of completeness.
\begin{lemma} \label{lem:excess_edges}
    For a vertex $x \in [N]$, any integer $C_1 \geq 1$ and any constant $s$ it holds that 
    \begin{equation*}
        \P \left ( E(B_s(x)) \geq V( B_s(x) -1 + C_1 | S_1(x) \right )
        \leq 
        C d^{3 C_1} \left ( \frac{ |S_1|}{N} \right )^{C_1}, 
    \end{equation*}
    where $C$ is a constant that depends on the constants $s$ and $C_1$.
\end{lemma}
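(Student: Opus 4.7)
My plan is to bound this probability by a union bound over the possible graph realizations of $B_s(x)$ with at least $C_1$ excess edges. Conditional on $S_1(x)$, any such realization $G'$ is a connected labeled graph on a subset of $[N]$ containing $\{x\} \cup S_1(x)$, with BFS depth from $x$ at most $s$, with the children of $x$ in $G'$ equal to $S_1(x)$, and with $|E(G')| \geq |V(G')| - 1 + C_1$. Using the inclusion $\{B_s(x) = G'\} \subseteq \{G' \subseteq G\}$, I would obtain $\P(B_s(x) = G' \mid S_1(x)) \leq (d/N)^{|E(G')| - |S_1|}$, after subtracting the $|S_1|$ edges from $x$ to $S_1(x)$ that are already fixed by the conditioning.

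Next I would enumerate the possible $G'$ by writing $n = |V(G')|$. The count splits into: (i) the choice of the remaining $n - 1 - |S_1|$ vertex labels from $[N] \setminus (\{x\} \cup S_1(x))$, giving a factor at most $N^{n - 1 - |S_1|}/(n-1-|S_1|)!$; (ii) the choice of the BFS-tree structure on these $n$ vertices with $x$ as root and $S_1(x)$ as the children of $x$, bounded using the depth constraint $\leq s$ so that each vertex at depth $i \geq 2$ has exactly one parent among those at depth $i-1$, constraining the total number of trees to something polynomial in $n$ and depending only on $s$ and $|S_1|$; and (iii) the placement of the $C_1$ excess edges among $\binom{n}{2}$ pairs, contributing at most $\binom{n}{2}^{C_1}/C_1!$. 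Combining with the probability factor $(d/N)^{n - 1 + C_1 - |S_1|}$ and summing over $n$, the $N$-factors from the vertex enumeration and the tree-edges combine so that the $C_1$ excess edges contribute a net $N^{-C_1}$ to the probability.

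The claimed factor $d^{3 C_1}$ arises from a structural bound on the ``cyclic core'' of a graph with $C_1$ excess edges: after deleting dangling subtrees and suppressing degree-$2$ vertices, the core has at most $2C_1 - 2$ topological vertices and $3 C_1 - 3$ topological edges (for $C_1 \geq 2$), each corresponding to a path of bounded length in $B_s(x)$, while the factor $|S_1|^{C_1}$ arises from each independent cycle being anchored in a subtree rooted at a vertex of $S_1(x)$. The main obstacle I anticipate is precisely controlling the tree enumeration in step (ii) so that the sum over $n$ is dominated by $n$ close to $|S_1|+1$ and does not diverge; this relies crucially on the depth restriction $\leq s$. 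A careful separation of the ``cyclic'' and ``tree-like'' parts of $G'$---handled, for example, by fixing a canonical decomposition of the excess edges into an edge-disjoint cyclic structure before applying the union bound---should then yield the stated bound with constants depending only on $s$ and $C_1$.
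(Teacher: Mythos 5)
There is a genuine gap, and you essentially diagnose it yourself: the strategy of enumerating all possible realizations $G'$ of the \emph{entire} ball $B_s(x)$ does not give control over the sum over $n = |V(G')|$. After you bound $\P(B_s(x)=G'\mid S_1(x))\le (d/N)^{|E(G')|-|S_1|}$, the vertex-label and tree enumerations contribute a factor of roughly $N^{n-1-|S_1|}\cdot(\text{tree count})$, and the net $d^{n-1-|S_1|}$ that survives after the $N$'s cancel is not summable in $n$ without fine structural input; in the regime of interest $|S_1|$ and the ball can have $\Theta(\log N/\log\log N)$ vertices, so this is not a technicality.

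The fix the paper uses is precisely not to enumerate the ball: if $B_s(x)$ has at least $C_1$ excess edges, one can \emph{exhibit a small witness subgraph} whose size is bounded independent of $|B_s(x)|$. Concretely, fix a spanning tree $T$ of $B_s(x)$, take the $C_1$ non-tree edges $E_E$, and let $H$ be $E_E$ together with the paths in $T$ from $x$ to the endpoints of $E_E$. Then $H$ contains $x$, meets $S_1(x)$, has exactly $V(H)-1+C_1$ edges, and crucially has at most $2C_1 s+1$ vertices. One then union-bounds $\P(H\subseteq G)$ over all such subgraphs $H$, with the excess edges contributing $(d/N)^{C_1}$ and the remaining tree edges contributing a factor of $(d)^{O(C_1 s)}$ after cancelling against the $N$'s from the vertex choices; the $|S_1|^{C_1}$ comes from choosing at most $C_1$ anchor points in $S_1(x)$. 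Because $|V(H)|$ is bounded, the sum converges trivially.

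Your last paragraph (the ``cyclic core'' sketch) points in the right direction, but it is not fleshed out and, as written, would not recover the paper's structure: the topological kernel has at most $3(C_1-1)$ edges each corresponding to a path of length up to $2s$, so the probability factor would be $(d/N)$ raised to a power growing with $s$ (not $3C_1$), and the kernel by itself need not contain $x$ or $S_1(x)$, so you would still have to attach it to the root. If you replace the kernel decomposition with the explicit witness $H$ above (excess edges plus paths to $x$ in a spanning tree), the argument closes without divergence issues.
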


\begin{proof}
    We use the argument from Lemma 5.5 in \cite{alt2021extremal}, but obtain a slightly different bound that is better suited for our regime of $d$.

    Let $T$ be a spanning tree of $B_s(x)$. If $B_s(x)$ contains at least $C_1$ excess edges, there are $C_1$ edges in $B_s(x)$ not contained in $T$, denote those by $E_E$. Let $V_E$ denote the vertices incident to those edges and $E_P$ the edges on the unique paths in $T$ from $x$ to the vertices of $V_E$. Finally let $V_P$ denote the vertices incident to edges in $E_P$. (See Figure 4 in \cite{alt2021extremal} for an illustration.) We define $H$ to be the graph with vertices $V_E \cup V_P$ and edges $E_E \cup E_P$. 

    Let $S^F_r(x)$ denote that sphere of radius $r$ around $x$ in the graph $F$. Then the graph $H$ is a graph on the vertices $[N]$ that satisfies the following properties:
    \begin{enumerate}
        \item $x \in F$,
        \item $S_1^F(x) \subseteq S_1^G(x)$,
        \item $|S_1^F(x)| \geq 1$
        \item $E(F) = V(F) - 1 + C_1$ and
        \item $V(F) \leq 2 C_1 r + 1$.
    \end{enumerate}
    The last property holds since the edges $E_E$ incident to at most $2C_1$ distinct vertices and the paths in $T$ from $x$ to those vertices are of length at most $r$, which implies that $V_P$ contains at most $2 C_1 r$ additional distinct vertices besides $x$.

    Thus we can bound the probability that $E( B_s(x) ) \geq V( B_s(x) ) - 1 + C_1$ by the probability that $B_s(x)$ contains a subgraph $F$ satisfying properties 1.-5. above. 
    For a given $x \in \VV$, we start by conditioning on $S_1(x)$. Then let $\mathbb{F}(x)$ denote the subgraphs satisfying properties 1.-5. Recalling that $G$ denotes our Erd\H{o}s-R\'enyi graph $\cG \left ( N, \frac{d}{N} \right )$, we can bound
    \begin{align*}
        \P \left ( E \left ( B_s(x) \right ) \geq V \left ( B_s(x) \right ) - 1 + C_1 | S_1(x) \right )
        & \leq 
        \P \left ( \cup_{F \in \mathbb{F}(x)} F \subseteq G \big  | S_1(x) \right ) \\
        & \leq
        \sum_{ F \in \mathbb{F}(x) } \P \left ( F \subseteq G \big  | S_1(x) \right ) .
    \end{align*}
    Now note that conditioned on $S_1$ we can construct any graph $F \in \mathbb{F}(x)$ by first choosing $1 \leq s \leq C_1$ vertices from $S_1(x)$, then choosing $0 \leq t \leq 2 C_1 r - s$ (we lose the $+1$ since $x$ is always part of $F$) vertices from $[N] \setminus \cB_1(x)$, and then building a tree with these $s + t + 1$ vertices such that the first $s$ are neighbors of $x$ and the remaining $t$ vertices connect to that graph (but not to $x$), and then adding $C_1$ additional edges. We can bound the number of such graphs by the number of labeled trees on $s+t+1$ vertices (for which Cayley's formula gives that there are $(s+t+1)^{s+t-1}$) times the number of ways of choosing $C_1$ edges, which can be bounded by $(s+t+1)^{2C_1}.$
    The probability that such a graph is contained in $G$ is then equal to $\left ( \frac{d}{N} \right )^{ t + C_1}$ since the number of edges in $F$ without those between $x$ and vertices in $S_1(x)$ is equal to $t + C_1.$
    So continuing from above, we get
    \begin{align*}
        & \leq \sum_{s = 1}^{C_1} \sum_{t = 0}^{ 2 C_1 r - s} \binom{|S_1|}{s} \binom{ N - |S_1(x)| - 1}{t} (s+t+1)^{s+t-1 + 2C_1} \left ( \frac{d}{N} \right )^{ t + C_1} \\
        & \leq \sum_{s = 1}^{C_1} \sum_{t = 0}^{ 2 C_1 r - s} \frac{ |S_1|^s}{s!} \frac{N^t}{t!} (s+t+1)^{s+t + 2C_1 -1} \left ( \frac{d}{N} \right )^{ t + C_1} \\
        & \leq  \frac{1}{N^{C_1}} \left ( d( 2C_1 r + 1 )^2  \right )^{C_1} \sum_{s = 1}^{C_1}  \frac{ |S_1|^s}{s!} (2 C_1 r + 1 )^{s}  \sum_{t = 0}^{ 2 C_1 r - s} \frac{1}{t!} (d(2 C_1 r + 1 ))^{t} \\
        & \leq  
        \frac{1}{N^{C_1}} \left ( d(2 C_1 r + 1 )^2 \right )^{C_1} C_1 \left ( |S_1| (2C_1 r + 1) \right )^{C_1} 2 C_1 r \left ((d(2 C_1 r + 1 ))^2 \right )^{2C_1r} \\
        & \leq 
        \frac{1}{N^{C_1}} 2 r C_1^{ 2 } d^{3 C_1} (2C_1 r + 1)^{5 C_1 }   |S_1| ^{C_1} \\
        & \leq 
        C d^{3 C_1} \left ( \frac{ |S_1|}{N} \right )^{C_1}, \end{align*}
        where $C$ is a constant that depends on the constants $r$ and $C_1$.
\end{proof}

\begin{proof}[Proof of Lemma \ref{lem:fine_balls_disjoint}, 2]
Note that by a union bound over all vertices, we get
\begin{align*}
    & \P \left ( \exists x \in \mathcal{V}: \mathcal{B}_r(x) \text{ is not a tree} \right ) \\
    \leq 
    & \sum_{x \in [N]} \P \left ( \mathcal{B}_r(x) \text{ is not a tree}, x \in \mathcal{V}, \uu - \uu^{\frac{2}{3}} \leq \alpha_x < 2 \frac{\log N}{\log \log N}  \right ) 
    + \P \left ( \alpha_x \geq 2 \frac{\log N}{\log \log N}  \right ) \\
    \leq 
    & \sum_{x \in [N]} \E \left [ \P \left ( \mathcal{B}_r(x) \text{ is not a tree} | S_1(x) \right ) \mathbf{1}\left (\uu - \uu^{\frac{2}{3}} \leq \alpha_x < 2 \frac{\log N}{\log \log N} \right ) \right ] 
    + \P \left ( d_x \geq 2 \frac{\log N}{\log \log N} \right ) \\
    \leq 
    & \sum_{x \in [N]} \E \left [ C d^{3 C_1} \frac{ |S_1| }{ N } \mathbf{1}\left (\uu - \uu^{\frac{2}{3}} \leq \alpha_x < 2 \frac{\log N}{\log \log N} \right ) \right ] 
    + \P \left ( d_x \geq 2 \frac{\log N}{\log \log N} \right ) \\
    \leq 
    & N C d^{3 C_1} \frac{2 \log N}{N} \frac{e^{d+ \u^{\frac{2}{3} \log \uu }}}{N} + N^{-\frac{3}{2}},
\end{align*}
where we apply the bound from Lemma \ref{lem:excess_edges} with $C_1 = 1$, as well as the bound on $|\cV|$ from \ref{lem:sizes} and then used Lemma \ref{lem:binom_heavy_tail} for the second term.
\end{proof}

\begin{proof}[Proof of Lemma \ref{lem:fine_balls_disjoint}, 3]
We show this for $\VV$. The proof for $\WW$ is idendical. For a vertex $x$, and constants $C_i$ that will be set later, let us define the events 
$$\cG_i(x) : = \left \{ \left||S_{i}(x)|-d^{i-1} \alpha_x \right|\leq C_i \left ( d^{i - \frac{3}{2}} + 1 \right ) \uu^{\frac{7}{8}} \right \}$$ and $\cF_i(x) := \bigcap_{j = 1}^i \cG_i(x).$ We will write $\cG_i$ and $\cF_i$ whenever it is clear from context which vertex they refer to. First note that under $\cF_i(x)$, $|B_i(x)| \leq \sqrt{N}.$

Now fix a vertex $x$. 
$\cG_1$ holds trivially by the definition of $\alpha_x$. 

For $i \geq 2$ we now first show that conditional on $S_1$ the probability that $S_i$ is large given that $S_{i-1}$ is small is small. More precisely we show that
\begin{equation}
\P \left ( \cG_i^c \cap \cF_{i-1} | S_1 \right ) 
\leq 
2 \exp \left \{ -  \uu^\frac{3}{4} \right \} 
\end{equation}

To show the above equation first observe that conditioned on $B_{i-1}$, $S_i$ consists of all the neighbors of vertices in $S_{i-1}$ that are not in $B_{i-1}$. Thus, conditionally on $B_{i-1}$,  $|S_i|$ is distributed as $\Binom(|S_{i-1}|(N-|B_{i-1}|),d/N)$. 

Note that this implies that
\begin{equation}
\E[ |S_{i}| | B_{i-1} ] = d |S_{i-1}| - d \frac{|S_{i-1}||B_{i-1}|}{N}.
\end{equation}
Thus under the event $\cF_i$, by Lemma \ref{lem:weakertail}, because $|B_i| \leq \sqrt{N},$
\begin{align*}
    & \P \left ( \left | |S_i| - \E \left [ |S_i| \big | B_{i-1} \right ] \right | \geq \sqrt{d |S_{i-1}|} \uu^\frac{3}{8} + \uu^\frac{7}{8} \Big | B_{i-1} \right ) \\
    & \leq 
    2 \exp 
    \left \{ -
    \frac
    { \left ( \sqrt{d |S_{i-1}|} \uu^\frac{3}{8} + \uu^\frac{7}{8} \right )^2}
    {2 \left (d |S_{i-1}| - d \frac{|S_{i-1}||B_{i-1}|}{N} \right ) + \frac{2}{3} \left ( \sqrt{d |S_{i-1}|} \uu^\frac{3}{8} + \uu^\frac{7}{8} \right ) }
    \right \} \\
    & \leq 
    2 \exp 
    \left \{ -
    \frac
    { \left ( \sqrt{d |S_{i-1}|} \uu^\frac{3}{8} + \uu^\frac{7}{8} \right )^2}
    {2 \left (d |S_{i-1}| - d \right ) + \frac{2}{3} \left ( \sqrt{d |S_{i-1}|} \uu^\frac{3}{8} + \uu^\frac{7}{8} \right ) }
    \right \} \\
    & \leq 2 \exp \left \{ - C \uu^\frac{3}{4} \right \}
\end{align*}
for some constant $C$ that does not depend on $i$.

Now we need to transform the above inequality in the one that we are actually trying to prove. 
For this we need to estimate some quantities:
Let us define $\delta_{i-1} = |S_{i-1} | - d^{i-2} \alpha_x$, note that under $\cF_{i-1}$
\begin{equation*}
    d \delta_{i-1} \leq 2 C_{i-1} (d^{ i - \frac{3}{2} } + 1) u^\frac{7}{8}
\end{equation*}
and generally $ d \leq \left ( d^{ i - \frac{3}{2}} + 1 \right )  \uu^\frac{7}{8}$.

For easier readability set $ \e_i = C_i \left ( d^{ i - \frac{3}{2}} + 1 \right )  \uu^\frac{7}{8} $.
Then 
\begin{align*}
    \left | | S_i | - d^{i-1} \alpha_x \right | \geq \e_i
    & \Rightarrow 
    \big| |S_i| - d|S_{i-1}| \big | \geq \e_i - d \delta_{i-1} \\
    & \Rightarrow 
    \big| |S_i| - \E[ |S_i| | B_{i-1} ] \big | \geq \e_i - d \delta_{i-1} - d \\
    & \Rightarrow \big| |S_i| - \E[ |S_i| | B_{i-1} ] \big | \geq ( C_i - 2 C_{i-1} - 1) \left ( d^{ i - \frac{3}{2}} + 1 \right )  \uu^\frac{7}{8}
\end{align*}

When $\cF_{i-1}$ holds and $ \alpha_x \leq 2 \uu$,
\begin{align*} \label{eq:S_ierrorbound}
    \sqrt{d |S_{i-1}|}  u ^\frac{3}{8} + \uu^\frac{7}{8}
    & \leq \sqrt{d \left ( d^{i-2} \alpha_x + C_{i-1} \left ( d^{i- \frac{5}{2}} + 1 \right ) \uu^\frac{7}{8}   \right ) }u^\frac{3}{8} + \uu^\frac{7}{8} \\
    & \leq \left (\sqrt{ 2 (C_{i-1} + 1)  } + 1 \right ) ( d^{i-\frac{3}{2}} + 1) u^\frac{7}{8}.
\end{align*}

If we set $C_i$ such that $C_i - 2 C_{i-1} -1 \geq \left (\sqrt{ 2 (C_{i-1} + 1)  } + 1 \right )$, then whenever $cF_{i-1}$ holds and $\alpha_x \leq 2 \uu$,
\begin{equation*}
    \P \left ( 
    \left | | S_i | - d^{i-1} \alpha_x \right | 
    \geq C_i ( d^{i - \frac{3}{2}} + 1 ) u^\frac{7}{8} 
    \big |B_{i-1} 
    \right ) 
    \leq 
    \P \left ( 
    \big| |S_i| - \E[ |S_i| | B_{i-1} ] \big |
    \geq \sqrt{d |S_{i-1}|}  u ^\frac{3}{8} + \uu^\frac{7}{8}
    \big | B_{i-1} 
    \right ).
\end{equation*}

Finally we put all of this together in a union bound
\begin{align*}
	& \P \Big ( \exists x \in \cV: \cup_{i = 1}^{r+3} \cG_i^c(x) \Big ) \\
	& \leq 
	N \E \left [ \P \left (  \cup_{i = 1}^{r+3} \cG_i^c(x) \big | S_1(x) \right ) \mathbf{1}(x \in \cV) \right ] \\
	& \leq N \E \left [ \sum_{i = 1}^{r+3} \P \left ( \cG^c_i(x) \cap \cF_{i-1}(x) \big | S_1(x) \right ) \mathbf{1} \left (\uu - \uu^\frac{2}{3} \leq \alpha_x \leq 2 \uu \right ) \right ] + N\P \left ( \alpha_x > 2 \uu \right ) \\
        & \leq N \E \left [ \sum_{i = 1}^{r+3} 
        \E \left [  \P \left ( \big | |S_i| - d^{i-1} \alpha_x \big |  \geq \e_i  \bigg | B_{i-1} \right ) \mathbf{1} \left ( \cF_{i-1} \right ) \Big | S_1 \right ]
        \mathbf{1} \left (\uu - \uu^\frac{2}{3} \leq \alpha_x \leq 2 \uu \right ) \right ] + N\P \left ( \alpha_x > 2 \uu \right ) \\
	& \leq N \E \left [ 
	\sum_{i = 2}^{r+3} 
	2 \exp \left \{ 
	-  C \uu^\frac{3}{4} 
	\right \}  
	\mathbf{1} \left ( \uu - \uu^\frac{2}{3} \leq \alpha_x \leq 2 \uu \right ) 
	\right ] 
	+ N\P \left ( \alpha_x > 2 \uu \right ) \\
	& \leq 
	N (r+3) e^{-C \uu^\frac{3}{4}}\P \left ( \uu - \uu^\frac{2}{3} \leq \alpha_x \right ) + N \P \left ( \alpha_x > 2 \uu \right ) \\
	&\leq 
	N (r+3) e^{-C \uu^\frac{3}{4}} \frac{1}{N} \frac32e^{d+ \uu^{\frac{2}{3}} \log \uu}\sqrt{\frac{\uu}{d}} + N \frac{1}{N^\frac{3}{2}}
\end{align*}
where we used that $ \uu = \Theta \left ( \frac{\log N}{\log \log N} \right )$ by \eqref{eq:uapprox} and then applied Lemma \ref{lem:binom_heavy_tail} for the second term and the bound on $|\cV|$ from Lemma \ref{lem:sizes}for the first term.

\end{proof}

\begin{proof}[Proof of Lemma \ref{lem:fine_balls_disjoint}, 4]
We will prove this for $\VV$, the proof with $\WW$ is identical. First note that by Lemma \ref{lem:weakertail}, for $X \sim \Binom(N, d/N)$,
$\P \left ( X \geq \uu^{\frac{3}{4}} \right ) 
\leq 
e^{-\Omega(\uu^{\frac{3}{4}})},
$
since $d \leq (\log N)^\frac{1}{5}$ by \ref{dfn:rdfn}.
The basic idea now is that by Lemma \ref{lem:fine_balls_disjoint} 3, there are $O((r+3)d^{r+2}\uu)$ vertices in $B_{r+3}(x)$ and by Lemma \ref{lem:sizes} there are $e^{O(\uu^\frac{2}{3}\log \uu)}$ vertices in $\cV$, so union bounding over all those vertices implies the result. Let us now make this precise.

We show this level by level. 
For all $y \in S_i(x)$, conditioned on $B_i$, the $N_y$ are independent and distributed as $\Binom(N - |B_{i}|, d/N),$ which is stochastically dominated by $\Binom(N, d/N)$. Thus the probability that any $N_y \geq u^\frac{3}{4}$ is bounded by $e^{- \Omega(u^\frac{3}{4})}$.

Putting everything together and using the notation $\cF_i$ as defined in the previous proof, we first get
\begin{align*}
    & \P \left ( \exists x \in \cV: \exists y \in B_{r+3}(x): N_y > \uu^\frac{3}{4} \right )
    & \leq \P \left ( \exists x \in \cV: \exists y \in B_{r+3}(x): N_y > \uu^\frac{3}{4},\alpha_x \leq 2 \uu  \right ) 
    + \P \left ( \exists x: \alpha_x > 2 \uu \right )
\end{align*}
and we know by \ref{lem:fine_balls_disjoint}, 3. that the latter event happens with low probability.
The first term on the other hand we can bound by
\begin{align*}
    & \sum_{x \in [N]} 
    \P \left (x \in \cV, \cup_{i = 1}^{r+3} \left \{ \exists y \in S_i(x): N_y > \uu^\frac{3}{4} \right \}, \alpha_x \leq 2 \uu \right ) \\
    & \leq 
    \sum_{x \in [N]}
    \sum_{i = 1}^{r+3} 
    \P \left (  \exists y \in S_i(x): N_y > \uu^\frac{3}{4} , \cF_i(x) , x \in \cV , \alpha_x \leq 2 \uu\right ) 
    + 
    \sum_{x \in [N]}
    \sum_{i = 1}^{r+3} 
    \P \left ( \cF_i^c(x), x \in \cV, \alpha_x \leq 2 \uu \right ) \\
\end{align*}
Now the latter term is small by the previous proof (note that we used a union bound there as well.)
For the former term we proceed as follows:
\begin{align*}
    & \leq 
    \sum_{x \in [N]}
    \sum_{i = 1}^{r+3} 
    \E \left [ \E  \left [ 
    \mathbf{1} \left (  \exists y \in S_i(x): N_y > \uu^\frac{3}{4} \right ) \big | B_{i} \right ] 
    \mathbf{1}_{ \cF_i } 
    \mathbf{1}_{x \in \cV}
    \mathbf{1}_{\alpha_x \leq 2 \uu}
    \right ] \\
    & \leq 
    \sum_{x \in [N]}
    \sum_{i = 1}^{r+3} 
    e^{-\Omega(u^\frac{3}{4})}
    \E \left [ |S_i| 
    \mathbf{1}_{\cF_i(x)} 
    \mathbf{1}_{x \in \cV} 
    \mathbf{1}_{\alpha_x \leq 2 \uu}
    \right ] \\
    & \leq 
    \sum_{x \in [N]}
    e^{-\Omega(u^\frac{3}{4})} 
    \sum_{i=1}^{r+3} 
    \E \left [ 
    \left ( d^{i-1}  \alpha + O( d^{i-\frac{3}{2}} \uu^\frac{7}{8} + \uu^\frac{7}{8} ) 
    \right )
    \mathbf{1}_{\alpha_x \leq 2 \uu}
    \mathbf{1}_{x \in \cV}
    \right ]\\
    & \leq 
    e^{-\Omega(u^\frac{3}{4})} 
    O \left ( (r+3) (1+d^{r+2}) \uu \right ) 
    \frac32e^{d+ \uu^{\frac{2}{3}} \log \uu}\sqrt{\frac{\uu}{d}},
\end{align*}
which is small as $N \to \infty.$

\end{proof}

\begin{proof}[Proof of Lemma \ref{lem:fine_balls_disjoint}, 5]
We prove this for $\WW$, the proof for $\VV$ is equivalent. Here we use Lemma \ref{lem:weibullbound} with $t=2\uu^{2/3}$. The probability bound we obtain is

\[
\exp(-\Omega(\uu^{-1/3}/(d^3+1))).
\]
 Therefore, for our range of $d$, it is possible to union bound over all vertices in $|\cW|$, as this gives us
\[
\exp(\uu^{1/4})\exp(-\Omega(\uu^{-1/3}/(d^3+1)))=\exp(-\Omega(\uu^{-1/3}/(d^3+1)))
\]
by the bound in Lemma \ref{lem:sizes}.

\end{proof}

\begin{proof}[Proof of Lemma \ref{lem:size_balls}]

We show that with high probability, for all vertices in $\cU$,
\begin{equation}
     |S_i|  \leq 4^{i-1} (d + \log \log N - \log d )^{i-1} \uu
\end{equation}
which implies the statement by our bounds on $d$ from Definition \ref{dfn:rdfn}.

The strategy is similar as in the proof of Lemma \ref{lem:fine_balls_disjoint}, 3:
First note that by Lemma \ref{lem:weakertail}, under the event $\cF_{i-1}$,
\begin{align*}
    \E \left [|S_{i-1}| \big | B_{i-1} \right ] = d |S_{i-1} | - | B_{i-1} | |S_{i-1} | \frac{ d }{ N } \leq d |S_{i-1} | - d
\end{align*}
using this bound and the fact that $d \leq d + \log \log N - \log d$, we get that
\begin{align*}
    & \P \left ( \big | |S_i| - \E \left [ |S_i | B_{i-1} \right ] \big | \geq \left ( (d + \log \log N - \log d) \sqrt{2 |S_{i-1}| u} + (d + \log \log N - \log d) \uu \right ) \big | B_{i-1} \right ) \\
    & \leq 
    2 \exp \left \{ - 
    \frac
    { \left ( (d + \log \log N - \log d) \sqrt{2 |S_{i-1}| u} + (d + \log \log N - \log d) \uu \right )^2}
    {2 \E \left [ |S_i| | B_{i-1} \right ] 
    +
    \frac{2}{3} \left ( (d + \log \log N - \log d) \sqrt{|S_{i-1}| u} + (d + \log \log N - \log d) \uu \right ) }
    \right \} \\
    & \leq 2 e^{ - (d+ \log \log N - \log d) \uu } \\
    & \leq 2 N^{-1+o(1)}
\end{align*}
by considering which term in the denominator is smaller and then using the approximation from \eqref{eq:uapprox} for $\uu$.

Now note that under the event $\cF_i$, 
\begin{align*}
    \E \left [|S_{i-1}| \big | B_{i-1} \right ] \leq d 4^{i-2}( d + \log \log N - \log d )^{i-2} \uu \leq 
    4^{i-2} ( d + \log \log N - \log d )^{i-1} \uu
\end{align*}
such that
\begin{align*}
    & |S_i| \geq 4^{i-1} ( d + \log \log N - \log d)^{i-1} \uu \\ 
    \Rightarrow & 
    |S_i| - \E \left [ |S_i| \big | B_{i-1} \right ] 
    \geq 
    3 \cdot 4^{i-2} ( d + \log \log N - \log d)^{i-1} \uu \\
    \Rightarrow & 
    |S_i| - \E \left [ |S_i| \big | B_{i-1} \right ] 
    \geq 
    ( d + \log \log N - d ) \sqrt{ 2 |S_{i-1}| \uu} + (d + \log \log N - \log d ) \uu.
\end{align*}
This implies that 
\begin{align*}
    \P \left ( |S_i| \geq 4^{i-2} ( d + \log \log N - \log d )^{i-1} \uu | B_i \right ) \leq 2 N^{-1 + o(1)}.
\end{align*}
We now proceed as in the end of the proof of Lemma \ref{lem:fine_balls_disjoint}, 3, by using the bound on $\cU$ from Lemma \ref{lem:sizes}.

For the second statement of the Lemma, note that it is sufficient to bound $\sum_{y\sim x}N_y^2$. We once more use Lemma \ref{lem:weibullbound} as in the proof of Lemma \ref{lem:fine_balls_disjoint}, 5, setting $t=\cweib^{-2}\log^2N$.
\end{proof}

\begin{proof}[Proof of Lemma \ref{lem:rough_tree_disjoint}]
    We use the arguments from the proofs of Lemma 5.5 and Lemma 7.3 in \cite{alt2021extremal}, but choose to include them here for sake of completeness.

    For any $x \in [N]$ let $\mathcal{E}_x$ denote the event that there are at least $C_1$ excess edges in $B_s(x)$.
    Then by a union bound
    \begin{align*}
        \P \left ( \exists x \in \cU: \cE_x  \right ) 
        & \leq 
        \sum_{ x \in [N]} \P \left ( \cE_x,  x \in \cU , \alpha_x < 2 \uu \right ) + \P ( \alpha_x \geq 2 \uu ),
    \end{align*}
    where the second summand can be bounded by $N^{-\frac{3}{2}}$ according to Lemma \ref{lem:binom_heavy_tail}.
    
    In order to bound the first term, we want to condition on $S_1(x)$, and then apply Lemma \ref{lem:excess_edges} for this we write
    \begin{align*}
        \P \left ( \cE_x,  x \in \cU , \alpha_x < 2 \uu \right ) 
        & =
        \E \left [ 
        \P ( \cE_x |S_1(x) )
        \mathbf{1}(\{ x \in \cU, \alpha_x < 2 \uu \})
        \right ] \\
        & \leq 
        \E \left [ C d^{3 C_1} \left ( \frac{ |S_1|}{N} \right )^{C_1} \mathbf{1} \left ( \eta \uu \leq \alpha_x \leq 2 \uu \right ) \right ] \\
        & \leq 
        C \left ( 2d^3 \right )^{C_1} \left ( \frac{ \log N}{N} \right )^{C_1} \frac{1}{N^\frac{\eta}{2}}
    \end{align*}

    Thus by taking $C_1 \geq 2$ and then doing a union bound over all $x$ we get the desired result.

    For the second statement of the Lemma, we proceed as in the proof of Lemma 7.3 in \cite{alt2021extremal} and write $\cI_x$, the event that there are at least $C_2$ disjoint paths in $B_s(x)$ ending at vertices in $\cU_\eta$, as a union over the specific paths:
    \begin{align*}
        \cI_x 
        =
        \bigcup_{\mathbf{y}, \mathbf{z}} \Gamma^{(C_2)}_{ \mathbf{y}, \mathbf{z} },
    \end{align*}
    where the union is taken over all vectors $\mathbf{y} = (y_1, \dots, y_{C_2} )$ with distinct entries in $[N] \setminus \{ x \}$ and the $C_2$-tuples $\mathbf{z}$ of disjoint vectors $(z^{(1)}, \dots, z^{(C_2)})$ of length $r_j \in \{ 0, \dots, s \}$ for $j \in [C_2]$, and $$\Gamma_{ \mathbf{y}, \mathbf{z} } = \left \{ y_j \in \cU_\eta, \{x,z_1^{(j)} \}, \{ z_i^{(j)}, z_{i+1}^{(j)} \}, \{ z_{r_j}^{(j)}, y^{j} \} \in E(G) \forall i \in [r_j - 1], j \in [k] \right \}.$$

    For some fixed $\mathbf{y}$ and $\mathbf{z}$, and thus fixed set of $(r_1, \dots, r_{C_2})$, since all paths are disjoint, when we denote by $\cN_x$ the neighborhood of a vertex $x$, and use the independence of the edges, we get that 
    \begin{align*}
        \P \left ( \Gamma_{\mathbf{y}, \mathbf{z}} \right ) 
        \leq &
        \P \left (  \big |\cN_x \cap ([N] \setminus \mathbf{y}) \big | \geq \eta \uu - C_2 \right ) \\
        & \prod_{j=1}^{C_2} \P \left ( \big | \cN_y \cap ([N] \setminus \{ x \} \cup \mathbf{y}) \} \big | \geq \eta \uu - C_2 - 1 \right )  \\
        & \left ( \frac{d}{N} \right )^{\sum_{ j  = 1}^{C_2} r_j + 1}.
    \end{align*}

    We now apply Lemma \ref{lem:sharp_poisson_tail} and Corollary \ref{cor:bintopoistail} to bound the remaining probabilities: since $C_2$ is constant all these probabilities will be bounded by 
    \begin{equation*}
        e^{-\eta u \log u + c \eta u} = e^{- (1+o(1)) \eta \log N}.
    \end{equation*}
    This implies that the above probability is bounded by 
    \begin{equation*}
        \frac{d^{\sum_{ j  = 1}^{C_2} r_j + 1}}{ N^{\left (\sum_{j=1}^{C_2} r_j + 1 \right ) +  \eta C_2 (1+o(1))} }.
    \end{equation*}
    To complete the union bound we need to count the number of terms, i.e. possible paths, for each sequence of $r_j$s. To do this we note that given that $x$ is fixed, there are $\binom{N - 1}{C_2}$ ways of picking $\mathbf{y}$ and for the $z^{(j)}_i$ on each path there are $\binom{N - k - \sum_{i = 1}^{j-1} r_i}{r_j}$ ways of picking them. Thus 
    \begin{align*}
        & \P \left ( \cI_x \right ) \\
        & \leq 
        \binom{N - 1}{C_2} \sum_{r_1 = 0}^s \cdots \sum_{r_{C_2} = 0}^s \binom{N-C_2 - 1 }{r_1} \cdots \binom{N - C_2 - \sum_{i = 1}^{C_2-1 r_i}}{r_{C_2}} \frac{d^{\sum_{ j  = 1}^{C_2} r_j + 1}}{ N^{\left (\sum_{j=1}^{C_2} r_j + 1 \right ) +  \eta C_2 (1+o(1))} } \\
        & \leq C  \frac{ d^{C_2 (s + 1)}}{N^{\eta C_2 (1 + o(1) )} }
    \end{align*}
    where $C$ is a constant that depends on the constants $s$ and $C_2$.

    By taking $C_2 > \frac{2}{\eta},$ and then taking a union bound over all $x$, this implies that all $B_s(x)$ only contain a constant number of disjoint paths ending at other vertices from $\cU$ with high probability.
        
\end{proof}

\begin{proof}[Proof of Lemma \ref{lem:prunedgraph}]
To construct the pruned graph $\hat{G}$ we delete edges in the same manner as in Lemma 7.2 in \cite{alt2021extremal}:
For every vertex $x \in \cU$, and its neighbor $y$, consider the set of vertices $T_y$ that are connected to $y$ by a path of length at most $3$, without traversing the edge $(x,y)$. If $x$ is in this set, or the graph induced by $T_y$ on $G$ is not a tree, then we prune the edge $(x,y)$. Denote the set of edges that are pruned in this way $P_x$. According to Lemma \ref{lem:rough_tree_disjoint}, with high probability, each vertex $x \in \cU$ has less than $C_1$ ``excess'' edges that create cycles in $B_3(x)$. Thus by the above procedure we prune at most $C_1 - 1$ edges that are adjacent to $x$.

In the second step, we work with the graph on $[N]$ with edges $E(G) \setminus P_x$, in which $B_3(x)$ is a tree. In that graph we consider for each neighbor $y$ of $x$, the vertices $V_y$ in $B_3(x)$ that are connected to $y$ by a path that does not use the edge $(x,y)$. If any of the vertices in $V_y$ is in $\cU$, we prune the edge $(x,y)$ and add it to $P_x$. By Lemma \ref{lem:rough_tree_disjoint} we prune at most $C_2 - 1$ edges adjacent to $x$ by doing this procedure.

We then apply these steps, by choosing an arbitrary order of vertices in $\UU$, then pruning edges surrounding these vertices sequentially. Let $H$ be the graph on $[N]$ that only consists of the edges $\cup_{ x \in cU} P_x$ that we pruned. We then define our pruned graph $\hat{G}$ to be the graph $G$ with edges $E(G) \setminus \cup_{ x \in cU} P_x$. By construction $\hat{G}$ satisfies 1. and 2.

 Note that only vertices $x \in \cU$ and vertices $y \in \cup_{x \in \cU} S_1(x)$ are not isolated in $H$. It is clear that at each step of this procedure we prune at most $C_2 + C_1 - 2$ edges adjacent to some $x$. Moreover, note that that any subsequent step cannot affect the degree of $x$ in $H$: otherwise, if we have already pruned for $x \in \cU$, if in a subsequent pruning for $x' \in \cU$ we were to delete an edge adjacent to $x$, this would mean that $(x',x)$ is an edge in $G$, in which case we would already have pruned it when doing the pruning for $x$.
 
Now let $y \in \cup_{x \in \cU} S_1(x) \setminus \cU$, i.e. let $y$ be a vertex that is not in $\cU$ and is a neighbor of some vertex $x \in \cU$. By Lemma \ref{lem:rough_tree_disjoint}, $y$ can be adjacent to at most $C_2 - 1$ additional vertices from $\cU$, since otherwise $B_2(x)$ contains more than $C_2 - 1$ vertices  from $\cU$.  Thus we prune at most $C_2 - 1$ edges adjacent to $y$. Hence the maximal degree of the graph $H$ is $C_1 + C_2 - 2,$ implying 3.

Recall the assumption that the maximum degree is at most $\uu$. Thus for each edge $(x,y)$ that we prune, $\beta_x$ is reduced by at most $\uu$. Additionally for each vertex $ y \in S_1(x)$, we delete at most $C_2 - 1$ edges by doing the pruning procedure for other $x' \in \cU$. This implies that $0 \leq \beta_x - \hat{\beta}_x \leq \alpha_x (C_2 - 1) + (C_1 + C_2 - 2) \uu  = O( \uu ),$ which implies 4.

Lemma \ref{lem:size_balls} gives a bound on the growth of the spheres in the original graph and since $\alpha_x$ and $\hat{\alpha}_x$ are of the same order, 5. follows immediately. 

For the last statement we rewrite 
\begin{equation*}
    \sum_{y \in \hat{S}_1(x)} \left ( \hat { N}_y - \frac{ \hat{\beta}}{\hat{\alpha}} \right )^2
    \leq 3 \left [ 
    \sum_{y \in \hat{S}_1(x)} \left ( \hat { N}_y -  N_y \right )^2 
    + 
    \sum_{y \in \hat{S}_1(x)} \left (  N_y -  d \right )^2 
    +
    \sum_{y \in \hat{S}_1(x)} \left ( d -  \frac{ \hat{\beta}}{\hat{\alpha}} \right )^2
    \right ].
\end{equation*}
The first term can be bounded by $O(\hat{\alpha})$ since $G - \hat G$ has a bounded degree by Lemma \ref{lem:prunedgraph}, 3, for the second term we use Lemma \ref{lem:size_balls} and the last term can be bounded by $\hat{\alpha} \left ( d - \frac{\hat{\beta}}{\hat{\alpha}} \right )^2$ and then be bounded using Lemma \ref{lem:prunedgraph}, 3.
    
\end{proof}

\end{document}